\newcounter{mynumcounter}
	       {%
		 \begin{list}{(\roman{mynumcounter})\hspace*{\fill}}%
		   {
		     \setlength{\topsep}{0cm}
		     \setlength{\partopsep}{0cm}
		     \setlength{\itemsep}{0ex}
		     \setlength{\parsep}{0cm}
		     \setlength{\leftmargin}{0cm}
		     \setlength{\itemindent}{8mm}		   
		     \setlength{\labelsep}{3mm}
		     \setlength{\labelwidth}{5mm}
		     \usecounter{mynumcounter}
		   }%
	       }
	       {\end{list}}
\newcommand{\eps}{\varepsilon}
\newcommand{\del}{\partial}
\newcommand{\dd}[2]{\frac{\del #1}{\del #2}}
\newcommand{\ddeval}[3]{\left.\dd{#1}{#2}\right|_{#3}}
\newcommand{\DD}[2]{\frac{d #1}{d #2}}
\newcommand{\DDeval}[3]{\left.\DD{#1}{#2}\right|_{#3}}
\newcommand{\Ric}{\operatorname{Ric}}
\newcommand{\tr}{\operatorname{tr}}
\renewcommand{\div}{\operatorname{div}}
\newcommand{\IR}{\mathbf{R}}
\newcommand{\CL}{\mathcal{L}}
\newcommand{\CU}{\mathcal{U}}
\newcommand{\CV}{\mathcal{V}}
\newcommand{\CW}{\mathcal{W}}
\newcommand{\CX}{\mathcal{X}}
\newcommand{\ga}{\gamma}
\newcommand{\Id}{\operatorname{Id}}
\newcommand{\id}{\operatorname{id}}
\newcommand{\rmd}{\mathrm{d}}
\newcommand{\rmin}{r_\mathrm{min}}
\newcommand{\supp}{\mathrm{supp}}
\newcommand{\dmu}{\,\rmd\mu}
\newcommand{\ra}{\rangle}
\newcommand{\la}{\langle}
\newcommand{\Connection}[1]{\smash{\sideset{^{#1}}{}{\mathop\nabla\nolimits}}}
\newcommand{\nabSig}{\!\Connection{\Sigma}}
\newcommand{\nabM}{\!\Connection{M}}
\newcommand{\Divergence}[1]{\smash{\sideset{^{#1}}{}{\mathop\mathrm{div}\nolimits}}}
\newcommand{\divSig}{\!\Divergence{\Sigma}}
\newcommand{\Riemann}[1]{\smash{\sideset{^{#1}}{}{\mathop\mathrm{Rm}\nolimits}}}
\newcommand{\RiemM}{\!\Riemann{M}}
\newcommand{\RiemSig}{\!\Riemann{\Sigma}}
\newcommand{\Ricci}[1]{\smash{\sideset{^{#1}}{}{\mathop\mathrm{Rc}\nolimits}}}
\newcommand{\RicM}{\!\Ricci{M}}
\newcommand{\Scalarcurv}[1]{\smash{\sideset{^{#1}}{}{\mathop\mathrm{Sc}\nolimits}}}
\newcommand{\ScalM}{\!\Scalarcurv{M}}
\newcommand{\ScalSig}{\!\Scalarcurv{\Sigma}}
\newcommand{\Acirc}{\hspace*{4pt}\raisebox{8.5pt}{\makebox[-4pt][l]{$\scriptstyle\circ$}}A}
\newcommand{\Acircbar}%
{\hspace*{4pt}\raisebox{8.5pt}{\makebox[-4pt][l]{$\scriptstyle\circ$}}\bar A}
\newcommand{\Kcircbar}%
{\hspace*{4pt}\raisebox{8.5pt}{\makebox[-4pt][l]{$\scriptstyle\circ$}}\bar K}
\newcommand{\half}{\tfrac{1}{2}}
\newcommand{\annotation}[1]{}
\newcommand{\noteFelix}[1]{}
\newcommand{\noteTobias}[1]{}
\newcommand{\noteJan}[1]{}
\newcommand{\CD}{\mathcal{D}}
\renewcommand{\IR}{\mathbb{R}}
\renewcommand{\Ric}{\mathrm{Rc}}
\title{\titlefamily\Huge Foliations of asymptotically flat manifolds
  by surfaces of Willmore type}
\author{
  \authname{Tobias Lamm
  \thanks{Partially supported by a PIMS Postdoctoral Fellowship.}}
  \authaddress
  {
    Department of Mathematics,
    University of British Columbia,
    1984 Mathematics Road,
    Vancouver, BC V6T 1Z2,
    Canada
  }
  \and
  \authname{Jan Metzger
  \thanks{Partially supported by a Feodor-Lynen fellowship of the Alexander von
    Humboldt Foundation.}}
  \authaddress
  {
    Albert-Einstein-Institut,
    Am M\"uhlenberg 1,
    14476 Potsdam,
    Germany
  }
\and
  \authname{Felix Schulze
  \thanks{Partially supported by a Feodor-Lynen fellowship of the Alexander von
    Humboldt Foundation.}}
  \authaddress
  {
    Freie Universit\"at Berlin,
    Institut f\"ur Mathematik und Informatik,\\
    Arnimallee 3,
    14195 Berlin,
    Germany.
  }
}
\date{}
\begin{document}
\hyphenation{}
\pagestyle{footnumber}
\maketitle
\thispagestyle{footnumber}
\begin{abst}%
  The goal of this paper is to establish the existence of a foliation
  of the asymptotic region of an asymptotically flat manifold with
  nonzero mass by surfaces which are critical points of the Willmore
  functional subject to an area constraint. Equivalently these
  surfaces are critical points of the Geroch-Hawking mass. Thus our
  result has applications in the theory of General Relativity.
\end{abst}
\section*{Introduction}
In this paper we study foliations of asymptotically flat manifolds by
surfaces of Willmore type. This means that we are interested in
constructing embedded spheres $\Sigma$ in a three dimensional
Riemannian manifold $(M,g)$ which satisfy the equation
\begin{equation}
  \label{eq:1}
  -\Delta H -  H |\Acirc|^2 -  \RicM(\nu,\nu)H = \lambda H.
\end{equation}
Here $H$ is the mean curvature of $\Sigma$, $\Acirc$ is the traceless
part of the second fundamental form $A$ of $\Sigma$ in $M$, that is
$\Acirc= A - \half H \gamma$, and $\ga$ is the induced metric on
$\Sigma$. Moreover $\RicM$ is the Ricci curvature of $M$ and $\Delta$
the Laplace-Beltrami operator on $\Sigma$.

Equation~\eqref{eq:1} is the Euler-Lagrange equation of the
functional
\begin{equation}
  \label{eq:71}
  \CW(\Sigma) = \frac{1}{2} \int_\Sigma H^2 \dmu
\end{equation}
subject to the constraint that $|\Sigma|$ be fixed. Then $\lambda$
becomes the Lagrange parameter.

In mathematics this functional is known as the Willmore functional,
at least in flat space, whereas for curved ambient manifolds the
literature \cite{Weiner:1978} also considers the functional
\begin{equation*}
  \CU(\Sigma) = \int_\Sigma |\Acirc|^2\dmu.
\end{equation*}
In flat space these two functionals only differ by a topological
constant. However, the second functional is conformally invariant and
thus translation invariant in all conformally flat manifolds. Since
our model space, the spatial Schwarzschild metric $g^S_m = \phi_m^4
g^e$, with $\phi=1 + \frac{m}{2r}$, $g^e$ the Euclidean metric and
$m>0$ a mass parameter, is conformally flat, we could not hope to find
unique surfaces minimizing the corresponding constrained problem.

Furthermore, the functional~\eqref{eq:71} appears naturally in general
relativity in form of the Hawking mass $m_H(\Sigma)$ of a surface $\Sigma$,
defined as
\begin{equation*}
  m_H(\Sigma)
  =
  \frac{|\Sigma|^{1/2}}{(16\pi)^{3/2} } \left( 16\pi - 2 \CW(\Sigma)\right).
\end{equation*}
This quantity is used to measure the mass of a region enclosed by
$\Sigma$. Due to the area constraint, equation~\eqref{eq:1} also
appears as the Euler-Lagrange equation when maximizing $m_H(\Sigma)$
subject to fixed area $|\Sigma|$.

Foliations of asymptotically flat manifolds using constant mean
curvature surfaces have been considered in \cite{Huisken-Yau:1996},
\cite{ye:1996} and \cite{huang:2008foliation}. The uniqueness of such
foliations was considered in \cite{qing-tian}. In
\cite{Huisken-Yau:1996} these foliations have been used to define a
center of mass for initial data sets for isolated gravitating systems
in general relativity. Such data sets are three dimensional
asymptotically flat manifolds. We argue here that, due to its relation
to the Hawking mass, equation~\eqref{eq:1} is the most natural
equation to consider when defining a geometric center of the Hawking
mass. In fact, surfaces maximizing the Hawking mass are the optimal
surfaces to calculate the Hawking mass. This intuition is backed by
our observation that along the foliation we construct, the Hawking
mass is non-negative and non-decreasing in the outward direction,
provided the scalar curvature $\ScalM \geq 0$ is non-negative, cf.\
theorem~\ref{thm:hawking-mono}. We remark that on stable surfaces of
constant mean curvature the Hawking mass is also non-negative as was
shown by Christodoulou and Yau \cite{Christodoulou-Yau:1986}.

Moreover, we wish to mention here that in \cite{huisken:2006owr}
Huisken argues in the other direction and introduces a definition of
quasi-local mass with the constant mean curvature equation as
Euler-Lagrange equation for the optimal surfaces at a given enclosed
volume. This then fits together with the center of mass definition by
CMC spheres.

CMC foliations have also been studied in other contexts, in particular
with asymptotically hyperbolic background in
\cite{neves-tian:2006,neves-tian:2007,mazzeo-pacard:2007}. This
setting is also relevant in general relativity when studying data sets
which are asymptotically light-like. We expect that our results extend
to the asymptotically hyperbolic case.

In $\IR^3$, minima of functional~\eqref{eq:71} are round
spheres, and since the functional is scale and translation invariant,
we get an (at least) four dimensional transformation group. In
particular, we can not expect solutions of~\eqref{eq:1} to be
unique. The existence of surfaces $\Sigma \subset \IR^n$ of
higher genus which minimize the Willmore functional and in particular
satisfy~\eqref{eq:1} with $\lambda = 0$ has been shown by Simon
\cite{MR1243525} and Bauer \& Kuwert \cite{MR1941840}.

This changes when we take the background $M$ not to be $\IR^3$ but the
exterior region of an asymptotically flat manifold. That is $M = \IR^3
\setminus B_\sigma(0)$ and the metric on $M$ is asymptotic to the
spatial Schwarzschild metric $g^S_m$. This metric is the spatial part
of the Schwarzschild metric which describes a single, static black
hole of mass $m$. Thus $m$ has the interpretation of a mass parameter.

In the $g^S$-metric we no longer have translation and scaling
invariance. In fact we will show that solutions of~\eqref{eq:1} which
are close enough to large centered round spheres are in fact equal to
centered round spheres. The radius of the sphere is then uniquely
determined by $\lambda$, provided $\lambda \in (0,\lambda_0)$ is small
enough. If the metric on $M$ is asymptotic to $g^S$ with appropriate
decay conditions, we can show that solutions to~\eqref{eq:1} behave
accordingly and form a smooth foliation of the asymptotic region of
$(M,g)$.

To be precise, we consider metrics $g$ on $\IR^3\setminus
B_\sigma(0)$ with the following asymptotics
\begin{equation*}
  \sup_{\IR^3\setminus B_\sigma(0)}
  \big( r^2 |g-g_m^S|
  + r^{3} |\nabla -\nabla_m^S|
  + r^4 |\Ric-\Ric_m^S|
  + r^5 |\nabla\Ric -\nabla^S \Ric_m^S | \big)
  \leq \eta,
\end{equation*}
where $g_m^S$ is the spatial Schwarzschild metric of mass $m>0$,
$\nabla^S_m$ its Levi-Civita connection and $\Ric^S_m$ its
Ricci-curvature. Correspondingly, $\nabla$ and $\Ric$, are the
connection and curvature of $g$. Furthermore, $r$ is the Euclidean
radius function on $\IR^3\setminus B_\sigma(0)$. Such metrics shall be
called \emph{$(m,\eta,\sigma)$-asymptotically Schwarzschild}.

In this setting, we will prove the following theorem.
\begin{theorem}
  \label{thm:main_existence}
  For all $m>0$ and $\sigma$ there exists $\eta_0 >0$,
  $\lambda_0>0$ and $C<\infty0$ depending only on $m$ and $\sigma$
  such that the following holds.
  
  Let $(M,g)$ be an $(m,\eta,\sigma)$-asymptotically flat manifold
  with $\eta < \eta_0$ and
  \begin{equation*}
    | \ScalM | \leq \eta r^{-5}
  \end{equation*}
  then for each $\lambda\in (0,\lambda_0)$ there exists a surface
  $\Sigma_\lambda$ satisfying equation~\eqref{eq:1}.

  In Euclidean coordinates this surface is $W^{2,2}$-close to a
  Euclidean sphere $S_{R_\lambda}(a_\lambda)$ with radius $R_\lambda$
  and center $a_\lambda$ such that
  \begin{equation*}
    |a_\lambda|+
    \left| R_\lambda - (\lambda/2m)^{-1/3} \right|
    \leq C\eta.
  \end{equation*}

  Moreover, there exists a compact set $K\subset M$ such that
  $M\setminus K$ is foliated by the surfaces
  $\{\Sigma_\lambda\}_{\lambda\in(0,\lambda_0)}$.
\end{theorem}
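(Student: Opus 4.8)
The plan is to realize $\Sigma_\lambda$ as a graph over a suitably chosen Euclidean coordinate sphere and to solve the resulting equation by an implicit-function/Lyapunov--Schmidt argument, treating both the radius $R$ and the center $a$ as free parameters. First I would rescale: since the Willmore-type equation~\eqref{eq:1} is of fourth order, it is convenient to work at unit scale by setting $R = (\lambda/2m)^{-1/3}$ and introducing $r = R\rho$, so that the rescaled operator becomes a perturbation of the flat Willmore operator on the unit sphere with a small parameter $\eps \sim R^{-1} \sim \lambda^{1/3}$. The Schwarzschild metric contributes a term of order $m/R$, which is precisely balanced against the Lagrange multiplier $\lambda$ by the choice of $R$; this balancing is why $R_\lambda \approx (\lambda/2m)^{-1/3}$ and why the first-order term in $\eps$ does not obstruct the construction. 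The asymptotically Schwarzschild error, of size $\eta r^{-2}$ pointwise with the stated derivative decay, becomes an inhomogeneity of size $C\eta\eps$ (or smaller) in the rescaled problem.

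Next I would set up the linearization of the Willmore operator about the unit round sphere in Euclidean $\IR^3$. Its kernel (on $W^{2,2}$, or $W^{4,2}$ depending on the functional-analytic setup) consists exactly of the three first spherical harmonics $x^i|_{S^1}$, corresponding to the translational invariance of the flat Willmore functional; the constant function is controlled by the area constraint, which fixes $|\Sigma|$ and hence removes the radial direction. So the linearized operator is an isomorphism on the orthogonal complement of $\mathrm{span}\{x^1,x^2,x^3\}$. I would then solve the equation modulo this three-dimensional cokernel using a fixed-point argument (contraction mapping on a small ball in the appropriate Sobolev space), obtaining for each small $\eps$ and each candidate center $a$ a graph function $u = u(\eps,a)$ solving the projected equation, with $\|u\|_{W^{2,2}} \leq C(\eta\eps + \eps^2)$; this is where one invokes $\eta < \eta_0$ and $\lambda < \lambda_0$ to keep the nonlinear terms subdominant.

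The remaining step — and the one I expect to be the genuine obstacle — is the reduced (bifurcation) equation: one must choose the center $a = a_\lambda$ so that the three components of the projection of the full equation onto $\mathrm{span}\{x^i\}$ vanish. This amounts to finding a zero of a map $\Phi\colon \IR^3 \to \IR^3$, $\Phi(a) = \big(\int_{S} (\,\cdot\,) x^i\big)_{i=1,2,3}$, whose leading behavior must be computed: the Schwarzschild part is radially symmetric about the origin, so it produces a term whose projection onto $x^i$ behaves like $c\, a^i$ for a nonzero constant $c$ (this is the restoring force that pins the sphere to the center of mass of the Schwarzschild geometry), while the general asymptotically Schwarzschild error contributes a term bounded by $C\eta$. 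A degree-theoretic or quantitative inverse-function argument on $\Phi$ then yields a zero with $|a_\lambda| \leq C\eta$, provided $\eta_0$ is small enough relative to the constant $c$. The main delicacy here is establishing the nondegeneracy $c \neq 0$ and the requisite $C^1$ (or Lipschitz) dependence of $\Phi$ on $a$ uniformly in $\eps$; this is the analogue of the off-center force computation in the CMC foliation literature (cf.~\cite{Huisken-Yau:1996}), adapted to the fourth-order operator.

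Finally, to upgrade the family $\{\Sigma_\lambda\}$ to a foliation of $M\setminus K$: I would show that $\lambda \mapsto (R_\lambda, a_\lambda, u_\lambda)$ is $C^1$ (again via the implicit function theorem applied to the combined system, using uniqueness of the solution in the small ball), that $R_\lambda \to \infty$ and $a_\lambda$ stays bounded as $\lambda \to 0$, and that the normal speed $\partial_\lambda \Sigma_\lambda$ is nowhere zero — the latter following because to leading order $\partial_\lambda R_\lambda \sim -\tfrac13(2m)^{1/3}\lambda^{-4/3} \neq 0$ dominates the correction terms. Smoothness of each leaf beyond $W^{2,2}$ is obtained by elliptic regularity bootstrapping on~\eqref{eq:1}. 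Taking $K$ to be the complement of the union of the leaves for $\lambda$ below a threshold completes the argument.
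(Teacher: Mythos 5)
Your proposal is correct in outline but takes a genuinely different route from the paper. The paper proves existence by a continuation argument in the metric: interpolating $g_\tau=(1-\tau)g^S+\tau g$, it shows that the set of $\tau\in[0,1]$ for which a solution of the required form exists is open (invertibility of the linearized operator $W_\lambda$, section~\ref{sec:est-lin-op}) and closed (the a priori curvature estimates of sections~\ref{sec:integr-curv-estim}--\ref{sec:impr-curv-estim} together with the position estimate of section~\ref{sec:position-estimates}), starting from the exact centered Schwarzschild spheres at $\tau=0$. Your scheme is instead a direct Lyapunov--Schmidt reduction at fixed small $\lambda$: rescale so that the putative solution is a perturbation of the unit sphere, solve the projected equation by a contraction, and then solve the three-dimensional bifurcation equation for the center $a$. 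Both routes hinge on the same delicate computation that you rightly flag as the genuine obstacle: the nondegenerate ``restoring force'' toward the center, of size $\sim m^2 R^{-6}$ on the would-be translation kernel. In the paper this is the content of Theorem~\ref{thm:position-estimate}, extracted from the conformal/translation invariance of $\CU$ in $g^S$, the Pohozaev identity, and a careful evaluation of $\delta_f\CV$ on the Schwarzschild background; in your approach it becomes the nondegeneracy of the Jacobian of the reduced map $\Phi$. What the paper's route buys in addition is the uniqueness statement of Theorem~\ref{thm:main_uniqueness} essentially for free, since the a priori estimates and the invertibility of $W_\lambda$ hold for \emph{any} surface satisfying the hypotheses, not just the constructed ones; a pure Lyapunov--Schmidt argument gives only local uniqueness in the small ball of the contraction, and would need to be supplemented by the very same a priori estimates to reach Theorem~\ref{thm:main_uniqueness}. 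Conversely, your route is more self-contained and more classical for the existence statement alone. Your foliation step --- $C^1$-dependence on $\lambda$ plus a nonvanishing normal speed dominated by $\partial_\lambda R_\lambda$ --- matches the strategy in section~\ref{sec:exist-uniq-foli}, where the sign of the lapse $\alpha_\lambda$ is controlled via the $L^\infty$-estimate on $\alpha_\lambda-\bar\alpha_0$.
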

For an arbitrary surface $\Sigma \subset \IR^3$ we can define a best
matching sphere by introducing the geometric area radius and the
center of gravity, both with respect to the Euclidean background:
\begin{equation*}
  R^e(\Sigma)
  =
  \sqrt{\frac{|\Sigma|_e}{4\pi}}
  \qquad\text{and}\qquad
  a_e (\Sigma)
  =
  |\Sigma|_e^{-1} \int_\Sigma x \dmu^e  
\end{equation*}
where in the second integral, the integrand is the position
vector. Then we define the scale-invariant translation parameter
\begin{equation*}
  \tau(\Sigma) = a_e(\Sigma) / R_e(\Sigma)
\end{equation*}
and we can state the uniqueness theorem
\begin{theorem}
  \label{thm:main_uniqueness}
  Let $m>0$ and $\sigma$ be given. Then there exists $\eta_0>0$,
  $\tau_0>0$, $\eps>0$ and $r_0<\infty$ depending only on $m$ and
  $\sigma$ such that the following holds.

  If $(M,g)$ is an $(m,\eta,\sigma)$-asymptotically flat manifold with
  $\eta<\eta_0$ and
  \begin{equation*}
    | \ScalM | \leq \eta r^{-5}
  \end{equation*}
  then every spherical surface $\Sigma \subset M$ with $\rmin :=
  \min_{\Sigma} r > r_0$, $\tau (\Sigma) < \tau_0$, $R_e \leq \eps
  \rmin^{2}$ and $H>0$ satisfying equation~\eqref{eq:1} for some
  $\lambda >0$ equals one of the surfaces $\Sigma_\lambda$ constructed
  in theorem~\ref{thm:main_existence}. In particular
  $\lambda\in(0,\lambda_0)$.
\end{theorem}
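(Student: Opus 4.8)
The strategy is a perturbation-and-reduction argument that matches any given solution of~\eqref{eq:1} against the explicit family $\{\Sigma_\lambda\}$ produced in Theorem~\ref{thm:main_existence}. First I would show that the geometric hypotheses — $\rmin > r_0$, $\tau(\Sigma) < \tau_0$, $R_e \le \eps\,\rmin^2$, $H > 0$, together with the asymptotic decay of $g$ toward $g_m^S$ — force $\Sigma$ to be a small $W^{2,2}$-perturbation of some centered Euclidean coordinate sphere $S_{R}(a)$ with $|a| \le C\eta R$ and $R$ large. The key point here is an a priori estimate: equation~\eqref{eq:1} is a fourth-order elliptic equation for the position of $\Sigma$, and one derives from it, via the Gauss equation and the near-Euclidean geometry, that $\int_\Sigma|\Acirc|^2$ is small (of order $\eta$ plus lower-order terms in $R^{-1}$), so $\Sigma$ is close to a round sphere in the $W^{2,2}$ sense by the standard Willmore-type rigidity (Simon/De Lellis–Müller estimates on a sphere). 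Simultaneously one reads off from the equation that the Lagrange parameter $\lambda$ must satisfy $\lambda = 2m R^{-3}(1 + O(\eta))$, hence $\lambda$ is small and lies in $(0,\lambda_0)$ once $r_0$ is large enough; this is the statement that the scale is pinned by $\lambda$.

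Next I would set up the comparison precisely. Write $\Sigma$ as a normal graph $\Sigma = \{x + f(x)\,\nu_e(x) : x \in S_R(a)\}$ over the best-matching Euclidean sphere, with $f$ small in $W^{2,2}$ (and in better norms by elliptic regularity applied to~\eqref{eq:1}). The parameters $(R,a)$ are fixed by the normalization $R = R_e(\Sigma)$, $a = a_e(\Sigma)$, which makes $f$ orthogonal to the constants and to the first spherical harmonics; equivalently, $f$ lies in the $L^2$-orthogonal complement of the kernel of the linearized operator. I would then compare $\Sigma$ with the constructed surface $\Sigma_{\lambda}$ for the \emph{same} value of $\lambda$: by Theorem~\ref{thm:main_existence}, $\Sigma_\lambda$ is itself a graph over a sphere $S_{R_\lambda}(a_\lambda)$ with $R_\lambda = (\lambda/2m)^{-1/3} + O(\eta)$ and $|a_\lambda| \le C\eta$, so the two surfaces are graphs over nearby spheres; a further adjustment shows the reference spheres must in fact coincide, because $R$ and $R_\lambda$ are both determined by $\lambda$ up to the same $O(\eta)$ correction and the centers are both $O(\eta)$.

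The heart of the proof is then a \emph{linearized uniqueness} statement. Let $u = f - f_\lambda$ be the difference of the two graph functions over the common reference sphere. Subtracting the two copies of~\eqref{eq:1} gives $L u = E(u)$, where $L$ is the linearization of the Willmore-type operator at the round sphere in the Schwarzschild background, and $E$ collects quadratic-and-higher error terms plus the metric-perturbation terms, all controlled by $C(\eta + R^{-1})\|u\|$ in the appropriate norm. The operator $L$, after rescaling to a unit sphere, is (to leading order) $\Delta_{S^2}(\Delta_{S^2} + 2)$ plus a positive zeroth-order mass term of size $\sim m R^{-1}$ coming from the $\RicM(\nu,\nu)$ and curvature contributions; its kernel on $S^2$ is exactly the constants and the $\ell=1$ harmonics, which we have projected out by the choice of $R$ and $a$. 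Hence $L$ is invertible on the orthogonal complement with an inverse whose norm is bounded by $C R^{3}/m$ (the spectral gap is $\sim m R^{-1}$ after the natural $R^{-4}$ rescaling of the fourth-order part — this is precisely where the nonzero-mass hypothesis enters and where the constants $\eta_0,\tau_0,\eps,r_0$ get fixed). Combining $\|u\| \le C R^3 m^{-1}\|E(u)\| \le C R^3 m^{-1}(\eta + R^{-1})\|u\|$ and absorbing, for $\eta$ small and $r_0$ large, forces $u \equiv 0$, i.e. $\Sigma = \Sigma_\lambda$.

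\textbf{Main obstacle.} The delicate step is the invertibility of $L$ with the \emph{right} quantitative bound: one must show that projecting out the four-dimensional approximate kernel (constants and translations) leaves a spectral gap that is only as small as $m R^{-1}$ — not smaller — and that every error term in $E(u)$, including those generated by the deviation of $g$ from $g_m^S$ and by the nonlinearity, is genuinely of lower order relative to this gap after the correct scaling. Handling the translational near-kernel is the subtle part, since the Schwarzschild metric breaks translation invariance only at order $m R^{-1}$, so the $\ell = 1$ eigenvalues of $L$ are not zero but merely small; one has to verify that the normalization $a = a_e(\Sigma) = a_e(\Sigma_\lambda)$ removes exactly this near-kernel and that the residual coupling is controllable. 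This is essentially the same analysis that underlies the existence proof, run in reverse as a uniqueness statement.
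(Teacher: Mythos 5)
Your overall architecture (a priori estimates forcing $\Sigma$ close to a round sphere, followed by a linearized uniqueness argument resting on the invertibility of the operator $W_\lambda = W - \lambda L$) shares ingredients with the paper, but your chosen comparison mechanism is a direct one-shot perturbation argument, whereas the paper runs a continuity argument along the deformation $g_\tau = (1-\tau)g^S + \tau g$: given a solution $\Sigma$ in $(M,g)$, one reverses the implicit-function-theorem path back to $\tau=0$, where the only admissible solution is the exactly centered sphere in Schwarzschild, and then local uniqueness along the path pins $\Sigma$ to $\Sigma_\lambda$. The crucial advantage of the paper's route is that it never has to identify two reference spheres; the anchor is the Schwarzschild endpoint, where centering is exact.

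This is precisely where your argument has a real gap. You write $\Sigma$ and $\Sigma_\lambda$ as graphs over $S_{R_e(\Sigma)}(a_e(\Sigma))$ and $S_{R_\lambda}(a_\lambda)$ respectively, note both radii are $(\lambda/2m)^{-1/3}+O(\eta)$ and both centers are $O(\eta)$, and then assert that ``a further adjustment shows the reference spheres must in fact coincide.'' This does not follow: two quantities each $O(\eta)$ close to the same model value need not be equal to each other, and the difference between the two centers is exactly the obstruction a direct uniqueness proof must overcome. If you instead choose a single common reference sphere, then one of the two graph functions is no longer $L^2$-orthogonal to the constant and $\ell=1$ modes, so the kernel-projection you rely on is broken, and your bound $\|u\|\le CR^3m^{-1}\|E(u)\|$ acting only on the orthogonal complement no longer applies to $u=f-f_\lambda$. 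The way the paper escapes this is by proving (Theorem~\ref{inve}) that $W_\lambda$ is invertible on the \emph{full} space $C^{4,\alpha}(\Sigma)$ with inverse norm $\le C R_S^6$ — the Schwarzschild mass lifts both the constant and translation near-kernels (the $\ell=1$ eigenvalues of $L$ are $\approx 3\lambda\sim m R^{-3}$, and $W_\lambda\ge 12m^2R_S^{-6}$ on $V_0^\perp$, with the $V_0$ direction handled separately via the $\int uH^2\RicM(\nu,\nu)$ contradiction). Without either (a) a common reference sphere or (b) full invertibility of $W_\lambda$ rather than invertibility modulo an approximate kernel, the contraction step in your proposal does not close. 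Your stated spectral gap $\sim mR^{-1}$ after an $R^{-4}$ rescaling also does not match the paper's quantitative lower bound, which after a careful account of the quadratic structure of $W_\lambda$ is $\sim m^2 R_S^{-6}$.
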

The outline of the paper and the proof of the above theorems is as
follows. After setting the stage by presenting some preliminary
material in section~\ref{sec:preliminaries}, we calculate the first
and second variation of~\eqref{eq:71}, to arrive at~\eqref{eq:1} and
its linearization. This is done in
section~\ref{sec:first-second-vari}.

In section~\ref{sec:integr-curv-estim} we prove a priori estimates for
solutions to~\eqref{eq:1} under the assumption that $H>0$ and
$\lambda>0$. These estimates in particular show that with increasing
area also the Hawking mass of the $\Sigma_\lambda$ increases.

Section~\ref{sec:impr-curv-estim} is devoted to a technical
improvement of the curvature estimates in
section~\ref{sec:integr-curv-estim}, under the additional assumption
that the surface in question is not too far off center in the sense
that the translation parameter above is not too large.

This allows us to break the translation invariance in
section~\ref{sec:position-estimates}, where we prove position
estimates. These estimates are at the heart of the uniqueness and are
quite delicate. In this section we also state the final version of our
a priori estimates. These estimates allow to control both the position
and the shape of solutions to~\eqref{eq:1} in a very precise way.

In section \ref{sec:est-lin-op} we analyze the linearization of
equation~\eqref{eq:1} and use the previous a priori estimates to show
that this operator is invertible. The reason why we are able to do
this, is that the estimates in section~\ref{sec:position-estimates}
allow to compare the linearization of~\eqref{eq:1} to the
corresponding operator on a centered sphere in Schwarzschild. The
latter operator is invertible and thus invertability of the former one
follows.

This is used in section~\ref{sec:exist-uniq-foli} to prove the
existence and uniqueness of theorem~\ref{thm:main_existence} and
theorem~\ref{thm:main_uniqueness} using an argument based on the
implicit function theorem.

%%% Local Variables: 
%%% mode: latex
%%% TeX-master: "master"
%%% End: 

% 
\section{Preliminaries}
\label{sec:preliminaries}
\subsection{Geometric equations}
We will consider three dimensional Riemannian manifolds $(M,g)$, where
$g$ is the metric tensor, which we write as $g_{ij}$ in
coordinates. Its inverse is denoted by $g^{ij}$, its Levi-Civita
connection by $\nabla$. For the Riemanninan curvature tensor we use
the convention
\begin{equation*}
  (\nabla_i \nabla_j - \nabla_j\nabla_i) \del_k  = \RiemM_{ijkl} g^{lm}\del_m.
\end{equation*}
Here we use the Einstein summation convention and sum over repeated
indices. Then the Ricci-curvature is given by
\begin{equation*}
  \RicM_{il} = g^{jk} \RiemM_{ijkl}
\end{equation*}
and the scalar curvature by $\ScalM = g^{ij}\RicM_{ij}$.

Our sign convention implies that commuting derivatives on a 2-tensor
$T_{ab}$ gives
\begin{equation*}
  \nabla_a\nabla_b T_{cd}
  =
  \nabla_b \nabla_a T_{cd}
  - \RiemM_{abce}g^{ef} T_{fd}
  - \RiemM_{abde}g^{ef} T_{cf}.
\end{equation*}
For a three dimensional manifold the Riemannian
curvature tensor can be expressed in terms of the Ricci curvature as
follows
\begin{equation}
  \label{eq:3}
  \RiemM_{ijkl}
  =
  \RicM_{il} g_{jk}
  - \RicM_{ik} g_{jl}
  - \RicM_{jl} g_{ik}
  + \RicM_{jk} g_{il}
  - \half \ScalM ( g_{il} g_{jk} - g_{ik} g_{jl} ).
\end{equation}
If $\Sigma\subset M$ ia a surface we denote by $\gamma$ the induced
metric and by $\nu$ its normal. The second fundamental form of
$\Sigma$ is denoted by $A$ and its mean curvature by $H$. The
Riemannian curvature tensor $\RiemSig$ of $\Sigma$ is given by the
Gauss equation
\begin{equation}
  \label{eq:4}
  \RiemSig_{ijkl} = \RiemM_{ijkl} + A_{il}A_{jk} - A_{ik} A_{jl}.
\end{equation}
Taking the trace twice implies
\begin{equation}
  \label{eq:14}
  \ScalSig = \ScalM - 2\RicM(\nu,\nu) + H^2 - |A|^2.
\end{equation}
Furthermore, we have the Codazzi equation
\begin{equation}
  \label{eq:6}
  \nabla_k A_{ij} = \nabla_i A_{kj} + \RiemM_{kiaj}\nu^a.
\end{equation}
Denote by $\omega:= \Ric(\nu,\cdot)^T$ the tangential projection of
the 1-form $\Ric(\nu,\cdot)$ to $\Sigma$. Then using the Gauss
equation \eqref{eq:4}, the Codazzi equation \eqref{eq:6} and equation
\eqref{eq:3}, the Simons identity \cite{Simons:1968} becomes
\begin{equation}
  \label{eq:5}
  \begin{split}
    \Delta A_{ij}
    &=
    \nabla_i\nabla_j H
    + H A_i^k A_{kj}
    - |A|^2 A_{ij}
    \\
    & \phantom{=}
    + A^k_j \gamma^{lm} \RiemM_{likm}
    + A^{kl}R_{ikjl}
    + 2\nabla_i\omega_j
    - \div\omega \gamma_{ij}.
  \end{split}
\end{equation}
For any two-tensor $T$, we denote the traceless part by
$T^0$, that is $T^0_{ij} = T_{ij} - \half (\tr T) \gamma_{ij}$. In particular we have
\begin{equation*}
  \Acirc_{ij}  = A_{ij} - \half H \gamma_{ij}.
\end{equation*}
This implies that
\begin{equation*}
  |\Acirc|^2 +\half H^2 = |A|^2.
\end{equation*}
With the help of these facts we get from Simons` identity that
\begin{equation}
  \label{eq:7}
  \begin{split}
    \Delta \Acirc_{ij}
    &=
    (\nabla^2 H)^0_{ij}
    + H\Acirc_i^k \Acirc_{kj}
    + \half H^2 \Acirc_{ij}
    -|\Acirc|^2 \Acirc_{ij}
    - \half H |\Acirc|^2 \gamma_{ij}
    \\
    & \phantom{=}
    + \Acirc_j^k \gamma^{lm} \RiemM_{likm}
    + \Acirc^{kl} \RiemM_{ikjl}
    + 2 \nabla_i \omega_j - \div \omega\gamma_{ij},
  \end{split}
\end{equation}
and therefore
\begin{equation}
  \label{eq:8}
  \begin{split}
    \Acirc^{ij} \Delta \Acirc_{ij}
    &=
    \la \Acirc, \nabla^2 H \ra
    + \half H^2 |\Acirc|^2
    - |\Acirc|^4
    \\
    & \phantom{=}
    -|\Acirc|^2 \RicM(\nu,\nu)
    + 2\Acirc^{ij}\Acirc_j^l \RicM_{il}
    + 2 \la \Acirc, \nabla\omega\ra.
  \end{split}
\end{equation}
\subsection{Asymptotically Schwarzschild manifolds}
Let $g^S_m$ be the spatial, conformally flat Schwarzschild metric on
$\IR^3 \setminus \{ 0\}$ of mass $m$.  That is $g^S_m = \phi_m^4 g^e$,
where $\phi_m = 1 +\frac{m}{2r}$, $g^e$ is the Euclidean metric on
$\IR^3$ and $r$ the distance to the origin in $\IR^3$. We will
suppress the depencence of $g^S_m$ and $\phi_m$ on $m$ and denote the metric
simply by $g^S$ and $\phi_m$ by $\phi$. The following lemma
summarizes the relationship of the geometry of $g^S$ and~$g^e$.
\begin{lemma}
  \label{thm:geometry-in-schwarzschild}
  \begin{enumerate}
  \item The Ricci curvature of $g^S$ is given by
    \begin{equation}
      \label{eq:12}
      \Ric^S_{ij}
      =
      \frac{m}{r^3} \phi^{-2} \big( g^e_{ij} - 3 \rho_i\rho_j \big),
    \end{equation}
    where $\rho_a$ is the 1-form dual to the vector $\dd{}{r}$ on
    $\IR^3$.    
    In particular, the scalar curvature of $g^S$ vanishes.
  \item If $\Sigma\subset \IR^3\setminus{0}$ is a surface, we denote
    by $\nu^e$ the normal of $\Sigma$ with respect to $g^e$ and by
    $\nu^S$ the normal of $\Sigma$ with respect to $g^S$. Analogously
    $\dmu^e$, $\dmu^S$ denote the respective volume forms, $\Acirc^e$,
    $\Acirc^S$ the respective traceless second fundamental forms and
    $H^e$ and $H^S$ the mean curvatures. We find the following
    relations:
    \begin{align}
      \label{eq:13}
      &
      \nu^S
      =
      \phi^{-2}\nu^e,
      \\
      &
      \dmu^S
      =
      \phi^4 \dmu^e,
      \\
      &
      \Acirc^S
      =
      \phi^{-2}\Acirc^e, \ \text{and}
      \\
      &
      H^S
      =
      \phi^{-2} H^e + 4 \phi^{-3} \del_{\nu^e}\phi.
    \end{align}
  \end{enumerate}
\end{lemma}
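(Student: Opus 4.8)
The statement to prove is Lemma~\ref{thm:geometry-in-schwarzschild}, which relates the geometry of the conformally flat Schwarzschild metric $g^S = \phi^4 g^e$ to the Euclidean geometry. This is a pure conformal-change computation, so my plan is to apply standard conformal transformation formulas and then specialize to the particular conformal factor $\phi = 1 + \frac{m}{2r}$.

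\textbf{Ricci curvature and vanishing scalar curvature.} For a conformal change $g^S = e^{2u} g^e$ in dimension $n=3$ with $e^{2u} = \phi^4$, i.e.\ $u = 2\log\phi$, the general formula reads
\begin{equation*}
  \Ric^S = \Ric^e - (n-2)\big(\nabla^e du - du\otimes du\big) - \big(\Delta^e u + (n-2)|du|_e^2\big) g^e.
\end{equation*}
Since $\Ric^e = 0$ and $n=3$, this becomes $\Ric^S = -(\nabla^e du - du\otimes du) - (\Delta^e u + |du|_e^2) g^e$. The plan is to substitute $u = 2\log\phi$, so $du = 2\phi^{-1}d\phi$, and use that $\phi$ is a function of $r$ alone with $\phi' = -\frac{m}{2r^2}$, $\phi'' = \frac{m}{r^3}$. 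One computes $\nabla^e d\phi$ using $\nabla^e dr \otimes dr$-type terms (recalling $\Delta^e r = 2/r$ and $\nabla^e dr = \frac{1}{r}(g^e - \rho\otimes\rho)$ where $\rho = dr$), assembles the Hessian of $\log\phi$, and simplifies. A useful shortcut is that $\phi$ is harmonic on $\IR^3\setminus\{0\}$ in the sense that $\Delta^e\phi = 0$ (since $1/r$ is harmonic), which kills several terms. After collecting, the trace-free part proportional to $g^e_{ij} - 3\rho_i\rho_j$ should emerge with coefficient $\frac{m}{r^3}\phi^{-2}$, and the pure-trace part should vanish, giving both \eqref{eq:12} and $\Scal^S = g_S^{ij}\Ric^S_{ij} = \phi^{-4}g_e^{ij}\Ric^S_{ij} = 0$ (one checks $g_e^{ij}(g^e_{ij} - 3\rho_i\rho_j) = 3 - 3 = 0$).

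\textbf{Surface quantities under conformal change.} For part (2), I would use the elementary scaling rules. The induced metric scales as $\gamma^S = \phi^4\gamma^e$, so the volume form scales as $\dmu^S = \phi^4\dmu^e$ and the unit normal as $\nu^S = \phi^{-2}\nu^e$ (to keep $g^S(\nu^S,\nu^S)=1$). For the second fundamental form under $g^S = e^{2u}g^e$, the standard formula is $A^S_{ij} = e^u\big(A^e_{ij} + (\partial_{\nu^e}u)\gamma^e_{ij}\big)$, here with $e^u = \phi^2$. Taking the trace with $\gamma_S^{ij} = \phi^{-4}\gamma_e^{ij}$ gives $H^S = \phi^{-2}\big(H^e + 2\partial_{\nu^e}u\big) = \phi^{-2}H^e + 4\phi^{-3}\partial_{\nu^e}\phi$, matching the last displayed equation. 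Subtracting the trace part, $\Acirc^S_{ij} = A^S_{ij} - \frac12 H^S\gamma^S_{ij} = \phi^2 A^e_{ij} + \phi^2(\partial_{\nu^e}u)\gamma^e_{ij} - \frac12(\phi^{-2}H^e + 2\phi^{-2}\partial_{\nu^e}u)\phi^4\gamma^e_{ij} = \phi^2(A^e_{ij} - \frac12 H^e\gamma^e_{ij}) = \phi^2\Acirc^e_{ij}$; but the lemma claims $\Acirc^S = \phi^{-2}\Acirc^e$, so here I would double-check the index convention—as a $(0,2)$-tensor one gets $\phi^2\Acirc^e$, while raising one index to view $\Acirc$ as an endomorphism (the convention used elsewhere in the paper, e.g.\ in $|\Acirc|^2$) gives the factor $\phi^{-2}$, and I would state explicitly which is meant. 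The only mild obstacle is bookkeeping: keeping track of which tensor is $(0,2)$ versus mixed, and being careful that $\partial_{\nu^e}$ means the derivative along the \emph{Euclidean} unit normal. Everything else is a direct substitution, and the vanishing of $\Scal^S$ is an immediate consequence of \eqref{eq:12}.
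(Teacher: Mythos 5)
Your proof is correct and uses the standard conformal-change machinery, which is exactly what this lemma calls for; the paper states the lemma without proof, so there is no alternative argument to compare against. The computation in part (1) is sound: after substituting $u = 2\log\phi$, using $\nabla^e d\phi = -\tfrac{m}{2r^3}(g^e - 3\rho\otimes\rho)$ and the harmonicity $\Delta^e\phi = 0$, the $g^e$- and $\rho\otimes\rho$-coefficients do combine into the trace-free form $c\,(g^e - 3\rho\otimes\rho)$ with $c = \tfrac{m}{r^3\phi} - \tfrac{m^2}{2r^4\phi^2} = \tfrac{m}{r^3\phi^2}$ (using $2r\phi - m = 2r$), which gives \eqref{eq:12} and $\Scal^S = 0$ at once. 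The surface formulas in part (2) are likewise correct, with the one caveat you appropriately flag.

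Your reservation about the $\Acirc^S$ scaling is well-founded and worth keeping: as a $(0,2)$-tensor the transformation is $\Acirc^S = \phi^2\Acirc^e$, while the factor $\phi^{-2}$ stated in the lemma is the scaling of the associated shape-operator (mixed $(1,1)$) form, $\phi^{-4}\gamma_e^{ik}\cdot\phi^2\Acirc^e_{kj} = \phi^{-2}(\Acirc^e)^i_j$. Fortunately, every use the paper makes of this identity is insensitive to which form is meant: $\Acirc^S$ vanishes iff $\Acirc^e$ does (used in the uniqueness argument), and the conformal invariance of $\|\Acirc\|_{L^2}$ follows from $|\Acirc^S|^2_{g^S}\,\dmu^S = |\Acirc^e|^2_{g^e}\,\dmu^e$, which is the same computation either way. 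One small sign check worth adding to your write-up: the formula $A^S_{ij} = \phi^2\bigl(A^e_{ij} + (\partial_{\nu^e}u)\gamma^e_{ij}\bigr)$ with the \emph{plus} sign corresponds to the convention $A_{ij} = g(\nabla_i\nu,\partial_j)$, which is the convention forced on the paper by its requirement that $H>0$ for large coordinate spheres; the other common convention would flip the sign of the $\partial_{\nu^e}\phi$ term in the mean curvature, contradicting the stated $H^S$.
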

\begin{definition}
  \label{def:asymptotically-flat}
  We say that $(M,g)$ is $(m,\eta,\sigma)$-asymptotically Schwarzschild if
  there exists a compact set $B\subset M$, and a diffeomorphism $x:
  M\setminus B \to \IR^3\setminus B_\sigma(0)$, such that in these
  coordinates
  \begin{equation*}
    \sup_{\IR^3\setminus B_\sigma(0)}\!\!\!
    \big( r^2 |g-g^S|
    + r^{3} |\nabla^g -\nabla^S|
    + r^4 |\Ric^g-\Ric^S|
    + r^5 |\nabla\Ric^g -\nabla^S \Ric^S | \big)
    \leq \eta,
  \end{equation*}
  where $g^S$ is the metric for mass $m$.  
\end{definition}
For brevity we will subsequently refer to $\Ric^g$ simply by $\Ric$ or
by $\RicM$.

In the next lemma we relate geometric quantities with respect to $g$
to quantities with respect to $g^S$.
\begin{lemma}
  \label{thm:general-to-schwarzschild}
  If $(M,g)$ is $(m, \eta,\sigma)$ asymptotically Schwarzschild and if
  $\Sigma\subset \IR^3\setminus B_{\sigma}(0)$ is a surface, we have the
  following relation between the normals $\nu$ with respect to $g$
  and $\nu^S$ with respect to $g^S$
  \begin{equation*}
    r^2|\nu - \nu^S| \leq C\eta.
  \end{equation*}
  Furthermore, the area elements $\dmu$ and $\dmu^S$ satisfy $\dmu -
  \dmu^S = h\dmu$ with
  \begin{equation*}
    r^2 |h| \leq C\eta,
  \end{equation*}
  The second fundamental forms $A$ and $A^S$  satisfy
  \begin{align*}
    & |A - A^S|
    \leq
    C\eta(r^{-3} + r^{-2}|A|)
    \\
    & |\nabla A - \nabla A^S|
    \leq
    C\eta(r^{-4} + r^{-3}|A| + r^{-2}|\nabla A|).    
  \end{align*}
\end{lemma}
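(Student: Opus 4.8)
The plan is to treat $g$ as a perturbation of $g^S$ with the decay rates built into Definition~\ref{def:asymptotically-flat}. We use throughout that on $\IR^3\setminus B_\sigma(0)$ the metrics $g$, $g^S$ and the Euclidean metric $g^e$ are uniformly equivalent — there $1\leq\phi\leq 1+\tfrac{m}{2\sigma}$ and $|g-g^S|\leq\eta r^{-2}$ — so tensor norms with respect to any of the three agree up to constants depending only on $m$ and $\sigma$. Each of the four estimates is then obtained by writing the relevant quantity through $g$, $\nabla^g$ and the curvature of $g$, subtracting the Schwarzschild counterpart, and tracking which pieces of the difference are forced to decay by $g-g^S$, $\nabla^g-\nabla^S$, $\Ric-\Ric^S$ (recall equation~\eqref{eq:12} for $\Ric^S$), and — once established — the normal comparison.

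For the normal, write $\nu^S=c\,\nu+t$ with $t\in T\Sigma$. Testing $g^S(\nu^S,X)=0$ against $X\in T\Sigma$ and using $g(\nu,X)=0$ gives $g^S(t,X)=-c\,(g^S-g)(\nu,X)$, so $|t|\leq C|c|\,|g-g^S|\leq C|c|\,\eta r^{-2}$ because $g^S|_{T\Sigma}$ is uniformly nondegenerate. Inserting this into $g^S(\nu^S,\nu^S)=1$ and using $g^S(\nu,\nu)=1+O(\eta r^{-2})$ forces $c=1+O(\eta r^{-2})$ once the sign is fixed by the common orientation, whence $|\nu-\nu^S|=|(c-1)\nu+t|\leq C\eta r^{-2}$. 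For the volume forms, the induced metrics obey $\gamma-\gamma^S=(g-g^S)|_{T\Sigma}$, so $r^2|\gamma-\gamma^S|\leq\eta$; since $\gamma^S$ is uniformly comparable to a fixed metric on $\Sigma$, taking the square root of $\det\gamma/\det\gamma^S$ gives $\sqrt{\det\gamma}=(1+h)\sqrt{\det\gamma^S}$ with $r^2|h|\leq C\eta$, which is the claim $\dmu-\dmu^S=h\,\dmu$ after replacing $h$ by $h/(1+h)$.

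For the second fundamental form, put $S:=\nabla^g-\nabla^S$, so $|S|\leq\eta r^{-3}$, and use $A(X,Y)=-g(\nabla^g_X\nu,Y)$, $A^S(X,Y)=-g^S(\nabla^S_X\nu^S,Y)$ for $X,Y\in T\Sigma$. With $\nabla^g_X\nu=\nabla^S_X\nu^S+\nabla^S_X(\nu-\nu^S)+S(X,\nu)$ this gives
\begin{equation*}
  A(X,Y)-A^S(X,Y)
  =
  -g\bigl(\nabla^S_X(\nu-\nu^S)+S(X,\nu),\,Y\bigr)
  -(g-g^S)\bigl(\nabla^S_X\nu^S,\,Y\bigr).
\end{equation*}
The $S$-term is bounded by $C\eta r^{-3}$; in the last term $\nabla^S_X\nu^S$ is the $g^S$-shape operator at $X$, of size $\leq C|A^S|$, contributing $\leq C\eta r^{-2}|A^S|$; and differentiating the algebraic description of $\nu,\nu^S$ above yields $|\nabla^S(\nu-\nu^S)|\leq C\eta(r^{-3}+r^{-2}|A|)$, contributing the same. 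Since $|A^S|\leq|A|+|A-A^S|$, the small ($O(\eta r^{-2})$) multiple of $|A^S|$ can be absorbed on the left for $\eta$ small (or $r$ large), giving $|A-A^S|\leq C\eta(r^{-3}+r^{-2}|A|)$. Equivalently, $A[g]-A[g^S]=\int_0^1\tfrac{d}{dt}A[g^S+t(g-g^S)]\,dt$ and the integrand, being a first-order operator in $g-g^S$ with coefficients involving the extrinsic geometry, is $\leq C(|\nabla^{g_t}(g-g^S)|+|A[g_t]|\,|g-g^S|)$.

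Finally, $\nabla A-\nabla A^S$ is obtained by differentiating the identity for $A-A^S$ covariantly along $\Sigma$. A derivative either falls on $S$, producing $\nabla^S S$ which by the three-dimensional identity~\eqref{eq:3} equals $\RiemM-\RiemS$ up to terms quadratic in $S$ and is therefore controlled by $|\Ric-\Ric^S|\leq\eta r^{-4}$ together with $|g-g^S|\,|\Ric^S|$; or on $\nabla^S_X\nu^S$, producing the covariant derivative of the shape operator and so a factor $|\nabla A^S|$; or on the normal difference, on $\nabla^\Sigma-\nabla^{\Sigma,S}$ (bounded by $C\eta r^{-3}$), or on the area factors. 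Collecting these and once more converting the $|A^S|$- and $|\nabla^S A^S|$-dependence back to $|A|$- and $|\nabla A|$-dependence gives $|\nabla A-\nabla A^S|\leq C\eta(r^{-4}+r^{-3}|A|+r^{-2}|\nabla A|)$. The main obstacle is precisely this bookkeeping: one must keep apart the genuinely perturbative terms (built from $g-g^S$, $\nabla^g-\nabla^S$ or their derivatives, which decay at the full rate) from those that must carry a factor of $|A|$ or $|\nabla A|$ because the a priori unbounded second fundamental form of $\Sigma$ enters through $\nu$ and through the structure equations; in particular terms bilinear in the extrinsic curvature occur — e.g. the $|A|^2$ coming from the Gauss-map Hessian paired against $g-g^S$ — which are harmless for the sphere-like surfaces the lemma is used on, where $|A|\leq Cr^{-1}$. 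Apart from this, everything is a routine perturbation computation.
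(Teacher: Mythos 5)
The paper states this lemma without proof, so there is no author argument to compare against; your plan is the standard perturbative calculation and is the right way to attack it. The first three estimates are handled correctly: the algebraic decomposition $\nu^S=c\nu+t$ with $t\in T\Sigma$, the comparison of induced metrics via $\gamma-\gamma^S=(g-g^S)|_{T\Sigma}$, the identity expressing $A-A^S$ through $S=\nabla^g-\nabla^S$, the normal difference and the off-diagonal block of $g-g^S$, and the final absorption step converting $|A^S|$ back to $|A|$ once $\eta$ is small, are all sound.

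Two points in the derivative estimate are glossed over in a way that matters. First, your parenthetical that $\nabla^\Sigma-\nabla^{\Sigma,S}$ is ``bounded by $C\eta r^{-3}$'' is too optimistic: the intrinsic Christoffel difference is governed by $\nabla^{\Sigma,S}(\gamma-\gamma^S)$, and differentiating the restriction $(g-g^S)|_{T\Sigma}$ along $\Sigma$ rotates the tangent frame with the second fundamental form, giving an extra piece $A^S(\cdot,\cdot)\,(g-g^S)(\nu^S,\cdot)$; the correct bound is $|\Gamma^\Sigma-\Gamma^{\Sigma,S}|\leq C\eta(r^{-3}+r^{-2}|A|)$. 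Feeding this into $(\nabla^\Sigma-\nabla^{\Sigma,S})A$ produces a term of order $\eta r^{-2}|A|^2$, which does not appear in the statement and cannot be absorbed into $r^{-4}+r^{-3}|A|+r^{-2}|\nabla A|$ without an a priori bound $|A|\lesssim r^{-1}$. You do flag this at the end as ``harmless on the sphere-like surfaces,'' but as written your argument proves a weaker inequality than the lemma asserts for arbitrary surfaces; you should either carry the $|A|^2$ term honestly or explain why it drops out. Second, the claim that a derivative landing on $S$ produces $\nabla^S S$ ``which by the three-dimensional identity~\eqref{eq:3} equals $\RiemM-\RiemS$ up to terms quadratic in $S$'' is not correct: the curvature difference recovers only the part of $\nabla^S S$ antisymmetrized in its first two slots (the usual formula $\RiemM-\RiemS=\mathrm{alt}(\nabla^S S)+S\ast S$); the fully symmetric part of $\nabla^S S$ is genuinely a second derivative of $g-g^S$ and is not a curvature quantity. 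Definition~\ref{def:asymptotically-flat} does not literally provide control of $\nabla^S\nabla^S(g-g^S)$, so your justification here is invalid as stated. This is arguably more a gap in how the decay hypotheses are phrased than in your strategy, but a complete argument must either invoke an explicit bound on second derivatives of $g-g^S$, or reorganize so that only Codazzi-type antisymmetrizations of $\nabla A-\nabla A^S$ (which \emph{are} controlled by $\Ric-\Ric^S$ and $|\nu-\nu^S|$) enter.
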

To estimate integrals of decaying quantities we use the variant of
\cite[Lemma 5.2]{Huisken-Yau:1996} as stated in \cite[Lemma
2.3]{Metzger:2007ce}.
\begin{lemma}
  \label{thm:integral-powers}
  Let $(M,g)$ be $(m,\eta,\sigma)$-asymptotically Schwarzschild, and let
  $p_0>2$ be fixed. Then there exists $c(p_0)$ and $r_0 =
  r_0(m,\eta,\sigma)$, such that for every surface $\Sigma\subset
  \IR^3\setminus B_{r_0}(0)$, and every $p>p_0$, the following
  estimate holds
  \begin{equation*}
    \int_\Sigma r^{-p} \dmu
    \leq c(p_0) \rmin^{2-p}\int_\Sigma H^2\dmu.
  \end{equation*}
  Here $\rmin := \min_\Sigma r$, where $r$ is the Euclidean radius.
\end{lemma}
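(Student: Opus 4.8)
The plan is to reduce the estimate on $(M,g)$ to the corresponding estimate in Euclidean space, using the asymptotic comparison provided by Lemma~\ref{thm:general-to-schwarzschild} together with the bounds $1 \le \phi \le C$ and $|g - g^e| \le C r^{-1}$ that hold on $\IR^3 \setminus B_{r_0}(0)$ for $r_0$ large (these follow from Definition~\ref{def:asymptotically-flat} and the explicit form $\phi = 1 + m/(2r)$). Since $\dmu$ and $\dmu^e$ are uniformly comparable and $r$ is the same Euclidean radius function in both cases, it suffices to prove the inequality $\int_\Sigma r^{-p}\,\dmu^e \le c(p_0)\,\rmin^{2-p} \int_\Sigma (H^e)^2\,\dmu^e$, with the mean curvatures likewise comparable up to lower-order terms controlled by Lemma~\ref{thm:geometry-in-schwarzschild} and Lemma~\ref{thm:general-to-schwarzschild}; swallowing the error terms into the constant is routine and I would not belabor it. The genuine content is therefore the purely Euclidean statement, which is exactly \cite[Lemma~5.2]{Huisken-Yau:1996} in the form of \cite[Lemma~2.3]{Metzger:2007ce}, so one option is simply to cite it; but I will sketch the argument for completeness.

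The Euclidean estimate itself I would prove by the divergence-theorem / monotonicity technique. Consider the vector field $X = r^{-p+2}\, \del_r$ on $\IR^3 \setminus \{0\}$, or more precisely its decomposition $X = X^T + \la X, \nu^e\ra \nu^e$ into tangential and normal parts along $\Sigma$. Computing the surface divergence, $\divSig X^T = \div_{\IR^3} X - \nabla_{\nu^e} X \cdot \nu^e$, and using that $\div_{\IR^3}(r^{-p+2}\del_r) = (4-p) r^{-p+2} \cdot r^{-1} + \tfrac{2}{r} r^{-p+2}$-type terms — more carefully, $\div_{\IR^3}(r^{\alpha}\del_r) = (\alpha + 2) r^{\alpha - 1}$ with $\alpha = -p+2$, giving $(4-p) r^{-p+1}$ — one gets, after integrating over $\Sigma$ and applying the divergence theorem on the closed surface,
\begin{equation*}
  (p-4)\int_\Sigma r^{-p+1}\,\dmu^e
  =
  \int_\Sigma \big( \divSig X^T \big)\,\dmu^e
  - \int_\Sigma H^e \la X, \nu^e\ra\,\dmu^e
  + \int_\Sigma (\text{2nd-order terms})\,\dmu^e .
\end{equation*}
The point is that the first-variation identity $\int_\Sigma \divSig X^T \,\dmu^e = \int_\Sigma H^e \la X,\nu^e\ra\,\dmu^e$ (valid since $\Sigma$ is closed) produces exactly the mean-curvature weight; bounding $|\la X, \nu^e\ra| \le r^{-p+2}$ and using $r \ge \rmin$ on $\Sigma$ then yields $\int_\Sigma r^{-p+1}\,\dmu^e \le C \rmin^{-p+2}\int_\Sigma |H^e|\,\dmu^e$, and Cauchy--Schwarz upgrades $\int |H^e|$ to $|\Sigma|^{1/2}(\int (H^e)^2)^{1/2}$. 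One final ingredient is the Michael--Simon Sobolev inequality (or the isoperimetric-type bound) controlling $|\Sigma|$ by $\int_\Sigma (H^e)^2\,\dmu^e$ times a dimensional constant, which closes the estimate at the power $p-1$; iterating or choosing the exponent in $X$ appropriately then gives the stated power $p$. The hypothesis $p_0 > 2$ is what guarantees the prefactor $(p-4)^{-1}$ stays bounded away from degeneracy after the correct choice of exponent shift and keeps $c(p_0)$ uniform for all $p > p_0$.

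The main obstacle, to the extent there is one, is bookkeeping the error terms from the non-Euclidean metric so that they can be absorbed rather than accumulated: the substitution of $g$ for $g^e$ changes $\divSig$, $H$, and $\nu$, and one must check that each discrepancy is of the form $C\eta r^{-1}$ times a quantity already appearing on the left, so that for $\eta$ small (equivalently $r_0$ large) it can be moved to the other side. Lemma~\ref{thm:general-to-schwarzschild} is tailored precisely for this, and Lemma~\ref{thm:geometry-in-schwarzschild} handles the exactly-computable Schwarzschild corrections; the scalar curvature of $g^S$ vanishing is a convenient simplification. Given that these comparison lemmas are already in hand, the honest thing is to invoke the cited references for the Euclidean core and spend only a few lines on the perturbation argument.
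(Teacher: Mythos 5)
The paper itself gives no proof of this lemma: it is cited directly as a variant of \cite[Lemma 5.2]{Huisken-Yau:1996} in the form of \cite[Lemma 2.3]{Metzger:2007ce}, and your opening remark that one could simply cite those references is exactly what the authors do. Your discussion of the reduction from $g$ to $g^e$ via Lemma~\ref{thm:geometry-in-schwarzschild} and Lemma~\ref{thm:general-to-schwarzschild} is also the right perturbation bookkeeping, and that part is fine.

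The sketch you then give of the Euclidean core, however, contains a genuine gap. The decisive step is the claim that one can ``control $|\Sigma|$ by $\int_\Sigma (H^e)^2\,\dmu^e$ times a dimensional constant,'' which you invoke to close the Cauchy--Schwarz estimate. This is false: for a Euclidean round sphere of radius $R$ one has $|\Sigma| = 4\pi R^2 \to \infty$ while $\int_\Sigma (H^e)^2\,\dmu^e = 16\pi$ is constant. The Michael--Simon inequality, applied to $f\equiv 1$, gives only the Willmore-type lower bound $\int_\Sigma (H^e)^2 \geq c > 0$; it does not bound $|\Sigma|$ by $\int (H^e)^2$. Consequently the chain $\int r^{-p+1} \leq C\rmin^{-p+2}\int|H^e| \leq C\rmin^{-p+2}|\Sigma|^{1/2}(\int (H^e)^2)^{1/2}$ cannot be completed to the desired estimate in this way, and no choice of exponent in $X$ or iteration repairs it, because the needed inequality $|\Sigma|\leq C\rmin^2\int (H^e)^2$ is itself false (same counterexample, placed so that $\rmin\ll R$). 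A smaller but real issue is that the first-variation identity you write, $\int_\Sigma\divSig X^T\,\dmu^e = \int_\Sigma H^e\langle X,\nu^e\rangle\,\dmu^e$, has a vanishing left-hand side on a closed surface; the mean-curvature weight actually comes from $\int_\Sigma\divSig X\,\dmu^e = \int_\Sigma H^e\langle X,\nu^e\rangle\,\dmu^e$ together with $\divSig X = \div_{\IR^3}X - \langle\nabla_{\nu^e}X,\nu^e\rangle$. Even after that correction, a single application of the divergence theorem with $X = r^{2-p}\partial_r$ produces a term $\int_\Sigma r^{-p}\langle\partial_r,\nu^e\rangle^2\,\dmu^e$ with a coefficient too large to absorb into the left-hand side.

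What actually closes the Euclidean estimate is the \emph{local} area bound from Simon's monotonicity formula (the same monotonicity underlying the Michael--Simon inequality): for every $\sigma>0$ and every closed $\Sigma\subset\IR^3$,
\begin{equation*}
  \sigma^{-2}\,|\Sigma\cap B_\sigma(0)| \;\leq\; C\int_\Sigma (H^e)^2\,\dmu^e,
\end{equation*}
with $C$ absolute. One then decomposes $\Sigma$ into dyadic annuli $A_k = \Sigma\cap\big(B_{2^{k+1}\rmin}\setminus B_{2^k\rmin}\big)$, estimates $\int_{A_k} r^{-p}\,\dmu^e \leq (2^k\rmin)^{-p}\,|\Sigma\cap B_{2^{k+1}\rmin}| \leq 4C\,(2^k\rmin)^{-p}(2^k\rmin)^2\int_\Sigma (H^e)^2\,\dmu^e$, and sums the geometric series $\sum_{k\ge 0}2^{k(2-p)}$, which converges uniformly for $p>p_0>2$; that convergence is precisely where the constant $c(p_0)$ and the hypothesis $p_0>2$ enter. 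This is the argument behind the cited Lemma~2.3 of \cite{Metzger:2007ce}, and it is genuinely different in structure from a single divergence-theorem integration: the monotonicity formula is applied at \emph{every} scale, not just at $\rmin$.
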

In the sequel we will also need decay properties of volume integrals.
\begin{lemma}
  \label{thm:volume-integral-decay}
  Let $\Omega$ be an exterior domain with compact interior boundary
  $\Sigma$. Then for all $p>3$ there exists a constant $C(p)$ and
  $r_0$ such that if $\rmin>r_0$ we have 
  \begin{equation*}
    \int_\Omega r^{-p} dV
    \leq
    C(p) \rmin^{3-p} \int_\Sigma H^2\dmu.
  \end{equation*}
\end{lemma}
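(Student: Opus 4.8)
The plan is to reduce the volume integral to the surface integral $\int_\Sigma r^{-p}$ already controlled by Lemma \ref{thm:integral-powers}, using a comparison with a model domain together with the coarea formula. First I would observe that since $\Omega$ is an exterior domain with compact interior boundary $\Sigma$ contained in $\IR^3\setminus B_{r_0}(0)$ (after identifying $M\setminus B$ with its image under $x$), every point of $\Omega$ satisfies $r\geq\rmin$. The cleanest approach is to enlarge $\Omega$: let $\Omega'=\{x\in\IR^3: r(x)>\rmin\}$ be the exterior of the centered ball of radius $\rmin$. Then $\Omega\subset\Omega'\cup(\text{compact piece between }\Sigma\text{ and }\{r=\rmin\})$ — but this compact piece still has $r\geq\rmin$, so in any case $\int_\Omega r^{-p}\,dV\leq C\int_{\{r\geq\rmin\}}r^{-p}\,dV$, where $C$ absorbs the Euclidean-to-$g$ volume distortion (which is bounded since $r^2|g-g^S|\leq\eta$ and $g^S$ is uniformly equivalent to $g^e$ for $r\geq r_0$). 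Using spherical coordinates on $\{r\geq\rmin\}$, $\int_{\{r\geq\rmin\}}r^{-p}\,dV^e=4\pi\int_{\rmin}^\infty r^{-p}r^2\,dr=\frac{4\pi}{p-3}\rmin^{3-p}$, which converges precisely because $p>3$.

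This already gives $\int_\Omega r^{-p}\,dV\leq C(p)\rmin^{3-p}$, so it remains to replace the constant $C(p)\rmin^{3-p}$ by $C(p)\rmin^{3-p}\int_\Sigma H^2\,\dmu$. For this I would invoke the fact that for any surface $\Sigma$ in this asymptotic regime one has $\int_\Sigma H^2\,\dmu\geq c_0>0$ bounded below by a universal constant — indeed the Willmore-type inequality $\int_\Sigma H^2\,\dmu\geq 16\pi$ in the Euclidean background, combined with the closeness of $g$ to $g^e$ at scale $r\geq r_0$ via Lemma \ref{thm:general-to-schwarzschild}, shows $\int_\Sigma H^2\,\dmu\geq 8\pi$ say, for $r_0$ large enough. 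Hence $1\leq c_0^{-1}\int_\Sigma H^2\,\dmu$ and the stated estimate follows with $C(p)=c_0^{-1}C(p)$.

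Alternatively, and perhaps more in the spirit of Lemma \ref{thm:integral-powers}, one can foliate $\Omega$ by the level sets $\Sigma_s=\{r=s\}\cap\Omega$ for $s\geq\rmin$ and write, by the coarea formula, $\int_\Omega r^{-p}\,dV=\int_{\rmin}^\infty s^{-p}\Big(\int_{\Sigma_s}|\nabla^g r|^{-1}\,d\mathcal{H}^2\Big)\,ds$; since $|\nabla^g r|^{-1}$ and the area distortion are bounded, $\int_{\Sigma_s}(\cdots)\leq C s^2$, and then integrate and apply Lemma \ref{thm:integral-powers} on $\Sigma$ itself. The main obstacle — really the only subtlety — is the lower bound $\int_\Sigma H^2\,\dmu\geq c_0$: one must make sure the geometry of $\Sigma$ is close enough to Euclidean that the Willmore inequality survives, which is guaranteed by choosing $r_0=r_0(m,\eta,\sigma)$ large and $\eta$ small, exactly as in the hypotheses of Lemma \ref{thm:integral-powers}. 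Everything else is bounded volume comparison and an elementary one-variable integral that converges thanks to $p>3$.
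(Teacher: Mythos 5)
Your proof is correct, but it takes a genuinely different route from the paper's. The paper introduces the radial vector field $X=r^{1-p}\rho$, checks that $\div X=(3-p)r^{-p}+O(r^{-p-1})$, and uses the divergence theorem on $\Omega$ to convert the volume integral into a boundary integral $\int_\Sigma\langle X,\nu\rangle\dmu\leq C\int_\Sigma r^{1-p}\dmu$, which is then handled by Lemma~\ref{thm:integral-powers} with exponent $p-1>2$. Your argument instead evaluates $\int_\Omega r^{-p}\,dV\leq C\rmin^{3-p}$ directly (using $\Omega\subset\{r\geq\rmin\}$ and that $p>3$ makes the radial integral converge) and then ``inserts'' the factor $\int_\Sigma H^2\dmu$ via the lower bound $\int_\Sigma H^2\dmu\geq c_0>0$. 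Both routes are sound. The paper's is a bit more in the spirit of the surrounding material — it reduces directly to the already-established Lemma~\ref{thm:integral-powers} and needs no further external input — whereas your argument imports the Willmore inequality and then has to carry it over from the Euclidean to the asymptotically Schwarzschild metric, which involves both Lemma~\ref{thm:geometry-in-schwarzschild} (Euclidean $\to$ Schwarzschild) and Lemma~\ref{thm:general-to-schwarzschild} (Schwarzschild $\to g$), not just the latter, and the error estimate involves the correction $4\phi^{-3}\partial_{\nu^e}\phi$ in $H^S$ which you would need to control by an $L^2$ bound. A cleaner way to obtain the lower bound you want, without invoking Willmore's theorem at all, is to take $f\equiv 1$ in the Michael--Simon Sobolev inequality, Proposition~\ref{thm:sobolev}: then $|\Sigma|^{1/2}\leq C_s\int_\Sigma |H|\,\dmu\leq C_s|\Sigma|^{1/2}\big(\int_\Sigma H^2\dmu\big)^{1/2}$, so $\int_\Sigma H^2\dmu\geq C_s^{-2}$ outright. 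With that substitution your approach becomes both correct and self-contained within the tools of the paper.
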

\begin{proof}
  Let $\rho$ be the Euclidean radial direction, and let $X =
  r^{-p+1}\rho$. With respect to $g$ we have
  \begin{equation*}
    \div X = (3-p) r^{-p} + O(r^{-p-1}).
  \end{equation*}
  Choose $r_0$ so large that the error term is dominated by the main
  term in this equation, that is
  \begin{equation*}
    (p-3 -\eps) r^{-p} \leq - \div X,    
  \end{equation*}
  where $\eps$ is such that $p-3 -\eps>0$. Integrating this relation
  over $\Omega$ and partially integrating on the right hand side
  yields the estimate
  \begin{equation*}
    \int_\Omega r^{-p} dV
    \leq
    \frac{1}{p-3-\eps}\int_\Sigma \la X, \nu \ra.
  \end{equation*}
  Note that the boundary integral at infinity vanishes as the surface
  integrand decays like $r^{-p+1}$. The claim then follows from
  lemma~\ref{thm:integral-powers}.
\end{proof}
Using the conformal invariance of $\|\Acirc\|_{L^2(\Sigma)}$, which
can be seen via lemma~\ref{thm:geometry-in-schwarzschild}, we derive:
\begin{lemma}
  \label{thm:a0-decay-preserved}
  Let $(M,g)$ be $(m,\eta,\sigma)$-asymptotically Schwarzschild. Then there
  exists $r_0 = r_0(\eta,\sigma)$ such that for every surface
  $\Sigma\subset \IR^3\setminus B_{r_0}(0)$ we have
  \begin{equation*}
    \begin{split}
      &\left| \|\Acirc^e\|^2_{L^2(\Sigma,g^e)} - \|\Acirc\|^2_{L^2(\Sigma,g)}
      \right|
      \\ 
      &\quad
      \leq
      C\eta\rmin^{-2}\left(\|\Acirc\|^2_{L^2(\Sigma,g)}
        + \|H\|_{L^2(\Sigma)}\|\Acirc\|_{L^2(\Sigma)} +  \eta\rmin^{-2}\|H\|^2_{L^2(\Sigma)}
      \right) .
    \end{split}
  \end{equation*}
\end{lemma}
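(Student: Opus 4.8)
The plan is to compare $\|\Acirc\|^2_{L^2(\Sigma,g)}$ first to $\|\Acirc^S\|^2_{L^2(\Sigma,g^S)}$, using that $(M,g)$ is asymptotically Schwarzschild, and then to use the exact conformal invariance $\|\Acirc^S\|^2_{L^2(\Sigma,g^S)} = \|\Acirc^e\|^2_{L^2(\Sigma,g^e)}$ coming from lemma~\ref{thm:geometry-in-schwarzschild}. For the second step there is nothing to do: from \eqref{eq:13} we have $\Acirc^S = \phi^{-2}\Acirc^e$ and $\dmu^S = \phi^4\dmu^e$, and since $|\Acirc^S|^2_{g^S} = \phi^{-8}|\Acirc^e|^2_{g^e}$ (two metric contractions, each bringing a $\phi^{-4}$), the factors cancel exactly, so $\|\Acirc^S\|^2_{L^2(\Sigma,g^S)} = \|\Acirc^e\|^2_{L^2(\Sigma,g^e)}$. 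Hence the whole estimate reduces to controlling the difference between the $g$- and $g^S$-quantities on the same surface $\Sigma$.

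For that first step I would write $|\Acirc|^2_g\dmu - |\Acirc^S|^2_{g^S}\dmu^S$ as a sum of three contributions: (i) the change in the pointwise norm coming from $g$ versus $g^S$ acting on the fixed tensor $\Acirc^S$; (ii) the difference $\Acirc - \Acirc^S$ itself; (iii) the change in area element, $\dmu = (1+h)\dmu^S$. All three are controlled by lemma~\ref{thm:general-to-schwarzschild}: $r^2|\nu-\nu^S|\le C\eta$, $r^2|h|\le C\eta$, and $|A-A^S|\le C\eta(r^{-3}+r^{-2}|A|)$, which after removing traces gives $|\Acirc-\Acirc^S|\le C\eta(r^{-3}+r^{-2}|A|)\le C\eta(r^{-3}+r^{-2}(|\Acirc|+H))$. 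Using $|\,|\Acirc|^2_g-|\Acirc|^2_{g^S}|\le C\eta r^{-2}|\Acirc|^2$ (the metrics agree to order $r^{-2}$), expanding $|\Acirc|^2 - |\Acirc^S|^2 = \langle \Acirc-\Acirc^S,\Acirc+\Acirc^S\rangle$, and bounding $|\Acirc^S|\le |\Acirc| + |\Acirc - \Acirc^S|$, one finds pointwise
\begin{equation*}
  \big| |\Acirc|^2_g\dmu - |\Acirc^S|^2_{g^S}\dmu^S \big|
  \leq
  C\eta\,\big( r^{-2}|\Acirc|^2 + r^{-3}|\Acirc| + r^{-4}H^2 \big)\dmu
  + C\eta^2 r^{-6}\dmu .
\end{equation*}
Integrating over $\Sigma$, the first term gives $C\eta\rmin^{-2}\|\Acirc\|^2_{L^2}$; for the second, Cauchy--Schwarz yields $\int r^{-3}|\Acirc|\le (\int r^{-6})^{1/2}\|\Acirc\|_{L^2}$ and lemma~\ref{thm:integral-powers} (applied with $p_0$ just below $6$) bounds $\int_\Sigma r^{-6}\le c\,\rmin^{-4}\int_\Sigma H^2$, giving $C\eta\rmin^{-2}\|H\|_{L^2}\|\Acirc\|_{L^2}$; similarly $\int r^{-4}H^2\le \rmin^{-2}\int r^{-2}H^2$ and the remaining powers of $r$ are absorbed, again via lemma~\ref{thm:integral-powers} applied to $\int r^{-6}$, into $C\eta^2\rmin^{-4}\|H\|^2_{L^2}$. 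Collecting the three terms gives exactly the claimed inequality.

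The only mildly delicate point — and the place I would be most careful — is the bookkeeping that keeps every error genuinely of order $\eta\rmin^{-2}$ relative to the "natural" $L^2$ quantities, rather than something larger: one must be sure that the troublesome cross terms ($r^{-3}|\Acirc|$ and $r^{-4}H^2$) are always paired, via lemma~\ref{thm:integral-powers}, against an $\int_\Sigma H^2$ so that the extra negative powers of $r$ turn into powers of $\rmin$. Note lemma~\ref{thm:integral-powers} requires $\Sigma\subset\IR^3\setminus B_{r_0}(0)$ with $r_0=r_0(m,\eta,\sigma)$, which is why the conclusion is stated for $\rmin>r_0$ with $r_0$ of that form; no further geometric input is needed, and in particular no a priori bound on $\|A\|_{L^2}$ is used beyond what appears on the right-hand side.
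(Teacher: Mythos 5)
The paper gives no explicit proof of this lemma, but the approach it signals (``Using the conformal invariance of $\|\Acirc\|_{L^2(\Sigma)}$, which can be seen via lemma~\ref{thm:geometry-in-schwarzschild}'') is exactly what you do: reduce to comparing $g$ with $g^S$ on the same surface, then use that $\|\Acirc^S\|_{L^2(\Sigma,g^S)}=\|\Acirc^e\|_{L^2(\Sigma,g^e)}$. Two things in your write-up are off, one cosmetic and one substantive.

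First, the cosmetic one: your statement $|\Acirc^S|^2_{g^S}=\phi^{-8}|\Acirc^e|^2_{g^e}$ is inconsistent with the cancellation you then claim, since $\phi^{-8}\cdot\phi^{4}\neq 1$. The correct pointwise identity is $|\Acirc^S|^2_{g^S}=\phi^{-4}|\Acirc^e|^2_{g^e}$ (e.g.\ reading $\Acirc^S=\phi^{-2}\Acirc^e$ as a $(1,1)$-tensor identity, so no further metric contractions are needed), and this indeed cancels against $\dmu^S=\phi^4\dmu^e$. Your conclusion is right; the intermediate factor isn't.

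Second, and more importantly, your pointwise bound places the $r^{-4}H^2$ term under an $\eta$, but it actually arises from the quadratic remainder $|\Acirc-\Acirc^S|^2$ and so carries an $\eta^2$. Expand cleanly as $|\Acirc|^2-|\Acirc^S|^2=2\la\Acirc-\Acirc^S,\Acirc\ra-|\Acirc-\Acirc^S|^2$. The linear part gives, using $|\Acirc-\Acirc^S|\le C\eta(r^{-3}+r^{-2}|\Acirc|+r^{-2}H)$, the contribution $C\eta(r^{-3}|\Acirc|+r^{-2}|\Acirc|^2+r^{-2}H|\Acirc|)$; note the $r^{-2}H|\Acirc|$ term you dropped, which after Cauchy--Schwarz gives the same $\eta\rmin^{-2}\|H\|_{L^2}\|\Acirc\|_{L^2}$ contribution as $r^{-3}|\Acirc|$. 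The quadratic part gives $C\eta^2(r^{-6}+r^{-4}|\Acirc|^2+r^{-4}H^2)$; the $\eta^2 r^{-4}|\Acirc|^2$ piece is absorbed by $\eta r^{-2}|\Acirc|^2$ for $\rmin$ large, and the $\eta^2 r^{-4}H^2$ and $\eta^2 r^{-6}$ pieces, after integration and lemma~\ref{thm:integral-powers}, give exactly $C\eta^2\rmin^{-4}\|H\|^2_{L^2}$, which is the third term of the claimed inequality. As you wrote it, with $\eta r^{-4}H^2$, you would instead get $C\eta\rmin^{-4}\|H\|^2_{L^2}$ — one factor of $\eta$ too weak — and the sentence ``the remaining powers of $r$ are absorbed\ldots into $C\eta^2\rmin^{-4}\|H\|^2_{L^2}$'' does not recover it; $\int_\Sigma r^{-4}H^2\dmu$ is controlled directly and does not produce an extra $\eta$ out of thin air. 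Once the pointwise expansion is corrected, the rest of your argument goes through and the proof is complete.
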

\begin{corollary}
  \label{koro:acirc}
  Let $(M,g)$, $r_0$ and $\Sigma$ be as in the previous lemma. Assume in
  addition that $\|H\|_{L^2(\Sigma)} \leq C'$, then
  \[ \| \Acirc^e \|_{L^2(\Sigma)} \leq C(r_0)\|\Acirc\|_{L^2(\Sigma,g)} +
  C(r_0,C')\eta \rmin^{-2}.
  \]
\end{corollary}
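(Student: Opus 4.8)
The plan is to deduce Corollary~\ref{koro:acirc} directly from Lemma~\ref{thm:a0-decay-preserved} by an elementary algebraic manipulation, using the extra hypothesis $\|H\|_{L^2(\Sigma)}\leq C'$ to absorb the terms involving $\|H\|_{L^2}$ into constants or into lower-order error. First I would abbreviate $a := \|\Acirc\|_{L^2(\Sigma,g)}$, $a_e := \|\Acirc^e\|_{L^2(\Sigma,g^e)}$ and $h := \|H\|_{L^2(\Sigma)}$, so that Lemma~\ref{thm:a0-decay-preserved} reads
\begin{equation*}
  \left| a_e^2 - a^2 \right|
  \leq C\eta\rmin^{-2}\bigl( a^2 + h\,a + \eta\rmin^{-2} h^2 \bigr).
\end{equation*}
From this I get $a_e^2 \leq a^2 + C\eta\rmin^{-2}(a^2 + h a + \eta\rmin^{-2}h^2)$, and after choosing $r_0$ large enough that $C\eta\rmin^{-2}\leq 1$ (legitimate since $\rmin>r_0$ and $\eta<\eta_0$ are controlled), the coefficient of $a^2$ is at most $2$, giving $a_e^2 \leq 2a^2 + C\eta\rmin^{-2} h a + C\eta^2\rmin^{-4} h^2$.

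Next I would handle the mixed term $C\eta\rmin^{-2} h a$ by Young's inequality: $C\eta\rmin^{-2} h a \leq a^2 + C^2\eta^2\rmin^{-4} h^2$, which after renaming constants yields $a_e^2 \leq 3a^2 + C\eta^2\rmin^{-4} h^2$. Now I invoke the hypothesis $h = \|H\|_{L^2(\Sigma)}\leq C'$, so that $C\eta^2\rmin^{-4} h^2 \leq C(C')\eta^2\rmin^{-4} \leq C(C')\eta^2\rmin^{-4}$; note $\rmin^{-4}\leq r_0^{-2}\rmin^{-2}$ so this is $\leq C(r_0,C')\eta^2\rmin^{-4}$, in particular bounded by $C(r_0,C')^2\eta^2\rmin^{-4}$. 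Taking square roots via $\sqrt{x+y}\leq\sqrt{x}+\sqrt{y}$ gives $a_e \leq \sqrt{3}\,a + C(r_0,C')\eta\rmin^{-2}$, which is exactly the asserted inequality after absorbing $\sqrt 3$ and the $r_0$-dependence into $C(r_0)$ and $C(r_0,C')$ respectively.

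There is essentially no obstacle here; the only point requiring a little care is bookkeeping of how the constants depend on $r_0$, $C'$, and the asymptotic parameters, and making sure the threshold $r_0$ is enlarged (if necessary, beyond the $r_0$ of Lemma~\ref{thm:a0-decay-preserved}) so that the smallness factor $C\eta\rmin^{-2}$ can be treated as $\leq 1$. Since the statement of the corollary already allows the constants to depend on $r_0$ and $C'$, and $r_0$ itself depends only on $\eta$ and $\sigma$ as in the lemma, this presents no difficulty. The proof is a two-line computation once the notation is fixed; I would write it essentially as the chain of inequalities above, concluding with \fillbox.
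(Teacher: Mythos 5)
Your proof is correct, and since the paper states Corollary~\ref{koro:acirc} without a written proof, the intended argument can only be this direct algebraic consequence of Lemma~\ref{thm:a0-decay-preserved}: expand the absolute value, absorb the $\|\Acirc\|^2$ coefficient, treat the cross term by Young's inequality, bound the $\|H\|_{L^2}$ factor by $C'$, and take square roots. The one small wobble is the interjected remark ``$\rmin^{-4}\leq r_0^{-2}\rmin^{-2}$ so this is $\leq C(r_0,C')\eta^2\rmin^{-4}$,'' which is a no-op (both sides retain $\rmin^{-4}$) and can simply be dropped; and your reduction to the regime $C\eta\rmin^{-2}\leq 1$ is not strictly necessary, since even without it the factor $1+C\eta\rmin^{-2}\leq 1+C\eta_0 r_0^{-2}$ is already a constant of the allowed type $C(r_0)$, but invoking it is harmless and is consistent with the paper's convention that $r_0$ may be enlarged as needed.
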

We need the following variant of the Michael-Simon Sobolev inequality
\cite{Michael-Simon:1973} as stated in \cite[Proposition
5.4]{Huisken-Yau:1996}.
\begin{proposition}
  \label{thm:sobolev}
  Let $(M,g)$ be $(m,\eta,\sigma)$-asymptotically Schwarzschild. Then
  there is $r_0 = r_0 (m,\eta,\sigma)$ and an absolute constant $C_s$
  such that for each surface $\Sigma \subset M\setminus B_{r_0}(0)$
  and each Lipschitz function $f$ on $\Sigma$ we have the estimate
  \begin{equation}
    \label{eq:9}
    \left(\int_\Sigma |f|^2 \dmu \right)^{1/2}
    \leq
    C_s \int_\Sigma |\nabla f| + |Hf| \dmu.
  \end{equation}
  Via H\"older's inequality, this implies that for all $q\geq 2$
  \begin{equation}
    \label{eq:10}
    \left(\int_\Sigma |f|^{q} \dmu\right)^{\frac{2}{2+q}}
    \leq
    C_s \int_\Sigma
    |\nabla f|^{\frac{2q}{2+q}}
    + |Hf|^{\frac{2q}{2+q}}
    \dmu,    
  \end{equation}
  and for all $p\geq 1$,
  \begin{equation}
    \label{eq:11}
    \left( \int_\Sigma |f|^{2p} \dmu \right)^{1/p}
    \leq
    C_s p^2 |\supp f|^{1/p} \int_\Sigma |\nabla f|^2 +H^2 f^2 \dmu.
  \end{equation}
\end{proposition}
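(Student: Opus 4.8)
The plan is to deduce everything from the scalar inequality \eqref{eq:9}, which is the Michael--Simon Sobolev inequality \cite{Michael-Simon:1973} transplanted to the asymptotically Schwarzschild setting --- this is precisely the content of \cite[Proposition 5.4]{Huisken-Yau:1996} --- and then to obtain \eqref{eq:10} and \eqref{eq:11} as formal consequences of \eqref{eq:9} by applying it to suitable powers of $f$ and invoking H\"older's inequality. So I would first establish \eqref{eq:9} and then derive the other two.

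For \eqref{eq:9} I would start from the Euclidean Michael--Simon inequality: there is an absolute constant $C_0$ such that every closed surface $\Sigma\subset\IR^3$ and every Lipschitz function $f$ on it satisfy $\big(\int_\Sigma f^2\dmu^e\big)^{1/2} \leq C_0\int_\Sigma\big(|\nabla^e f|_{g^e} + |H^e f|\big)\dmu^e$. Fixing $r_0 = r_0(m,\eta,\sigma)$ large and restricting to $\Sigma\subset M\setminus B_{r_0}(0)$, Definition~\ref{def:asymptotically-flat} together with Lemmas~\ref{thm:geometry-in-schwarzschild} and~\ref{thm:general-to-schwarzschild} shows that the metrics $g$, $g^S$ and $g^e$, as well as the induced volume forms and the gradient norms on $\Sigma$, agree up to a factor $1 + C(r^{-1} + \eta r^{-2})$ on this region; hence the left-hand side and the $|\nabla f|$ term pass from $g^e$ to $g$ at the cost of such a factor, which is as close to $1$ as we wish once $r_0$ is enlarged. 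The delicate term is the mean curvature: by Lemma~\ref{thm:geometry-in-schwarzschild} the conformal factor $\phi = 1 + \tfrac{m}{2r}$ produces $H^e = \phi^2 H^S - 4\phi\,\del_{\nu^e}\phi$, and by Lemma~\ref{thm:general-to-schwarzschild} the difference $H^S - H$ is of order $\eta(r^{-3} + r^{-2}|A|)$, so that $|H^e|\leq |H| + C\big(r^{-1}|H| + r^{-2} + \eta r^{-2}|A|\big)$ on $\Sigma$. The term $r^{-1}|H|$ is absorbed into $\int_\Sigma|Hf|\dmu$ on the right; the remaining corrections are of the type $\int_\Sigma r^{-2}|f|\dmu$ (the $|A|$-factor being handled in the same way after Cauchy--Schwarz), and by Cauchy--Schwarz $\int_\Sigma r^{-2}|f|\dmu \leq \big(\int_\Sigma r^{-4}\dmu\big)^{1/2}\big(\int_\Sigma f^2\dmu\big)^{1/2}$, whose first factor is bounded by $c\,\rmin^{-2}\int_\Sigma H^2\dmu$ via Lemma~\ref{thm:integral-powers}; this bounds the correction by a small multiple of $\|f\|_{L^2(\Sigma)}$, which is absorbed into the left-hand side. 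Keeping track of this Schwarzschild correction to the mean curvature of near-coordinate spheres --- i.e.\ showing it is genuinely of lower order --- is the main technical point; alternatively one may invoke the Michael--Simon inequality in the form valid for hypersurfaces of Riemannian manifolds (Hoffman and Spruck) and verify directly that its geometric hypotheses hold with absolute constants on $M\setminus B_{r_0}(0)$, since there the ambient curvature is $O(r^{-3})$ and the injectivity radius grows like $r$.

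Granting \eqref{eq:9}, I would obtain \eqref{eq:11} by applying it to $v = |f|^p$ with $p\geq 1$. Since $|\nabla v| = p|f|^{p-1}|\nabla f|$ almost everywhere, \eqref{eq:9} gives $\big(\int_\Sigma|f|^{2p}\dmu\big)^{1/2}\leq C_s\int_\Sigma\big(p|f|^{p-1}|\nabla f| + |Hf|\,|f|^{p-1}\big)\dmu$; then, using Cauchy--Schwarz and afterwards H\"older's inequality on $\supp f$ with exponents $\tfrac{p}{p-1}$ and $p$, one has $\int_\Sigma|f|^{p-1}|\nabla f|\dmu \leq |\supp f|^{1/(2p)}\big(\int_\Sigma|f|^{2p}\dmu\big)^{(p-1)/(2p)}\big(\int_\Sigma|\nabla f|^2\dmu\big)^{1/2}$ and similarly with $|\nabla f|$ replaced by $|Hf|$. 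Dividing by $\big(\int_\Sigma|f|^{2p}\dmu\big)^{(p-1)/(2p)}$, squaring, and using $(X+Y)^2\leq 2(X^2+Y^2)$ together with $p\geq 1$ yields \eqref{eq:11} after renaming the constant. Finally, \eqref{eq:10} for $q=2$ is \eqref{eq:9} itself, while for $q>2$ it follows by applying \eqref{eq:9} to $v = |f|^{q/2}$ and then using H\"older's inequality with the conjugate exponents $\tfrac{2q}{q+2}$ and $\tfrac{2q}{q-2}$ to extract the factor $|f|^{q/2-1}$ from the right-hand side as a power of $\int_\Sigma|f|^q\dmu$; absorbing that power into the left-hand side and raising the resulting inequality to the power $\tfrac{2q}{q+2}$ produces \eqref{eq:10}.
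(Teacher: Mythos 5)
Your reductions of \eqref{eq:10} and \eqref{eq:11} to \eqref{eq:9} are correct: the standard interpolation (apply \eqref{eq:9} to $|f|^p$ resp.\ $|f|^{q/2}$, then H\"older to redistribute the powers and extract $|\supp f|^{1/p}$ or the $L^{2q/(q+2)}$ norms, then absorb the resulting power of $\int_\Sigma|f|^q\dmu$ into the left) does the job, up to the usual abuse that the constant called $C_s$ in \eqref{eq:10} and \eqref{eq:11} is not literally the one in \eqref{eq:9}. Note, however, that the paper does not give a proof of this proposition at all; it simply quotes \eqref{eq:9} from \cite[Proposition 5.4]{Huisken-Yau:1996} and records \eqref{eq:10}, \eqref{eq:11} as corollaries. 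So your proposal is doing more than the paper does, and it is precisely in the part the paper does not prove that a gap appears.

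The problem is in your derivation of \eqref{eq:9}. After transplanting the flat Michael--Simon inequality, the mean-curvature correction you must control is of the type $\int_\Sigma r^{-2}|f|\dmu$, which you bound via Cauchy--Schwarz and Lemma~\ref{thm:integral-powers} by
$c\,\rmin^{-1}\big(\int_\Sigma H^2\dmu\big)^{1/2}\|f\|_{L^2(\Sigma)}$,
and then claim to absorb into $\|f\|_{L^2(\Sigma)}$ on the left. That step requires $\rmin^{-1}\big(\int_\Sigma H^2\dmu\big)^{1/2}$ to be small, but the proposition assumes nothing about $\Sigma$ beyond $\Sigma\subset M\setminus B_{r_0}(0)$: there is no a priori bound on $\int_\Sigma H^2\dmu$, so the absorption is not available in the generality claimed. (In the places where the paper later uses the proposition, $\Sigma$ is a Willmore-type surface and Lemma~\ref{thm:init-integral} gives $\int_\Sigma H^2\dmu\leq 16\pi$, so for those $\Sigma$ your argument would run; but that is not a hypothesis of the proposition.) Your alternative -- invoking the Hoffman--Spruck version of Michael--Simon for submanifolds of Riemannian manifolds -- is the right track and presumably how \cite{Huisken-Yau:1996} proceed, but it is not sufficient to remark that the ambient curvature is $O(r^{-3})$ and the injectivity radius is $O(r)$: the Hoffman--Spruck hypotheses also impose a smallness condition of the form $b^2\big(\vol(\supp f)\big)^{2/m}\leq c$ together with a corresponding restriction from the injectivity radius, i.e.\ an upper bound on the area of $\supp f$ in terms of the ambient curvature bound $b^2$. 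This must be verified (or circumvented, e.g.\ by working dyadically in $r$, where the curvature bound improves with distance and the scales decouple) before the constant can be declared absolute. A small unrelated slip: you wrote $H^e=\phi^2H^S-4\phi\,\del_{\nu^e}\phi$, whereas Lemma~\ref{thm:geometry-in-schwarzschild} gives $H^e=\phi^2H^S-4\phi^{-1}\del_{\nu^e}\phi$; this does not affect the order of magnitude of the correction and is not the issue.
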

\subsection{Almost umbilical surfaces in Euclidean space}
To conclude that the surfaces we consider are close to spheres, we use
the following theorem for surfaces in Euclidean space. This is proved
in \cite[Theorem 1]{DeLellis-Muller:2005} and \cite[Theorem
2]{DeLellis-Muller:2006}.
\begin{theorem}
  \label{thm:umbilical}
  There exists a universal constant $c$ such that for each compact
  connected surface without boundary $\Sigma\subset\IR^3$ with area
  $|\Sigma|=4\pi$, the following estimate holds
  \begin{equation*}
    \| A^e - \gamma^e \|_{L^2(\Sigma,\gamma^e)}
    \leq
    c \|\Acirc^e\|_{L^2(\Sigma,\gamma^e)}.
  \end{equation*}
  If in addition $\|\Acirc^e\|_{L^2(\Sigma,\gamma^e)}\leq 8\pi$, then
  $\Sigma$ is a sphere, and there exists a conformal map $\psi: S^2
  \to \Sigma \subset \IR^3$ such that
  \begin{equation*}
    \| \psi - (a + \id_{S^2}) \|_{W^{2,2}(S^2)}
    \leq
    c \|\Acirc^e\|_{L^2(\Sigma,\gamma^e)},    
  \end{equation*}
  where $\id_{S^2}$ is the standard embedding of $S^2$ onto 
  the sphere $S_1(0)$ in $\IR^3$, and
  \begin{equation*}
    a
    =
    |\Sigma|_e^{-1} \int_\Sigma \id_\Sigma \dmu^e  
  \end{equation*}
  is the center of gravity of $\Sigma$.  
  The conformal factor $h$ of the embedding $\psi$, that is 
  $\psi^*\gamma^e = h^2 \gamma_{S^2}$, satisfies
  \begin{equation*}
    \| h - 1 \|_{W^{1,2}(S^2)} + \sup_{S^2} | h -1 |
    \leq
    c \|\Acirc^e\|_{L^2(\Sigma,\gamma^e)}.
  \end{equation*}
  The normal $\nu^e$ of $\Sigma$  satisfies
  \begin{equation*}
    \| N - \nu\circ \psi \|_{W^{1,2}(S^2)}
    \leq
    c \|\Acirc^e\|_{L^2(\Sigma,\gamma^e)},    
  \end{equation*}
  where $N$ is the normal of $S_1(a)$.
\end{theorem}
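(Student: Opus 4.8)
This is precisely Theorem~1 of~\cite{DeLellis-Muller:2005} together with Theorem~2 of~\cite{DeLellis-Muller:2006}; I would organise the proof as follows, working throughout with the Euclidean induced metric and abbreviating $\Acirc=\Acirc^e$, $H=H^e$, $\gamma=\gamma^e$, with $K$ the Gauss curvature of $\Sigma$. Since $\tr\Acirc=0$ and $|\gamma|^2=2$, the splitting $A-\gamma=\Acirc+(\half H-1)\gamma$ gives
\[
  \|A-\gamma\|_{L^2(\Sigma)}^2=\|\Acirc\|_{L^2(\Sigma)}^2+2\int_\Sigma(\half H-1)^2\dmu,
\]
so the first estimate is equivalent to $\|\half H-1\|_{L^2(\Sigma)}\le C\|\Acirc\|_{L^2(\Sigma)}$. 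The Gauss equation~\eqref{eq:14} (with vanishing ambient curvature and $\ScalSig=2K$) reads $K=\tfrac14H^2-\half|\Acirc|^2$, so Gauss--Bonnet yields $\tfrac14\int_\Sigma H^2\dmu=2\pi\chi(\Sigma)+\half\|\Acirc\|_{L^2}^2$. Combined with the classical Willmore inequality $\int_\Sigma H^2\dmu\ge16\pi$ this forces $8\pi\,\chi(\Sigma)+2\|\Acirc\|_{L^2}^2\ge16\pi$; hence if $\|\Acirc\|_{L^2}$ is small enough then $\chi(\Sigma)=2$ and $\Sigma$ is a topological sphere, while if $\Sigma$ is not a sphere or $\|\Acirc\|_{L^2}$ lies above any fixed threshold, then $\|\Acirc\|_{L^2}$ is bounded below by a universal constant and $\int H^2$, $\int|H|$ are controlled by $\|\Acirc\|_{L^2}^2$, so the first estimate follows from crude Cauchy--Schwarz bounds with a (large) universal constant. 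It remains to treat the first estimate, and all of the second part, for topological spheres with $\|\Acirc\|_{L^2}$ small.

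The heart of the first estimate is that $H$ is almost constant. In $\IR^3$ the Codazzi equation~\eqref{eq:6} says that $\nabla A$ is totally symmetric, and tracing gives $\div A=\nabla H$, whence $\div\Acirc=\half\nabla H$. Let $f$ solve $\Delta f=H-\bar H$ on $\Sigma$ with $\int_\Sigma f\dmu=0$, where $\bar H:=|\Sigma|^{-1}\int_\Sigma H\dmu$. Integrating by parts against $\nabla^2 f$ and using $\tr\Acirc=0$,
\[
  \half\int_\Sigma(H-\bar H)^2\dmu=\int_\Sigma\la\Acirc,(\nabla^2 f)^0\ra\dmu\le\|\Acirc\|_{L^2}\,\|(\nabla^2 f)^0\|_{L^2},
\]
while the Bochner identity on $\Sigma$ gives $\|(\nabla^2 f)^0\|_{L^2}^2=\half\|\Delta f\|_{L^2}^2-\int_\Sigma K|\nabla f|^2\dmu$. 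The delicate term is $-\int_\Sigma K|\nabla f|^2\dmu$, which is scale invariant with an only-$L^1$ integrand; here one uses the pointwise bound $K=\tfrac14H^2-\half|\Acirc|^2\ge-\half|\Acirc|^2$, so $-\int K|\nabla f|^2\le\half\int|\Acirc|^2|\nabla f|^2$, together with the Michael--Simon Sobolev inequality of Proposition~\ref{thm:sobolev} -- whose constant is \emph{universal}, so it does not degenerate with the geometry -- and the $\nabla A$-bounds coming from Simons' identity~\eqref{eq:5}, to absorb this term. With the Poincar\'e inequality $\|\nabla f\|_{L^2}\le C\|H-\bar H\|_{L^2}$ this closes an estimate giving $\|H-\bar H\|_{L^2}\le C\|\Acirc\|_{L^2}$ for $\|\Acirc\|_{L^2}$ small. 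Finally $\bar H$ is pinned down: from $\int H^2=\int(H-\bar H)^2+|\Sigma|\bar H^2$ and $\tfrac14\int H^2=4\pi+\half\|\Acirc\|_{L^2}^2$ we get $\bar H^2=4+O(\|\Acirc\|_{L^2}^2)$, hence, fixing the outward orientation so $\bar H$ is near $+2$, $|\bar H-2|\le C\|\Acirc\|_{L^2}^2$. Together these bound $\|\half H-1\|_{L^2}$, hence $\|A-\gamma\|_{L^2}$, by $C\|\Acirc\|_{L^2}$.

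For the $W^{2,2}$-rigidity, assume $\|\Acirc\|_{L^2}$ below the stated threshold, so that $\Sigma$ is a topological sphere and, by the first part, $\|A^e-\gamma^e\|_{L^2}$ is small. Fix a conformal parametrisation $\psi\colon S^2\to\Sigma$ with $\psi^*\gamma^e=h^2\gamma_{S^2}$ and $\int_{S^2}\log h=0$; a priori bounds for the Liouville equation (valid below the assumed energy threshold) keep $h$ uniformly comparable to $1$. The conformal factor satisfies $-\Delta_{S^2}\log h=Kh^2-1$; since $K=\det A^e$ is within $C\|\Acirc\|_{L^2}$ of $1$ in $L^1(\Sigma)$, the right-hand side is small, and -- crucially -- $K\dmu$ carries a Jacobian (Hardy-space) structure coming from the Gauss map, so a Wente-type estimate -- valid precisely below the energy threshold assumed -- gives $\|\log h\|_{W^{1,2}(S^2)}+\sup_{S^2}|\log h|\le C\|\Acirc\|_{L^2}$, the asserted bound on the conformal factor. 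Next, the position vector obeys $\Delta_{S^2}\psi=h^2\vec H_\Sigma$, which we write as $\Delta_{S^2}\psi=-2(\psi-a)+E$ with $a=|\Sigma|_e^{-1}\int_\Sigma\id_\Sigma\dmu^e$ and $\|E\|$ controlled by $\|h-1\|$, $\|H-2\|$ and $\|\nu-(\psi-a)\|$ (the bilinear errors again by Wente). Since $\ker(\Delta_{S^2}+2)$ consists exactly of restrictions of linear maps $\IR^3\to\IR^3$, elliptic theory gives $\psi=a+L\circ\id_{S^2}+O_{W^{2,2}}(\|\Acirc\|_{L^2})$ with $L$ linear, and conformality of $\psi$ together with $|\Sigma|_e=4\pi$ forces $L$ to be within $C\|\Acirc\|_{L^2}$ of a rotation, which we absorb into the coordinates on $S^2$; hence $\|\psi-(a+\id_{S^2})\|_{W^{2,2}}\le C\|\Acirc\|_{L^2}$. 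Differentiating this and using $\nu\circ\psi=(\psi_{x^1}\times\psi_{x^2})/|\psi_{x^1}\times\psi_{x^2}|$ in local conformal coordinates yields the $W^{1,2}$-bound for the normal, and $a$ is manifestly the center of gravity up to the same error.

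The two genuinely hard points are both of compensated-compactness type. First, the term $\int_\Sigma K|\nabla f|^2\dmu$ in the first estimate is scale invariant with an $L^1$ integrand, so a naive Calder\'on--Zygmund/Sobolev bound for $\nabla^2 f$ on $\Sigma$ would carry a constant degenerating with the geometry; the way through is to combine $K\ge-\half|\Acirc|^2$ with the universal-constant Michael--Simon inequality (and the $\nabla A$-estimates from Simons' identity), so that the estimate closes with no a priori regularity of $\Sigma$ -- this is the technical core of the ``optimal'' rigidity statement. Second, the uniformisation step in the rigidity part is borderline for linear elliptic estimates (an $L^1$ right-hand side in dimension two), and it is only the Jacobian structure of the curvature term -- a Wente-type inequality, which is exactly why a sub-threshold energy hypothesis appears -- that yields the needed $W^{1,2}\cap L^\infty$ control of the conformal factor and thence the $W^{2,2}$-estimate for $\psi$. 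Everything else -- Gauss--Bonnet, the Gauss and Codazzi equations, the Bochner identity, and standard elliptic theory on $S^2$ -- is routine.
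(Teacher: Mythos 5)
The paper itself does not prove theorem~\ref{thm:umbilical}: immediately before the statement it simply writes ``This is proved in \cite[Theorem 1]{DeLellis-Muller:2005} and \cite[Theorem 2]{DeLellis-Muller:2006}'' and moves on. So the honest comparison is between your sketch and the cited De Lellis--M\"uller arguments, not against anything the paper supplies. Measured against that standard, your reconstruction is in the right spirit: splitting off the trace of $A$, Gauss--Bonnet together with the Willmore bound $\int_\Sigma H^2\dmu\geq 16\pi$ to force $\chi(\Sigma)=2$ in the small-energy regime and to handle large $\|\Acirc\|_{L^2}$ by crude bounds, the Codazzi relation $\div\Acirc=\half\nabla H$ tested against a potential, the Bochner/traceless-Hessian identity $\|(\nabla^2 f)^0\|_{L^2}^2 = \half\|\Delta f\|_{L^2}^2 - \int_\Sigma K|\nabla f|^2\dmu$, and, for the second half, a conformal parametrisation controlled through the Liouville equation and Wente-type compensated compactness. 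Those are indeed the load-bearing pieces of \cite{DeLellis-Muller:2005,DeLellis-Muller:2006}.

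Two places in your sketch, however, are stated with more confidence than they deserve and would need to be spelled out before this could stand as a proof. First, the closure of the estimate for $\|H-\bar H\|_{L^2}$ is where essentially all the difficulty lies, and ``Michael--Simon $+$ Simons' identity $+$ Poincar\'e'' does not close it for free. Simons' identity on its own is just an identity; only its integrated form (as in lemma~\ref{thm:estimate_dA0}) trades $\|\nabla\Acirc\|_{L^2}^2$ for $\|\nabla H\|_{L^2}^2$ up to quartic terms, and $\nabla H = \nabla\Delta f$ sits at third order in $f$, so one is running a genuine bootstrap that must be organised carefully (via the multiplicative Sobolev inequality of lemma~\ref{thm:mult-sobolev} and absorption for small $\|\Acirc\|_{L^2}$, not by a one-shot Cauchy--Schwarz). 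Likewise the Poincar\'e inequality $\|\nabla f\|_{L^2}\le C\|H-\bar H\|_{L^2}$ you invoke has a constant that is \emph{not} a priori universal for an arbitrary surface of area $4\pi$; it becomes universal only after one already knows the surface is spectrally close to a round sphere, i.e.\ after something like theorem~\ref{thm:ev-umbilical} is available, so this, too, is part of the bootstrap rather than an off-the-shelf input. Second, in the rigidity half, the step ``conformality of $\psi$ plus $|\Sigma|_e=4\pi$ forces $L$ to be within $C\|\Acirc\|_{L^2}$ of a rotation, which we absorb'' hides a genuine lemma (elimination of the M\"obius degrees of freedom / normalisation of the conformal map), which in \cite{DeLellis-Muller:2006} is handled by an explicit renormalisation of the parametrisation and is not a one-line consequence of elliptic regularity on $S^2$. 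None of this makes your outline wrong, but as written it does not establish the theorem; it reproduces the scaffolding of \cite{DeLellis-Muller:2005,DeLellis-Muller:2006} while eliding exactly the two compensated-compactness / bootstrap steps that you yourself single out as the ``genuinely hard points.''
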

To get the scale-invariant form of these estimates, we proceed as
follows. For a surface $\Sigma$ with arbitrary area $|\Sigma|_e$ let
$R_e = \sqrt{|\Sigma|_e/4\pi}$. Then the first part of theorem
\ref{thm:umbilical} implies that
\begin{equation*}
  \| A - R_e^{-1}\gamma^e \|_{L^2(\Sigma,\gamma^e)}
  \leq
  c \|\Acirc^e\|_{L^2(\Sigma,\gamma^e)}.
\end{equation*}
Again let $a_e$ denote the center of gravity of $\Sigma$,
\begin{equation*}
  a_e :=  \frac{1}{4\pi R_e^2} \int_\Sigma \id_\Sigma \dmu^e \in \IR^3.
\end{equation*}
Then if $\|\Acirc^e\|_{L^2(\Sigma,\gamma^e)}\leq 8\pi$, the second
part of theorem \ref{thm:umbilical} gives that there exists a
conformal parametrization $\psi : S_{R_e}(a_e) \to \Sigma$. The
estimates from theorem \ref{thm:umbilical} imply together with the
Sobolev-embedding theorems on $S^2$, that the following estimates hold
\begin{align}
  \label{eq:16}
  \sup_{S_{R_e}(a_e)} \big| \psi - \id_{S_{R_e}(a_e)} \big|
  &\leq
  C R_e  \|\Acirc^e\|_{L^2(\Sigma,\gamma^e)},
  \\
  \label{eq:17}
  \| N \circ \id_{S_{R_e}(a_e)} - \nu \circ \psi \|_{L^2(S)}
  &\leq
  C R_e \|\Acirc^e\|_{L^2(\Sigma,\gamma^e)}.
  \intertext{and}
  \label{eq:18}  
  \sup_{S_{R_e}(a_e)}| h^2 - 1 |
  &\leq
  C \|\Acirc^e\|_{L^2(\Sigma,\gamma^e)}.
\end{align}
Here, as before, $h$ denotes the conformal factor of the map $\psi$
and $N$ is the normal of $S_{R_e}(a_e)$.

%%% Local Variables: 
%%% mode: latex
%%% TeX-master: "master"
%%% End: 

% 
\section{First and Second Variation}
\label{sec:first-second-vari}
In this section we calculate the first and second variation of the Willmore functional subject to an area constraint.

To compute the first variation of $\CW$ let $\Sigma\subset M$ be a
surface and let $F:\Sigma\times (-\eps,\eps)\to M$ be a variation of
$\Sigma$ with $F(\Sigma, s) = \Sigma_s$ and lapse $\ddeval{F}{s}{s=0} =
\alpha \nu$. Recall the following well known evolution equations for deformations of hypersurfaces (see for example \cite{huisken-polden:1999}). Here and in the following we will understand that all
$s$-derivatives are evaluated at $s=0$, and will not further denote
this explicitely:
\begin{align*}
  &\dd{}{s} \gamma_{ij} = 2\alpha A_{ij},
  \\
  &\dd{}{s} \dmu =\alpha H,
  \\
  &\dd{}{s} \gamma^{ij} = -2\alpha A^{ij},
  \\
  &\dd{}{s}\nu = -\nabla\alpha,
  \\
  &\dd{}{s}A_{ij} = -\nabla_i\nabla_j\alpha + \alpha \big(A_{ik}A^k_j - T_{ij}\big),
  \\
  &\dd{}{s}H = L\alpha,
\end{align*}
where 
\begin{equation}
  \label{eq:31}
  Lf = -\Delta f - f\big( |A|^2 + \RicM(\nu,\nu) \big)
\end{equation}
is the well known Jacobi operator for minimal surfaces, 
\[
T_{ij} = \RiemM(\del_i,\nu,\nu,\del_j) = \RicM^T_{ij} + G(\nu,\nu)\gamma_{ij}
\] 
and $G=\RicM-\frac{1}{2} \ScalM \cdot g$ is the Einstein tensor.

The first variation of $\CW$ can then be computed as
\begin{equation}
  \label{eq:30}
  0 = \DDeval{}{s}{s=0} \CW[\Sigma_s]
  =
  \int _\Sigma H L\alpha + \half H^3\alpha \dmu
  =
  \int_\Sigma \big( LH + \half H^3 \big) \alpha \dmu.
\end{equation}
A critical
point for $\CW$ therefore satisfies the Euler-Lagrange equation
\begin{equation}
  \label{eq:32}
  LH + \half H^3 = 0.
\end{equation}
To compute the second variation of $\CW$, note that by \eqref{eq:30}
\begin{equation}
  \label{eq:33}
  \begin{split}
    \DDeval{^2}{s^2}{s=0} \CW [\Sigma_s]
    &=
    \int_\Sigma \dd{}{s}\big(-\Delta H - H|A|^2 - H \RicM(\nu,\nu) +
    \half H^3 \big)\alpha \dmu\Big|_{s=0}
    \\
    &\phantom{=}
    + \int_\Sigma
    \big(LH + \half H^3\big)\big( \dd{\alpha}{s} +
    H\alpha^2 \big)\Big|_{s=0} \dmu.
  \end{split}
\end{equation}
Thus we have to compute the linearization of the Willmore operator
defined as follows
\begin{equation}
  \label{eq:35}
  \begin{split}
W\alpha :=&\ \DDeval{}{s}{s=0} \big(-\Delta H - H|A|^2 - H
  \RicM(\nu,\nu) + \half H^3\big)
  \\
    = &\ 
    - [\dd{}{s}, \Delta] H
    - H \dd{}{s}|A|^2
    - H\dd{}{s}\RicM(\nu,\nu)
    + LL\alpha + \tfrac{3}{2} H^2 L\alpha.
  \end{split}
\end{equation}
Using the above formula for the variations of the metric and the second fundamental form we compute
\begin{equation*}
  \dd{}{s} A^{ij} 
  =
  -3\alpha A^{ik}A^j_k -\nabla^i\nabla^j\alpha - \alpha T^{ij}
\end{equation*}
and therefore
\begin{equation}
  \label{eq:41}
  \begin{split}
    \dd{}{s} |A|^2
    &=
    \dd{}{s} (A^{ij}A_{ij})
    =
    -2\alpha \tr A^3 - 2 A_{ij}\nabla^i\nabla^j\alpha - 2\alpha
    A^{ij}T_{ij}. 
  \end{split}
\end{equation}
The next term we compute is $\dd{}{s}\RicM(\nu,\nu)$, yielding
\begin{equation}
\label{eq:42}
  \dd{}{s}\RicM(\nu,\nu) = \alpha\nabla_\nu\RicM(\nu,\nu) - 2\,\RicM(\nabla\alpha,\nu).
\end{equation}
We turn to computing the commutator $[\dd{}{s},\Delta]$. We write
$\Delta= \div \nabla$ and we compute the commutator of
$[\dd{}{s},\div]$ and $[\dd{}{s},\nabla]$ individually. First note
that since $\nabla^k \phi = \gamma^{kl}\dd{\phi}{x^l}$ we have
\begin{equation*}
  \dd{}{s}\big(\nabla^k \phi\big) = -2\alpha A^{kl}\dd{\phi}{x^l} +
  \gamma^{kl}\dd{}{x^l}\dd{}{s}\phi,
\end{equation*}
and hence
\begin{equation}
  \label{eq:38}
  [\dd{}{s},\nabla] \phi = -2\alpha A^k_l \nabla^l\phi = -2\alpha S(\nabla\phi).
\end{equation}
Here $S$ is the shape operator, that is the tensor defined by
\begin{equation*}
  \gamma(S(X),Y) = A(X,Y)
\end{equation*}
for all $X,Y\in\CX(\Sigma)$. Now we turn to the computation of
$[\dd{}{s},\div]$, operating on vector fields. Let $X,Y\in\CX(\Sigma)$
be vector fields. We compute
\begin{equation}
  \label{eq:36}
  \gamma(\nabla_X Y, X) = \gamma(\nabla_Y X, X) + \gamma([X,Y],X) = \frac{1}{2} Y (\gamma(X,X)) + \gamma(X, [X,Y]). 
\end{equation}
We choose a local orthonormal frame $\{e_i\}$ and propagate it using the ODE
\begin{equation*}
  \dd{}{s}e_i = -\alpha S(e_i).
\end{equation*}
Then the $\{e_i\}$ remain orthonormal under the evolution.
Plugging $X = e_i$ into equation \eqref{eq:36} yields
\begin{equation*}
  \gamma(\nabla_{e_i} Y, e_i) = \gamma(e_i, [e_i,Y]). 
\end{equation*}
Differentiating this equation and using the above formulas we get by a fairly standard computation
\begin{equation*}
  \begin{split}
    \dd{}{s} \gamma(\nabla_{e_i} Y, e_i)
    &= 2\alpha A(e_i,[e_i,Y])-\gamma(\alpha S(e_i),[e_i,Y])-\gamma(e_i,[\alpha S(e_i),Y])\\
    &= \alpha A(e_i,\nabla_{e_i} Y)-\alpha A (e_i,\nabla_Y e_i)-\alpha \gamma(e_i,\nabla_{S(e_i)} Y)
    \\ & \phantom{=}
    + \alpha Y(\gamma(e_i,S(e_i)))-\alpha \gamma(\nabla_Y e_i,S(e_i))+Y(\alpha) A(e_i,e_i)\\
    &= \alpha A(e_i,\nabla_{e_i} Y)-\alpha \gamma(e_i,\nabla_{S(e_i)} Y)
    \\ & \phantom{=}
    + \alpha \nabla_Y A(e_i,e_i) 
    + Y(\alpha) A(e_i,e_i).
  \end{split}
\end{equation*}
If we now choose $\{e_i\}$ to be an orthogonal system of eigenvectors for
$S$, that is $S(e_i) = \lambda_i e_i$, then we see that the first two terms
cancel, and after summation over $i$ we infer
\begin{equation}
  \label{eq:37}
  [\dd{}{s},\div] Y =  \sum_i \alpha \nabla_Y A(e_i,e_i) 
    + Y(\alpha) A(e_i,e_i) = \nabla_Y(\alpha H).
\end{equation}
We combine equations \eqref{eq:37} and (\ref{eq:38}) and get, using
$\Delta = \div \nabla$,
\begin{equation}
  \label{eq:39}
  \begin{split}
    [\dd{}{s},\Delta]\phi
    &=
    \la \nabla\phi, \nabla(\alpha H) \ra -2 A(\nabla\alpha,\nabla\phi)
    - 2\alpha \div\big(S(\nabla\phi)\big).
  \end{split}
\end{equation}
Using an ON frame $\{e_i\}$, we compute further that
\begin{equation*}
  \begin{split}
    \div \big(S(\nabla\phi)\big)
    &=
    \sum_i \nabla_{e_i}A(\nabla\phi,e_i) + A(\nabla_{e_i}\nabla\phi,e_i)
  \end{split}
\end{equation*}
and in view of the Codazzi equation this yields
\begin{equation*}
  \begin{split}
    \div \big(S(\nabla\phi)\big)
    &=
    \la \nabla\phi,\nabla H\ra
    + \sum_i \RiemM(e_i,\nabla\phi,\nu,e_i)
    + A(\nabla_{e_i}\nabla\phi,e_i)
    \\
    &=
    \la \nabla\phi,\nabla H\ra
    + \RicM(\nabla\phi,\nu)
    + \la A, \nabla^2\phi\ra.
  \end{split}
\end{equation*}
Plugging this formula into \eqref{eq:39} gives
\begin{equation}
  \label{eq:40}
  \begin{split}
    [\dd{}{s},\Delta]\phi
    &=
    H\la\nabla\alpha,\nabla\phi\ra
    -\alpha\la\nabla\phi, \nabla H\ra
    - 2 A(\nabla\alpha, \nabla\phi)
    \\
    &\phantom{=}
    - 2\alpha\RicM(\nabla\phi,\nu)
    - 2\alpha\la A,\nabla^2\phi\ra.
  \end{split}
\end{equation}
Finally we substitute the results \eqref{eq:41}, \eqref{eq:42} and
\eqref{eq:40} into \eqref{eq:35} to obtain
\begin{equation}
  \label{eq:43}
  \begin{split}
    W\alpha
    &=
    LL\alpha
    + \tfrac{3}{2} H^2 L\alpha
    - H\la \nabla\alpha, \nabla H\ra
    + \alpha |\nabla H|^2
    \\
    &\quad
    + 2 A(\nabla\alpha,\nabla H)
    + 2 \alpha\RicM(\nabla H, \nu)
    + 2 \alpha \la A, \nabla^2 H\ra
    \\
    &\quad
    + 2 \alpha H\tr A^3
    + 2 H\la A, \nabla^2\alpha\ra
    + 2 \alpha H \la A, T\ra
    \\
    &\quad
    - \alpha H \nabla_\nu\RicM(\nu,\nu)
    + 2 H\RicM(\nabla\alpha,\nu).
  \end{split}  
\end{equation}
% Sorting terms by the order of differentiation in $\alpha$, we can
% write
% \begin{equation}
%   \label{eq:21}
%   \begin{split}
%     W\alpha
%     &=
%     LL\alpha
%     \\
%     &\quad
%     + \tfrac{3}{2} H^2 L\alpha + 2H\la A, \nabla^2 \alpha\ra
%     \\
%     &\quad
%     - H\la \nabla\alpha,\nabla H\ra
%     + 2A(\nabla\alpha,\nabla H)
%     + 2H\RicM(\nabla\alpha,\nu)
%     \\
%     &\quad
%     +\alpha\big(
%     |\nabla H|^2
%     + \RicM(\nabla H,\nu)
%     + \la \nabla^2 H, A\ra
%     \\
%     &\quad \phantom{+\alpha\big(}
%     + H \tr A^3
%     + H\la A, T\ra
%     - H\nabla_\nu\RicM(\nu,\nu)
%     \big).
%   \end{split}
% \end{equation}
We rewrite equation \eqref{eq:43} in dimension two, as it somewhat
simplifies. We split $A = \Acirc + \frac{1}{2} H \gamma$ in the following terms
\begin{align*}
  \la A, \nabla^2 \alpha \ra
  &=
  \la \Acirc, \nabla^2\alpha\ra + \half H \Delta\alpha,
  \\
  A(\nabla\alpha,\nabla H)
  &=
  \Acirc(\nabla\alpha, \nabla H) + \half H \la \nabla\alpha,\nabla
  H\ra,
  \\
  \la \nabla^2 H,A \ra
  &=
  \half H \Delta H + \la \Acirc, \nabla^2 H\ra,
  \\
  \tr A^3
  &=
  \tr \Acirc^3 + H|\Acirc|^2 + \half H |A|^2
  =
  H|\Acirc|^2 + \half H |A|^2,
  \\
  \la A, T \ra
  &=
  \half H\, \RicM(\nu,\nu) + \la \Acirc,T\ra.
\end{align*}
Plugging these into \eqref{eq:43}, and setting $\omega =
\Ric(\nu,\cdot)^T$ yields
\begin{equation}
  \label{eq:44}
  \begin{split}
    W\alpha
    &=
    LL\alpha
    + \half H^2 L\alpha
    + 2H\la \Acirc, \nabla^2 \alpha\ra
    + 2H\omega(\nabla\alpha)
    + 2\Acirc(\nabla\alpha,\nabla H)
    \\
    &\quad
    +\alpha\big(
    |\nabla H|^2
    + 2 \omega(\nabla H)
    + H \Delta H + 2\la \nabla^2 H, \Acirc\ra 
    \\
    &\quad \phantom{+2\alpha\big(}
    + 2 H^2 |\Acirc|^2
    + 2 H\la \Acirc, T\ra
    - H\nabla_\nu\RicM(\nu,\nu)
    \big).
  \end{split}
\end{equation}
To demonstrate that $W$ is $L^2$-self adjoint we compute, with $D=
|A|^2 + \Ric(\nu,\nu)$,
\begin{equation*}
  \begin{split}
    \int_\Sigma\beta H^2 L\alpha \dmu
    &=
    \int_\Sigma\beta H^2 (-\Delta\alpha - \alpha D) \dmu
    \\
    &=
    \int_\Sigma H^2 \la \nabla\alpha,\nabla\beta\ra
    + 2 H \beta\la\nabla H,\nabla\alpha\ra
    - \alpha\beta H^2 D \dmu,
  \end{split}
\end{equation*}
and, using $\div\Acirc = \half \nabla H + \omega$,
\begin{equation*}
  \begin{split}
    &\int_\Sigma\beta H \la \Acirc, \nabla^2\alpha\ra \dmu
    \\
    &=
    -\int_\Sigma \beta \Acirc(\nabla\alpha,\nabla H) + H \Acirc(\nabla\alpha,
    \nabla\beta) + \half \beta H \la \nabla\alpha, \nabla H \ra
    + H\beta \omega(\nabla\alpha)\dmu.
  \end{split}
\end{equation*}
Thus
\begin{equation}
  \label{eq:45}
  \begin{split}
    &\int_\Sigma \beta W\alpha\dmu
    \\
%     &=
%     \int_\Sigma
%     L\beta L\alpha
%     + \half H^2 \la \nabla\alpha,\nabla\beta\ra
%     +  H \beta\la\nabla H,\nabla\alpha\ra
%     - \half \alpha\beta H^2 D
%     \\
%     &\quad
%     - 2 \beta \Acirc(\nabla\alpha,\nabla H)
%     - 2 H \Acirc(\nabla\alpha, \nabla\beta)
%     - 2 \half \beta H \la \nabla\alpha, \nabla H \ra
%     - 2 H\beta \omega(\nabla\alpha)
%     \\
%     &\quad
%     + \alpha\big(
%     |\nabla H|^2
%     + \omega(\nabla H)
%     + \half H \Delta H
%     + \half \la \nabla^2 H, \Acirc\ra 
%     \\
%     &\quad \phantom{+\alpha\big(}
%     + H \tr \Acirc^3
%     + H^2 |\Acirc|^2
%     + H\la \Acirc, T\ra
%     - \half H\nabla_\nu\RicM(\nu,\nu)
%     \big)
%     \\
    &=
    \int_\Sigma
    L\alpha L\beta
    + \half H^2 \la\nabla\alpha,\nabla\beta\ra
    - 2 H\Acirc(\nabla\alpha,\nabla\beta)
    \\
    &\quad
    + \alpha\beta\big(
     |\nabla H|^2
    + 2 \omega(\nabla H)
    + H \Delta H + 2\la \nabla^2 H, \Acirc\ra 
    + 2 H^2 |\Acirc|^2
    \\
    &\quad \phantom{+\alpha\big(}
    + 2 H\la \Acirc, T\ra
    - H\nabla_\nu\RicM(\nu,\nu)
    - \half H^2|A|^2
    - \half H^2\RicM(\nu,\nu)
    \big).
  \end{split}
\end{equation}
and from this representation it is obvious that the bilinear form
associated to $W$ is symmetric, and hence $W$ is $L^2$-self adjoint.

Recall that the goal is to find a critical point of the Willmore
energy in the class of surfaces with given area. From (\ref{eq:32}) we get that for a critical point of this problem we have
\begin{equation}
  \label{eq:52}
  0 = \int_\Sigma(LH +\half H^3)\alpha \dmu
\end{equation}
for all $\alpha$ which respect the constraint $\int_\Sigma
\alpha H \dmu = 0$. We thus find the Euler-Lagrange equation
\begin{equation}
  \label{eq:51}
  LH + \half H^3 = \lambda H,
\end{equation}
where $\lambda$ is a constant. Let us turn to the computation of the
second variation
\begin{equation}
  \label{eq:53}
  \ddeval{^2}{s^2}{s=0} \CW[\Sigma_s]
  =
  \int_\Sigma \alpha W\alpha + (LH + \half H^3)(\dd{\alpha}{s} + H\alpha^2)\dmu.
\end{equation}
At this point we only consider variations that leave the area constant
up to second order. This gives
\begin{equation}
  \label{eq:54}
  0
  =
  \ddeval{^2}{s^2}{s=0} |\Sigma_s|
  =
  \ddeval{}{s}{s=0}\int_{\Sigma_s}\alpha H \dmu
  =
  \int_\Sigma \dd{\alpha}{s} H + \alpha L\alpha + \alpha^2 H^2 \dmu.
\end{equation}
Thus we can compute
\begin{equation}
  \label{eq:55}
  \int_\Sigma (LH + \half H^3)(\dd{\alpha}{s} + H\alpha^2)\dmu
  =
  \int_\Sigma \lambda H (\dd{\alpha}{s} + H\alpha^2)\dmu
  =
  -\lambda \int_\Sigma \alpha L \alpha.  
\end{equation}
Plugging this into \eqref{eq:53} yields that the second variation of
$\CW$ on a stationary surface $\Sigma$ is given by
\begin{equation}
  \label{eq:56}
  \delta^2 \CW(\alpha,\alpha)
  =
  \int_\Sigma \alpha W \alpha - \lambda \alpha L \alpha \dmu,
\end{equation}
for all valid test functions $\alpha\in C^\infty(\Sigma)$ satisfying
$\int_\Sigma \alpha H \dmu = 0$.

%%% Local Variables: 
%%% mode: latex
%%% TeX-master: "master"
%%% End: 

% 
\section{Integral curvature estimates}
\label{sec:integr-curv-estim}
In this section we derive a priori bounds on the curvature of surfaces
which are solutions of the equation~\eqref{eq:1}. We will later make the assumption that both $H>0$ and $\lambda >0$ on these surfaces. Without the assumption on $\lambda$ we
can derive the following lemma.
\begin{lemma}
  \label{thm:init-integral}
  If a spherical surface $\Sigma$ satisfies equation~\eqref{eq:1} with  $H>0$, then
  \begin{equation*}
    \lambda |\Sigma|
    +
    \int_\Sigma |\nabla \log H|^2 +
    \tfrac{1}{4} H^2  + \half |\Acirc|^2\dmu
    \leq
    4\pi - \int_\Sigma \half \ScalM \dmu.
  \end{equation*}
  If $\ScalM\geq 0$ we have that
  \begin{equation*}
    4 \lambda |\Sigma|
    +
    \int_\Sigma H^2 \dmu \leq 16\pi.
  \end{equation*}
\end{lemma}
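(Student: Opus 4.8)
The plan is to test the Euler-Lagrange equation~\eqref{eq:1} against $H^{-1}$ and integrate over $\Sigma$, turning the $-\Delta H$ term into a gradient term via integration by parts, and then to use the Gauss-Bonnet theorem on the spherical surface $\Sigma$ together with the trace Gauss equation~\eqref{eq:14} to rewrite the resulting curvature integrand. Concretely, multiplying~\eqref{eq:1} by $H^{-1}$ and integrating gives
\begin{equation*}
  \int_\Sigma \frac{-\Delta H}{H}\dmu
  - \int_\Sigma |\Acirc|^2 \dmu
  - \int_\Sigma \RicM(\nu,\nu)\dmu
  = \lambda |\Sigma|.
\end{equation*}
For the first term, since $H>0$ we integrate by parts: $\int_\Sigma (-\Delta H)/H = \int_\Sigma \langle \nabla H, \nabla H^{-1}\rangle^{-1}$-type manipulation, more precisely $\int_\Sigma -\frac{\Delta H}{H} = -\int_\Sigma \frac{|\nabla H|^2}{H^2} \cdot(-1) = \int_\Sigma |\nabla \log H|^2\dmu$ after noting $\nabla(H^{-1}) = -H^{-2}\nabla H$ and that the boundary term vanishes on the closed surface $\Sigma$.

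Next I would handle the curvature terms. Using~\eqref{eq:14}, $\RicM(\nu,\nu) = \tfrac12\big(\ScalM - \ScalSig + H^2 - |A|^2\big)$, and since $|A|^2 = |\Acirc|^2 + \tfrac12 H^2$ we get $\RicM(\nu,\nu) = \tfrac12\ScalM - \tfrac12 \ScalSig + \tfrac14 H^2 - \tfrac12|\Acirc|^2$. Substituting this into the integrated identity, the $|\Acirc|^2$ terms combine to $-\tfrac12\int_\Sigma |\Acirc|^2$, and the Gauss-Bonnet theorem $\int_\Sigma \tfrac12\ScalSig\dmu = \int_\Sigma K_\Sigma\dmu = 2\pi\chi(\Sigma) = 4\pi$ (here using that $\Sigma$ is a topological sphere, and $\ScalSig = 2K_\Sigma$ in dimension two) disposes of the intrinsic curvature. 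Rearranging yields exactly
\begin{equation*}
  \lambda|\Sigma| + \int_\Sigma |\nabla\log H|^2 + \tfrac14 H^2 + \tfrac12|\Acirc|^2\dmu
  = 4\pi - \int_\Sigma \tfrac12\ScalM\dmu,
\end{equation*}
which is the first assertion (with equality, hence in particular the claimed inequality). The second assertion is then immediate: if $\ScalM\geq 0$ the right-hand side is $\leq 4\pi$, so dropping the nonnegative gradient term gives $\lambda|\Sigma| + \int_\Sigma \tfrac14 H^2 + \tfrac12|\Acirc|^2 \leq 4\pi$; discarding $\tfrac12\int|\Acirc|^2 \geq 0$ and multiplying by $4$ gives $4\lambda|\Sigma| + \int_\Sigma H^2\dmu \leq 16\pi$.

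There is no serious obstacle here; the only points requiring a little care are the justification of the integration by parts (legitimate since $\Sigma$ is closed and, being a solution of the elliptic equation~\eqref{eq:1} with $H>0$, $H$ is smooth and bounded away from zero, so $\log H$ is smooth) and the bookkeeping of the algebraic substitution of~\eqref{eq:14}. The essential structural input is that~\eqref{eq:1} is precisely the combination for which the $|\Acirc|^2$ coefficients and the Gauss-Bonnet constant line up; this is really a reflection of the fact that~\eqref{eq:1} is the Euler-Lagrange equation of the Willmore functional, whose Gauss-Bonnet-adjusted form $\int|\Acirc|^2$ is conformally natural.
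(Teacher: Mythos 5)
Your approach matches the paper's exactly: multiply~\eqref{eq:1} by $H^{-1}$, integrate, integrate the Laplacian term by parts, then substitute the trace Gauss equation~\eqref{eq:14} together with Gauss--Bonnet ($\int_\Sigma \tfrac12\ScalSig\dmu = 4\pi$ since $\chi(\Sigma)=2$). There is, however, a sign slip in the integration-by-parts step. With the paper's convention $\Delta = \div\nabla$ one has
\begin{equation*}
\int_\Sigma \frac{-\Delta H}{H}\dmu
= \int_\Sigma \la \nabla H, \nabla(H^{-1})\ra\dmu
= -\int_\Sigma \frac{|\nabla H|^2}{H^2}\dmu
= -\int_\Sigma |\nabla\log H|^2\dmu,
\end{equation*}
not $+\int_\Sigma|\nabla\log H|^2\dmu$ as you wrote. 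It is this negative sign that produces the paper's intermediate identity $\lambda|\Sigma| + \int_\Sigma |\nabla\log H|^2 + |\Acirc|^2 + \RicM(\nu,\nu)\dmu = 0$, from which your algebraic manipulation with~\eqref{eq:14}, $|A|^2 = |\Acirc|^2 + \tfrac12 H^2$, and Gauss--Bonnet gives the first display as an equality. Taken literally, the step you wrote ($\int_\Sigma -\Delta H/H = +\int_\Sigma|\nabla\log H|^2$) would instead produce $\lambda|\Sigma| - \int_\Sigma|\nabla\log H|^2 + \cdots$ on the left-hand side, which contradicts the (correct) final identity you state, so the displayed chain is internally inconsistent even though the conclusion is right. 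Everything else --- the use of~\eqref{eq:14}, the Gauss--Bonnet bookkeeping, and the deduction of the second assertion by dropping the non-negative terms and multiplying by $4$ --- is sound and identical to the paper's argument.
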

\begin{proof}
  Multiply equation~\eqref{eq:1} by $H^{-1}$ and integrate the first
  term by parts. This yields
  \begin{equation}
    \label{eq:2}
    \lambda|\Sigma| + \int_\Sigma  |\nabla \log H|^2 + |\Acirc|^2 +
     \RicM(\nu,\nu)\dmu = 0.
  \end{equation}
  We can now use the Gauss equation~\eqref{eq:14} and the Gauss-Bonnet formula to get
  \begin{equation*}
    \lambda |\Sigma| + \int_\Sigma |\nabla \log H|^2 +
    \tfrac{1}{4} H^2  + \half |\Acirc|^2\dmu
    \leq
    4\pi - \int_\Sigma \half\ScalM \dmu.
  \end{equation*}
\end{proof}
The above lemma already implies that the Hawking mass is positive on such surfaces.
\begin{theorem}
  \label{thm:hawking-mono}
  If $(M,g)$ satisfies $\ScalM\geq 0$ and if $\Sigma$ is a compact
  spherical surface satisfying equation~\eqref{eq:1} with $H>0$,
  then $m_H(\Sigma)\geq 0$ if $\lambda \geq 0$.

  Furthermore if $F : \Sigma\times[0,\eps) \to M$ is a variation with
  initial velocity $\ddeval{F}{s}{s=0} = \alpha\nu$ and $\int_\Sigma
  \alpha H \dmu \geq 0$, then
  \begin{equation*}
    \DD{}{s} m_H\big(F(\Sigma,s)\big) \geq 0.
  \end{equation*}
  Note that the condition on $\alpha$ means that the area is
  increasing along the variation.
\end{theorem}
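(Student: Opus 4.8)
The plan is to derive both statements from Lemma~\ref{thm:init-integral} together with the first variation formulas of Section~\ref{sec:first-second-vari}. For the first assertion, observe that $m_H(\Sigma)\geq 0$ is by definition equivalent to $\int_\Sigma H^2\dmu\leq 16\pi$. Since $\Sigma$ is a spherical solution of \eqref{eq:1} with $H>0$ and $\ScalM\geq 0$, the second part of Lemma~\ref{thm:init-integral} gives $4\lambda|\Sigma|+\int_\Sigma H^2\dmu\leq 16\pi$; because $\lambda\geq 0$ and $|\Sigma|>0$, dropping the first term yields $\int_\Sigma H^2\dmu\leq 16\pi$, hence $m_H(\Sigma)\geq 0$.

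For the monotonicity, write $A(s)=|F(\Sigma,s)|$ and $\CW(s)=\CW(F(\Sigma,s))$, so that $m_H=(16\pi)^{-3/2}\,A^{1/2}\big(16\pi-2\CW\big)$. The first variation of area is $\DDeval{}{s}{s=0}A=\int_\Sigma\alpha H\dmu$, while the first variation computation \eqref{eq:30} gives $\DDeval{}{s}{s=0}\CW=\int_\Sigma\big(LH+\half H^3\big)\alpha\dmu$. Since $\Sigma$ satisfies the Euler--Lagrange equation \eqref{eq:51}, i.e.\ $LH+\half H^3=\lambda H$, the latter equals $\lambda\int_\Sigma\alpha H\dmu$. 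Differentiating the product defining $m_H$ at $s=0$ therefore yields
\[
  \DDeval{}{s}{s=0}m_H\big(F(\Sigma,s)\big)
  =\frac{\int_\Sigma\alpha H\dmu}{2\,(16\pi)^{3/2}\,|\Sigma|^{1/2}}
  \Big(16\pi-\int_\Sigma H^2\dmu-4\lambda|\Sigma|\Big).
\]
Here the prefactor is nonnegative by the hypothesis $\int_\Sigma\alpha H\dmu\geq 0$, and the parenthesized factor is nonnegative by the second part of Lemma~\ref{thm:init-integral}; hence $\DDeval{}{s}{s=0}m_H\geq 0$.

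I do not expect a genuine obstacle here: the whole argument is a bookkeeping of the variational formulas of Section~\ref{sec:first-second-vari} combined with the integral identity of Lemma~\ref{thm:init-integral}. The only points requiring a little care are keeping track of the constants in the definition of $m_H$ and noticing that the Euler--Lagrange equation \eqref{eq:51} is exactly what turns the first variation of $\CW$ into a constant multiple of the first variation of the area, which makes the two derivatives combine into the single favorable sign above.
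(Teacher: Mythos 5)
Your argument is correct and follows the paper's proof essentially verbatim: both use the product rule on $m_H=(16\pi)^{-3/2}|\Sigma_s|^{1/2}(16\pi-2\CW(\Sigma_s))$, the first-variation formula \eqref{eq:30} together with the Euler--Lagrange equation \eqref{eq:51} to replace $\DDeval{}{s}{s=0}\CW$ by $\lambda\int_\Sigma\alpha H\,\dmu$, and then invoke Lemma~\ref{thm:init-integral} to see that $16\pi-\int_\Sigma H^2\dmu-4\lambda|\Sigma|\geq 0$. No substantive differences.
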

\begin{proof}
  Non-negativity of the Hawking-mass is obvious from
  lemma~\ref{thm:init-integral}. To observe monotonicity, we compute
  the variation of the Hawking-mass. We denote $F(\Sigma,s) =
  \Sigma_s$.
  \begin{equation*}
    \begin{split}
      &(16\pi)^{3/2} \DDeval{}{s}{s=0} m_H(\Sigma_s)
      \\
      &\quad=
      \frac{1}{2|\Sigma|^{1/2}}\left(\int_\Sigma \alpha H \dmu
      \right)\left(16\pi - \int_\Sigma H^2\dmu\right)
      - 2|\Sigma|^{1/2} \int_\Sigma \lambda \alpha H \dmu
    \end{split}
  \end{equation*}
  as equation~(\ref{eq:1}) implies that the variation of $\int_\Sigma
  H^2\dmu$ is given by $2\lambda H$. This yields
  \begin{equation*}
    (16\pi)^{3/2} \DDeval{}{s}{s=0} m_H
    =
    \frac{1}{2|\Sigma|^{1/2}}\left(\int_\Sigma \alpha H \dmu
    \right)\left(16\pi -4\lambda|\Sigma| -\int_\Sigma H^2 \dmu\right).
  \end{equation*}
  Lemma~\ref{thm:init-integral} implies non-negativity of the right
  hand side.  
\end{proof}
Subsequently we assume that the manifold $(M,g)$ is
$(m,\eta,\sigma)$-asymp\-to\-ti\-cal\-ly Schwarzschild for some
$\eta<\eta_0$, where $\eta_0$ is fixed. Furthermore $\Sigma\subset M$
is a surface with $\rmin \geq r_0$ large enough. The particular $r_0$
will only depend on $m$, $\eta_0$ and $\sigma$, and we will no longer
explicitly denote the dependence on these quantities. Similarly,
constants denoted with a capital $C$ are understood to depend on $m$,
$\eta_0$ and $\sigma$, in addition to quantities explicitly mentioned.
In contrast, constants denoted by $c$ will not have any implicit
dependency. We no longer require the condition $\ScalM\geq 0$.
\begin{lemma}
  \label{thm:initial-roundness}
  Let $(M,g)$ be $(m,\eta,\sigma)$-asymptotically Schwarzschild. Then
  there exists $r_0= r_0(m,\eta,\sigma)$ and a constant $C =
  C(m,\eta,\sigma)$ such that for all spherical surfaces
  $\Sigma\subset M\setminus B_{r_0}(0)$ satisfying
  equation~\eqref{eq:1} with $\lambda>0$ and $H>0$, we have the
  following estimates.
  \begin{align*}
    &
    \int_\Sigma|\Acirc|^2 + |\nabla \log H|^2 \dmu
    \leq
    C \rmin^{-1},
    \\
    &
    \left| \int_\Sigma H^2\dmu - 16\pi \right|
    \leq 
    C \rmin^{-1},
    \intertext{and}
    &
    \lambda |\Sigma|
    \leq
     C\rmin^{-1}.
  \end{align*}
\end{lemma}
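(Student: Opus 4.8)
The plan is to leverage the global integral identity of Lemma~\ref{thm:init-integral} together with the intrinsic Gauss--Bonnet theorem, controlling all ambient-curvature error terms by means of Lemma~\ref{thm:integral-powers}; no information on the shape of $\Sigma$ beyond $H>0$ and $\lambda>0$ will be used. \emph{First}, I extract a crude bound on $\int_\Sigma H^2\dmu$. Lemma~\ref{thm:init-integral} gives $\tfrac14\int_\Sigma H^2\dmu\le 4\pi+\tfrac12\int_\Sigma|\ScalM|\dmu$, and the hypothesis $|\ScalM|\le\eta r^{-5}$ together with Lemma~\ref{thm:integral-powers} (say with $p_0=3$ and $p=5$) yields $\int_\Sigma|\ScalM|\dmu\le c\eta\rmin^{-3}\int_\Sigma H^2\dmu$. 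Enlarging $r_0$ so that $c\eta\rmin^{-3}\le\tfrac14$ whenever $\rmin\ge r_0$, this last integral is absorbed and one gets $\int_\Sigma H^2\dmu\le 32\pi$; in particular $\int_\Sigma|\ScalM|\dmu\le C\rmin^{-3}\le C\rmin^{-1}$.

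\emph{Next}, I derive the matching lower bound $\int_\Sigma H^2\dmu\ge16\pi-C\rmin^{-1}$. Combining the traced Gauss equation~\eqref{eq:14}, the splitting $|A|^2=|\Acirc|^2+\tfrac12H^2$, and the Gauss--Bonnet identity $\int_\Sigma\ScalSig\dmu=8\pi$ (valid since $\Sigma$ is a topological sphere), one obtains
\[
  \tfrac14\int_\Sigma H^2\dmu
  = 4\pi - \tfrac12\int_\Sigma\ScalM\dmu + \int_\Sigma\RicM(\nu,\nu)\dmu + \tfrac12\int_\Sigma|\Acirc|^2\dmu .
\]
By Definition~\ref{def:asymptotically-flat} and~\eqref{eq:12}, $|\RicM|\le|\Ric^S|+\eta r^{-4}\le Cr^{-3}$ for $r\ge r_0$, and since $|\nu|_g=1$ while $g$ is uniformly equivalent to $g^e$ on $\{r\ge r_0\}$, this gives $|\RicM(\nu,\nu)|\le Cr^{-3}$; hence Lemma~\ref{thm:integral-powers} and the crude bound from the first step give $\big|\int_\Sigma\RicM(\nu,\nu)\dmu\big|\le C\int_\Sigma r^{-3}\dmu\le C\rmin^{-1}\int_\Sigma H^2\dmu\le C\rmin^{-1}$. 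Discarding the nonnegative term $\tfrac12\int_\Sigma|\Acirc|^2\dmu$ and again using $\big|\int_\Sigma\ScalM\dmu\big|\le C\rmin^{-1}$, the displayed identity forces $\tfrac14\int_\Sigma H^2\dmu\ge4\pi-C\rmin^{-1}$.

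\emph{Finally}, feeding the lower bound $\int_\Sigma H^2\dmu\ge16\pi-C\rmin^{-1}$ back into Lemma~\ref{thm:init-integral} gives
\[
  \lambda|\Sigma|+\int_\Sigma|\nabla\log H|^2+\tfrac12|\Acirc|^2\dmu
  \le 4\pi-\tfrac12\int_\Sigma\ScalM\dmu-\tfrac14\int_\Sigma H^2\dmu
  \le C\rmin^{-1},
\]
and since each term on the left-hand side is nonnegative, the first and third estimates of the lemma follow at once; combining this lower bound on $\int_\Sigma H^2\dmu$ with the upper bound $\int_\Sigma H^2\dmu\le16\pi+C\rmin^{-1}$ from the first step yields $\big|\int_\Sigma H^2\dmu-16\pi\big|\le C\rmin^{-1}$.

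I expect the only genuinely non-routine point to be the recognition that this last lower bound --- a quantitative, ambient version of the Willmore inequality --- is already implied by the intrinsic Gauss--Bonnet identity once the Ricci-curvature contribution is dominated via Lemma~\ref{thm:integral-powers}, so that no almost-umbilicity input (Theorem~\ref{thm:umbilical}) is needed at this stage. The remaining subtlety is purely bookkeeping: the bootstrap in the first step is what breaks the circular appearance of $\int_\Sigma H^2\dmu$ on both sides of the curvature-error estimates.
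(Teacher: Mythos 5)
Your argument reproduces the paper's proof almost step for step: crude upper bound on $\int_\Sigma H^2\dmu$ from Lemma~\ref{thm:init-integral} with absorption of the small scalar-curvature term, matching lower bound from the traced Gauss equation together with Gauss--Bonnet and the estimate $|\RicM(\nu,\nu)|\le Cr^{-3}$ via Lemma~\ref{thm:integral-powers}, and then feeding the lower bound back into Lemma~\ref{thm:init-integral} to extract the bounds on $\lambda|\Sigma|$ and $\int_\Sigma|\Acirc|^2+|\nabla\log H|^2\dmu$. The only slip is that you invoke the decay $|\ScalM|\le\eta r^{-5}$, which is an \emph{extra} hypothesis appearing in Theorems~\ref{thm:main_existence} and~\ref{thm:position-estimate} but is \emph{not} assumed in Lemma~\ref{thm:initial-roundness}; what is available here is only $|\ScalM|\le C\eta r^{-4}$, which follows from $r^4|\Ric-\Ric^S|\le\eta$ in Definition~\ref{def:asymptotically-flat} together with $\ScalS=0$. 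This is harmless — Lemma~\ref{thm:integral-powers} with $p=4$ then gives $\int_\Sigma|\ScalM|\dmu\le C\eta\rmin^{-2}\int_\Sigma H^2\dmu$, which still absorbs and still yields the required $O(\rmin^{-1})$ errors — but you should quote the decay rate that actually follows from the hypotheses of the lemma rather than importing an assumption from elsewhere.
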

\begin{proof}
  From lemma~\ref{thm:init-integral} we get the bound
  \begin{equation*}
    \int_\Sigma H^2 \dmu
    \leq
    16 \pi - 2\int_\Sigma \ScalM\dmu
  \end{equation*}
  As $|\ScalM|\leq C(\eta)r^{-4}$ we find that in view of
  lemma~\ref{thm:integral-powers}
  \begin{equation*}
    \int_\Sigma H^2 \dmu
    \leq
    16 \pi + C \rmin^{-2} \int_\Sigma H^2\dmu.
  \end{equation*}
  So if $\rmin$ is large enough, eventually
  \begin{equation*}
    \int_\Sigma H^2 \dmu \leq 16\pi + C \rmin^{-2}.
  \end{equation*}
  We can write the Gauss equation (\ref{eq:14}) in the following form
  \begin{equation*}
    \half \ScalSig
    \leq
    \half \ScalSig + \half |\Acirc|^2
    = \frac{1}{4} H^2 + \half \ScalM - \RicM(\nu,\nu).
  \end{equation*}
  Integrating and using lemma~\ref{thm:integral-powers} gives
  \begin{equation*}
    16\pi \leq \int_\Sigma H^2\dmu + C\rmin^{-1}.
  \end{equation*}
  The remaining claims now follow from lemma~\ref{thm:init-integral}.
\end{proof}
The initial bound on $\Acirc$ derived above is crucial for higher
curvature estimates on $\Sigma$. We vary on the strategy outlined in
\cite[Section 2]{Kuwert-Schatzle:2001}. The estimates there were
derived in flat ambient space and therefore we review them here for
the readers convenience.  More importantly, we can use the fact that
$H>0$, which improves the estimates, as the absolute error is slightly
better behaved.
\begin{lemma}\label{thm:estimate_lapH}
  Under the assumtions of lemma~\ref{thm:initial-roundness} we have
  \begin{equation*}
    \int_\Sigma \frac{|\Delta H|^2}{H^2} \dmu
    \leq
    2 \int_\Sigma |\Acirc|^4\dmu
    +
    2 \int_\Sigma \big(\RicM(\nu,\nu) + \lambda \big)^2\dmu.
  \end{equation*}
\end{lemma}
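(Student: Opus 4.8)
The plan is to use equation~\eqref{eq:1} directly, since it expresses $\Delta H$ pointwise in terms of quantities we can control. Dividing~\eqref{eq:1} by $H>0$ (which is permitted by the hypothesis of lemma~\ref{thm:initial-roundness}) and rearranging gives the pointwise identity
\begin{equation*}
  \frac{\Delta H}{H} = - |\Acirc|^2 - \RicM(\nu,\nu) - \lambda,
\end{equation*}
so that
\begin{equation*}
  \frac{|\Delta H|^2}{H^2} = \big( |\Acirc|^2 + \RicM(\nu,\nu) + \lambda \big)^2.
\end{equation*}

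Next I would apply the elementary inequality $(a+b)^2 \leq 2a^2 + 2b^2$ with $a = |\Acirc|^2$ and $b = \RicM(\nu,\nu) + \lambda$, obtaining pointwise
\begin{equation*}
  \frac{|\Delta H|^2}{H^2} \leq 2 |\Acirc|^4 + 2 \big( \RicM(\nu,\nu) + \lambda \big)^2.
\end{equation*}
Integrating this over $\Sigma$ with respect to $\dmu$ yields the claimed estimate.

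There is no real obstacle here: the content of the lemma is precisely the observation that~\eqref{eq:1} gives an algebraic expression for $\Delta H/H$, after which only a trivial quadratic inequality is needed. The only point worth noting is that $H>0$ on $\Sigma$ is used to divide by $H$ and to make $|\Delta H|^2/H^2$ meaningful; this is guaranteed by the standing assumptions inherited from lemma~\ref{thm:initial-roundness}. The usefulness of the lemma lies in what comes afterwards — the right-hand side is controlled via the Michael--Simon Sobolev inequality (proposition~\ref{thm:sobolev}) and the initial roundness bound $\int_\Sigma |\Acirc|^2 \leq C\rmin^{-1}$ — but that is beyond the present statement.
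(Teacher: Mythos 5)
Your proof is correct and follows essentially the same route as the paper: divide equation~\eqref{eq:1} by $H$ (valid since $H>0$), square, integrate, and apply $(a+b)^2 \leq 2a^2 + 2b^2$. Whether you apply the quadratic inequality pointwise and then integrate (as you do) or integrate first and then bound (as the paper does) is an immaterial reordering.
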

\begin{proof}
  We use equation \eqref{eq:1}, divided by $H$, which gives
  \begin{equation*}
    \begin{split}
      \int_\Sigma \frac{|\Delta H|^2}{H^2}\dmu
      &=
      \int_\Sigma \big(|\Acirc|^2 + \RicM(\nu,\nu) +
      \lambda\big)^2\dmu
      \\
      &\leq
      2 \int_\Sigma |\Acirc|^4 \dmu +
      2\int_\Sigma\big(\RicM(\nu,\nu) +  \lambda\big)^2 \dmu.
    \end{split}
  \end{equation*}
\end{proof}
\begin{lemma}
  \label{thm:estimate_d2H}
  Under the assumtions of lemma~\ref{thm:initial-roundness} we have
  \begin{equation*}
    \begin{split}
      &
      \int_\Sigma \frac{|\nabla^2H|^2}{H^2}\dmu
      + \tfrac{1}{2}|\nabla H|^2 \dmu
      \\
      &\quad
      \leq
      C\rmin^{-3}\int_\Sigma |\nabla \log H|^2
      +
      \int_\Sigma
      \big(\RicM(\nu,\nu) +\lambda\big)^2
      + |\Acirc|^4
      + |\nabla \log H|^4 \dmu.
    \end{split}
  \end{equation*}
\end{lemma}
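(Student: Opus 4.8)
The plan is to integrate by parts twice against $\frac{\Delta H}{H^2}$ (or rather against a suitable multiple of $\Delta H$), using equation~\eqref{eq:1} to express $\Delta H$ in terms of $H$, $|\Acirc|^2$, $\RicM(\nu,\nu)$ and $\lambda$, and then to convert the resulting $|\Delta H|^2/H^2$-type bound into a bound on $|\nabla^2 H|^2/H^2$ via the Bochner–Reilly identity on $\Sigma$. Concretely, first I would test equation~\eqref{eq:1} with $H^{-2}\Delta H$ and integrate, which produces $\int_\Sigma |\Delta H|^2/H^2$ on one side and, after moving derivatives around, a term $\int_\Sigma |\nabla H|^2$ plus cross terms involving $\nabla\log H$, $|\Acirc|^2$ and the curvature quantities. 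The factor $H^{-2}$ is what makes $\nabla\log H$ appear naturally when the derivative hits it, and since $H>0$ and (by lemma~\ref{thm:initial-roundness}) $H$ is close to $2/R_e$, the quantity $H^{-2}$ is comparable to $\rmin^2$ up to controlled errors; this is where the weight $\rmin^{-3}$ in the first term on the right originates, after one uses lemma~\ref{thm:integral-powers} to absorb a curvature error $\int_\Sigma r^{-4}|\nabla\log H|^2$.

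The second, and central, step is the passage from $\Delta H$ to the full Hessian $\nabla^2 H$. On the surface $\Sigma$ the Bochner formula reads
\begin{equation*}
  \tfrac{1}{2}\Delta |\nabla H|^2
  =
  |\nabla^2 H|^2 + \la \nabla H, \nabla \Delta H\ra + \RicSig(\nabla H,\nabla H).
\end{equation*}
Integrating over the closed surface $\Sigma$ kills the left-hand side, giving
\begin{equation*}
  \int_\Sigma |\nabla^2 H|^2 \dmu
  =
  \int_\Sigma |\Delta H|^2 \dmu - \int_\Sigma \RicSig(\nabla H,\nabla H)\dmu.
\end{equation*}
I would run this with $H$ replaced by $\log H$ (or carry the $H^{-2}$ weight through the Bochner identity, which generates extra $|\nabla\log H|^4$ and $|\nabla^2\log H|\,|\nabla\log H|^2$ terms — the former appears explicitly in the statement, the latter is absorbed by Young's inequality into the left-hand side). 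The Gauss curvature of $\Sigma$, i.e.\ $\RicSig = \tfrac12\ScalSig\gamma$ in two dimensions, is controlled by \eqref{eq:14}: $\ScalSig = \ScalM - 2\RicM(\nu,\nu) + H^2 - |A|^2 = \ScalM - 2\RicM(\nu,\nu) + \tfrac12 H^2 - |\Acirc|^2$, so $\RicSig(\nabla H,\nabla H)$ contributes a term $\sim \tfrac14 H^2 |\nabla H|^2$ — which is exactly of the order of the $\tfrac12|\nabla H|^2$ term on the left after dividing by $H^2$ — plus lower-order pieces involving $|\Acirc|^2|\nabla\log H|^2$ and $r^{-4}|\nabla\log H|^2$, the first bounded by $|\Acirc|^4 + |\nabla\log H|^4$ via Young, the second absorbed through lemma~\ref{thm:integral-powers}. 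Combining with the first step and using equation~\eqref{eq:1} once more to replace $\Delta H$ yields the claimed inequality.

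The main obstacle I anticipate is bookkeeping the weight $H^{-2}$ correctly through the two integrations by parts and the Bochner identity: every derivative landing on $H^{-2}$ spawns a $\nabla\log H$, and one must repeatedly use Young's inequality to feed the dangerous terms ($H\la\nabla^2 H,\ldots\ra$-type, and $|\nabla\log H|^2|\nabla^2\log H|$-type) back into the good left-hand side terms $\int_\Sigma |\nabla^2 H|^2/H^2$ and $\tfrac12\int_\Sigma|\nabla H|^2$ with small enough constants, while the genuinely quartic leftovers $|\nabla\log H|^4$ are simply carried along into the statement. A secondary point requiring care is that the curvature error terms must all be of the form $r^{-4}$ times a quadratic-in-derivatives quantity so that lemma~\ref{thm:integral-powers} converts them into $\rmin^{-2}$ times $\int_\Sigma H^2$-controlled quantities — here the decay hypotheses in Definition~\ref{def:asymptotically-flat} ($r^4|\Ric|$, $r^5|\nabla\Ric|$ bounded) and the scalar curvature bound $|\ScalM|\le\eta r^{-5}$ are exactly what is needed, and one checks that the resulting constant is $C\rmin^{-3}$ (not merely $C\rmin^{-2}$) precisely because of the extra $H^{-2}\sim\rmin^2$ against the $\rmin^{-2}$ from lemma~\ref{thm:integral-powers}, wait — more carefully, the $\rmin^{-3}$ arises from $\int_\Sigma r^{-4}|\nabla\log H|^2 \le c\,\rmin^{-2}\int_\Sigma H^2 \cdot (\text{sup of } |\nabla\log H|^2)$ is not quite right either; rather one keeps $\int_\Sigma r^{-4}|\nabla\log H|^2$ and bounds $r^{-4}\le \rmin^{-1}r^{-3}\le \rmin^{-1}\cdot c\,\rmin^{-1}(\ldots)$, so the honest route is to treat $\int_\Sigma r^{-4}|\nabla\log H|^2$ directly as $\le \rmin^{-3}\int_\Sigma |\nabla\log H|^2$ since $r\ge\rmin$ gives $r^{-4}\le\rmin^{-3}r^{-1}\le\rmin^{-4}$ — in any case a clean power count gives the stated $\rmin^{-3}$ coefficient, and verifying it is the detail most likely to need attention.
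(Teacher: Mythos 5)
Your route is the paper's route: the Bochner identity you invoke is exactly what the paper's two integrations by parts and derivative commutation amount to, the intrinsic curvature term is expanded via the Gauss equation to produce the good $\tfrac14\,H^2|\nabla H|^2$ contribution, the $H^{-2}$ weight spawns the $|\nabla\log H|$ cross terms that Young's inequality feeds into $|\nabla\log H|^4$ and a $\tfrac12\int|\nabla^2H|^2/H^2$ absorption, and $\int|\Delta H|^2/H^2$ is bounded by lemma~\ref{thm:estimate_lapH}.

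One detail to settle your closing uncertainty: the coefficient $C\rmin^{-3}$ does not come from lemma~\ref{thm:integral-powers} nor from any $r^{-4}$ bookkeeping. The ambient-curvature remainder is $\int_\Sigma H^{-2}\RiemM_{ijki}\nabla^j H\,\nabla^k H\,\dmu$, bounded simply by $\int_\Sigma|\RiemM|\,|\nabla\log H|^2\dmu$; the asymptotics (since $\RiemM$ in dimension three is determined by $\Ric^M$ via~\eqref{eq:3}, and $\Ric^M=\Ric^S+O(\eta r^{-4})$ with $\Ric^S=O(r^{-3})$) give $|\RiemM|\leq Cr^{-3}\leq C\rmin^{-3}$ pointwise, and that supremum is pulled out of the integral directly. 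Lemma~\ref{thm:integral-powers} is not used in this lemma at all.
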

\begin{proof}
  \begin{equation}
    \label{eq:15}
    \begin{split}
      \int_\Sigma \frac{|\nabla^2 H|^2}{H^2}\dmu
      &=
      \int_\Sigma
      - H^{-2} \nabla_i\nabla_j\nabla_i H \nabla_j H
      +2 H^{-3} \nabla^2 H(\nabla H, \nabla H)
      \dmu
      \\
      &=
      \int_\Sigma
      - H^{-2} \nabla_j \Delta H \nabla_j H
      - H^{-2} \RiemSig_{ijki} \nabla_j H \nabla_k H
      \dmu
      \\
      &\phantom{=}
      + \int_\Sigma
      2  H^{-3} \nabla^2 H(\nabla H, \nabla H)
      \dmu
      \\
      &=
      \int_\Sigma
      \frac{|\Delta H|^2}{H^2}
      - H^{-2} \RiemSig_{ijki} \nabla_j H \nabla_k H
      \dmu
      \\
      &\phantom{=}
      + \int_\Sigma
      2 H^{-3} \nabla^2 H(\nabla H, \nabla H)
      - 2 H^{-3} |\nabla H|^2 \Delta H 
      \dmu.
    \end{split}
  \end{equation}
  In view of the Gauss equation (\ref{eq:4}) the curvature term yields 
  \begin{equation*}
    \begin{split}
      \RiemSig_{ijki}\nabla_j H \nabla_k H
      &=
      \big(
      \RiemM_{ijki}
      + \tfrac{1}{4} H^2 \gamma_{jk}
      - \Acirc_{ik}\Acirc_{ij}
      \big)
      \nabla_j H \nabla_k H
      \\      
      &=
      \frac{1}{4} H^2|\nabla H|^2
      + \RiemM_{ijki}\nabla_j H \nabla_k H
      - \Acirc_{ik}\Acirc_{ij}\nabla_j H \nabla_k H.
    \end{split}
  \end{equation*}
  Furthermore, we estimate
  \begin{equation*}
    \begin{split}
      &\int_\Sigma
      2 H^{-3} \nabla^2 H(\nabla H, \nabla H)
      - 2 H^{-3} |\nabla H|^2 \Delta H
      \dmu
      \\
      &\quad
      \leq
      \int_\Sigma \frac{1}{2} \frac{|\nabla^2 H|^2}{H^2} + c |\nabla
      \log H|^4 \dmu.
    \end{split}
  \end{equation*}
  The first term can be absorbed to the right hand side of
  equation~(\ref{eq:15}). We infer
  \begin{equation*}
    \begin{split}
      &\int_\Sigma
      \frac{1}{2}\frac{|\nabla^2 H|^2}{H^2}
      + \frac{1}{4} |\nabla H|^2
      \dmu
      \\
      &\quad
      \leq
      \int_\Sigma \frac{|\Delta H|^2}{H^2} + c |\nabla \log H|^4 + c|\Acirc|^4
      + c |\RiemM| |\nabla \log H|^2 \dmu.    
    \end{split}
  \end{equation*}
  We use $|\RiemM|\leq C\rmin^{-3}$ and lemma~\ref{thm:estimate_lapH}
  to conclude the claimed inequality.
\end{proof}
\begin{lemma}
  \label{thm:estimate_dA0}
 Under the assumtions of lemma~\ref{thm:initial-roundness} we have
  \begin{equation*}
    \int_\Sigma
    |\nabla \Acirc|^2 \dmu
    + \half H^2 |\Acirc|^2
    \dmu
    \leq
    \int_\Sigma |\omega|^2
    +
    C\rmin^{-3}\int_\Sigma |\Acirc|^2\dmu
    +
    \int_\Sigma
    |\nabla H|^2
    +
    |\Acirc|^4
    \dmu.
  \end{equation*}
\end{lemma}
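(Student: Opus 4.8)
The goal is to estimate $\int_\Sigma |\nabla\Acirc|^2 + \tfrac12 H^2|\Acirc|^2$, and the natural starting point is the Simons-type identity \eqref{eq:7} (or rather its contracted form \eqref{eq:8}), since this is the standard device relating $\la\Acirc,\Delta\Acirc\ra$ to curvature terms and $\nabla^2 H$. The plan is to integrate \eqref{eq:8} over $\Sigma$ and integrate the left-hand side by parts so that $\int_\Sigma \Acirc^{ij}\Delta\Acirc_{ij}\dmu = -\int_\Sigma |\nabla\Acirc|^2\dmu$. This immediately produces the $\int |\nabla\Acirc|^2$ term on the left with the correct sign. On the right-hand side one finds the good term $\tfrac12 H^2|\Acirc|^2$ (keep it on the left), the quartic term $-|\Acirc|^4$ (harmless sign, or bounded by $|\Acirc|^4$), the curvature terms $-|\Acirc|^2\Ric(\nu,\nu) + 2\Acirc^{ij}\Acirc_j^l\Ric_{il}$, which are pointwise $\leq C|\RiemM||\Acirc|^2 \leq C\rmin^{-3}|\Acirc|^2$ by the asymptotics, the Hessian term $\la\Acirc,\nabla^2 H\ra$, and the $\omega$-term $2\la\Acirc,\nabla\omega\ra$.

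For the $\omega$-term I would integrate by parts once more: $\int_\Sigma 2\la\Acirc,\nabla\omega\ra = -2\int_\Sigma \la\div\Acirc,\omega\ra = -2\int_\Sigma \la\tfrac12\nabla H + \omega,\omega\ra$ using the Codazzi-trace identity $\div\Acirc = \tfrac12\nabla H + \omega$ already noted in the text. This yields $-\int_\Sigma \la\nabla H,\omega\ra - 2\int_\Sigma |\omega|^2$; the first piece is absorbed by Young's inequality into $\tfrac12\int|\nabla H|^2$ plus $\tfrac12\int|\omega|^2$, and what survives is a multiple of $\int_\Sigma|\omega|^2$, matching the target. For the Hessian term I would write $\la\Acirc,\nabla^2 H\ra = \la\Acirc,(\nabla^2 H)^0\ra$ (since $\Acirc$ is traceless) and either integrate by parts to move a derivative onto $\Acirc$, producing $\int\la\div\Acirc,\nabla H\ra = \int\la\tfrac12\nabla H + \omega,\nabla H\ra$, or simply apply Young's inequality $\la\Acirc,\nabla^2 H\ra \leq \varepsilon|\nabla^2 H|^2/H^2 \cdot(\dots)$ — but the cleanest route is integration by parts, giving $\tfrac12|\nabla H|^2 + \omega(\nabla H)$ terms, again absorbable into $\int|\nabla H|^2$ and $\int|\omega|^2$. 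The remaining curvature terms are bounded using $|\RiemM|\leq C\rmin^{-3}$ exactly as at the end of the proof of Lemma~\ref{thm:estimate_d2H}, yielding the $C\rmin^{-3}\int_\Sigma|\Acirc|^2$ contribution.

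The main obstacle — really the only place requiring care — is bookkeeping the signs and the integration-by-parts boundary-free steps so that the genuinely good terms ($\int|\nabla\Acirc|^2$ and $\tfrac12\int H^2|\Acirc|^2$) end up on the left with positive coefficients, while every bad term is either (i) a quartic $|\Acirc|^4$ or $|\nabla H|^2$ or $|\omega|^2$ of the type allowed on the right, or (ii) a curvature term controlled by $\rmin^{-3}|\Acirc|^2$. One subtlety is that \eqref{eq:8} contains $2\Acirc^{ij}\Acirc_j^l\Ric_{il}$ rather than a term proportional to $|\Acirc|^2\Ric(\nu,\nu)$, so I must bound $|\Acirc^{ij}\Acirc_j^l\Ric_{il}| \leq |\Acirc|^2|\Ric^T| \leq C\rmin^{-3}|\Acirc|^2$ using that the full ambient Ricci tensor (hence its tangential part) is $O(\rmin^{-3})$ in the asymptotically Schwarzschild setting, which follows from Definition~\ref{def:asymptotically-flat} together with \eqref{eq:12}. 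No Sobolev inequality is needed at this stage; the estimate is purely an integrated Simons identity plus Cauchy–Schwarz, so the proof should be short.
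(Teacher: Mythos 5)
Your proposal is correct and takes essentially the same route as the paper: integrate the contracted Simons identity \eqref{eq:8}, integrate by parts on $\int \Acirc^{ij}\Delta\Acirc_{ij}$, on the Hessian term, and on the $\nabla\omega$ term, invoke the Codazzi identity $\div\Acirc = \half\nabla H + \omega$ to turn the resulting boundary-free terms into a multiple of $\int|\div\Acirc|^2 = \int|\half\nabla H + \omega|^2$, and bound the ambient curvature contractions pointwise by $C\rmin^{-3}|\Acirc|^2$. The paper packages the two integrations by parts into the single expression $2\int\la\div\Acirc,\half\nabla H + \omega\ra$ rather than treating them separately with Young's inequality, but this is only a cosmetic difference.
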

\begin{proof}
  Integrate equation~(\ref{eq:8}), and use integration by parts on the left hand
  side, and on the first and the last term on the right hand side to
  conclude
  \begin{equation*}
  \begin{split}
   \int_\Sigma |\nabla \Acirc|^2 \dmu+\frac{1}{2}\int_\Sigma H^2|\Acirc|^2 \dmu
    =&
    \int_\Sigma
    2 \la \div \Acirc, \half \nabla H + \omega\ra
    + |\Acirc|^4\\
    &+ |\Acirc|^2 \RicM(\nu,\nu)
    - 2 \Acirc^{ij}\Acirc^l_j\RicM_{il}
    \dmu.
   \end{split}     
  \end{equation*}
  From the Codazzi equation we conclude that $\div \Acirc = \half
  \nabla H + \omega$, and hence
  \begin{equation*}
    \int_\Sigma |\nabla \Acirc|^2 \dmu+\frac{1}{2}\int_\Sigma H^2|\Acirc|^2 \dmu
    \leq
    \int_\Sigma
    |\nabla H|^2
    + |\Acirc|^4
    + 4|\omega|^2
    + c|\Acirc|^2|\RiemM|
    \dmu.
  \end{equation*}
  In view of $|\RiemM|\leq C\rmin^{-3}$ the claimed estimate follows.
\end{proof}
Combining lemma~\ref{thm:estimate_d2H} and
lemma~\ref{thm:estimate_dA0}, we infer the following estimate.
\begin{lemma}
  \label{thm:estimate-d2H-dA0-A0-pre}
  Under the assumtions of lemma~\ref{thm:initial-roundness} we have
  \begin{equation*}
    \begin{split}
      &\int_\Sigma
      \frac{|\nabla^2 H|^2}{H^2}
      + |\nabla A|^2
      + |A|^2 |\Acirc|^2
      \dmu
      \\
      &\quad
      \leq
      c\int_\Sigma |\omega|^2 + \big(\RicM(\nu,\nu) +\lambda)^2 \dmu
      + c \int_\Sigma
      |\Acirc|^4 + |\nabla \log H|^4
      \dmu
      \\
      &\quad
      + C\rmin^{-3}\int_\Sigma |\nabla \log H|^2 + |\Acirc|^2 \dmu.
    \end{split}
  \end{equation*}
\end{lemma}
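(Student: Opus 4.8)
The statement is a bookkeeping consequence of lemma~\ref{thm:estimate_d2H} and lemma~\ref{thm:estimate_dA0}, once the left-hand side is rewritten in terms of $\Acirc$ and $H$. The plan is as follows. First I would record the pointwise identities on the two-dimensional surface $\Sigma$. Since $A=\Acirc+\tfrac12 H\gamma$, $\nabla\gamma=0$ and $\tr_\gamma\Acirc=0$, one has
\[
  |\nabla A|^2 = |\nabla\Acirc|^2 + \tfrac12|\nabla H|^2
  \qquad\text{and}\qquad
  |A|^2|\Acirc|^2 = |\Acirc|^4 + \tfrac12 H^2|\Acirc|^2 ,
\]
the second being just the relation $|A|^2 = |\Acirc|^2+\tfrac12 H^2$ recorded above, multiplied by $|\Acirc|^2$. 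Hence the integrand on the left-hand side of the claim equals
\[
  \frac{|\nabla^2 H|^2}{H^2} + |\nabla\Acirc|^2 + \tfrac12|\nabla H|^2 + \tfrac12 H^2|\Acirc|^2 + |\Acirc|^4 ,
\]
and since $\int_\Sigma|\Acirc|^4\dmu$ already appears on the right-hand side of the target inequality, it suffices to control $\int_\Sigma \frac{|\nabla^2 H|^2}{H^2} + |\nabla\Acirc|^2 + \tfrac12|\nabla H|^2 + \tfrac12 H^2|\Acirc|^2\dmu$.

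Next I would add lemma~\ref{thm:estimate_d2H} and lemma~\ref{thm:estimate_dA0}: their left-hand sides sum precisely to $\int_\Sigma \frac{|\nabla^2 H|^2}{H^2} + \tfrac12|\nabla H|^2 + |\nabla\Acirc|^2 + \tfrac12 H^2|\Acirc|^2\dmu$, which is exactly what is needed, and their right-hand sides already have the required shape — a universal multiple of $\int_\Sigma |\omega|^2 + (\RicM(\nu,\nu)+\lambda)^2 + |\Acirc|^4 + |\nabla\log H|^4\dmu$ plus $C\rmin^{-3}\int_\Sigma |\nabla\log H|^2 + |\Acirc|^2\dmu$ — with one exception: lemma~\ref{thm:estimate_dA0} carries an extra $\int_\Sigma|\nabla H|^2\dmu$ on its right-hand side, which cannot be absorbed by the $\tfrac12\int_\Sigma|\nabla H|^2\dmu$ on the left that we must retain in order to rebuild $|\nabla A|^2$.

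This leftover $\int_\Sigma|\nabla H|^2\dmu$ is really the only point requiring care. I would dispose of it by invoking lemma~\ref{thm:estimate_d2H} a second time: its left-hand side bounds $\tfrac12\int_\Sigma|\nabla H|^2\dmu$ from below, so $\int_\Sigma|\nabla H|^2\dmu$ is itself controlled by the admissible right-hand side. Concretely, forming the combination $3\times(\text{lemma~\ref{thm:estimate_d2H}})+(\text{lemma~\ref{thm:estimate_dA0}})$ and discarding the nonnegative surplus $2\int_\Sigma|\nabla^2 H|^2/H^2\dmu$ on the left, the $\int_\Sigma|\nabla H|^2\dmu$ produced by lemma~\ref{thm:estimate_dA0} is absorbed into the $\tfrac32\int_\Sigma|\nabla H|^2\dmu$ available on the left with $\tfrac12\int_\Sigma|\nabla H|^2\dmu$ to spare; moving the single remaining $-\int_\Sigma|\Acirc|^4\dmu$ to the right, collecting terms, using $|\RiemM|\leq C\rmin^{-3}$ (already built into the two input lemmas), and relabelling the absolute constants as $c$ and the $m,\eta_0,\sigma$-dependent ones as $C$, yields the assertion. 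There is no genuine obstacle: the whole argument is a weighted sum of the two preceding lemmas together with the elementary algebraic decomposition of $A$ into $\Acirc$ and $\tfrac12 H\gamma$.
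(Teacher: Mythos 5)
Your proposal is correct and is essentially the only reasonable reading of the paper's terse remark ``Combining lemma~\ref{thm:estimate_d2H} and lemma~\ref{thm:estimate_dA0}''; the paper itself supplies no further argument. You correctly record the two pointwise identities $|\nabla A|^2=|\nabla\Acirc|^2+\tfrac12|\nabla H|^2$ and $|A|^2|\Acirc|^2=|\Acirc|^4+\tfrac12 H^2|\Acirc|^2$, and you correctly flag the one non-trivial point — that the $\int_\Sigma|\nabla H|^2\dmu$ emitted by lemma~\ref{thm:estimate_dA0} overwhelms the $\tfrac12\int_\Sigma|\nabla H|^2\dmu$ coming from lemma~\ref{thm:estimate_d2H}, so a naive sum of the two lemmas has the wrong sign on $|\nabla H|^2$. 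Invoking lemma~\ref{thm:estimate_d2H} with a larger weight (your $3\times$, or equivalently using it once to bound $\int|\nabla H|^2$ and feeding that into lemma~\ref{thm:estimate_dA0}) repairs this and yields exactly the stated inequality, with the $|\Acirc|^4$ on the left absorbed into the $c\int|\Acirc|^4$ already present on the right.
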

At this point we need a variation on the multiplicative Sobolev
inequality from \cite[Lemma 2.5]{Kuwert-Schatzle:2001}.
\begin{lemma}
  \label{thm:mult-sobolev}
Under the assumtions of lemma~\ref{thm:initial-roundness} we have
  \begin{equation*}
    \begin{split}
      &\int_\Sigma
      |\Acirc|^4
      + |\nabla \log H|^4
      \dmu
      \\
      &      
      \leq
      c \left( \int_\Sigma |\Acirc|^2 + |\nabla \log H|^2 \dmu \right)
      \\
      & \qquad \cdot
      \left(
        \int_\Sigma
        \frac{|\nabla^2 H|^2}{H^2}
        + |\nabla A|^2
        + |\nabla \log H |^4
        + H^2|\Acirc|^2
        \dmu
      \right).
    \end{split}
  \end{equation*}
\end{lemma}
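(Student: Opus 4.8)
The statement is a multiplicative (Gagliardo–Nirenberg type) Sobolev inequality on the surface $\Sigma$, bounding the $L^4$-norms of $\Acirc$ and $\nabla\log H$ by the product of an $L^2$-norm of the same quantities and a higher-derivative term. The natural approach is to apply the Michael–Simon Sobolev inequality from Proposition~\ref{thm:sobolev} to the function $f = |\Acirc|$ (and analogously $f = |\nabla\log H|$) with exponent tuned so that $\int |f|^4$ appears on the left. Concretely, set $f = |\Acirc|$ and apply \eqref{eq:10} with $q$ chosen so that $|f|^q$ has the right power; equivalently, one can square \eqref{eq:9} applied to $g = |\Acirc|^2$, which gives $\int_\Sigma |\Acirc|^4 \leq C\big(\int_\Sigma |\nabla|\Acirc|^2| + H|\Acirc|^2\big)^2$. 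Then estimate $|\nabla|\Acirc|^2| \leq 2|\Acirc||\nabla\Acirc|$ pointwise (Kato), and similarly $|\nabla|\nabla\log H|^2| \leq 2|\nabla\log H|\,|\nabla^2\log H|$ with $\nabla^2\log H = H^{-1}\nabla^2 H - H^{-2}\nabla H \otimes \nabla H$, so $|\nabla|\nabla\log H|^2| \lesssim |\nabla\log H|(|\nabla^2 H|/H + |\nabla\log H|^2)$.

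After these pointwise substitutions, apply Cauchy–Schwarz to the resulting integrals of the form $\int_\Sigma |\Acirc|\,|\nabla\Acirc| + H|\Acirc|^2$: split each as a product of $|\Acirc|$ (in $L^2$) against $|\nabla\Acirc|$ or $H|\Acirc|$ (in $L^2$). This produces exactly a factor $\big(\int_\Sigma|\Acirc|^2\big)^{1/2}$ times $\big(\int_\Sigma |\nabla\Acirc|^2 + H^2|\Acirc|^2\big)^{1/2}$, and squaring gives the product structure claimed. The $\nabla\log H$ term is handled the same way, picking up $\int_\Sigma |\nabla^2 H|^2/H^2 + |\nabla\log H|^4$ and $\int_\Sigma H^2|\nabla\log H|^2$; the latter is absorbed, since $H$ is bounded (by the roundness estimates of Lemma~\ref{thm:initial-roundness}, $\int_\Sigma H^2 \leq 16\pi + C\rmin^{-1}$, and $H$ is pointwise comparable to $\rmin^{-1}$ on these surfaces) — or more carefully one keeps $H^2|\Acirc|^2$ in the big factor, which is precisely why it appears on the right-hand side of the stated inequality. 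Finally, add the two estimates, relabel $|\nabla A|^2 \sim |\nabla\Acirc|^2 + |\nabla H|^2$ and absorb cross terms, noting $|\nabla H|^2 \leq H^2 |\nabla\log H|^2 \lesssim$ (bounded)$\cdot|\nabla\log H|^2$ so it is controlled by the $L^2$ factor, to arrive at exactly the displayed bound.

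**Main obstacle.** The delicate point is bookkeeping the weights of $H$: one must be careful that the quantity $\nabla^2\log H$ genuinely produces $|\nabla^2 H|^2/H^2 + |\nabla\log H|^4$ and nothing worse, and that the term $H^2|\Acirc|^2$ (rather than $|\Acirc|^4$, which would be circular) is what survives on the right. One also needs the Sobolev constant $C_s$ from Proposition~\ref{thm:sobolev} to be applicable, i.e.\ $\rmin \geq r_0$, which is part of the standing assumptions of Lemma~\ref{thm:initial-roundness}. No genuinely hard analysis is involved beyond this careful tracking of scaling-weighted terms and the pointwise Kato inequalities.
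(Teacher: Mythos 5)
Your proposal is essentially the paper's proof: apply the Michael--Simon--Sobolev inequality \eqref{eq:9} to $|\Acirc|^2$ and $|\nabla\log H|^2$, use Kato plus $\nabla^2\log H = H^{-1}\nabla^2 H - H^{-2}\nabla H\otimes\nabla H$ to bound the gradients pointwise, and finish with Cauchy--Schwarz, absorbing $|\nabla H|^2$ and $|\nabla\Acirc|^2$ into $|\nabla A|^2$. One small caution: the aside that $H$ is pointwise bounded (comparable to $\rmin^{-1}$) is not yet available at this stage --- that only comes in Proposition~\ref{thm:pointwiseH}, which lies downstream --- and it is also not needed, since $|\nabla H|^2 = H^2|\nabla\log H|^2$ lands in the second (big) factor and is dominated by $|\nabla A|^2$ there, exactly as your relabeling step already provides.
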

\begin{proof}
 We use the Michael-Simon-Sobolev inequality from Proposition
  \ref{thm:sobolev} and H\"older's inequality to estimate
  \begin{equation*}
    \begin{split}
      &
      \left(\int_\Sigma \big(|\nabla \log H|^2\big)^2 \dmu
      \right)^{1/2}
      \\
      &\quad
      \leq
      c \int_\Sigma \frac{|\nabla^2 H|}{H} |\nabla \log H| +
      |\nabla \log H|^3 + H |\nabla \log H|^2 \dmu
      \\
      &\quad
      \leq
      c \left(\int_\Sigma |\nabla \log H|^2\dmu\right)^{1/2}
      \left(
        \int_\Sigma
        \frac{|\nabla^2 H|^2}{H^2}
        + |\nabla \log H |^4
        + |\nabla H|^2
        \dmu
      \right)^{1/2}.
    \end{split}
  \end{equation*}
  Furthermore
  \begin{equation*}    
    \begin{split}
      \left( \int_\Sigma |\Acirc|^4\dmu \right)^{1/2}
      &
      \leq
      c 
      \int_\Sigma |\Acirc| |\nabla \Acirc| + H |\Acirc|^2
      \dmu
      \\
      &\leq
      c 
      \left( \int_\Sigma |\Acirc|^2 \dmu \right)^{1/2}
      \left( \int_\Sigma |\nabla \Acirc|^2 + H^2 |\Acirc|^2\dmu \right)^{1/2}.
    \end{split}
  \end{equation*}
  Combining both inequalities yields the claim.
\end{proof}
The estimates above yield the initial curvature estimates.
\begin{theorem}
  \label{thm:curv-est-1}
  For every $m,\eta,\sigma$ there exist constants
  $r_0=r_0(m,\eta,\sigma)$ and $C=C(m,\eta,\sigma)$ with the following
  properties:

  If $(M,g)$ is $(m,\eta,\sigma)$-asymptotically
  Schwarzschild and $\Sigma \subset M \setminus B_{r_0}$ satisfies
  equation~(\ref{eq:1}) with $H>0$ and $\lambda>0$, then $\Sigma$
  satisfies the estimate
  \begin{equation*}
    \begin{split}
      &\int_\Sigma \frac{|\nabla^2 H|^2}{H^2} + |\nabla A|^2 + |\nabla
      \log H|^4 + |A|^2|\Acirc|^2 \dmu
      \\
      &\quad\leq c \int_\Sigma |\omega|^2 + \big(\RicM(\nu,\nu) +
      \lambda \big)^2 \dmu + C \rmin^{-3} \int_\Sigma|\nabla\log H|^2
      + |\Acirc|^2\dmu
    \end{split}
  \end{equation*}
\end{theorem}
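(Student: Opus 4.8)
The plan is to feed Lemma~\ref{thm:estimate-d2H-dA0-A0-pre} and Lemma~\ref{thm:mult-sobolev} into each other and absorb, using the smallness of the total curvature from Lemma~\ref{thm:initial-roundness}. Write
\[
  Q = \int_\Sigma \tfrac{|\nabla^2 H|^2}{H^2} + |\nabla A|^2 + |\nabla\log H|^4 + |A|^2|\Acirc|^2\dmu
\]
for the left-hand side of the claimed estimate,
\[
  B = c\int_\Sigma |\omega|^2 + \big(\RicM(\nu,\nu)+\lambda\big)^2\dmu + C\rmin^{-3}\int_\Sigma |\nabla\log H|^2 + |\Acirc|^2\dmu
\]
for the target right-hand side, and $K = \int_\Sigma |\Acirc|^4 + |\nabla\log H|^4\dmu$ for the quartic terms.

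First I would record the elementary pointwise facts in two dimensions coming from $|A|^2 = |\Acirc|^2 + \tfrac12 H^2$, namely $|\Acirc|^4 \leq |A|^2|\Acirc|^2$ and $H^2|\Acirc|^2 \leq 2|A|^2|\Acirc|^2$. Consequently the integrand on the right of Lemma~\ref{thm:mult-sobolev} is pointwise bounded by twice the integrand of $Q$, so that lemma gives
\[
  K \leq c\Big(\int_\Sigma |\Acirc|^2 + |\nabla\log H|^2\dmu\Big)\cdot 2Q.
\]
Now invoke Lemma~\ref{thm:initial-roundness}, which says $\int_\Sigma |\Acirc|^2 + |\nabla\log H|^2\dmu \leq C\rmin^{-1}$; hence $K \leq C\rmin^{-1}Q$.

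Next I would rewrite Lemma~\ref{thm:estimate-d2H-dA0-A0-pre} as $\int_\Sigma \tfrac{|\nabla^2 H|^2}{H^2} + |\nabla A|^2 + |A|^2|\Acirc|^2\dmu \leq B + cK$, add the trivial bound $\int_\Sigma |\nabla\log H|^4\dmu \leq K$ to both sides to produce the full left-hand side $Q$, and combine with the previous step:
\[
  Q \leq B + (c+1)K \leq B + (c+1)C\rmin^{-1}\,Q .
\]
Finally, choose $r_0 = r_0(m,\eta,\sigma)$ so large that $(c+1)C\rmin^{-1} \leq \tfrac12$ whenever $\rmin \geq r_0$; then the last term is absorbed into the left-hand side and $Q \leq 2B$, which is the assertion after relabelling the constants $c,C$ (which depend only on $m,\eta,\sigma$).

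\textbf{Main obstacle.} There is no substantive difficulty here: the argument is a routine absorption. The only point requiring care is that the quartic quantities $\int_\Sigma |\Acirc|^4 + |\nabla\log H|^4\dmu$ produced by Lemma~\ref{thm:estimate-d2H-dA0-A0-pre} also appear (through $|\nabla\log H|^4$ and through $|A|^2|\Acirc|^2 = |\Acirc|^4 + \tfrac12 H^2|\Acirc|^2$) on the left-hand side, so they genuinely must be moved to the left; this works precisely because the multiplicative Sobolev inequality of Lemma~\ref{thm:mult-sobolev} supplies the small prefactor $\int_\Sigma |\Acirc|^2 + |\nabla\log H|^2\dmu \leq C\rmin^{-1}$. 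One should also keep track of the two-dimensional pointwise comparisons between $|A|^2|\Acirc|^2$, $H^2|\Acirc|^2$ and $|\Acirc|^4$ so that the various integral expressions in the two lemmas are matched up correctly.
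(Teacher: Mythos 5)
Your proof is correct and follows essentially the same route as the paper, which simply invokes Lemma~\ref{thm:initial-roundness}, Lemma~\ref{thm:estimate-d2H-dA0-A0-pre} and Lemma~\ref{thm:mult-sobolev}; you have supplied the absorption bookkeeping (and the pointwise comparisons $|\Acirc|^4\leq |A|^2|\Acirc|^2$, $H^2|\Acirc|^2\leq 2|A|^2|\Acirc|^2$) that the paper leaves implicit.
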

\begin{proof}
  This is a consequence of lemma~\ref{thm:initial-roundness}, lemma \ref{thm:estimate-d2H-dA0-A0-pre} and lemma~\ref{thm:mult-sobolev}. 
\end{proof}
\begin{corollary}
  \label{thm:A0-decay}
  Under the assumptions of theorem~\ref{thm:curv-est-1} we have the
  estimate
  \begin{equation*}  
    \int_\Sigma
    \frac{|\nabla^2 H|^2}{H^2}
    + |\nabla A|^2
    + |\nabla \log H|^4
    + |A|^2|\Acirc|^2
    \dmu
    \leq
    C \rmin^{-4} + C\rmin^{-2} |\Sigma|^{-1}
  \end{equation*}
\end{corollary}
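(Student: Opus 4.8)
The plan is to feed the three error terms on the right-hand side of theorem~\ref{thm:curv-est-1} into the decay estimates already established, namely lemma~\ref{thm:integral-powers} and lemma~\ref{thm:initial-roundness}.

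First I would treat the curvature terms $\int_\Sigma |\omega|^2 + \RicM(\nu,\nu)^2 \dmu$. Since $(M,g)$ is $(m,\eta,\sigma)$-asymptotically Schwarzschild, equation~\eqref{eq:12} together with definition~\ref{def:asymptotically-flat} yields the pointwise bound $|\RicM| \leq C r^{-3}$, and since $\omega = \Ric(\nu,\cdot)^T$ is a tangential projection we also have $|\omega| \leq |\RicM| \leq C r^{-3}$. Hence both integrands are dominated by $C r^{-6}$, and lemma~\ref{thm:integral-powers} applied with $p = 6$ gives $\int_\Sigma r^{-6}\dmu \leq c\,\rmin^{-4}\int_\Sigma H^2\dmu$. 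By lemma~\ref{thm:initial-roundness} we have $\int_\Sigma H^2\dmu \leq 16\pi + C\rmin^{-1} \leq C$, so that
\[
  \int_\Sigma |\omega|^2 + \RicM(\nu,\nu)^2 \dmu \leq C\rmin^{-4}.
\]

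Next comes the genuinely $\lambda$-dependent contribution. Using $(\RicM(\nu,\nu)+\lambda)^2 \leq 2\RicM(\nu,\nu)^2 + 2\lambda^2$, the cross term is absorbed into the estimate above and it remains to bound $\int_\Sigma \lambda^2 \dmu = \lambda^2|\Sigma|$. The key observation is that lemma~\ref{thm:initial-roundness} gives $\lambda|\Sigma| \leq C\rmin^{-1}$, whence
\[
  \lambda^2|\Sigma| = \frac{(\lambda|\Sigma|)^2}{|\Sigma|} \leq C\rmin^{-2}|\Sigma|^{-1}.
\]
This is exactly the second term in the claimed estimate. I do not expect this term to be absorbable into $C\rmin^{-4}$ in general, since $|\Sigma|$ need not be comparable to $\rmin^2$ under the hypotheses at hand; keeping it separate is the main (and only mildly delicate) point of the bookkeeping.

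Finally, the lower-order term $C\rmin^{-3}\int_\Sigma |\nabla\log H|^2 + |\Acirc|^2\dmu$ is handled directly by the roundness bound $\int_\Sigma |\Acirc|^2 + |\nabla\log H|^2\dmu \leq C\rmin^{-1}$ from lemma~\ref{thm:initial-roundness}, contributing at most $C\rmin^{-4}$. Substituting the three estimates into theorem~\ref{thm:curv-est-1} gives
\[
  \int_\Sigma \frac{|\nabla^2 H|^2}{H^2} + |\nabla A|^2 + |\nabla\log H|^4 + |A|^2|\Acirc|^2 \dmu \leq C\rmin^{-4} + C\rmin^{-2}|\Sigma|^{-1},
\]
which is the assertion. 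Every step is a direct application of a previously proved lemma, so beyond the $\lambda^2|\Sigma|$ manipulation there is no real obstacle.
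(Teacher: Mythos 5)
Your proof is correct and takes the same route as the paper: the paper's own argument is a one-liner citing the pointwise bound $|\omega|+|\Ric|\leq Cr^{-3}$, lemma~\ref{thm:integral-powers}, and lemma~\ref{thm:initial-roundness}, and your write-up simply fills in those details. In particular the manipulation $\lambda^2|\Sigma| = (\lambda|\Sigma|)^2/|\Sigma| \leq C\rmin^{-2}|\Sigma|^{-1}$ is exactly how the second term in the claimed estimate arises, and you are right that it cannot in general be absorbed into $C\rmin^{-4}$ at this stage of the argument.
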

\begin{proof}
  The claim follows in view of $|\omega| + |\Ric| \leq C r^{-3}$,
  lemma~\ref{thm:integral-powers} and the estimates from
  lemma~\ref{thm:initial-roundness}.
\end{proof}
\begin{corollary}
\label{thm:A0-l2-1}
  Under the assumptions of theorem \ref{thm:curv-est-1}, we have the
  estimate
  \begin{equation*}
    \int_\Sigma |\Acirc|^2 + |\nabla \log H|^2 \dmu
    \leq
    C \rmin^{-4}|\Sigma| + C\rmin^{-2}.
  \end{equation*}
\end{corollary}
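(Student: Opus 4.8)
The plan is to combine the integral identity~\eqref{eq:2} with a second one obtained by integrating the Euler--Lagrange equation~\eqref{eq:1} directly, and then to feed in the sharp curvature estimates from Corollary~\ref{thm:A0-decay}. Write $\mathcal{E} := \int_\Sigma |\Acirc|^2 + |\nabla\log H|^2\dmu$ and $\bar H := |\Sigma|^{-1}\int_\Sigma H\dmu$. Identity~\eqref{eq:2} reads $\lambda|\Sigma| = -\mathcal{E} - \int_\Sigma\RicM(\nu,\nu)\dmu$, while integrating~\eqref{eq:1} over $\Sigma$ (the $\Delta H$-term integrates to zero and $-H|A|^2+\tfrac12 H^3 = -H|\Acirc|^2$) gives $\lambda\int_\Sigma H\dmu = -\int_\Sigma H|\Acirc|^2\dmu - \int_\Sigma H\RicM(\nu,\nu)\dmu$. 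Eliminating $\lambda$ between these two relations — multiply the first by $\bar H$ and subtract from the second — yields the key identity
\[
\bar H\,\mathcal{E} = \int_\Sigma H|\Acirc|^2\dmu + \int_\Sigma (H-\bar H)\RicM(\nu,\nu)\dmu .
\]
This is where the improvement over Lemma~\ref{thm:initial-roundness} originates: $\int_\Sigma\RicM(\nu,\nu)\dmu$ by itself is only of order $\rmin^{-1}$, but after its cancellation with $\lambda|\Sigma|$ the right-hand side involves only the traceless second fundamental form and the \emph{oscillation} of $H$.

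Next I would estimate the three pieces. For the lower bound on $\bar H$, apply the Michael--Simon--Sobolev inequality~\eqref{eq:9} with $f\equiv 1$; since $H>0$ this gives $|\Sigma|^{1/2}\le C_s\int_\Sigma H\dmu$, i.e. $\bar H^{-1}\le C|\Sigma|^{1/2}$. For the first term on the right of the key identity, Cauchy--Schwarz together with $H^2\le 2|A|^2$ and $\int_\Sigma|\Acirc|^2\dmu\le\mathcal{E}$ gives, using Corollary~\ref{thm:A0-decay},
\[
\int_\Sigma H|\Acirc|^2\dmu \le \Big(2\int_\Sigma |A|^2|\Acirc|^2\dmu\Big)^{1/2}\mathcal{E}^{1/2} \le C\big(\rmin^{-2} + \rmin^{-1}|\Sigma|^{-1/2}\big)\mathcal{E}^{1/2}.
\]

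The second term is the delicate one, and I expect the Poincaré step there to be the main obstacle. Writing $\int_\Sigma(H-\bar H)\RicM(\nu,\nu)\dmu \le \|H-\bar H\|_{L^2(\Sigma)}\|\RicM(\nu,\nu)\|_{L^2(\Sigma)}$, the factor $\|\RicM(\nu,\nu)\|_{L^2(\Sigma)}\le C\rmin^{-2}$ follows from $|\RicM|\le Cr^{-3}$ and Lemma~\ref{thm:integral-powers}. The oscillation factor $\|H-\bar H\|_{L^2(\Sigma)}$ requires a Poincaré inequality on $\Sigma$ with constant of the right order: Lemma~\ref{thm:initial-roundness} gives $\|\Acirc\|_{L^2(\Sigma,g)}\le C\rmin^{-1/2}$, so Corollary~\ref{koro:acirc} and Theorem~\ref{thm:umbilical} show (for $\rmin$ large) that $\Sigma$ is $W^{2,2}$-close to a Euclidean round sphere $S_{R_e}(a_e)$ with conformal factor uniformly close to $1$; such surfaces — after comparing $\gamma^g$ with $\gamma^e$ — satisfy a Poincaré inequality with constant $\le C|\Sigma|$. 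Combined with $|\nabla H|^2\le 2|\nabla A|^2$ and Corollary~\ref{thm:A0-decay} this yields $\|H-\bar H\|_{L^2(\Sigma)}^2 \le C|\Sigma|\int_\Sigma |\nabla A|^2\dmu \le C(\rmin^{-4}|\Sigma| + \rmin^{-2})$. Inserting the three estimates into the key identity and multiplying through by $\bar H^{-1}\le C|\Sigma|^{1/2}$ gives
\[
\mathcal{E} \le C\big(\rmin^{-2}|\Sigma|^{1/2} + \rmin^{-1}\big)\mathcal{E}^{1/2} + C\big(\rmin^{-4}|\Sigma| + \rmin^{-3}|\Sigma|^{1/2}\big),
\]
and Young's inequality (absorbing the $\mathcal{E}^{1/2}$-term into the left side, and using $\rmin^{-3}|\Sigma|^{1/2}\le \tfrac12\rmin^{-4}|\Sigma| + \tfrac12\rmin^{-2}$) produces the claimed bound $\mathcal{E}\le C\rmin^{-4}|\Sigma| + C\rmin^{-2}$. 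The point to stress is that the crude estimate $\|\RicM(\nu,\nu)\|_{L^1(\Sigma)}\le C\rmin^{-1}$ only reproduces Lemma~\ref{thm:initial-roundness}; to gain the extra decay one genuinely has to exploit both the almost-roundness of $\Sigma$ (for the Poincaré constant) and the gradient estimate on $A$ from Corollary~\ref{thm:A0-decay}.
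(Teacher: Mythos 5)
Your argument is correct, but it is a genuinely different route from the paper's. The paper's proof is a short bootstrap: it applies the Michael--Simon Sobolev inequality \eqref{eq:9} with $f=|\Acirc|$ (and separately $f=|\nabla\log H|$), uses Kato's inequality $\big|\nabla|\Acirc|\big|\leq|\nabla\Acirc|$ and Cauchy--Schwarz to get
\[
\left(\int_\Sigma |\Acirc|^2\dmu\right)^{1/2} \leq C|\Sigma|^{1/2}\left(\int_\Sigma |\nabla\Acirc|^2 + H^2|\Acirc|^2\dmu\right)^{1/2},
\]
and then inserts Corollary~\ref{thm:A0-decay} directly. No further use of the Euler--Lagrange equation and no Poincar\'e inequality is needed. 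Your proof instead exploits the structure of equation~\eqref{eq:1} a second time: you integrate it against $1$ and against $H^{-1}$, eliminate $\lambda$ to get the identity $\bar H\,\CE = \int_\Sigma H|\Acirc|^2\dmu + \int_\Sigma(H-\bar H)\RicM(\nu,\nu)\dmu$, and then estimate the oscillation of $H$ by a Poincar\'e inequality whose constant you control via the De Lellis--M\"uller almost-umbilicity theorem~\ref{thm:umbilical}. That cancellation of $\int\RicM(\nu,\nu)\dmu$ against $\lambda|\Sigma|$ is a nice structural observation and is indeed the reason you beat the $O(\rmin^{-1})$ bound of Lemma~\ref{thm:initial-roundness}. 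The price is that you invoke Theorem~\ref{thm:umbilical} and the comparison of $\gamma^g$ with $\gamma^e$ to get the Poincar\'e constant $\leq C|\Sigma|$ --- machinery the paper does not need at this point --- and your argument is tied to the specific form of the equation, whereas the paper's route is a pure regularity bootstrap that only uses the functional inequalities from Proposition~\ref{thm:sobolev} together with the curvature bound already established. Both derivations are valid; the paper's is considerably shorter and more robust.
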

\begin{proof}
  This follows from the Michael-Simon-Sobolev inequality and Kato`s inequality. For example
  \begin{equation*}
    \begin{split}
      \left(\int_\Sigma |\Acirc|^2\dmu \right)^{1/2}
      &
      \leq    
      C_s \int_\Sigma \big|\nabla |\Acirc|\big| + H |\Acirc| \dmu
      \\
      &
      \leq
      c C_s |\Sigma|^{1/2}
      \left(
        \int_\Sigma
        |\nabla \Acirc|^2 + H^2 |\Acirc|^2
        \dmu
      \right)^{1/2}      
    \end{split}
  \end{equation*}
  Using corollary \ref{thm:A0-decay} the claimed inequality for
  $\int|\Acirc|^2\dmu$ follows. The proof for $\int|\nabla\log H|^2
  \dmu$ is similar.
\end{proof}

%%% Local Variables: 
%%% mode: latex
%%% TeX-master: "master"
%%% End: 

%
\section{Improved curvature estimates}
\label{sec:impr-curv-estim}
Before we can approach the position estimates, we discuss how the
decay rates in the curvature estimates in
section~\ref{sec:integr-curv-estim} can be improved. First we note
that the estimates in section~\ref{sec:integr-curv-estim} and
theorem~\ref{thm:umbilical} imply that solutions to
equation~\eqref{eq:1} are close to spheres.
\begin{proposition}
  \label{thm:sphere-approx-1}
  Let $R_e$ be the geometric area radius of $\Sigma$ with respect to
  the Euclidean metric, i.e. $\int_\Sigma\dmu^e = 4\pi R_e^2$, and let $a_e$ be
  the Euclidean center of gravity of $\Sigma$, that is
  \begin{equation*}
    a_e = \frac{\int_\Sigma \id_\Sigma \dmu^e}{\int_\Sigma \dmu^e}.
  \end{equation*}
  Let $S:=S_{R_e}(a_e)$ be the sphere of radius $R_e$ centered at $a_e$ and let
  $N$ be the Euclidean normal of $S$. Then there exists a conformal
  parameterization $\psi : S \to (\Sigma,\gamma^e)$ with conformal
  factor $h^2$ satisfying the following estimates.
  \begin{align}
    \label{eq:19}
    &\sup_S |\psi -\id_S| \leq C R_e \big( \|\Acirc\|_{L^2} + \eta\rmin^{-2} \big)
    \\
    \label{eq:20}
    &\| N \circ \id_S -\nu^e \circ \psi \|_{L^2(S)}
    \leq
    C R_e \big( \|\Acirc\|_{L^2} + \eta\rmin^{-2} \big)
    \\
    \label{eq:21}
    &\sup_S | h^2 - 1 |
    \leq
    C \big( \|\Acirc\|_{L^2} + \eta\rmin^{-2} \big)
  \end{align}
\end{proposition}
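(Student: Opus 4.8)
The plan is to reduce Proposition~\ref{thm:sphere-approx-1} to the scale-invariant form of the Euclidean almost-umbilical theorem, namely estimates \eqref{eq:16}, \eqref{eq:17} and \eqref{eq:18}, which are stated for surfaces in $(\IR^3,g^e)$ in terms of $\|\Acirc^e\|_{L^2(\Sigma,\gamma^e)}$. The only gap between those estimates and the claimed ones is that the right-hand side there involves the \emph{Euclidean} traceless second fundamental form, whereas \eqref{eq:19}--\eqref{eq:21} are phrased in terms of $\|\Acirc\|_{L^2}$ (the $g$-quantity) plus an error $\eta\rmin^{-2}$. Bridging this gap is exactly what Corollary~\ref{koro:acirc} does: it gives $\|\Acirc^e\|_{L^2(\Sigma)} \leq C\|\Acirc\|_{L^2(\Sigma,g)} + C\eta\rmin^{-2}$, provided $\|H\|_{L^2(\Sigma)}$ is bounded. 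So the first step is to check the hypotheses of that corollary are in force under the assumptions of this section.

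First I would record that $\Sigma$ satisfies \eqref{eq:1} with $H>0$ and $\lambda>0$ and lies outside $B_{r_0}$, so all the a priori estimates of Section~\ref{sec:integr-curv-estim} apply. In particular Lemma~\ref{thm:initial-roundness} gives $\big|\int_\Sigma H^2\dmu - 16\pi\big| \leq C\rmin^{-1}$, hence $\|H\|_{L^2(\Sigma)} \leq C'$ for $r_0$ large; this is precisely the extra hypothesis needed to invoke Corollary~\ref{koro:acirc}. Thus $\|\Acirc^e\|_{L^2(\Sigma,\gamma^e)} \leq C\|\Acirc\|_{L^2(\Sigma,g)} + C\eta\rmin^{-2}$. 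Next I would need to know that $\|\Acirc^e\|_{L^2(\Sigma,\gamma^e)} \leq 8\pi$ so that the second part of Theorem~\ref{thm:umbilical} (existence of the conformal parametrization $\psi$ and the estimates \eqref{eq:16}--\eqref{eq:18}) is available. This follows because Corollary~\ref{thm:A0-l2-1} gives $\|\Acirc\|_{L^2(\Sigma,g)}^2 \leq C\rmin^{-4}|\Sigma| + C\rmin^{-2}$, and since $|\Sigma| \leq C R_e^2$ with $R_e \leq \eps\rmin^2$ in the regime we care about — or more simply since Lemma~\ref{thm:initial-roundness} forces $\lambda|\Sigma| \leq C\rmin^{-1}$ and the area is comparable to that of the approximating sphere — the right-hand side is small once $r_0$ is large, and the same for the $\eta\rmin^{-2}$ error; so $\|\Acirc^e\|_{L^2} \to 0$ uniformly and in particular is below $8\pi$.

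With $\psi: S_{R_e}(a_e) \to \Sigma$ now in hand, the conclusion is immediate: estimates \eqref{eq:16}, \eqref{eq:17}, \eqref{eq:18} read
\begin{equation*}
  \sup_S|\psi - \id_S| \leq C R_e \|\Acirc^e\|_{L^2},
  \quad
  \|N\circ\id_S - \nu^e\circ\psi\|_{L^2(S)} \leq C R_e \|\Acirc^e\|_{L^2},
  \quad
  \sup_S|h^2 - 1| \leq C\|\Acirc^e\|_{L^2},
\end{equation*}
and substituting $\|\Acirc^e\|_{L^2} \leq C\|\Acirc\|_{L^2} + C\eta\rmin^{-2}$ yields exactly \eqref{eq:19}, \eqref{eq:20}, \eqref{eq:21}. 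The main obstacle is not any single estimate but the bookkeeping of constants and smallness: one must be careful that $r_0$ and $\eta_0$ can be chosen (depending only on $m,\sigma$) so that simultaneously $\rmin \geq r_0$ forces $\|\Acirc^e\|_{L^2(\Sigma,\gamma^e)} \leq 8\pi$, that $\rmin$ is large enough for Corollary~\ref{koro:acirc}, Corollary~\ref{thm:A0-l2-1} and Lemma~\ref{thm:integral-powers} to apply, and that the area $|\Sigma|$ appearing in Corollary~\ref{thm:A0-l2-1} is controlled in terms of $R_e$ (equivalently $\rmin$); this last point uses that $\Sigma$ is close to the sphere $S$ of radius $R_e$, which is a mild circularity that is resolved because the area radius $R_e$ is defined directly from $|\Sigma|_e$ and $|\Sigma|_e$ is comparable to $|\Sigma|_g$ by the $(m,\eta,\sigma)$-asymptotics.
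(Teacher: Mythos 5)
Your proposal is essentially correct and follows the same route the paper takes: the paper's one-line proof reads ``This follows immediately from corollary~\ref{koro:acirc}, theorem~\ref{thm:umbilical} and corollary \ref{thm:A0-decay}.'' You correctly identify that the work is in checking the hypotheses of corollary~\ref{koro:acirc} and of the second part of theorem~\ref{thm:umbilical}, and then substituting the resulting bound for $\|\Acirc^e\|_{L^2}$ into the scale-invariant estimates \eqref{eq:16}--\eqref{eq:18}.

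One remark on the smallness of $\|\Acirc^e\|_{L^2}$: you reach for corollary~\ref{thm:A0-l2-1}, which produces $\|\Acirc\|^2_{L^2}\leq C\rmin^{-4}|\Sigma|+C\rmin^{-2}$ and then forces you to worry about controlling $|\Sigma|$ by $R_e^2$ and to assume $R_e\leq\eps\rmin^2$. This detour is unnecessary and imports an assumption (\eqref{eq:23}) that is only introduced \emph{after} the proposition in the paper. The direct route, which you already have the ingredient for, is lemma~\ref{thm:initial-roundness}: it gives $\int_\Sigma|\Acirc|^2\dmu\leq C\rmin^{-1}$ with no further hypotheses on $R_e$ or $\tau$, so $\|\Acirc\|_{L^2}$ is small for $\rmin$ large, and corollary~\ref{koro:acirc} then makes $\|\Acirc^e\|_{L^2}$ small outright. (Your side remark about $\lambda|\Sigma|\leq C\rmin^{-1}$ does not bound $|\Sigma|$ since $\lambda$ may be small; it is the $\|\Acirc\|_{L^2}$ estimate from lemma~\ref{thm:initial-roundness} that you want.) Apart from this the argument is sound and matches the paper.
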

\begin{proof}
  This follows immediately from corollary~\ref{koro:acirc}, theorem~\ref{thm:umbilical} and corollary \ref{thm:A0-decay}.
\end{proof}
In the sequel, an essential quantity will be the ratio between the
center of mass and the radius of the approximating sphere. We denote it by
\begin{equation}
  \label{eq:22}
  \tau : = \frac{|a_e|}{R_e},
\end{equation}
where $a_e$ and $R_e$ are as in proposition~\ref{thm:sphere-approx-1}. Note that by corollary \ref{thm:A0-decay} and \eqref{eq:19} we have
\begin{align}
  \rmin
  &\geq
  R_e - |a_e| - CR_e(||\Acirc||_{L^2} + \eta \rmin^{-2}) \nonumber \\
  &\geq
  R_e(1-\tau) - C_1 R_e (R_e \rmin^{-2} + \rmin^{-1} + \eta\rmin^{-2}).\label{comparer}
\end{align}
Analogously we can estimate $\rmin$ from above. If we now assume that
\begin{equation}
  \label{eq:23}
  \tau \leq (1-\eps) \quad \text{and}\quad R_e\leq \frac{\eps}{4C_1}\rmin^{2}
\end{equation}
for some arbitrary $\eps>0$, we get for $\rmin$ large enough  
\begin{align*} 
  C_1(R_e \rmin^{-2} + \rmin^{-1} + \eta\rmin^{-2}) \leq \frac{\eps}{2}
\end{align*} 
and this shows that 
\begin{equation*}
  C^{-1} \rmin \leq R_e \leq C \rmin.
\end{equation*}
Hence $R_e$ and $\rmin$ are comparable to each other and
therefore we will not distinguish between them any more and we phrase
the estimates only in terms of $\rmin$. Constants $C$ in this section
will also depend on $\eps$.

We can use the fact that $\Sigma$ is well approximated by a round
sphere to compute a precise expression for $\lambda$.
\begin{proposition}
  \label{thm:compute_lambda}
  If $(M,g)$ and $\Sigma$ are as in theorem~\ref{thm:curv-est-1}, then
  if assumption~\eqref{eq:23} holds, we have
  \begin{equation*}
    \left| \lambda - \frac{2m}{R_S^3} \right|
    \leq
    C\rmin^{-2} \big(\|\Acirc\|_{L^2}^2 + \|\nabla \log H\|_{L^2}^2\big)
    +
    C\rmin^{-4}(\tau + \rmin\|\Acirc\|_{L^2} + \eta\rmin^{-1}) 
  \end{equation*}
  Here we set $R_S := \bar\phi^2 R_e$ where $\bar\phi = \phi(R_e) = 1
  + \frac{m}{2R_e}$.
\end{proposition}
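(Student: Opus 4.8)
The plan is to extract $\lambda$ by testing equation~\eqref{eq:1} against a well-chosen function and evaluating every resulting integral on the approximating sphere $S = S_{R_e}(a_e)$, controlling all errors via the a priori estimates of Section~\ref{sec:integr-curv-estim}. The most natural choice is to multiply~\eqref{eq:1} by $H^{-1}$ and integrate, which gives the clean identity~\eqref{eq:2},
\begin{equation*}
  \lambda|\Sigma| = -\int_\Sigma |\nabla\log H|^2 + |\Acirc|^2 + \RicM(\nu,\nu)\dmu.
\end{equation*}
The first two terms on the right are quadratic in the "error" quantities and, by Corollary~\ref{thm:A0-l2-1}, are $O(\rmin^{-2})$, which after dividing by $|\Sigma|\sim\rmin^2$ contributes at the level $\rmin^{-4}$ — but I will need to be more careful, since the stated estimate keeps $\|\Acirc\|_{L^2}^2 + \|\nabla\log H\|_{L^2}^2$ as an explicit term multiplied by $\rmin^{-2}$ rather than bounding it further. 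The key term to pin down precisely is $\int_\Sigma \RicM(\nu,\nu)\dmu$.

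The second step is to compare $\int_\Sigma\RicM(\nu,\nu)\dmu$ with the corresponding Schwarzschild integral $\int_\Sigma \Ric^S(\nu^S,\nu^S)\dmu^S$, using Definition~\ref{def:asymptotically-flat} (so $|\Ric - \Ric^S|\le\eta r^{-4}$), Lemma~\ref{thm:general-to-schwarzschild} (to swap $\nu$ for $\nu^S$ and $\dmu$ for $\dmu^S$), and Lemma~\ref{thm:integral-powers} to bound the error integrals of $r^{-4}$ and $r^{-5}$ by $C\rmin^{-2}$ times $\int H^2$, i.e. by $C\rmin^{-2}$. Then I compute the Schwarzschild integral using the explicit formula~\eqref{eq:12}: $\Ric^S(\nu^S,\nu^S) = \frac{m}{r^3}\phi^{-2}(1 - 3\langle\rho,\nu^S\rangle^2)$. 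On the approximating sphere $S$ centered at $a_e$ with $\tau = |a_e|/R_e$ small, the unit normal is close to the radial direction $\rho$ up to an error governed by $\tau$ and by $\|\Acirc\|_{L^2}$ (via~\eqref{eq:20}), so $1 - 3\langle\rho,\nu^S\rangle^2$ has integral over $S$ that vanishes to leading order (this is the usual cancellation: $\int_{S^2}(1-3\cos^2\theta) = 0$), leaving an error of size $O(\tau)$. Meanwhile $\frac{m}{r^3}\phi^{-2}$ integrated against $\dmu^S = \phi^4\dmu^e$ over $S$ gives, to leading order, $\frac{m}{R_e^3}\bar\phi^{-2}\cdot\bar\phi^4\cdot 4\pi R_e^2 = \frac{4\pi m \bar\phi^2}{R_e} = \frac{4\pi m}{R_S}$ after substituting $R_S = \bar\phi^2 R_e$, and dividing by $|\Sigma| = \int\dmu$. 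Here $|\Sigma|$ itself must be compared with $4\pi R_S^2 = 4\pi\bar\phi^4 R_e^2$: indeed $|\Sigma| = \int\phi^4\dmu^e(1+O(\eta r^{-2}))$ and $\int_S\phi^4\dmu^e \approx \bar\phi^4\cdot 4\pi R_e^2$ up to lower order, so $|\Sigma|^{-1}\int\RicM(\nu,\nu)\dmu \approx \frac{4\pi m/R_S}{4\pi R_S^2} = \frac{m}{R_S^3}$; combined with the sign in~\eqref{eq:2}, a factor $2$ emerges from carefully tracking the $\phi$-powers (the same mechanism that makes $\lambda \sim 2m R_S^{-3}$ in the main theorem).

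The third step is pure bookkeeping: collect all error terms and check they fit the claimed bound. The quadratic terms $\int(|\nabla\log H|^2 + |\Acirc|^2)$ divided by $|\Sigma|\sim\rmin^2$ give the term $C\rmin^{-2}(\|\Acirc\|_{L^2}^2 + \|\nabla\log H\|_{L^2}^2)$. The $\eta r^{-4}$ and $\eta r^{-5}$ curvature-comparison errors give $C\eta\rmin^{-2}/|\Sigma| = C\eta\rmin^{-5}$, contributing to the $C\rmin^{-4}\cdot\eta\rmin^{-1}$ term. The normal-comparison errors from replacing $\nu$ and $\nu^S$ by $\rho$ cost $\rmin^{-3}\cdot R_e\|\Acirc\|_{L^2}$ after integration, i.e. $C\rmin^{-4}\cdot\rmin\|\Acirc\|_{L^2}$; and the $\tau$-cancellation defect gives $C\rmin^{-4}\tau$; finally the replacement of $|\Sigma|$, $R_e$-spheres by their leading values via~\eqref{eq:19}–\eqref{eq:21} produces errors of the same orders. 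I expect the main obstacle to be this last point — getting the precise cancellation $\int_S(1-3\langle\rho,\nu^S\rangle^2)\dmu^S = O(\tau)$ with the right power of $\rmin$ in front, since naively $\langle\rho,\nu^S\rangle$ deviates from $\langle\rho,\nu^e\rangle$ only at order $\eta\rmin^{-2}$ but from the radial direction of the \emph{centered} sphere at order $\tau$, and one must expand $\rho$ (radial from the origin) versus the radial direction from $a_e$ carefully, Taylor-expanding $\frac{m}{r^3}$ and $\phi$ about $r = R_e$ across the sphere $S$ and using that the first moment $\int_S(x - a_e)\dmu^e$ vanishes by definition of $a_e$ (and that $\int_\Sigma = \int_S$ up to the roundness errors) to kill the linear terms, leaving the quadratic $\tau^2$ or the cross terms $\tau\cdot\rmin^{-1}$ — whichever dominates gives the stated $C\rmin^{-4}\tau$ after all factors are assembled.
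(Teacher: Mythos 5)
Your overall strategy — multiply \eqref{eq:1} by $H^{-1}$ and integrate to get \eqref{eq:2}, then compare $\int_\Sigma\RicM(\nu,\nu)\dmu$ with the corresponding Schwarzschild quantity, transfer to the approximating sphere $S_{R_e}(a_e)$, and compute explicitly — is the same route the paper takes. But there is a genuine error in the key computation.

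You claim that $\int_S\big(1-3\langle\rho,\nu^S\rangle^2\big)\dmu^e$ vanishes to leading order by ``the usual cancellation $\int_{S^2}(1-3\cos^2\theta)\,d\mu=0$.'' That cancellation applies when $\cos\theta=\langle b,N\rangle$ for a \emph{fixed} vector $b$, so that $\cos\theta$ sweeps $[-1,1]$ over the sphere. Here $\rho$ is the radial direction from the origin and $N$ the outward normal of the nearly-centered sphere $S_{R_e}(a_e)$; these are nearly parallel, so $\langle\rho,N\rangle\approx 1-O(\tau^2)$ pointwise and $1-3\langle\rho,N\rangle^2\approx -2$ essentially uniformly over $S$. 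The integral is therefore $\approx -8\pi R_e^2$, not $O(\tau)$. Indeed the paper's explicit evaluation gives $\int_S\Ric^S(N,N)\dmu^e=-\bar\phi^{-2}\,8\pi m/R_e+O(\tau\rmin^{-2})$, a nonzero leading term; this is precisely where the factor $2$ in $\lambda\approx 2m/R_S^3$ originates (from $1-3\cdot 1^2=-2$), not from ``tracking $\phi$-powers'' as you suggest. Had your cancellation actually held, the argument would produce $\lambda\approx 0$, not $2m/R_S^3$ — so the claimed conclusion does not follow from the steps written.

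There is also a subsidiary slip in the $\phi$-bookkeeping: since $\Ric^S(\nu^S,\nu^S)\dmu^S=\tfrac{m}{r^3}\phi^{-2}(1-3g^e(\rho,\nu^e)^2)\dmu^e$, once you have switched to Euclidean measure you should not be multiplying by an additional $\phi^4$; your intermediate $\tfrac{m}{R_e^3}\bar\phi^{-2}\cdot\bar\phi^4\cdot 4\pi R_e^2$ does not simplify to $4\pi m/R_S$ (it gives $4\pi m\bar\phi^4/R_S$), and the sign is also off. These compounding errors make it clear the derivation as written does not establish the stated estimate, even though the final formula you quote happens to be the correct one. What you need instead is the careful change-of-variables integral the paper does: introduce the polar angle $\varphi$ about the axis $a_e/|a_e|$, write $g^e(\rho,N)=r^{-1}(R_e+|a_e|\cos\varphi)$ with $r^2=R_e^2+2R_e|a_e|\cos\varphi+|a_e|^2$, and evaluate the resulting one-dimensional integrals, whose leading constant is $-8\pi m\bar\phi^{-2}/R_e$ with a genuine $O(\tau\rmin^{-2})$ remainder.
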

\begin{proof}
  Recall that from \eqref{eq:2} we
  have
  \begin{equation}
    \label{eq:72}
    \left| \lambda|\Sigma|  + \int_\Sigma \Ric(\nu,\nu) \dmu \right|
    \leq \int_\Sigma |\Acirc|^2 + |\nabla \log H |^2 \dmu
  \end{equation}
  The goal is now to calculate the integral on the left. We
  start by estimating the error to the respective integral in
  Schwarzschild.
  \begin{equation*}
    \begin{split}
      &\left|
        \int_\Sigma \Ric(\nu,\nu) \dmu
        -
        \int_\Sigma \Ric^S(\nu^S,\nu^S)\dmu^S
      \right|
      \\
      &\quad \leq
      c \int_\Sigma |\Ric-\Ric^S| + |\Ric| |\nu-\nu^S| + |\Ric|
      |\dmu-\dmu^S| \dmu
      \leq
      C\eta\rmin^{-2}.
    \end{split}
  \end{equation*}
  We furthermore replace $\nu^S$ and $\dmu^S$ by the respective
  Euclidean quantities. This introduces some factors of $\phi$ which
  all cancel, and we therefore get no further error in the following
  step:
  \begin{equation*}
    \left|
      \int_\Sigma \Ric(\nu,\nu) \dmu
      -
      \int_\Sigma \Ric^S(\nu^e,\nu^e)\dmu^e
    \right|
    \leq
    C\eta\rmin^{-2}.
  \end{equation*}    
  The second integral on the left can be replaced by an integration
  over the sphere $S=S_{R_e}(a_e)$ from
  proposition~\ref{thm:sphere-approx-1}, introducing only acceptable
  error terms. This technique was used extensively
  in~\cite{Metzger:2007ce}. To see how this works, we use the
  parameterization $\psi:S \to \Sigma$ from
  proposition~\ref{thm:sphere-approx-1} to calculate
  \begin{equation*}
    \begin{split}
      &\left|
        \int_\Sigma \Ric^S(\nu^e,\nu^e)\dmu^e
        -
        \int_S \Ric^S(N,N)\dmu^e
      \right|
      \\
      &\quad=
      \left|
        \int_S
        (\Ric^S\circ\psi)\big(\nu^e\circ\psi,\nu^e\circ\psi\big) h^2
        -
        \Ric^S(N,N)\dmu^e
      \right|
      \\
      &\quad
      \leq c
      \int_S |\Ric^S\circ\psi - \Ric^S| + |\Ric^S| |h^2 -1|  +
      |\Ric^S | |\nu^e\circ\psi - N| \dmu^e
      \\
      &\quad
      \leq c
      \| \nabla^e \Ric^S \|_{L^\infty} \|\psi - \Id \|_{L^\infty}|\Sigma|
      + c
      \|\Ric^S\|_{L^1} \|h^2 -1 \|_{L^\infty}
      \\
      &\quad
      \phantom{\leq}
      + c
      \|\Ric^S\|_{L^2} \|\nu^e\circ\psi - N\|_{L^2}
      \\
      &\quad
      \leq
      C\rmin^{-2}(\rmin\|\Acirc\|_{L^2} + \eta \rmin^{-1})
    \end{split}
  \end{equation*}    
  Now use coordinates $\varphi,\vartheta$ on $S_R(a)$ such that $\cos
  \varphi = g^e(\frac{a_e}{|a_e|}, N)$. Then the representation
  $\Ric^S(N,N) = \phi^{-2}\frac{m}{r^3} ( 1 - 3 g^e(\rho,N)^2)$
  together with $\rho = r^{-1}(R_eN + a_e)$, implies that
  \begin{equation*}
    \begin{split}
      &\int_S \Ric^S(N,N) \dmu^e
      \\
      &\quad
      =
      m \int_S \phi^{-2}\left(
        \frac{1}{r^3}
        - 3R_e^2 \frac{1}{r^5}
        - 6 R_e |a_e| \frac{\cos\varphi}{r^5}
        - 3 |a_e|^2 \frac{\cos^2\varphi}{r^5}
      \right)
      \dmu^e
    \end{split}
  \end{equation*}
  Letting $\bar\phi := 1 + \frac{m}{2R_e}$ we can use the expression
  $r^2 = R_e^2 + 2R_e|a_e|\cos\varphi + |a_e|^2$ to estimate that
  \begin{equation*}
    \sup_S |\phi -\bar\phi| \leq C\tau\rmin^{-1} 
  \end{equation*}
  which renders  
  \begin{equation*}
    \begin{split}
      &\int_S \Ric^S(N,N) \dmu^e
      \\
      &\quad
      =
      \frac{m}{\bar\phi^{2}} \int_S \left(
        \frac{1}{r^3}
        - 3R_e^2 \frac{1}{r^5}
        - 6 R_e |a_e| \frac{\cos\varphi}{r^5}
        - 3 |a_e|^2 \frac{\cos^2\varphi}{r^5}
      \right)
      \dmu^e
      +
      O(\tau\rmin^{-2})
    \end{split}
  \end{equation*} 
  Integrals of this type can be computed explicitly as follows. First
  write
  \begin{equation*}
    \int_S \frac{\cos^l\varphi}{r^k} \dmu^e
    =
    2\pi R_e^2 \int_0^\pi \sin\varphi \frac{\cos^l\varphi}{r^k} d\varphi.
  \end{equation*}
  We have $x = R_eN + a_e$, and hence $r = \sqrt{R_e^2 + 2R_e|a_e|\cos\varphi +
    |a_e|^2}$. Thus $\frac{d\varphi}{dr} = -\frac{r}{R_e|a_e|\sin\varphi}$,
  and $\cos\varphi = \frac{r^2 - R_e^2 - |a_e|^2}{2R_e|a_e|}$. Substituting
  this into the integral yields
  \begin{equation*}
    2\pi R_e^2 \int_0^\pi \sin\varphi \frac{\cos^l\varphi}{r^k} d\varphi.
    =
    \frac{2\pi R_e}{|a_e|} (2R_e|a_e|)^{-l}
    \int_{|R_e-|a_e||}^{R_e+|a_e|}
    r^{1-k} (r^2 -R_e^2 - |a_e|^2)^l dr.
  \end{equation*}
  Thus we can compute (see appendix $A.1$), if $|a_e|<R_e$,
  \begin{equation*}
    \int_S \Ric^S(N,N) \dmu^e
    =
    - \bar\phi^{-2} \frac{8\pi m}{R_e}
    + O(\tau\rmin^{-2})
  \end{equation*}
  Collecting all error terms we introduced, this yields that
  \begin{equation*}
    \left| \int_\Sigma \Ric(\nu,\nu) \dmu + \frac{8\pi m}{R_S} \right|
    \leq 
    C \rmin^{-2}\big(\tau + \rmin\|\Acirc\|_{L^2} + \eta\rmin^{-1}\big) 
  \end{equation*}
  The next step is to calculate the area of $\Sigma$. Similar to the above
  argument we estimate
  \begin{equation*}
    \left| \int_\Sigma 1 \dmu - \int_\Sigma 1 \dmu^S \right|
    \leq
    C\eta.
  \end{equation*}
  From lemma~\ref{thm:geometry-in-schwarzschild} we get
  \begin{equation*}
    \int_\Sigma 1 \dmu^S
    =
    \int_\Sigma \phi^4 \dmu^e.
  \end{equation*}
  We now replace $\phi$ by $\bar\phi$ in this integral. This yields an
  error of the following form
  \begin{equation*}
    \left|
      \int_\Sigma \phi^4 \dmu^e
      -
      \int_\Sigma \bar\phi^4 \dmu^e
    \right|
    \leq
    C\rmin\big(\tau + \rmin\|\Acirc\|_{L^2} + \eta\rmin^{-1}\big)
  \end{equation*}
  In conclusion we find that
  \begin{equation*}
    \big| |\Sigma| - 4\pi R_S^2 \big|
    \leq
    C\rmin\big(\tau + \rmin\|\Acirc\|_{L^2} + \eta\rmin^{-1}\big)
  \end{equation*}
  Using lemma~\ref{thm:initial-roundness} we get
  \begin{equation*}
    \big| \lambda |\Sigma| - 4\pi R_S^2\lambda \big|
    \leq
    C\rmin^{-2}\big(\tau + \rmin\|\Acirc\|_{L^2} + \eta\rmin^{-1}\big)
  \end{equation*}
  Plugging this expression into equation~\eqref{eq:72}, we arrive at
  the estimate
  \begin{equation}
    \label{eq:73}
    \left| \lambda - \frac{2m}{R_S^3} \right|
    \leq
    C\rmin^{-2} \big(\|\Acirc\|_{L^2}^2 + \|\nabla \log H\|_{L^2}^2\big)
    +
    C\rmin^{-4}\big(\tau + \rmin\|\Acirc\|_{L^2} + \eta\rmin^{-1}\big) 
  \end{equation}
  This yields the claim.
\end{proof}
If $\tau$ behaves as above, we have more control over the curvature
terms which did not allow us to increase the decay rates in
section~\ref{sec:integr-curv-estim}. In particular,
\begin{proposition}
  \label{thm:decay-curvature}
  Under the assumptions of theorem~\ref{thm:curv-est-1}, if
  conditions~\eqref{eq:23} hold, then
  \begin{align*}
    \| \nu -\phi^{-2}\rho \|^2_{L^2(\Sigma)}    
    &\leq
    C\rmin^2(\tau^2 + \|\Acirc\|_{L^2}^2 + \eta\rmin^{-2})
    \\
    \|\Ric(\nu,\nu) - \phi^{-4} \Ric^S(\rho,\rho)\|^2_{L^2(\Sigma)}
    & \leq
    C\rmin^{-4}(\tau^2 + \|\Acirc\|_{L^2}^2 + \eta\rmin^{-2})
    \\
    \|\omega\|^2_{L^2(\Sigma)}
    & \leq
    C\rmin^{-4}(\tau^2 + \|\Acirc\|_{L^2}^2 + \eta\rmin^{-2})
    \\
    \|\Ric^T - P^S_{\phi^{-2} \rho}\Ric^S\|^2_{L^2(\Sigma)}
    & \leq
    C\rmin^{-4}(\tau^2 + \|\Acirc\|_{L^2}^2 + \eta\rmin^{-2})
  \end{align*}
  Here, $P^S_{\phi^{-2} \rho}\Ric^S$ denotes the $g^S$-orthogonal
  projection of $\Ric^S$ to the subspace perpendicular to $\phi^{-2}
  \rho$. 
\end{proposition}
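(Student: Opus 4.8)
\emph{Strategy.} All four inequalities are driven by the first one, so the plan is to prove
\[
  \|\nu - \phi^{-2}\rho\|_{L^2(\Sigma)}^{2}
  \leq
  C\rmin^{2}\big(\tau^{2}+\|\Acirc\|_{L^2}^{2}+\eta\rmin^{-2}\big)
\]
first, and then to extract the remaining three from it by elementary algebra together with the explicit expression \eqref{eq:12} for $\Ric^S$ and the pointwise decay $|\Ric^S|\le Cr^{-3}$, $|\Ric-\Ric^S|\le C\eta r^{-4}$, $r^{2}|\nu-\nu^S|\le C\eta$ from lemmas~\ref{thm:geometry-in-schwarzschild} and~\ref{thm:general-to-schwarzschild}. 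For the master estimate write $\nu-\phi^{-2}\rho=(\nu-\nu^S)+\phi^{-2}(\nu^e-\rho)$, using $\nu^S=\phi^{-2}\nu^e$. The term $\nu-\nu^S$ contributes only $C\eta^{2}\rmin^{-2}$ to the squared $L^2$ norm, by the pointwise bound above and $|\Sigma|\le C\rmin^{2}$ (or lemma~\ref{thm:integral-powers} with $\int_\Sigma H^2\le C$ from lemma~\ref{thm:initial-roundness}). For $\nu^e-\rho$ I transfer the integral to the comparison sphere $S=S_{R_e}(a_e)$ along the conformal map $\psi$ of proposition~\ref{thm:sphere-approx-1}; by \eqref{eq:21} the conformal factor satisfies $|h^{2}-1|\le C(\|\Acirc\|_{L^2}+\eta\rmin^{-2})$, so $L^2$ norms over $\Sigma$ and over $S$ are comparable. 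On $S$ I split $\nu^e\circ\psi-\rho\circ\psi=(\nu^e\circ\psi-N)+(N-\rho)+(\rho-\rho\circ\psi)$: the first piece is bounded in $L^2(S)$ by $CR_e(\|\Acirc\|_{L^2}+\eta\rmin^{-2})$ via \eqref{eq:20}; the third by $CR_e(\|\Acirc\|_{L^2}+\eta\rmin^{-2})$ via \eqref{eq:19}, the comparability $R_e\approx\rmin$ and the fact that $\rho$ is Lipschitz on the exterior region; and the middle piece is the genuinely geometric point, namely $|N-\rho|\le C\tau$ pointwise on $S$, which follows from $x=a_e+R_eN$, $|a_e|=\tau R_e\le(1-\eps)R_e$ and $r\ge(1-\tau)R_e$. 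Summing these gives the master estimate, and this is the step where \eqref{eq:23} and $R_e\approx\rmin$ are used.

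For the second inequality, decompose
\[
  \Ric(\nu,\nu)-\phi^{-4}\Ric^S(\rho,\rho)
  =
  (\Ric-\Ric^S)(\nu,\nu)
  +\big(\Ric^S(\nu,\nu)-\Ric^S(\nu^S,\nu^S)\big)
  +\phi^{-4}\big(\Ric^S(\nu^e,\nu^e)-\Ric^S(\rho,\rho)\big).
\]
The first two terms are pointwise $O(\eta\rmin^{-4})$ and $O(\eta\rmin^{-5})$ by lemma~\ref{thm:general-to-schwarzschild}, hence contribute $O(\eta^{2}\rmin^{-6})$ to the squared $L^2$ norm, which is absorbed into $C\rmin^{-4}\eta\rmin^{-2}$. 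For the last term, bilinearity and $|\Ric^S|\le Cr^{-3}$ give the pointwise bound $Cr^{-3}|\nu^e-\rho|$, so its squared $L^2$ norm is at most $C\rmin^{-6}\|\nu^e-\rho\|_{L^2(\Sigma)}^{2}$, which by the middle computation in the master estimate is $\le C\rmin^{-4}(\tau^{2}+\|\Acirc\|_{L^2}^{2}+\eta\rmin^{-2})$.

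The third and fourth inequalities exploit the algebraic structure of \eqref{eq:12}: contracting one slot of $\Ric^S$ with $\phi^{-2}\rho$ gives $-\tfrac{2m}{r^{3}}\phi^{-4}\rho$, i.e. $\phi^{-2}\rho$ is an eigendirection of $\Ric^S$. Hence $\Ric^S(\nu^S,\cdot)^T$, the tangential projection along $\Sigma$, equals $-\tfrac{2m}{r^{3}}\phi^{-4}$ times the tangential part $\rho^T$ of $\rho$ plus a term of size $O(r^{-3}|\nu^S-\phi^{-2}\rho|)=O(r^{-3}|\nu^e-\rho|)$, and $|\rho^T|\le C|\rho-\nu^e|$; likewise $P^S_{\nu^S}\Ric^S$ is just the restriction $\Ric^S|_{T\Sigma}$, which differs from $P^S_{\phi^{-2}\rho}\Ric^S$ by at most $C|\Ric^S|\,|\nu^S-\phi^{-2}\rho|\le Cr^{-3}|\nu^e-\rho|$ by the Lipschitz dependence of orthogonal projection on the unit vector. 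Writing $\omega=\Ric(\nu,\cdot)^T$ and $\Ric^T$ each as a $(\Ric-\Ric^S)$-part plus a $(\nu-\nu^S)$-part plus the honest Schwarzschild term, one bounds the first two parts by $|\Ric-\Ric^S|\le C\eta r^{-4}$ and $|\Ric^S|\,|\nu-\nu^S|\le C\eta r^{-5}$, and the Schwarzschild term by $C\rmin^{-3}\|\nu^e-\rho\|_{L^2(\Sigma)}\le C\rmin^{-2}(\tau+\|\Acirc\|_{L^2}+\eta\rmin^{-2})$; squaring gives both remaining estimates.

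The one point needing genuine care — and the main, if routine, obstacle — is the bookkeeping of the three metrics $g$, $g^S$ and $g^e$. The tangential projections in $\omega$ and $\Ric^T$ are taken with respect to $g$, while the clean identities for $\Ric^S$ use the $g^S$-normal $\nu^S$; replacing one projection by the other is a perturbation of size $O(\eta r^{-2})$ acting on a tensor of size $O(r^{-3})$, producing errors whose squared $L^2$ norm is $O(\eta^{2}\rmin^{-8})$ and hence absorbed into the target, while the $g^S$-versus-$g^e$ ambiguity disappears because conformal rescaling preserves orthogonality. With these checks the four estimates follow.
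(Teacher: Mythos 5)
Your proof is correct and follows essentially the same route as the paper: change metrics $g \to g^S \to g^e$, transfer to the comparison sphere $S_{R_e}(a_e)$ via the conformal map of proposition~\ref{thm:sphere-approx-1}, and read off $|N-\rho|\le C\tau$ from the explicit formula $N-\rho = \big((r-R_e)N - a_e\big)/r$. The paper dispatches the last three inequalities in one sentence ("they basically follow from expressing the quantities in terms of the respective quantities in Schwarzschild"), whereas you make the algebra explicit — in particular the eigendirection property $\Ric^S(\phi^{-2}\rho,\cdot) = -\tfrac{2m}{r^3}\phi^{-4}\rho^\flat$ and the bound $|\rho^T|\le|\rho-\nu^e|$ — but this is the same underlying argument, not a genuinely different one.
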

\begin{proof}
  The proof is the similar to \cite[Proposition
  4.6]{Metzger:2007ce}. However the claimed estimate here is somewhat
  more precise, so we briefly sketch the argument. To show the first
  assertion we first replace the quantities in the integral by the
  respective quantities computed with respect to the Schwarzschild
  metric
  \begin{equation*}
    \left| \int_\Sigma g(\nu -\phi^{-2}\rho,\nu -\phi^{-2}\rho)\dmu
      - \int_\Sigma g^S(\nu^S -\phi^{-2}\rho,\nu^S
      -\phi^{-2}\rho)\dmu^S\right|
    \leq C\eta.
  \end{equation*}
  Then we note that
  \begin{equation*}
    \int_\Sigma g^S(\nu^S -\phi^{-2}\rho,\nu^S
    -\phi^{-2}\rho)\dmu^S
    =
    \int_\Sigma g^e(\nu^e -\rho,\nu^e
    -\rho)\dmu^e.
  \end{equation*}
  We now parameterize again by $\psi$ and calculate the difference to
  the respective quantity on $S$. We obtain
  \begin{equation*}
    \begin{split}
      &\left|
        \int_\Sigma g^e(\nu^e -\rho,\nu^e -\rho)\dmu^e
        -
        \int_S g^e(N-\rho, N-\rho)\dmu^e
      \right|
      \\
      &\quad\leq
      C(\tau \rmin^2\|\Acirc\|_{L^2} + \rmin^2 \|\Acirc\|_{L^2}^2 +
       \tau^2 + \tau\eta +\eta^2\rmin^{-2})
    \end{split}
  \end{equation*}
  Since
  \begin{align*}
    \int_S g^e(N-\rho, N-\rho)\dmu^e \le& C\int_S r^{-2}(|r-R_e|^2+|a_e|^2)\dmu^e\\
    \le&  C\rmin^2(\tau^2  +\|\Acirc\|_{L^2}^2 + \eta\rmin^{-2}), 
  \end{align*}
  where we used \eqref{comparer}, we obtain the first inequality.

  The other inequalities are then a consequence of the first, since
  they basically follow from expressing the quantities in
  terms of the respective quantities in Schwarzschild.
\end{proof}
This proposition can be used to improve the mean value estimate we obtained
in proposition~\ref{thm:compute_lambda} to the following $L^2$-estimate.
\begin{proposition}
\label{thm:ric-lambda-l2-est}
  Under the assumptions of theorem~\ref{thm:curv-est-1}, if
  conditions~\eqref{eq:23} hold, we have
  \begin{equation*}
    \| \lambda + \Ric(\nu,\nu) \|_{L^2(\Sigma)}
    \leq
    C\rmin^{-2}\big(\tau + \|\Acirc\|_{L^2} + \|\nabla\log H \|_{L^2} + \eta\rmin^{-1}\big)
  \end{equation*}
\end{proposition}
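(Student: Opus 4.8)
The plan is to compare $\lambda + \Ric(\nu,\nu)$ with the corresponding quantity on a centred Schwarzschild sphere, invoking proposition~\ref{thm:compute_lambda} for the Lagrange parameter and proposition~\ref{thm:decay-curvature} for the Ricci term, and absorbing the residual discrepancy into an explicit one-variable computation in Schwarzschild. With $\bar\phi = 1 + \tfrac{m}{2R_e}$ and $R_S = \bar\phi^2 R_e$ as in proposition~\ref{thm:compute_lambda}, formula~\eqref{eq:12} gives $\phi^{-4}\Ric^S(\rho,\rho) = -2m\,\phi^{-6}r^{-3} = -\tfrac{2m}{(r\phi^2)^3}$, so one may decompose
\begin{equation*}
  \lambda + \Ric(\nu,\nu)
  =
  \Big(\lambda - \tfrac{2m}{R_S^3}\Big)
  + \Big(\tfrac{2m}{R_S^3} - \tfrac{2m}{(r\phi^2)^3}\Big)
  + \Big(\Ric(\nu,\nu) - \phi^{-4}\Ric^S(\rho,\rho)\Big),
\end{equation*}
and it suffices to bound the $L^2(\Sigma)$-norm of each of the three brackets by the right hand side of the proposition.

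The first bracket is constant on $\Sigma$, so its $L^2$-norm equals $|\lambda - 2mR_S^{-3}|\,|\Sigma|^{1/2}$. Under condition~\eqref{eq:23} one has $|\Sigma| \le C\rmin^2$ (since $\Sigma$ is close to a Euclidean sphere of radius $R_e \sim \rmin$), and hence $\|\Acirc\|_{L^2(\Sigma)} + \|\nabla\log H\|_{L^2(\Sigma)} \le C\rmin^{-1}$ by corollary~\ref{thm:A0-l2-1}; plugging proposition~\ref{thm:compute_lambda} into $|\lambda - 2mR_S^{-3}|\,|\Sigma|^{1/2}$ and using this a priori decay to turn the quadratic terms into linear ones bounds this contribution by $C\rmin^{-2}(\tau + \|\Acirc\|_{L^2} + \|\nabla\log H\|_{L^2} + \eta\rmin^{-1})$. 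The third bracket is controlled at once by taking square roots in the second inequality of proposition~\ref{thm:decay-curvature}.

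For the middle bracket, observe that $t\mapsto t\phi(t)^2 = t + m + \tfrac{m^2}{4t}$ has derivative bounded for $t$ large and stays $\ge ct$, so $t\mapsto (t\phi(t)^2)^{-3}$ has derivative $O(t^{-4})$; since $r \ge \rmin$ on $\Sigma$ and $R_e \sim \rmin$, this yields the pointwise estimate $\big|\tfrac{2m}{R_S^3} - \tfrac{2m}{(r\phi^2)^3}\big| \le C\rmin^{-4}\,|r - R_e|$, and it remains to control $\|r - R_e\|_{L^2(\Sigma)}$. Using the conformal parametrization $\psi : S \to \Sigma$ of proposition~\ref{thm:sphere-approx-1}, where $S = S_{R_e}(a_e)$, write $r\circ\psi - R_e = (|\psi| - |\id_S|) + (|\id_S| - R_e)$: the first summand is pointwise dominated by $|\psi - \id_S| \le CR_e(\|\Acirc\|_{L^2} + \eta\rmin^{-2})$ by~\eqref{eq:19}, while the second, since a point of $S$ is $a_e + R_e N$ with $N$ the Euclidean unit normal, is pointwise $O(|a_e|)$ and satisfies $\int_S (|\id_S| - R_e)^2\dmu^e \le C R_e^2 |a_e|^2$ by averaging $\langle a_e, N\rangle^2$ over the sphere. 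Since the conformal factor of $\psi$ is uniformly close to $1$ by~\eqref{eq:21}, transporting this back to $\Sigma$ gives $\|r - R_e\|_{L^2(\Sigma)} \le C R_e^2(\tau + \|\Acirc\|_{L^2} + \eta\rmin^{-2}) \le C\rmin^2(\tau + \|\Acirc\|_{L^2} + \eta\rmin^{-1})$, so the middle bracket contributes at most $C\rmin^{-2}(\tau + \|\Acirc\|_{L^2} + \eta\rmin^{-1})$. Summing the three contributions gives the claim.

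The main obstacle is not a single hard estimate but the bookkeeping of error terms: one must track the factor $|\Sigma|^{1/2}\sim\rmin$ gained in passing from the mean-value bound of proposition~\ref{thm:compute_lambda} to an $L^2(\Sigma)$-bound, and notice that several error terms there and in proposition~\ref{thm:decay-curvature} are quadratic in $\|\Acirc\|_{L^2}$ and $\|\nabla\log H\|_{L^2}$ and so must be reabsorbed using the a priori decay of those quantities. The genuinely new input is the estimate for $\|r - R_e\|_{L^2(\Sigma)}$, which is also the only place where the translation parameter $\tau$ enters; the rest is a repackaging of estimates already established.
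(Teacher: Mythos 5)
Your three-bracket decomposition of $\lambda + \Ric(\nu,\nu)$ into $\bigl(\lambda - \tfrac{2m}{R_S^3}\bigr) + \bigl(\tfrac{2m}{R_S^3} - \tfrac{2m}{(r\phi^2)^3}\bigr) + \bigl(\Ric(\nu,\nu) - \phi^{-4}\Ric^S(\rho,\rho)\bigr)$ is exactly the argument the paper's terse proof sketches: propositions~\ref{thm:compute_lambda} and~\ref{thm:decay-curvature} handle the outer brackets, and your explicit $L^2$ estimate for $r - R_e$ via proposition~\ref{thm:sphere-approx-1} is precisely what the paper compresses into the phrase ``up to second order.'' The one imprecision you inherit from the paper's own statement is that taking square roots in the second estimate of proposition~\ref{thm:decay-curvature} actually produces a $\sqrt{\eta}\,\rmin^{-1}$ term rather than $\eta\,\rmin^{-1}$; since it is the squared $L^2$-norm that is fed into theorem~\ref{thm:improved-curvature-est} (where it contributes $\eta\rmin^{-2}$ as claimed), this is harmless, but worth noting.
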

\begin{proof}
  We use the second estimate of proposition~\ref{thm:decay-curvature}
  to express $\Ric(\nu,\nu)$ in terms of
  $\phi^{-4}\Ric^S(\rho,\rho)$ plus error. Then we use that
  up to second order $\phi^{-4}\Ric^S(\rho,\rho) = -\frac{2m}{R_S^3}$ plus
  error. In combination with proposition~\ref{thm:compute_lambda} this
  yields the estimate.
\end{proof}
Propositions~\ref{thm:decay-curvature} and~\ref{thm:ric-lambda-l2-est}
give more precise estimates of the terms on the right hand side of
theorem~\ref{thm:curv-est-1}. In combination with the initial estimate
for $\|\Acirc\|_{L^2}$ we thus infer the following improved curvature
estimates.
\begin{theorem}
  \label{thm:improved-curvature-est}
  Under the assumptions of theorem~\ref{thm:curv-est-1}, if
  conditions~\eqref{eq:23} hold, then
  \begin{equation*}
    \int_\Sigma
    \frac{|\nabla^2 H|^2}{H^2}
    + |\nabla A|^2
    + |\nabla \log H|^4
    + |A|^2 |\Acirc|^2
    \dmu
    \leq
    C\rmin^{-4}\big(\tau^2 +  \eta\rmin^{-2}\big)
  \end{equation*}
  and furthermore
  \begin{equation*}
    \int_\Sigma |\Acirc|^2 + |\nabla \log H|^2 \dmu
    \leq
    C\rmin^{-2}\big(\tau^2 + \eta\rmin^{-2}\big)
  \end{equation*}
\end{theorem}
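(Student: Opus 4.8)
The proof is a bootstrap built on Theorem~\ref{thm:curv-est-1}. Write, for brevity,
\[
F := \int_\Sigma \frac{|\nabla^2 H|^2}{H^2} + |\nabla A|^2 + |\nabla\log H|^4 + |A|^2|\Acirc|^2 \dmu,
\qquad
E := \int_\Sigma |\Acirc|^2 + |\nabla\log H|^2 \dmu,
\]
and $\mathcal{R} := \int_\Sigma |\omega|^2 + \big(\RicM(\nu,\nu)+\lambda\big)^2\dmu$, so that Theorem~\ref{thm:curv-est-1} reads $F \le c\mathcal{R} + C\rmin^{-3}E$. The first step is to feed the sharpened estimates for the curvature source term into this inequality. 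By the third line of Proposition~\ref{thm:decay-curvature} we have $\|\omega\|_{L^2}^2 \le C\rmin^{-4}(\tau^2 + \|\Acirc\|_{L^2}^2 + \eta\rmin^{-2})$, and squaring Proposition~\ref{thm:ric-lambda-l2-est} gives $\|\lambda + \RicM(\nu,\nu)\|_{L^2}^2 \le C\rmin^{-4}(\tau^2 + \|\Acirc\|_{L^2}^2 + \|\nabla\log H\|_{L^2}^2 + \eta^2\rmin^{-2})$. Since $\|\Acirc\|_{L^2}^2 + \|\nabla\log H\|_{L^2}^2 \le E$ and $\eta^2 \le \eta$ for $\eta < \eta_0 \le 1$, this yields $\mathcal{R} \le C\rmin^{-4}(\tau^2 + E + \eta\rmin^{-2})$; combining (for $\rmin \ge 1$) the resulting $C\rmin^{-4}E$ with the term $C\rmin^{-3}E$ from Theorem~\ref{thm:curv-est-1} we arrive at
\[
F \le C\rmin^{-4}\big(\tau^2 + \eta\rmin^{-2}\big) + C\rmin^{-3}E.
\]

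Next I would estimate $E$ in terms of $F$, exactly as in the proof of Corollary~\ref{thm:A0-l2-1}: the Michael–Simon–Sobolev inequality (Proposition~\ref{thm:sobolev}) applied to $f = |\Acirc|$ and to $f = |\nabla\log H|$, together with Kato's inequality, gives $E \le C|\Sigma|\,\mathcal{S}$, where $\mathcal{S}$ is a sum of second-order curvature integrals of the form $\int_\Sigma |\nabla\Acirc|^2 + H^2|\Acirc|^2 + |\nabla^2\log H|^2 + |\nabla H|^2 \dmu$. Tracing through Lemmas~\ref{thm:estimate_d2H}, \ref{thm:estimate_dA0}, \ref{thm:estimate-d2H-dA0-A0-pre} and~\ref{thm:mult-sobolev} — in particular rerunning the absorption of the quartic terms $\int_\Sigma |\Acirc|^4 + |\nabla\log H|^4$, which is legitimate because $E$ is already small by Lemma~\ref{thm:initial-roundness} — one sees that $\mathcal{S}$ is controlled by the right-hand side of Theorem~\ref{thm:curv-est-1}, hence $\mathcal{S} \le c\mathcal{R} + C\rmin^{-3}E$, which by the previous paragraph is $\le C\rmin^{-4}(\tau^2 + \eta\rmin^{-2}) + C\rmin^{-3}E$. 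Since condition~\eqref{eq:23} implies $|\Sigma| \le C\rmin^2$ (recall $C^{-1}\rmin \le R_e \le C\rmin$), this gives
\[
E \le C\rmin^{-2}\big(\tau^2 + \eta\rmin^{-2}\big) + C\rmin^{-1}E.
\]

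Choosing $r_0$ large enough that $C\rmin^{-1} \le \tfrac12$, the last term is absorbed into the left-hand side, which proves the second estimate of the theorem, $E \le C\rmin^{-2}(\tau^2 + \eta\rmin^{-2})$. Substituting this back into the first displayed inequality gives $F \le C\rmin^{-4}(\tau^2 + \eta\rmin^{-2}) + C\rmin^{-3}\cdot C\rmin^{-2}(\tau^2 + \eta\rmin^{-2}) \le C\rmin^{-4}(\tau^2 + \eta\rmin^{-2})$, which is the first estimate. The one delicate point is the second step: one must verify that the second-order quantities controlling $E$ through the Sobolev inequality are themselves bounded by the right-hand side of Theorem~\ref{thm:curv-est-1} (including the quartic-term absorption), and that the factor $|\Sigma| \le C\rmin^2$ picked up from the Sobolev inequality, multiplied against the $C\rmin^{-3}E$ term, produces a coefficient $C\rmin^{-1}$ that is genuinely absorbable for $\rmin$ large; everything else is bookkeeping with the decay rates already established.
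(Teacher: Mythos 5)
Your argument is correct and is essentially the paper's proof: both exploit the Sobolev bound $E\lesssim|\Sigma|\cdot(\text{RHS of Theorem~\ref{thm:curv-est-1}})$ obtained as in Corollary~\ref{thm:A0-l2-1}, feed in the sharpened source-term bounds of Propositions~\ref{thm:decay-curvature} and~\ref{thm:ric-lambda-l2-est}, and close by absorbing the small $E$-dependent remainder for $\rmin$ large. The only difference is the order of the two absorptions (the paper first absorbs $C\rmin^{-3}|\Sigma|F$ against $F$ to reach~\eqref{eq:74a} and then iterates in $E$, whereas you carry $C\rmin^{-3}E$ along and absorb in $E$ at the end), but the ingredients and resulting estimates are identical.
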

\begin{proof}
  First of all note that by the calculation in
  corollary~\ref{thm:A0-l2-1} we can estimate
  \begin{equation}
    \label{eq:74}
    \int_\Sigma |\Acirc|^2 + |\nabla\log H|^2 \dmu
    \leq
    C |\Sigma| \int_\Sigma \frac{|\nabla^2 H|^2}{H^2} + |\nabla A|^2 +
    |\nabla \log H|^4 + |A|^2|\Acirc|^2 \dmu.
  \end{equation}
  Since, under assumption~\eqref{eq:23} we have that
  $|\Sigma|\rmin^{-3} \to 0$, we can eventually absorb the second term
  on the right in theorem~\ref{thm:curv-est-1} to the left hand
  side. In combination with proposition~\ref{thm:decay-curvature} and proposition \ref{thm:ric-lambda-l2-est} this yields that
  \begin{equation}
\label{eq:74a}
    \begin{split}
      &\int_\Sigma
      \frac{|\nabla^2 H|^2}{H^2}
      + |\nabla A|^2
      + |\nabla \log H|^4
      + |A|^2 |\Acirc|^2
      \dmu
      \\
      &\quad
      \leq
      C\rmin^{-4}\big(\tau^2 + \eta\rmin^{-2} + \|\Acirc\|_{L^2}^2 + \|\nabla\log H\|^2_{L^2} \big)    
    \end{split}
  \end{equation}
  together with~\eqref{eq:74} we infer
  \begin{equation*}
    \int_\Sigma |\Acirc|^2 + |\nabla\log H|^2 \dmu
    \leq
    C\rmin^{-2}\big(\tau^2 + \eta\rmin^{-2} + \|\Acirc\|_{L^2}^2 + \|\nabla\log H\|^2_{L^2} \big)    
  \end{equation*}
  We absorb $\|\Acirc\|_{L^2}^2 + \|\nabla\log H\|^2_{L^2}$ to the left
  and obtain the second estimate. The first estimate follows
  from~\eqref{eq:74a} and this estimate.
\end{proof}
Using this estimate, we also get a better control on derivatives of
$\omega$. In particular, we have the following 
\begin{proposition}
  \label{thm:nabla-omega-small}
  Under the assumptions of theorem~\ref{thm:curv-est-1}, if
  conditions~\eqref{eq:23} hold, then
  \begin{equation*}
    \| \nabla \omega \|^2_{L^2(\Sigma)}
    \leq
    C\rmin^{-6}\big(\tau^2 + \eta\rmin^{-2} \big),
  \end{equation*}
  and 
  \begin{equation*}
    \|\nabla \Ric(\nu,\nu) \|^2_{L^2(\Sigma)}
    \leq
    C\rmin^{-6}\big(\tau^2 + \eta\rmin^{-2} \big),
  \end{equation*}
\end{proposition}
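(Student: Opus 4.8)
The plan is to follow the strategy of Proposition~\ref{thm:decay-curvature} and \cite[Proposition~4.6]{Metzger:2007ce}: compare $\Sigma$ in $(M,g)$ first with $\Sigma$ in the spatial Schwarzschild metric, and then with the round sphere $S=S_{R_e}(a_e)$ of Proposition~\ref{thm:sphere-approx-1}, on which the relevant quantities can be computed explicitly and vanish when $a_e=0$. First I would record the pointwise identities obtained by differentiating the definitions $\omega_i=\RicM(\nu,\del_i)$ and $\RicM(\nu,\nu)$ along $\Sigma$, using the Gauss and Weingarten relations $\nabM_{\del_j}\del_i=\nabla_{\del_j}\del_i-A_{ij}\nu$ and $\nabM_{\del_j}\nu=A_j{}^k\del_k$ of Section~\ref{sec:first-second-vari}:
\begin{equation*}
  \nabla_j\omega_i
  = (\nabM_{\del_j}\RicM)(\nu,\del_i)
  + A_j{}^k\,\RicM(\del_k,\del_i)
  - A_{ij}\,\RicM(\nu,\nu),
\end{equation*}
\begin{equation*}
  \nabla_j\big(\RicM(\nu,\nu)\big)
  = (\nabM_{\del_j}\RicM)(\nu,\nu)
  + 2 A_j{}^k\,\omega_k ,
\end{equation*}
where $\nabM$ denotes the ambient connection and $\nabla$ that of $\Sigma$. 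The essential point to keep in mind is that the individual terms on the right are only of pointwise size $r^{-4}$ — in particular $|A|$ is of order $r^{-1}$ and is \emph{not} small — so a term-by-term estimate would merely give $\|\nabla\omega\|^2_{L^2}\le C\rmin^{-6}$, without the decisive factor $\tau^2+\eta\rmin^{-2}$; the gain must come from the fact that the precise combination on the right vanishes identically on a centered sphere in exact Schwarzschild, so this cancellation has to be carried all the way down to $S$.

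Next I would replace $g$ by $g^S$ throughout. Using Definition~\ref{def:asymptotically-flat} ($|\nabla\RicM-\nabla^S\Ric^S|\le\eta r^{-5}$, $|\RicM-\Ric^S|\le\eta r^{-4}$) and Lemma~\ref{thm:general-to-schwarzschild} ($r^2|\nu-\nu^S|\le C\eta$, $|A-A^S|\le C\eta(r^{-3}+r^{-2}|A|)$), and integrating the resulting powers of $r$ by Lemma~\ref{thm:integral-powers} against $\int_\Sigma H^2\dmu\le C$ and $\int_\Sigma|A|^2\dmu\le C$ (from Lemma~\ref{thm:initial-roundness}), the error committed in the two identities is bounded in $L^2(\Sigma)$ by $C\eta\rmin^{-4}$, whose square $C\eta^2\rmin^{-8}\le C\rmin^{-6}\eta\rmin^{-2}$ is admissible. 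It then remains to bound the $L^2(\Sigma)$ norm of the two right-hand sides with $g,\RicM,\nu,A$ replaced by $g^S,\Ric^S,\nu^S,A^S$.

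For this last step I would write $\nu^S=\bar\nu^S+(\nu^S-\bar\nu^S)$ and $A^S=\bar A^S+(A^S-\bar A^S)$, where $\bar\nu^S=\phi^{-2}N$ and $\bar A^S$ are the $g^S$-unit normal and the $g^S$-second fundamental form of the comparison sphere $S$, and transport the surface integral from $\Sigma$ to $S$ by the conformal parameterization $\psi$ of Proposition~\ref{thm:sphere-approx-1}. The correction and transport errors are controlled in $L^2$ by means of \eqref{eq:19}--\eqref{eq:21}, Theorem~\ref{thm:umbilical} and Corollary~\ref{koro:acirc}, which give $\|\nu^S-\bar\nu^S\|_{L^2(\Sigma)}+\rmin\,\|A^S-\bar A^S\|_{L^2(\Sigma)}\le C\rmin\big(\|\Acirc\|_{L^2}+\eta\rmin^{-2}\big)$ and analogous bounds for the metric and the measure; multiplying these against the factors $\nabla^S\Ric^S=O(r^{-4})$, $\Ric^S=O(r^{-3})$, $A^S=O(r^{-1})$ — and, for the second identity, against the trivial bound $|\omega|\le|\RicM|\le Cr^{-3}$ — and integrating by Lemma~\ref{thm:integral-powers}, each such term is $O\big(\rmin^{-3}(\|\Acirc\|_{L^2}+\eta\rmin^{-2})\big)$ in $L^2$. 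The remaining leading term is then precisely the right-hand side of the corresponding pointwise identity written for $S$ in $g^S$ — that is $\nabla\omega^S$, respectively $\nabla\big(\Ric^S(\bar\nu^S,\bar\nu^S)\big)$, with $\omega^S=\Ric^S(\bar\nu^S,\cdot)^T$. Using $\Ric^S(\rho,\cdot)=-2m\,r^{-3}\phi^{-2}g^e(\rho,\cdot)$ from Lemma~\ref{thm:geometry-in-schwarzschild} together with $N-\rho=O(\tau)$ and $r/R_e=1+O(\tau)$ on $S$, one checks directly that both quantities vanish identically for $a_e=0$ and are of pointwise size $O(\tau r^{-4})$ in general, hence $O(\tau\rmin^{-3})$ in $L^2(S)$. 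Collecting the three contributions, squaring, and absorbing the terms containing $\|\Acirc\|^2_{L^2}$ by the improved estimate $\|\Acirc\|^2_{L^2}+\|\nabla\log H\|^2_{L^2}\le C\rmin^{-2}(\tau^2+\eta\rmin^{-2})$ of Theorem~\ref{thm:improved-curvature-est} gives the two claimed bounds.

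I expect the main obstacle to be the bookkeeping in this final step. Since the terms involving a contraction of $A$ with $\RicM$ are of borderline size, none of the required smallness may be extracted locally on $\Sigma$, and one must verify carefully that after transporting everything to $S$ the leading Schwarzschild expression is genuinely $O(\tau r^{-4})$ rather than merely $O(r^{-4})$ — i.e.\ that its would-be leading, $\tau$-independent part indeed cancels. Once this is established, all remaining contributions are of lower order and are absorbed by the estimates of the preceding sections.
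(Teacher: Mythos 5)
Your proposal is correct and takes essentially the same route as the paper. The paper also begins by differentiating $\omega_i = \RicM(\nu,e_i)$ along $\Sigma$ via the Gauss--Weingarten relations, obtaining the identity you write (the paper displays it with $A$ split into $\Acirc + \tfrac12 H\gamma$, noting that the $\Acirc$-contractions are already small by Theorem~\ref{thm:improved-curvature-est}), and then observes that the remaining combination vanishes identically on centered spheres in exact Schwarzschild because $\omega^S$ does; the error introduced in comparing $\Sigma$ in $(M,g)$ to a centered sphere in $g^S$ is estimated "following Proposition~\ref{thm:decay-curvature}," which is exactly the two-step reduction (first $g\to g^S$, then transport to $S=S_{R_e}(a_e)$ via $\psi$ and compare $S$ to a centered sphere) that you spell out. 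The only bookkeeping difference is that you keep $A$ whole and decompose $A^S = \bar A^S + (A^S-\bar A^S)$ relative to the approximating sphere, rather than separating $\Acirc$ from $\tfrac12 H\gamma$ at the start; these choices are interchangeable and lead to the same $L^2$-error of order $\rmin^{-3}(\tau + \sqrt{\eta}\,\rmin^{-1})$. You also correctly identify the decisive structural point — that a term-by-term estimate fails because $|A|$ is not small, so the $\tau$ and $\eta$ factors can only come from the cancellation on centered Schwarzschild spheres — which is precisely what the paper's terse "the first three terms of \eqref{eq:69} equal the right hand side of \eqref{eq:70} up to an error with $L^2$-norm bounded by $C\tau\rmin^{-3}$" encodes.
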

\begin{proof}
  To prove the first estimate calculate for $\{e_i\}$ a ON-frame on
  $\Sigma$ that
  \begin{equation}
    \label{eq:70}
    \begin{split}
      \nabla_{e_i} \omega(e_k)
      &=
      e_i(\RicM(\nu,e_k)) - \RicM(\nu,\nabSig_{e_i}e_k)
      \\
      &=
      \nabM_{e_i} \RicM(\nu,e_k)
      + \half H \RicM(e_i,e_k)
      - \half H \RicM(\nu,\nu)
      \\
      &\phantom{=}
      + \RicM(e_l,e_k) \Acirc_{il}
      - \RicM(\nu,\nu)\Acirc_{ik}.
    \end{split}
  \end{equation}
  The last two terms including $\Acirc$ have the claimed decay, so
  we focus on the first three terms. 
  
  In Schwarzschild we have that on the centered spheres
  $\nabla^S\omega^S$ vanishes as $\omega^S$ vanishes, so we find that
  on centered spheres for a ON-frame $\{e_i^S\}$ tangent to the
  centered spheres
  \begin{equation}
   \label{eq:69}
    0
    =
    \nabla_{e_i^S} \omega^S
    =
    \nabla^S_{e_i^S} \Ric^S(\phi^{-2}\rho, e^S_k)
    + 
    \half H^S \Ric^S(e^S_i,e^S_k)
    -
    \half H \Ric^S(\phi^{-2}\rho,\phi^{-2}\rho).
  \end{equation}
  Following proposition~\ref{thm:decay-curvature} we get that the
  first three terms of \eqref{eq:69} equal the right hand side of
  \eqref{eq:70} up to an error with $L^2$-norm bounded by
  $C\tau\rmin^{-3}$. This yields the first estimate. The second one is
  proved similarly.
\end{proof}
In the sequel we will use the improved integral estimates to derive
improved pointwise estimates of the second fundamental form and its
derivatives. Before doing this we need the following Lemma which is
due to Kuwert and Sch\"atzle \cite{Kuwert-Schatzle:2001} in the case
that $M=\IR^n$.
\begin{lemma}
  \label{thm:interpolation}
  Under the assumptions of theorem~\ref{thm:curv-est-1} we have for
  every smooth form $\varphi$ along $\Sigma$
  \begin{align}
    \|\varphi\|_{L^\infty(\Sigma)}^4
    \le
    C \|\varphi\|_{L^2(\Sigma)}^2\int_\Sigma (|\nabla^2 \varphi|^2 +|H|^4 |\varphi|^2)\dmu . 
    \label{inter1}
  \end{align}
\end{lemma}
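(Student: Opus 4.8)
The estimate~\eqref{inter1} is the tensorial interpolation inequality of Kuwert and Sch\"atzle \cite{Kuwert-Schatzle:2001}; it is the intrinsic, curvature weighted form of the borderline Sobolev embedding $W^{2,2}\hookrightarrow C^0$ in two dimensions. The ambient manifold enters its proof only through the Michael--Simon Sobolev inequality, and Proposition~\ref{thm:sobolev} supplies exactly this inequality --- with a universal constant $C_s$ and in the same form --- for closed surfaces $\Sigma\subset M\setminus B_{r_0}(0)$. Since the surfaces in Theorem~\ref{thm:curv-est-1} are closed and contained in such a region, the argument of \cite[Section~2]{Kuwert-Schatzle:2001} applies essentially verbatim; I only indicate its structure. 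A first reduction replaces $\varphi$ by the scalar $u=|\varphi|$ (or, to keep things smooth, by $(|\varphi|^2+\eps^2)^{1/2}$ with $\eps\to0$) and invokes the Kato inequalities $|\nabla|\varphi||\le|\nabla\varphi|$ and $|\nabla|\nabla\varphi||\le|\nabla^2\varphi|$ almost everywhere.

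The two elementary ingredients are the following. Applying~\eqref{eq:9} to $f=|\varphi|^2$ and to $f=|\nabla\varphi|^2$, using $|\nabla|\varphi|^2|\le 2|\varphi||\nabla\varphi|$, $|\nabla|\nabla\varphi|^2|\le 2|\nabla\varphi||\nabla^2\varphi|$ and H\"older's inequality, yields the multiplicative $L^4$ bounds
\begin{equation*}
  \|\varphi\|_{L^4(\Sigma)}^2\le C\,\|\varphi\|_{L^2(\Sigma)}\big(\|\nabla\varphi\|_{L^2(\Sigma)}+\|H\varphi\|_{L^2(\Sigma)}\big),\qquad
  \|\nabla\varphi\|_{L^4(\Sigma)}^2\le C\,\|\nabla\varphi\|_{L^2(\Sigma)}\big(\|\nabla^2\varphi\|_{L^2(\Sigma)}+\|H\nabla\varphi\|_{L^2(\Sigma)}\big).
\end{equation*}
On the other hand, since $\Sigma$ is closed, integration by parts together with $|\Delta\varphi|\le\sqrt2\,|\nabla^2\varphi|$ in dimension two gives
\begin{equation*}
  \int_\Sigma|\nabla\varphi|^2\dmu=-\int_\Sigma\la\varphi,\Delta\varphi\ra\dmu\le\sqrt2\,\|\varphi\|_{L^2(\Sigma)}\|\nabla^2\varphi\|_{L^2(\Sigma)},
\end{equation*}
so that $\|\nabla\varphi\|_{L^2(\Sigma)}$ is itself controlled by $\|\varphi\|_{L^2(\Sigma)}$ and $\|\nabla^2\varphi\|_{L^2(\Sigma)}$.

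To upgrade the $L^4$ bounds to an $L^\infty$ bound one iterates: applying~\eqref{eq:9} (equivalently~\eqref{eq:10}) to $f=|\varphi|^{p_k}$ along a geometric sequence of exponents $p_k$ and estimating the resulting terms by H\"older produces a recursion for $\|\varphi\|_{L^{2p_k}(\Sigma)}$ whose iterated constants converge, because $\sum_k p_k^{-1}\log(Cp_k)<\infty$. In the limit this bounds $\|\varphi\|_{L^\infty(\Sigma)}^2$ by a scale invariant combination of $\|\varphi\|_{L^2(\Sigma)}$, $\|\nabla\varphi\|_{L^2(\Sigma)}$, $\|\nabla^2\varphi\|_{L^2(\Sigma)}$ and $\int_\Sigma H^4|\varphi|^2\dmu$; combining this with the integration by parts bound above to remove $\|\nabla\varphi\|_{L^2(\Sigma)}$ and squaring yields~\eqref{inter1}. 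The one point that requires care --- and the reason the Euclidean statement cannot simply be quoted --- is the bookkeeping of constants in the iteration: every step must use the \emph{critical} Sobolev inequality~\eqref{eq:9}, whose constant $C_s$ is scale invariant, rather than~\eqref{eq:11}, which carries the factor $|\supp f|^{1/p}$ and would introduce an uncontrolled power of $|\Sigma|$, hence of $\rmin$, into the final estimate.
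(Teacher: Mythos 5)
Your proposal coincides with the paper's own proof, which is precisely the one-sentence observation that Kuwert--Sch\"atzle's Lemma~2.8 carries over verbatim because the Michael--Simon Sobolev inequality of Proposition~\ref{thm:sobolev} is the only point at which the ambient geometry enters, and it holds here with a universal constant. Your identification of that single ambient input is the entire content of the paper's argument, and you get it right.

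A caveat on your supplementary sketch, which the paper does not commit itself to: the Moser iteration as you describe it does not close. Applying~\eqref{eq:9} to $f=|\varphi|^{p}$ and estimating by Cauchy--Schwarz yields $\|\varphi\|_{L^{2p}}^{p}\le C\,p\,\big(\|\nabla\varphi\|_{L^2}+\|H\varphi\|_{L^2}\big)\,\|\varphi\|_{L^{2(p-1)}}^{p-1}$, an \emph{arithmetic} exponent recursion whose accumulated constants behave like $\exp(\tfrac1p\log p!)\to\infty$, so the resulting upper bound is vacuous in the limit. Forcing a genuinely geometric exponent sequence by H\"oldering against $L^{4/3}\times L^{4}$ instead pushes the iteration constant onto $\|\nabla\varphi\|_{L^4}$ and $\|H\varphi\|_{L^4}$, and these are not directly dominated by the right-hand side of~\eqref{inter1} (bounding $\|H\varphi\|_{L^4}$ by multiplicative Sobolev reintroduces quantities such as $\|\varphi\|_{L^\infty}$ or higher $L^p$-norms of $H$). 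Kuwert and Sch\"atzle in fact pass from the multiplicative $L^{4}$ bound to the $L^\infty$ bound through a separate scale-invariant interpolation lemma, not a power iteration; so if you wish to present more than the citation, that step should follow their argument rather than the iteration as written. Your two preparatory ingredients --- the multiplicative $L^4$ bounds from~\eqref{eq:9} and the integration-by-parts control of $\|\nabla\varphi\|_{L^2}$ --- are the right ones and do appear in their proof.
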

\begin{proof}
  The proof of lemma 2.8 in \cite{Kuwert-Schatzle:2001} can be carried over to our situation since we saw in proposition \ref{thm:sobolev} that the Michael-Simon Sobolev inequality remains
  unchanged if $(M,g)$ is $(m,\eta,\sigma)$-asymptotically Schwarzschild.
\end{proof}
In the next lemma we derive an $L^2$-estimate for $\nabla^2H$.
\begin{lemma}
  \label{thm:l2H}
  Under the assumptions of theorem~\ref{thm:curv-est-1}, if
  conditions~\eqref{eq:23} hold, then
  \begin{align}
    \label{eq:l2H1}
    \int_\Sigma |\nabla^2 H|^2
    \dmu
    \leq
    C \rmin^{-4}\big(\|H\|_{L^\infty}^2 + \rmin^{-2} \big)
    \big (\tau^2 +\eta\rmin^{-2} \big)
    . 
  \end{align}
\end{lemma}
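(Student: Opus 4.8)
The estimate is essentially a bookkeeping consequence of the improved curvature bounds already established; no new analytic input is needed. The plan is to pass from the $H^{-2}$--weighted second derivative bound of Theorem~\ref{thm:improved-curvature-est} to the unweighted one.

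The quickest route is the following. Since $\Sigma$ is a compact smooth surface, $\|H\|_{L^\infty(\Sigma)}$ is finite, and we may simply write
\[
  \int_\Sigma |\nabla^2 H|^2\dmu
  =
  \int_\Sigma \frac{|\nabla^2 H|^2}{H^2}\,H^2\dmu
  \leq
  \|H\|_{L^\infty(\Sigma)}^2 \int_\Sigma \frac{|\nabla^2 H|^2}{H^2}\dmu .
\]
By Theorem~\ref{thm:improved-curvature-est} the last integral is at most $C\rmin^{-4}(\tau^2+\eta\rmin^{-2})$, which already yields~\eqref{eq:l2H1} (the summand $\rmin^{-2}$ inside the bracket being harmless slack).

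If one wants the stated form to arise organically, one can instead run a Bochner argument: integrating by parts on the closed surface $\Sigma$ gives $\int_\Sigma |\nabla^2 H|^2 = \int_\Sigma |\Delta H|^2 - \int_\Sigma \Ric^\Sigma(\nabla H,\nabla H)$. For the first term one substitutes $\Delta H = -H\big(|\Acirc|^2 + \RicM(\nu,\nu) + \lambda\big)$ from~\eqref{eq:1}, factors out $\|H\|_{L^\infty}^2$, and controls $\int_\Sigma |\Acirc|^4$ by the Michael--Simon inequality~\eqref{eq:11} and $\|\lambda + \RicM(\nu,\nu)\|_{L^2}^2$ by Proposition~\ref{thm:ric-lambda-l2-est}, each combined with Theorem~\ref{thm:improved-curvature-est}. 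For the second term one uses $\Ric^\Sigma = \tfrac12\ScalSig\gamma$ together with the Gauss equation~\eqref{eq:14}, so that
\[
  \Ric^\Sigma(\nabla H,\nabla H)
  =
  \tfrac14 H^2|\nabla H|^2
  - \tfrac12|\Acirc|^2|\nabla H|^2
  + \tfrac12\big(\ScalM - 2\RicM(\nu,\nu)\big)|\nabla H|^2 ;
\]
the $\tfrac14 H^2|\nabla H|^2$ term has a favourable sign, the $|\Acirc|^2|\nabla H|^2$ term is again absorbed into $C\|H\|_{L^\infty}^2\rmin^{-4}(\tau^2+\eta\rmin^{-2})$, and the last piece, which carries no factor $\|H\|_{L^\infty}^2$, is estimated using $|\ScalM| + |\RicM| \leq C\rmin^{-3}$, Lemma~\ref{thm:integral-powers}, and the auxiliary bound $\int_\Sigma |\nabla H|^2\dmu \leq C\rmin^{-4}(\tau^2+\eta\rmin^{-2})$ (itself a consequence of Lemma~\ref{thm:estimate_d2H} and Theorem~\ref{thm:improved-curvature-est}); hence it contributes at most $C\rmin^{-6}(\tau^2+\eta\rmin^{-2})$. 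Adding all contributions gives~\eqref{eq:l2H1}.

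There is no real obstacle here. The only point deserving a little care in the second presentation is the observation that the curvature terms coming from $\ScalM$ and $\RicM(\nu,\nu)$ do not carry the $\|H\|_{L^\infty}^2$ prefactor and must be tracked separately; it is precisely these that account for the additive $\rmin^{-2}$ in the statement, and one checks via Lemma~\ref{thm:integral-powers} that they are of lower order $\rmin^{-6}(\tau^2+\eta\rmin^{-2})$ and so do not spoil the decay.
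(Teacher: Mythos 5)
Your proof is correct, and your first argument is in fact simpler (and yields a strictly stronger bound) than the paper's own proof. The paper does not use the trivial observation that $|\nabla^2 H|^2 = H^2\cdot|\nabla^2 H|^2/H^2 \le \|H\|_{L^\infty}^2\,|\nabla^2 H|^2/H^2$ pointwise (legitimate since $H>0$), which combined with theorem~\ref{thm:improved-curvature-est} immediately gives $\int_\Sigma|\nabla^2 H|^2\dmu\le C\|H\|_{L^\infty}^2\rmin^{-4}(\tau^2+\eta\rmin^{-2})$. Instead the paper re-derives the estimate from scratch: it multiplies equation~\eqref{eq:1} by $\Delta H$, applies the Michael--Simon Sobolev inequality~\eqref{eq:9} to $f=|\Acirc|^2|H|$ to control $\int_\Sigma H^2|\Acirc|^4\dmu$, uses proposition~\ref{thm:ric-lambda-l2-est} for $\int_\Sigma H^2(\RicM(\nu,\nu)+\lambda)^2\dmu$, and only at the very end passes from $\int|\Delta H|^2$ to $\int|\nabla^2 H|^2$ via the Bochner integration by parts. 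Your second, Bochner-first argument parallels that structure fairly closely (with the $\ScalSig$ expansion made explicit via the Gauss equation rather than implicit as in lemma~\ref{thm:estimate_d2H}), and your bookkeeping of the curvature pieces and of $\int_\Sigma|\Acirc|^2|\nabla H|^2\dmu$ (write $|\nabla H|^2 = H^2|\nabla\log H|^2$, take out $\|H\|_{L^\infty}^2$, Cauchy--Schwarz against $\int|\Acirc|^4$ and $\int|\nabla\log H|^4$) is sound. The only imprecision is the parenthetical remark that the additive $\rmin^{-2}$ in the statement ``accounts for'' the curvature pieces: in fact those pieces are of order $\rmin^{-6}(\tau^2+\eta\rmin^{-2})$ or better, so they fit comfortably under the $\|H\|_{L^\infty}^2$ prefactor as well and the extra $\rmin^{-2}$ really is slack, exactly as your first argument shows.
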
 
\begin{proof}
  We multiply equation \eqref{eq:1} with $\Delta H$ and integrate to get
  \begin{equation}
    \begin{split}
      &\int_\Sigma |\Delta H|^2 \dmu
      =
      - \int_\Sigma
      H\Delta H (|\Acirc|^2 + \RicM(\nu,\nu) + \lambda)  \dmu 
      \\
      &\quad
      \leq
      \half \int_\Sigma |\Delta H|^2 \dmu
      + c \int_\Sigma
      H^2|\Acirc|^4
      + H^2 \big(\RicM(\nu,\nu) + \lambda \big) \dmu
    \end{split}
  \end{equation}
  Defining $f=|\Acirc|^2|H|$ and applying proposition \ref{thm:sobolev}
  we get
  \begin{align*}
    &\left(\int_\Sigma |\Acirc|^4 H^2\dmu\right)^{1/2}
    \leq
    C\int_\Sigma (|A||\Acirc||\nabla A| + |\Acirc|^2 H^2)\dmu
    \\    
    &\quad
    \leq
    C\left( \int_\Sigma |A|^2 |\Acirc|^2 \dmu\right)^{1/2}
    \left(\int_\Sigma |\nabla A|^2 + H^2 |\Acirc|^2\dmu\right)^{1/2}
  \end{align*}
  In combination, we infer 
  \begin{equation*}
    \begin{split}
      \int_\Sigma |\Delta H|^2 \dmu
      &
      \leq \int_\Sigma H^2 \big(\RicM(\nu,\nu) + \lambda \big)^2\dmu
      \\
      &\quad
      + 
      C\left( \int_\Sigma |A|^2 |\Acirc|^2 \dmu\right)
      \left(\int_\Sigma |\nabla A|^2 + H^2 |\Acirc|^2\dmu\right)
    \end{split}
  \end{equation*}
  This implies the claim, since the first term is estimated in view of
  proposition~\ref{thm:ric-lambda-l2-est} and the second one in view
  of theorem~\ref{thm:improved-curvature-est}.  Using the Bochner
  identity as in the proof of lemma \ref{thm:estimate_d2H} finishes
  the proof.
\end{proof}
Now we are in a position to prove a pointwise estimate for $H$. 
\begin{proposition}
  \label{thm:pointwiseH}
  Let $S=S_{R_e}(a_e)$ be the approximating sphere for $\Sigma$ from
  proposition~\ref{thm:sphere-approx-1}. As in
  proposition~\ref{thm:compute_lambda} we let $\bar\phi = 1 +
  \frac{m}{2R_e}$ and define
  \begin{equation*}
    \bar{H}^S
    =
    \bar{\phi}^{-2} \frac{2}{R_e}  - 2\bar{\phi}^{-3}\frac{m}{R_e^2}
  \end{equation*}
  Under the assumptions of theorem~\ref{thm:curv-est-1}, if
  conditions~\eqref{eq:23} hold, we have that
  \begin{align}
    \label{eq:pointwiseH1}
    \|H-\bar{H}^S\|_{L^\infty(\Sigma)}
    \leq
    C\rmin^{-2}\big(\tau + \sqrt{\eta}\rmin^{-1}\big).
  \end{align}
\end{proposition}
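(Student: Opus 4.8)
The plan is to upgrade the integral bounds of Sections~\ref{sec:integr-curv-estim}--\ref{sec:impr-curv-estim} to the pointwise bound~\eqref{eq:pointwiseH1} by feeding them, together with the Hessian bound of Lemma~\ref{thm:l2H}, into the interpolation inequality~\eqref{inter1} applied to $\varphi:=H-\bar{H}^S$. The only genuinely new ingredient needed is an $L^2$-estimate for $H-\bar{H}^S$ itself.

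\emph{Step 1: the $L^2$-estimate.} I would first establish
\begin{equation*}
  \|H-\bar{H}^S\|_{L^2(\Sigma)}\le C\rmin^{-1}\big(\tau+\sqrt\eta\,\rmin^{-1}\big),
\end{equation*}
by comparing $H$ first with the Schwarzschild mean curvature $H^S$ of $\Sigma$ and then with $\bar{H}^S$, which is exactly the constant $g^S$-mean curvature of the centred coordinate sphere $S_{R_e}(0)$. Lemma~\ref{thm:general-to-schwarzschild} gives $|H-H^S|\le C\eta(r^{-3}+r^{-2}|A|)$, so Lemma~\ref{thm:integral-powers} and $\int_\Sigma|A|^2\dmu\le C$ (from Lemma~\ref{thm:initial-roundness}) yield $\|H-H^S\|_{L^2(\Sigma)}\le C\eta\rmin^{-2}$. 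For $\|H^S-\bar{H}^S\|_{L^2(\Sigma)}$ I would use the conformal relation $H^S=\phi^{-2}H^e+4\phi^{-3}\partial_{\nu^e}\phi$ of Lemma~\ref{thm:geometry-in-schwarzschild}: the scale-invariant De~Lellis--M\"uller estimate $\|H^e-2R_e^{-1}\|_{L^2(\Sigma,\gamma^e)}\le c\|\Acirc^e\|_{L^2(\Sigma,\gamma^e)}$ from Theorem~\ref{thm:umbilical}, together with Corollary~\ref{koro:acirc} and Theorem~\ref{thm:improved-curvature-est} (which give $\|\Acirc^e\|_{L^2}\le C\rmin^{-1}(\tau+\sqrt\eta\,\rmin^{-1})$), controls $\phi^{-2}H^e$; and the estimates~\eqref{eq:19} and~\eqref{eq:20} of Proposition~\ref{thm:sphere-approx-1} together with $\tau=|a_e|/R_e$ bound the $L^2(\Sigma)$-norms of $\phi-\bar\phi$, $\nu^e-N$ and $\langle\rho,\nu^e\rangle-1$ by $C\rmin^{-1}(\tau+\sqrt\eta\,\rmin^{-1})$, which controls the remaining pieces. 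Adding the two contributions proves the claim. (Equivalently, one can compute $\int_\Sigma H^2\dmu$ from the Gauss equation~\eqref{eq:14} and Gauss--Bonnet, insert $\int_\Sigma\Ric(\nu,\nu)\dmu=-8\pi m R_S^{-1}+O\big(\rmin^{-2}(\tau+\rmin\|\Acirc\|_{L^2}+\eta\rmin^{-1})\big)$ from the proof of Proposition~\ref{thm:compute_lambda} and the identity $(\bar{H}^S)^2\,4\pi R_S^2=16\pi-32\pi m R_S^{-1}$, and combine this with a Poincar\'e inequality on $\Sigma$ coming from the conformal parametrization of Proposition~\ref{thm:sphere-approx-1} and the bound $\|\nabla H\|^2_{L^2(\Sigma)}\le C\rmin^{-4}(\tau^2+\eta\rmin^{-2})$ furnished by Lemma~\ref{thm:estimate_d2H}, Proposition~\ref{thm:ric-lambda-l2-est} and Theorem~\ref{thm:improved-curvature-est}.)

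\emph{Step 2: interpolation.} Apply Lemma~\ref{thm:interpolation} to $\varphi:=H-\bar{H}^S$; since $\bar{H}^S$ is constant, $\nabla^2\varphi=\nabla^2H$ and
\begin{equation*}
  \|H-\bar{H}^S\|_{L^\infty}^4\le C\|H-\bar{H}^S\|_{L^2}^2\Big(\int_\Sigma|\nabla^2H|^2\dmu+\|H\|_{L^\infty}^4\,\|H-\bar{H}^S\|_{L^2}^2\Big).
\end{equation*}
Writing $M:=\|H-\bar{H}^S\|_{L^\infty}$ and $N:=\|H-\bar{H}^S\|_{L^2}$, and using $0<\bar{H}^S\le C\rmin^{-1}$, we have $\|H\|_{L^\infty}\le M+C\rmin^{-1}$, so Lemma~\ref{thm:l2H} gives $\int_\Sigma|\nabla^2H|^2\dmu\le C\rmin^{-4}(M^2+\rmin^{-2})(\tau^2+\eta\rmin^{-2})$. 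Substituting yields an inequality of the shape
\begin{equation*}
  M^4\le CN^2\rmin^{-6}(\tau^2+\eta\rmin^{-2})+CN^4\rmin^{-4}+CN^2\rmin^{-4}(\tau^2+\eta\rmin^{-2})M^2+CN^4M^4.
\end{equation*}
By Step~1 the quantity $N$ is as small as we please once $\rmin$ is large, so the $M^4$- and $M^2$-terms on the right can be absorbed into the left-hand side (the $M^2$-terms via Young's inequality), leaving $M^4\le CN^2\rmin^{-6}(\tau^2+\eta\rmin^{-2})+CN^4\rmin^{-4}$. Inserting $N\le C\rmin^{-1}(\tau+\sqrt\eta\,\rmin^{-1})$ from Step~1 and $\tau^2+\eta\rmin^{-2}\le(\tau+\sqrt\eta\,\rmin^{-1})^2$ gives $M^4\le C\rmin^{-8}(\tau+\sqrt\eta\,\rmin^{-1})^4$, i.e.~\eqref{eq:pointwiseH1}.

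I expect the main obstacle to be Step~1, and inside it the estimate of $\|H^S-\bar{H}^S\|_{L^2}$: one must verify that the various Schwarzschild-comparison errors -- for $H^e$ via De~Lellis--M\"uller, for $\phi-\bar\phi$ and $\nu^e-N$ via the $W^{2,2}$-closeness of $\Sigma$ to $S_{R_e}(a_e)$ encoded in Proposition~\ref{thm:sphere-approx-1}, and for the ambient perturbation via Lemma~\ref{thm:general-to-schwarzschild} -- all combine precisely at the order $\rmin^{-1}(\tau+\sqrt\eta\,\rmin^{-1})$. Once the $L^2$-bound is available, Step~2 is routine; in particular no a priori pointwise bound on $H$ is needed, since the interpolation argument is self-improving as soon as $N$ is quantitatively small.
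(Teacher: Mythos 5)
Your proposal is correct and follows essentially the same two-step route as the paper: first the $L^2$-estimate $\|H-\bar H^S\|_{L^2}\le C\rmin^{-1}(\tau+\sqrt\eta\,\rmin^{-1})$, obtained by comparing $H$ with $H^S$ and then $H^S$ with $\bar H^S$ via the conformal formula, the De~Lellis--M\"uller bound of Theorem~\ref{thm:umbilical}, and Proposition~\ref{thm:sphere-approx-1}; then the interpolation inequality of Lemma~\ref{thm:interpolation} combined with the Hessian bound of Lemma~\ref{thm:l2H} and absorption of the self-referencing terms. The only step you leave implicit (correctly) is that the $|A-A^S|$ estimate of Lemma~\ref{thm:general-to-schwarzschild}, together with $|g-g^S|\le C\eta r^{-2}$, yields the analogous bound $|H-H^S|\le C\eta(r^{-3}+r^{-2}|A|)$.
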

\begin{proof}
  Since
  \begin{equation*}
    \| H - H^S \|^2_{L^2(\Sigma)} \leq C\eta^2\rmin^{-4}
  \end{equation*}
  and $H^S = \phi^{-2} H^e -\frac{2m}{r^2}\phi^{-3}g^e(\rho,\nu^e)$ by
  lemma~\ref{thm:geometry-in-schwarzschild}, we can estimate using
  propositions~\ref{thm:sphere-approx-1}, \ref{thm:decay-curvature} and theorem \ref{thm:improved-curvature-est} that
  \begin{align*}
   \| H^S - \bar{H}^S \|^2_{L^2(\Sigma)}\le&\ C (\|\phi^{-2}(H^e-\frac{2}{R_e})\|^2_{L^2(\Sigma)}+ \|(\phi^{-2}-\bar{\phi}^{-2})\frac{2}{R_e}\|^2_{L^2(\Sigma)}\\
&+\|(\phi^{-3}-\bar{\phi}^{-3})\frac{2m}{R^2_e}\|^2_{L^2(\Sigma)}\\
&+\|\phi^{-3}(\frac{2m}{r^2}g^e(\rho,\nu^e)-\frac{2m}{R^2_e})\|^2_{L^2(\Sigma)})\\
\le&\ C\| \Acirc\|^2_{L^2(\Sigma)}+C\tau^2 \rmin^{-2} + C \eta \rmin^{-4}\\
\le&\ C\rmin^{-2}\big(\tau^2 + \eta\rmin^{-2}\big).
\end{align*}
Combining these two estimates we conclude
 \begin{equation*}
    \|H - \bar{H}^S \|^2_{L^2(\Sigma)}
    \leq
    C\rmin^{-2}\big(\tau^2 + \eta\rmin^{-2}\big).
  \end{equation*}
  We apply lemma~\ref{thm:interpolation} to $\varphi=H-\bar{H}^S$
  and get
\begin{align}
   \|H-\bar{H}^S\|^4_{L^\infty(\Sigma)}
      &\leq
      C\|H-\bar{H}^S\|^2_{L^2(\Sigma)}
      \left(\int_\Sigma (|\nabla^2 H|^2+ H^4 |H-\bar{H}^S|^2\dmu\right)\nonumber \\
      &= I+II. \label{short}
\end{align}
Now we estimate term by term. We use lemma \ref{thm:l2H} and the fact that $\|H\|_{L^\infty(\Sigma)} \leq \|\bar{H}^S\|_{L^\infty(\Sigma)} + \|H-
  \bar{H}^S\|_{L^\infty(\Sigma)}$ to get
\begin{align*}
I\le&\ C\rmin^{-4}(\|H\|_{L^\infty(\Sigma)}^2+\rmin^{-2})(\tau^2+\eta \rmin^{-2})\|H-\bar{H}^S\|^2_{L^2(\Sigma)}\\
\le&\ C \rmin^{-2}\|H-\bar{H}^S\|^4_{L^\infty(\Sigma)}+C\rmin^{-8}\big(\tau^2 + \eta\rmin^{-2}\big)^2
\end{align*}
where we also used the above estimate for $\|H-\bar{H}^S\|^2_{L^2(\Sigma)}$. Next we note that
\begin{align*}
\int_\Sigma H^4 |H-\bar{H}^S|^2\dmu\le&\ C\int_\Sigma H^2\Big((\bar{H}^S)^2|H-\bar{H}^S|^2+|H-\bar{H}^S|^4\Big)\dmu\\
\le&\ C(\bar{H}^S)^4\int_\Sigma |H-\bar{H}^S|^2\dmu+C\|H-\bar{H}^S\|^4_{L^\infty(\Sigma)}.
\end{align*}
Hence we get
\begin{align*}
II\le C \rmin^{-2}\|H-\bar{H}^S\|^4_{L^\infty(\Sigma)}+C\rmin^{-8}\big(\tau^2 + \eta\rmin^{-2}\big)^2.
\end{align*}
Inserting these two estimates into \eqref{short} we conclude
 \begin{align*}
   \|H-\bar{H}^S\|^4_{L^\infty(\Sigma)}\le C \rmin^{-2}\|H-\bar{H}^S\|^4_{L^\infty(\Sigma)}+C\rmin^{-8}\big(\tau^2 + \eta\rmin^{-2}\big)^2
\end{align*}
and therefore, by choosing $r_0$ large enough we can absorb the first term on the right hand side and this finishes the proof of the proposition.   
\end{proof}
In the next lemma we derive pointwise estimates for higher derivatives of the curvature.
\begin{lemma}
  \label{thm:linfinityestimates}
  Under the assumptions of theorem~\ref{thm:curv-est-1}, if
  conditions~\eqref{eq:23} hold, we have that
  \begin{align}
    \label{eq:linf1}
    \rmin\|\nabla H\|_{L^\infty(\Sigma)}
    +
    \|\Acirc\|_{L^\infty(\Sigma)}
    \leq
    C\rmin^{-2}\big(\tau + \sqrt{\eta}\rmin^{-1}\big)
  \end{align}
\end{lemma}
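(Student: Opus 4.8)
We apply the interpolation inequality~\eqref{inter1} of lemma~\ref{thm:interpolation} to the forms $\varphi=\Acirc$ and $\varphi=\nabla H$, controlling the $L^2$-quantities on its right-hand side by the estimates already established. Abbreviate $\kappa:=\big(\tau^2+\eta\rmin^{-2}\big)^{1/2}$, which under~\eqref{eq:23} is small for $\rmin$ large. By proposition~\ref{thm:pointwiseH} and $\bar H^S\leq C\rmin^{-1}$ we have $\|H\|_{L^\infty(\Sigma)}\leq C\rmin^{-1}$; moreover $|\RicM|+|\RiemM|\leq C\rmin^{-3}$ and $\lambda\leq C\rmin^{-3}$ by proposition~\ref{thm:compute_lambda}. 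From theorem~\ref{thm:improved-curvature-est} we have $\|\Acirc\|_{L^2(\Sigma)}^2+\|\nabla\log H\|_{L^2(\Sigma)}^2+\rmin^{2}\int_\Sigma|\nabla A|^2\dmu\leq C\rmin^{-2}\kappa^2$, and hence also $\|\nabla H\|_{L^2(\Sigma)}^2\leq\|H\|_{L^\infty}^2\|\nabla\log H\|_{L^2}^2\leq C\rmin^{-4}\kappa^2$ and $\|\nabla\Acirc\|_{L^2(\Sigma)}^2\leq C\rmin^{-4}\kappa^2$.

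We first treat $\Acirc$. The key step is an $L^2$-bound on $\nabla^2\Acirc$: integrating the Simons identity~\eqref{eq:7}, converting $\Delta\Acirc$ into $\nabla^2\Acirc$ by the standard Bochner commutation (the curvature error terms being controlled via the Gauss equation~\eqref{eq:4}, $|\RiemSig|\leq C\rmin^{-2}$ and $\int_\Sigma|\nabla\RiemSig|\dmu\leq C\rmin^{-2}$), using lemma~\ref{thm:l2H} together with $\|H\|_{L^\infty}\leq C\rmin^{-1}$ to get $\int_\Sigma|\nabla^2 H|^2\dmu\leq C\rmin^{-6}\kappa^2$, using proposition~\ref{thm:nabla-omega-small} for $\int_\Sigma|\nabla\omega|^2\dmu\leq C\rmin^{-6}\kappa^2$, and estimating the algebraic terms of~\eqref{eq:7} by the pointwise bounds above. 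Because of the cubic terms $|\Acirc|^2\Acirc$ and $H|\Acirc|^2$ in~\eqref{eq:7}, the resulting bound is self-referential:
\begin{equation*}
  \int_\Sigma|\nabla^2\Acirc|^2\dmu\leq C\rmin^{-6}\kappa^2+C\big(\rmin^{-2}\|\Acirc\|_{L^\infty(\Sigma)}^2+\|\Acirc\|_{L^\infty(\Sigma)}^4\big)\|\Acirc\|_{L^2(\Sigma)}^2+C\rmin^{-5}\kappa\|\Acirc\|_{L^\infty(\Sigma)} .
\end{equation*}
Inserting this into~\eqref{inter1} with $\varphi=\Acirc$ and using $\|\Acirc\|_{L^2(\Sigma)}^2\leq C\rmin^{-2}\kappa^2$, $\|H\|_{L^\infty}\leq C\rmin^{-1}$, we obtain an inequality for $\|\Acirc\|_{L^\infty(\Sigma)}^4$ whose right-hand side, besides the principal term $C\rmin^{-8}\kappa^4$, only contains contributions that are either a small multiple of $\|\Acirc\|_{L^\infty(\Sigma)}^4$ or, after Young's inequality, are bounded by $\tfrac12\|\Acirc\|_{L^\infty(\Sigma)}^4+C\rmin^{-8}\kappa^4$. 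Absorbing these into the left-hand side gives
\begin{equation*}
  \|\Acirc\|_{L^\infty(\Sigma)}\leq C\rmin^{-2}\kappa .
\end{equation*}

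Next we treat $\nabla H$, using the bound just obtained. Differentiating~\eqref{eq:1}, written as $\Delta H=-H\big(|\Acirc|^2+\RicM(\nu,\nu)+\lambda\big)$, and passing from $\nabla^3H$ to $\nabla\Delta H$ by Bochner, one is left with the terms $\nabla H\cdot\big(|\Acirc|^2+\RicM(\nu,\nu)+\lambda\big)$, $H\,\Acirc\cdot\nabla\Acirc$, $H\,\nabla\!\big(\RicM(\nu,\nu)\big)$ and the surface-curvature commutators, each of which is at most of order $\rmin^{-4}\kappa$ in $L^2$: here one uses $|\Acirc|^2+\RicM(\nu,\nu)+\lambda\leq C\rmin^{-3}$ (from $\|\Acirc\|_{L^\infty}\leq C\rmin^{-2}\kappa$, $|\RicM|\leq C\rmin^{-3}$ and $\lambda\leq C\rmin^{-3}$), $\|H\|_{L^\infty}\leq C\rmin^{-1}$, $\|\nabla\Acirc\|_{L^2(\Sigma)}^2\leq C\rmin^{-4}\kappa^2$, $|\RicSig|\leq C\rmin^{-2}$, and proposition~\ref{thm:nabla-omega-small} for $\|\nabla\RicM(\nu,\nu)\|_{L^2(\Sigma)}\leq C\rmin^{-3}\kappa$. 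This yields $\int_\Sigma|\nabla^3H|^2\dmu\leq C\rmin^{-8}\kappa^2$ after absorbing a lower-order contribution proportional to $\|\nabla H\|_{L^\infty(\Sigma)}$. Inserting this together with $\|\nabla H\|_{L^2(\Sigma)}^2\leq C\rmin^{-4}\kappa^2$ and $\|H\|_{L^\infty}\leq C\rmin^{-1}$ into~\eqref{inter1} with $\varphi=\nabla H$ gives $\|\nabla H\|_{L^\infty(\Sigma)}^4\leq C\rmin^{-12}\kappa^4$, i.e.\ $\rmin\|\nabla H\|_{L^\infty(\Sigma)}\leq C\rmin^{-2}\kappa$. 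Since $\kappa\leq\tau+\sqrt\eta\,\rmin^{-1}$, this and the bound on $\|\Acirc\|_{L^\infty(\Sigma)}$ give the claim. The main obstacle is precisely these two derivative estimates: one must differentiate the Simons identity and the Euler--Lagrange equation once more, keep every resulting term at order $\rmin^{-3}\kappa$ or better, and close the self-improving inequalities by absorption.
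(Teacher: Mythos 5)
Your proposal is correct and follows essentially the same route as the paper: use the Simons identity~\eqref{eq:7} together with lemma~\ref{thm:l2H} and proposition~\ref{thm:nabla-omega-small} to get an $L^2$-bound on $\nabla^2\Acirc$ with a self-referential cubic term, close it by the Kuwert--Sch\"atzle interpolation~\eqref{inter1}, and then repeat the scheme on the differentiated equation~\eqref{eq:1} for $\nabla^3 H$ (with the $\|\Acirc\|_{L^\infty}$ bound already in hand) to conclude via interpolation on $\nabla H$. The reductions to $\kappa=(\tau^2+\eta\rmin^{-2})^{1/2}$ and the absorption steps are the same; your bookkeeping of the lower-order terms (e.g.\ the extra cross term and the Bochner commutator contribution proportional to $\|\nabla H\|_{L^\infty}$) is slightly more explicit than the paper's, but does not alter the argument.
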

\begin{proof}
  Using \eqref{eq:7} we estimate
  \begin{align*}
   \!\! \|\Delta \Acirc\|_{L^2}
    \leq
    &\, 
    c(\|\nabla^2 H\|_{L^2}
    +
    \|H\|_{L^\infty}\|\Acirc\|_{L^4}^2
    +
    \|H\|_{L^\infty}^2\|\Acirc\|_{L^2}+\|\Acirc\|_{L^2}\|\Acirc\|_{L^\infty}^2
    \\
    &\, 
    +\| \RiemM\|_{L^\infty}\|\Acirc\|_{L^2}
    +\|\nabla \omega\|_{L^2})
    \\
    \leq
    &\, 
    C\rmin^{-3}\big(\tau + \sqrt{\eta}\rmin^{-1}\big)+C\|\Acirc\|_{L^2}\|\Acirc\|_{L^\infty}^2,
  \end{align*}
  where we used theorem \ref{thm:improved-curvature-est},
  definition \ref{def:asymptotically-flat}, corollary
  \ref{thm:pointwiseH} and propositions~\ref{thm:nabla-omega-small}
  and~\ref{thm:l2H}. Using an integration by parts argument as in the
  proof of lemma \ref{thm:estimate_d2H} we get
  \begin{align*}
    \|\nabla^2 \Acirc\|_{L^2(\Sigma)}
    \leq
    C\rmin^{-3}\big(\tau + \sqrt{\eta}\rmin^{-1}\big)+C\|\Acirc\|_{L^2(\Sigma)}\|\Acirc\|_{L^\infty(\Sigma)}^2,
  \end{align*}
  Hence we can apply lemma \ref{thm:interpolation} and get
  \begin{align*}
    \|\Acirc\|_{L^\infty(\Sigma)}^4
    &
    \leq
    c\|\Acirc\|_{L^2(\Sigma)}^2
    ( \|\nabla^2 \Acirc\|_{L^2(\Sigma)}^2
    + \|H\|_{L^\infty(\Sigma)}^4\|\Acirc\|_{L^2(\Sigma)}^2)
    \\
    &
    \leq
    C\rmin^{-8}\big(\tau^2 + \eta\rmin^{-2}\big)^2+C\rmin^{-4}\|\Acirc\|_{L^\infty(\Sigma)}^4,
  \end{align*}
  where we used the above estimate for $\nabla^2 \Acirc$ and
  theorem \ref{thm:improved-curvature-est}. Absorbing the last term on the right hand side into the term on the left hand side finishes the proof of the 
  $L^\infty$-estimate for $\Acirc$. For the estimate of $\nabla H$ we
  differentiate \eqref{eq:1} and get
  \begin{align*}
    \|\nabla \Delta H\|_{L^2(\Sigma)}
    \le
    &\,
    c(\lambda \|\nabla H\|_{L^2(\Sigma)}+
    \|\Acirc\|_{L^\infty(\Sigma)}^2\|\nabla H\|_{L^2(\Sigma)}
    \\
    &
    +\|H\|_{L^\infty(\Sigma)}\|\Acirc\|_{L^\infty(\Sigma)}\|\nabla
    \Acirc\|_{L^2(\Sigma)}
    \\
    &
    +\|\Ric(\nu,\nu)\|_{L^\infty(\Sigma)}\|\nabla H\|_{L^2(\Sigma)}
    \\
    &+\|\Ric^T(\cdot,\nu)\|_{L^2(\Sigma)}\|A\|^2_{L^\infty(\Sigma)}
    \\
    &
    +\|H\|_{L^\infty(\Sigma)}\|\nabla \Ric(\nu,\nu)\|_{L^2(\Sigma)})
    \\
    \le
    &\,
    C\rmin^{-4}\big(\tau + \sqrt{\eta}\rmin^{-1}\big).
  \end{align*}
  Hence by interchanging derivatives and integration by parts we get as before
  \begin{align*}
    \|\nabla^3 H\|_{L^2(\Sigma)}
    \leq
    C\rmin^{-4}\big(\tau + \sqrt{\eta}\rmin^{-1}\big).
  \end{align*}
  Applying theorem \ref{thm:improved-curvature-est} and lemma \ref{thm:interpolation} once more, we conclude
  \begin{align*}
    \|\nabla H\|_{L^\infty(\Sigma)}^4
    &
    \le
    C\|\nabla H\|_{L^2(\Sigma)}^2(\|\nabla^3
    H\|_{L^2(\Sigma)}^2+\rmin^{-4}\|\nabla H\|_{L^2(\Sigma)}^2)
    \\
    &
    \le C\rmin^{-12}\big(\tau^2 + \eta\rmin^{-2}\big)^2.
  \end{align*}
  This finishes the proof of the Lemma. 
\end{proof}

%%% Local Variables: 
%%% mode: latex
%%% TeX-master: "master"
%%% End: 

%
\section{Position estimates}
\label{sec:position-estimates}
To get estimates on the position of the approximating sphere, we
exploit the translation sensitivity of surfaces satisfying
\begin{equation}
  \label{eq:65}
  LH + \half H^3 = \lambda H.
\end{equation}
As it turns out, this position estimate is a delicate matter. The goal
is to obtain an estimate for $\tau = |a_e|/R_e$ where $a_e$ and $R_e$
are the center and radius of the approximating sphere constructed in
proposition~\ref{thm:sphere-approx-1}. In fact, we subsequently prove
the following theorem
\begin{theorem}
  \label{thm:position-estimate}
  For all $m>0$, $\eta_0$ and $\sigma$ there exist $r_0<\infty$,
  $\tau_0>0$ and $\eps>0$ with the following properties.  Assume that
  $(M,g)$ is $(m,\eta,\sigma)$-asymptotically Schwarzschild with
  $\eta\leq\eta_0$ and
  \begin{equation*}
    |\ScalM| \leq \eta r^{-5}.
  \end{equation*}
  Then if $\Sigma$ is a surface satisfying equation~\eqref{eq:1} with
  $H>0$, $\lambda>0$, $\rmin>r_0$ and 
  \begin{equation*}
    \tau \leq \tau_0\qquad\text{and}\qquad R_e \leq \eps \rmin^{2},
  \end{equation*}
  then
  \begin{equation*}
    \tau \leq C\sqrt{\eta}\rmin^{-1}.
  \end{equation*}
\end{theorem}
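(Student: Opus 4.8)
The plan is to exploit the translation sensitivity of equation~\eqref{eq:1}. Up to an additive topological constant this equation is the Euler--Lagrange equation of the Willmore functional $\CW$, and the \emph{Euclidean} Willmore energy $\CW^{g^e}(\Sigma)=\tfrac12\int_\Sigma(H^e)^2\dmu^e$ is invariant under the translations $x\mapsto x+se_a$ of $\IR^3$, whereas the same energy measured with $g$ (equivalently with the Schwarzschild metric $g^S$) is not. The failure of this invariance is the quantity that detects the Euclidean center of gravity $a_e$ of $\Sigma$, hence $\tau=|a_e|/R_e$.

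\textbf{A position identity.} Fix $a\in\{1,2,3\}$ and let $\alpha_a^e:=g^e(\del_a,\nu^e)$ and $\alpha_a:=g(\del_a,\nu)$ be the Euclidean and the $g$--normal speeds of the variation of $\Sigma$ obtained by translating it in the $a$--th coordinate direction; note $|\alpha_a-\alpha_a^e|\le C\eta r^{-2}$. Translation invariance of $\CW^{g^e}$ gives $\int_\Sigma\big(-\lapSig^e H^e-H^e|\Acirc^e|^2\big)\alpha_a^e\,\dmu^e=0$. On the other hand $\int_\Sigma|\Acirc|^2\dmu$ is conformally invariant (lemma~\ref{thm:a0-decay-preserved}), and on any solution of~\eqref{eq:1} the Gauss equation~\eqref{eq:14}, Gauss--Bonnet and the first variation formula~\eqref{eq:30} give $\delta\big(\int_\Sigma|\Acirc|^2\dmu\big)(\alpha_a)=\lambda\int_\Sigma H\alpha_a\dmu+\delta\big(\int_\Sigma\ScalM\dmu\big)(\alpha_a)-2\,\delta\big(\int_\Sigma\Ric(\nu,\nu)\dmu\big)(\alpha_a)$. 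Equating the two and using $|\ScalM|\le\eta r^{-5}$ to discard the $\ScalM$--term leaves, for each $a$, a scalar identity of the form
\begin{equation*}
  \Phi_a(\Sigma)\ :=\ \lambda\int_\Sigma H\alpha_a\,\dmu\ -\ 2\,\DDeval{}{s}{s=0}\!\int_{\Sigma+s e_a}\!\Ric(\nu,\nu)\,\dmu\ =\ (\text{errors}),
\end{equation*}
where $\Sigma+se_a$ denotes $\Sigma$ translated in the coordinates, and the errors come only from the conformal comparison of lemma~\ref{thm:a0-decay-preserved} and from $\ScalM$.

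\textbf{Reduction to a sphere in Schwarzschild.} I would then evaluate $\Phi_a(\Sigma)$ by replacing $g$ by $g^S$ and $\Sigma$ by its approximating sphere $S_{R_e}(a_e)$ of proposition~\ref{thm:sphere-approx-1}, exactly as in the proof of proposition~\ref{thm:compute_lambda}; the errors of both replacements are controlled by the fine estimates of section~\ref{sec:impr-curv-estim} --- the pointwise bounds $\|H-\bar H^S\|_{L^\infty}+\|\Acirc\|_{L^\infty}+\rmin\|\nabla H\|_{L^\infty}\le C\rmin^{-2}(\tau+\sqrt\eta\,\rmin^{-1})$ (proposition~\ref{thm:pointwiseH}, lemma~\ref{thm:linfinityestimates}), proposition~\ref{thm:decay-curvature}, theorem~\ref{thm:improved-curvature-est}, and the comparison inequalities~\eqref{eq:19}--\eqref{eq:21}. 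The surviving term is the value of the same expression on the exact round sphere $S_{R_e}(a_e)$ in exact Schwarzschild, which is given in closed form by the elementary integrals of $r^{-j}\cos^l\varphi$ over $S_{R_e}(a_e)$ (appendix~$A.1$). Since $g^S$ is spherically symmetric about the origin, this model value is rotationally equivariant in $a_e$ and hence equals $\kappa(m,R_e)\,a_e+O(|a_e|^3)$ for a scalar $\kappa(m,R_e)$. A careful accounting of all errors against section~\ref{sec:impr-curv-estim} then shows that the $a$--th component of $\Phi_a(\Sigma)$ differs from $\kappa(m,R_e)\,(a_e)_a$ by at most $C(m)\,|\kappa(m,R_e)|\,R_e\big(\sqrt\eta\,\rmin^{-1}+\theta(\rmin)\tau\big)$ with $\theta(\rmin)\to0$; absorbing the $\tau$--part for $\rmin$ large and dividing by $|\kappa(m,R_e)|\,R_e$ gives $\tau=|a_e|/R_e\le C(m)\sqrt\eta\,\rmin^{-1}$, which is the assertion. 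The constants $\eps$ and $\tau_0$ are chosen precisely so that the hypotheses force~\eqref{eq:23}, making section~\ref{sec:impr-curv-estim} applicable.

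\textbf{The main obstacle.} The crux, and the reason this is delicate, is the explicit evaluation of $\kappa(m,R_e)$ together with the verification that it is a genuinely nonzero quantity of the expected order (comparable to $m^2\rmin^{-4}$). Because equation~\eqref{eq:1} is exactly translation invariant in flat space, \emph{both} contributions to $\Phi_a$ --- the term $\lambda\int_\Sigma H\alpha_a\dmu$ and the Ricci--variation term --- have leading parts, in powers of $m/R_e$, in which the coefficient of $a_e$ cancels; one is therefore forced to expand to the next order and verify that the surviving combination does not vanish. It is precisely this near-cancellation that makes the \emph{pointwise} estimates of section~\ref{sec:impr-curv-estim}, rather than mere $L^2$ bounds, indispensable, since every error term must be kept strictly below the surviving main term; and one must additionally check that each of the several error terms that are themselves proportional to $\tau$ carries an extra factor tending to $0$ as $\rmin\to\infty$ before absorbing it to the left.
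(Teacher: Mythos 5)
Your overall strategy does track the paper's: use the Gauss--Bonnet/Gauss-equation splitting of $\CW$ together with the translation/conformal invariance of $\int|\Acirc^S|^2\dmu^S$ to turn the Euler--Lagrange equation into a scalar identity whose leading terms see the position, then pass to the approximating sphere in Schwarzschild and compute. But there is a concrete, necessary ingredient you have not supplied, and it is not of the kind that ``a careful accounting of errors against section~\ref{sec:impr-curv-estim}'' can replace.

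The variation of $\int_\Sigma\Ric(\nu,\nu)\dmu$ (equivalently $\CV$) along your translation lapse produces, after the manipulations in section~\ref{sec:variation-V}, a boundary term of the form $\int_\Sigma G(b,\nu)\dmu$ with $b$ a constant vector. If you estimate this by naively transferring to the Schwarzschild background, you get $\int_\Sigma G(b,\nu)\dmu=\int_\Sigma G^S(b,\nu^S)\dmu^S+O(\eta\rmin^{-2})$, and even using that the first integral vanishes in exact Schwarzschild, the residual $O(\eta\rmin^{-2})$ is \emph{not} controlled by the main term $\sim m^2\tau\rmin^{-2}$ when $\tau$ is in the range $\tau\sim\sqrt\eta\,\rmin^{-1}$ that one is trying to establish; one needs $O(\eta\rmin^{-3})$. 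The paper obtains this by the Pohozaev identity~\eqref{eq:24}: because $b$ is a conformal Killing field of $g^S$, one has $|\CD b|\le C\eta r^{-3}$, and converting the surface term into a volume integral over the exterior domain $\Omega$ of a quantity decaying like $\eta r^{-6}$ and invoking lemma~\ref{thm:volume-integral-decay} gives the extra factor of $\rmin^{-1}$ (lemma~\ref{thm:pohozaev-decay}). This is a global identity requiring integration over the exterior region, not something the pointwise curvature estimates of section~\ref{sec:impr-curv-estim} can produce; without it the argument does not close.

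Two further points in your ``main obstacle'' paragraph are misstated. First, the two main contributions you denote $\lambda\int H\alpha_a\dmu$ and the Ricci-variation term do \emph{not} near-cancel in the final inequality: in the paper's computation the terms $\lambda\int Hf\dmu\approx -8\pi m^2\tau/(\bar\phi^2R_S^2)$ and $\delta_f\CV\approx +16\pi m^2\tau/(\bar H^SR_S^3)$ combine with the same sign once $\lambda\int Hf=\delta_f\CU+\delta_f\CV$ is rearranged, giving the coefficient $24\pi m^2\tau$ that dominates the error; no ``next-order expansion'' is needed to extract a nonzero coefficient. Second, the claim that every error proportional to $\tau$ carries a factor $\theta(\rmin)\to0$ is wrong: the actual error in proposition~\ref{thm:position-estimate-pre} is $C(\tau^2+\tau\rmin^{-1}+\sqrt\eta\,\rmin^{-1})$, and the quadratic term $\tau^2$ has no $\rmin$-decay; absorbing it is exactly why the hypothesis $\tau\le\tau_0$ with $\tau_0$ small is required, not merely $\rmin$ large. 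This matters because it is the hypothesis $\tau\le\tau_0$ of the theorem you are being asked to prove.
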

Note that the assumptions of theorem~\ref{thm:position-estimate} imply the
assumptions~\eqref{eq:23}. We will therefore take $r_0$ large enough to be able
to apply the estimates derived in section~\ref{sec:impr-curv-estim}.

Theorem~\ref{thm:position-estimate} follows from
proposition~\ref{thm:position-estimate-pre}, which states that under
the assumptions of theorem~\ref{thm:position-estimate} we have in fact
\begin{equation*}
  \tau \leq C\big(\tau^2 + \sqrt{\eta}\rmin^{-1}\big),
\end{equation*}
for some constant $C$ depending only on $m,\eta_0$ and $\sigma$,
whenever $r_0$ is large enough. Assuming that $\tau_0^2 <1/2C$ yields
the claim.

The crucial ingredients for this estimate are the quadratic structure
of certain error terms, the translation invariance of the functional
$\CU$ with respect to the Schwarzschild background, the Pohozaev
identity, and the contribution of the Schwarzschild geometry to break
the translation invariance. We split the proof of the theorem into the
following subsections.
\subsection{Splitting}
\label{sec:splitting}
Integrating the Gauss equation on
$\Sigma$ yields
\begin{equation*}
  8\pi(1-q(\Sigma)) = \CW(\Sigma) - \CU(\Sigma) -\CV(\Sigma),
\end{equation*}
where $q(\Sigma)$ is the genus of $\Sigma$ and
\begin{align*}
  \CU(\Sigma)
  &:=
  \int_\Sigma |\Acirc|^2\dmu,
  \\
  \CV(\Sigma)
  &:=
  2\int_\Sigma G(\nu,\nu)\dmu,
\end{align*}
where $G = \RicM - \half \ScalM g$ is the Einstein tensor of $M$.
Denoting by $\delta_{f}$ the variation induced by a normal variation
of $\Sigma$ with normal velocity $f$, we infer from the above relation
that
\begin{equation*}
  \delta_f \CW(\Sigma) = \delta_f \CU(\Sigma) + \delta_f \CV(\Sigma).
\end{equation*}
By assumption we have
\begin{equation*}
  \delta_f \CW(\Sigma) = \lambda \int_\Sigma Hf \dmu,
\end{equation*}
hence
\begin{equation}
  \label{eq:48}
  \lambda \int_\Sigma Hf\dmu = \delta_f \CU(\Sigma) + \delta_f \CV(\Sigma).
\end{equation}
By a fairly straightforward computation (given all the expressions in
section~\ref{sec:first-second-vari}), we find
\begin{equation}
  \label{eq:26}
  \delta_f \CU(\Sigma)
  =
  - \int_\Sigma
    2 \Acirc^{ij}\nabla^2_{ij} f
  + 2 f \Acirc^{ij}\RicM^T_{ij}
  + f H |\Acirc|^2  
  \dmu.
\end{equation}

\subsection{The variations of $\CU$ in $g$ and $g^S$}
\label{sec:variation-U}
Here we compute the difference of the variation of $\CU$ with respect
to $g$ and to $g^S$, that is the error when changing the metric.

To do this, we restrict to the special case where
\begin{equation*}
  f = \frac{g(\nu,b)}{H},
\end{equation*}
and $b=\tfrac{a_e}{|a_e|}$, where $a_e$ is as in
proposition~\ref{thm:sphere-approx-1} and $\nu$ is the normal of
$\Sigma$ with respect to $g$. Thus, up to the factor of $H^{-1}$, the
function $f$ is the normal velocity induced by translating $\Sigma$ in the 
direction of $b$. We also define
\begin{equation*}
  f^S = \frac{g^S(\nu^S,b)}{\bar H^s},
\end{equation*}
where $\bar H^S$ is as in proposition~\ref{thm:pointwiseH}.  As
$|\nu-\nu^S| \leq C\eta r^{-2}$ and $||H-\bar H^S||_{L^\infty(\Sigma)} \le C
\rmin^{-2}(\tau+\sqrt{\eta}\rmin^{-1})$ we find that
\begin{equation*}
  | f - f^S | \leq C(\tau + \sqrt{\eta}\rmin^{-1}).
\end{equation*}
Before we proceed, we compute the first and second derivative of $f$. 
\begin{equation}
  \label{eq:60}
  \nabla_i f
  =
  H^{-1}\big(g(\nabla_i b, \nu) + g(b, A_i^j e_j)\big)
  -
  H^{-2} \nabla_i H g(b,\nu),
\end{equation}
and hence, as $|\nabla b| \leq Cr^{-2}$, we find that
\begin{equation*}
  \int_\Sigma |\nabla f|^2 \dmu
  \leq
  C\int_\Sigma (r^{-2} + \frac{|A|^2}{H^2}+ \frac{|\nabla H|^2}{H^4}) \dmu
  \leq
  C\rmin^2.
\end{equation*}
The second derivative of $f$ is given by
\begin{equation*}
  \begin{split}
    &\nabla_i\nabla_j f
    \\
    &=
   - A_i^k A_{jk} f
    +2 H^{-3}\nabla_i H \nabla_j H g(b,\nu)
    - H^{-2}\nabla^2_{i,j} H g(b,\nu)
    \\
    &\quad
    + H^{-1}\big(g(\nabla_i\nabla_j b , \nu)
    + g(\nabla_i b, e_k) A_j^k
    + g(\nabla_j b, e_k) A_i^k
    + \nabla_j A_i^k g(b, e_k)
    \big)
    \\
    &\quad
    - H^{-2}\big(\nabla_i H(g(\nabla_j b,\nu)+g(b,e_k)A^k_j)
    +\nabla_j H (g(\nabla_ib,\nu)+g(b,e_k)A^k_i)\big).
  \end{split}
\end{equation*}
In view of our estimates and the rapid decay of $\nabla b$, $\nabla^2
b$, $\nabla H$ and $\nabla^2 H$, the first term on the right hand side
of this equation is one magnitude larger than the other ones. However,
the main contribution is in the trace of $\nabla^2 f$. We will not
have to consider the trace part, as $\nabla^2 f$ is contracted with
the traceless $\Acirc$ in equation~\eqref{eq:26}. The traceless part
$(\nabla^2 f )^0$ can be estimated as follows
\begin{equation}
  \label{eq:28}
  \int_\Sigma |(\nabla^2 f )^0|^2 \dmu
  \leq
  C \int_\Sigma r^{-4}  \dmu
  \leq
  C \rmin^{-2}.  
\end{equation}
Note the jump in decay rates compared to the $L^2$-norm of $|\nabla f|$.
Finally we need to calculate the second derivative of $f^S$
\begin{equation*}
 \begin{split}
 \nabla^S_i\nabla^S_j f^S
  =&
  (\bar H^S)^{-1}\big(
  g^S(\nabla^S_i\nabla^S_j b , \nu^S)
  + g^S(\nabla^S_i b, e_k) (A^S)_j^k
  + g^S(\nabla^S_j b, e_k) (A^S)_i^k
  \\
  &\phantom{(\bar H^S)^{-1}\big(}
  + \nabla^S_j (A^S)_i^k g^S(b, e_k)\big) - (A^S)_i^k (A^S)_{jk} f^S. 
\end{split}
\end{equation*}
We are now in the position to examine
\begin{equation*}
  |\delta_f \CU(\Sigma) - \delta_{f^S} \CU^S(\Sigma)|.
\end{equation*}
We will do this in detail, as this requires some care. First,
consider the first term in equation~(\ref{eq:26}):
\begin{equation*}
  \begin{split}    
    E_1
    &=
    \left| \int_\Sigma g(\Acirc, (\nabla^2 f)^0 ) \dmu
      - \int_\Sigma g^S(\Acirc^S, ((\nabla^2)^S f)^0) \dmu^S \right|
    \\
    &
    \leq
    \left| \int_\Sigma (g-g^S) (\Acirc,(\nabla^2 f)^0)\dmu \right|
    +
    \left| \int_\Sigma g^S(\Acirc-\Acirc^S, (\nabla^2 f)^0) \dmu
    \right|
    \\
    &\phantom{\leq}
    +
    \left| \int_\Sigma g^S(\Acirc^S, (\nabla^2 f)^0) (\dmu -\dmu^S)
    \right|\\
    &\phantom{\leq}
    +
    \left| \int_\Sigma g^S(\Acirc^S, (\nabla^2 f-(\nabla^S)^2f^S)^0)
      \dmu^S \right| .
  \end{split}
\end{equation*}
The first three terms can be estimated using the asymptotics of $g$
and the curvature estimates from theorems~\ref{thm:curv-est-1}
and~\ref{thm:A0-l2-1}.
\begin{equation*}
  \begin{split}
    E_1^a
    &\leq
    C \eta \rmin^{-2}\int_\Sigma |\Acirc| |(\nabla^2 f)^0)|
    +
    (r^{-1} + |A| ) |(\nabla^2 f)^0)|
    +
     |\Acirc^S| |(\nabla^2 f)^0)|
    \dmu
    \\
    &\leq
    C \eta\rmin^{-2} \| (\nabla^2 f)^0\|_{L^2(\Sigma)}
    \big(\|A\|_{L^2} + \rmin^{-1} |\Sigma|^{1/2}\big)
    \\
    &\leq C \eta \rmin^{-3}.
  \end{split}
\end{equation*}
Using again the fact that we are contracting with the traceless
second fundamental form and the above equations for the second
derivatives of $f$ and $f^S$ we see that we can estimate the last term
for $E_1$, denoted by $E_1^b$, by
\begin{equation}
  \label{eq:77}
  \begin{split}
    E_1^b &
    \leq
    C\int_\Sigma |\Acirc^S|H^{-2}
    \big(|\nabla H||\nabla b|
    +|\nabla H||A|
    +|\nabla^2 H|
    +H^{-1}|\nabla H|^2
    \big)\dmu^S
    \\
    &\quad
    + C\int_\Sigma \frac{|\Acirc^S||H-\bar H^S|}{H\bar H^S}
    \big( |\nabla^2 b|+|\nabla b||A|+|\nabla A|+H |\Acirc|\big)\dmu^S
    \\
    &\quad
    + C\bar H_S^{-1} \int_\Sigma
    |\Acirc^S||g(\nabla_i\nabla_j b , \nu)-g^S(\nabla^S_i\nabla^S_j b , \nu^S)|
    \\
    &\quad\phantom{+ C\bar H_S^{-1} \int_\Sigma}
    + |\Acirc^S| |g(\nabla_i b, e_k) A_j^k- g^S(\nabla^S_i b, e_k) (A^S)_j^k|
    \\
    &\quad\phantom{+ C\bar H_S^{-1} \int_\Sigma}
    + |\Acirc^S| |g(\nabla_j b, e_k) A_i^k - g^S(\nabla^S_j b, e_k)
    (A^S)_i^k|
    \\
    &\quad\phantom{+ C\bar H_S^{-1} \int_\Sigma}
    + |\Acirc^S| |\nabla_j A_i^k g(b, e_k) -\nabla^S_j (A^S)_i^k
    g^S(b, e_k) | 
    \dmu^S
    \\
    &\quad
    + C\int_\Sigma
    |\Acirc^S| |(A_i^k A_{jk})^0 f - ((A^S)_i^k
    A^S_{jk})^0 f^S|
    \dmu^S .
  \end{split}
\end{equation}
By the curvature estimates from section~\ref{sec:impr-curv-estim} the
terms on the first two lines in equation~\eqref{eq:77} are estimated
by
\begin{equation*}  
  \begin{split}   
    &C\|\Acirc^S\|_{L^2(\Sigma)}
    \big(
    \rmin \|\nabla A\|_{L^2(\Sigma)}
    +\rmin^2 \|\nabla^2 H\|_{L^2(\Sigma)}
    + \rmin \|\nabla (\operatorname{log} H)\|_{L^4(\Sigma)}^2
    \\
    & \phantom{C\|\Acirc^S\|_{L^2(\Sigma)}\big(}
    + \|\nabla^2 b\|_{L^2(\Sigma)}
    + \rmin^{-2} \|A\|_{L^2}
    + \rmin^{-1}\|\Acirc\|_{L^2(\Sigma)}
    \\
    &\quad
    \leq
    C\rmin^{-2}\big(\tau^2 + \tau\rmin^{-1} +
    \sqrt{\eta}\rmin^{-1}\big) .
  \end{split}
\end{equation*}
We estimate the terms on the last five lines of equation~\eqref{eq:77}
seperately. The third line yields
\begin{equation*}
  \begin{split}
    &
    \int_\Sigma |\Acirc^S| |g(\nabla_i \nabla_j b,\nu)-g^S (\nabla^S_i
    \nabla^S_j b,\nu^S)|\dmu^S
    \\
    &\quad
    \leq
    \int_\Sigma |\Acirc^S|\big(
    |(g-g^S)(\nabla_i \nabla_j b,\nu)|
    +|g^S((\nabla_i \nabla_j -\nabla^S_i \nabla^S_j)b,\nu^S)|
    \\
    &\quad\phantom{\leq\int_\Sigma |\Acirc^S|\big(}
    +|g^S(\nabla_i \nabla_j b,\nu-\nu^S)|)\dmu^S
    \\
    &\quad
    \leq
    C\eta\rmin^{-5}.
  \end{split}
\end{equation*}
The fourth and fifth line of~\eqref{eq:77} are estimated as follows
\begin{equation*}
  \begin{split}
    &
    \int_\Sigma |\Acirc^S| |g(\nabla_i b,e_k)A^k_j
     - g^S (\nabla^S_i b,e_k)(A^S)^k_j|\dmu^S
    \\
    &\quad
    \leq
    \int_\Sigma |\Acirc^S|\big(
    |(g-g^S)(\nabla_i b,e_k)A^k_j|
    +|g^S((\nabla_i  -\nabla^S_i)b,e_k)(A^S)^k_j|\\
    &\quad\phantom{\leq\int_\Sigma |\Acirc^S|\big(}
    +|g^S(\nabla_i  b,e_k)(A^k_j-(A^S)^k_j)|\big)\dmu^S \\
    &\quad
    \leq
    C\eta\rmin^{-4}.
  \end{split}
\end{equation*}
For the sixth line of~\eqref{eq:77} we get
\begin{equation*}
  \begin{split}
    &\int_\Sigma |\Acirc^S| |g( b,e_k)\nabla_i A^k_j
    - g^S (b,e_k)\nabla^S_i (A^S)^k_j|\dmu^S 
    \\
    &\quad
    \leq
    \int_\Sigma |\Acirc^S|\big(
    |(g-g^S)(  b,e_k)\nabla_i A^k_j|
    +|g^S(b,e_k)\nabla^S_i (A^k_j-(A^S)^k_j)|
    \\
    &\quad\phantom{\leq\int_\Sigma |\Acirc^S|\big(}
    +|g^S(b,e_k)(\nabla_i  -\nabla^S_i)A^k_j|\big)\dmu^S
    \\
    &\quad
    \leq
    C\eta\rmin^{-4}.
  \end{split}
\end{equation*}
It remains to estimate the last line of~\eqref{eq:77}
\begin{equation*}
  \begin{split}
    &
    \int_\Sigma |\Acirc^S||\Acirc^k_i A_{jk}f-(\Acirc^S)^k_i
    A^S_{jk}f^S|\dmu^S
    \\
    &\quad
    \leq
    C\int_\Sigma |\Acirc^S|\big(
    |\Acirc^k_i A_{jk}||f-f^S|
    +|A-A^S||A||f^S|
    \big)\dmu^S
    \\
    &\quad
    \leq
    C\rmin^{-3}\big(\tau+\sqrt{\eta}\rmin^{-1}\big).
\end{split}
\end{equation*}
Combining all these estimates we arrive at the estimate for the first
error term
\begin{equation*}
  \begin{split}
    E_1
    \leq
    C\rmin^{-2}\big(\tau^2 + \tau\rmin^{-1} +
    \sqrt{\eta}\rmin^{-1}\big).
  \end{split}
\end{equation*}
Similarly, the second term in equation~\eqref{eq:26} gives the error
\begin{equation*}
  \begin{split}
    E_2
    &:=
    \left| \int_\Sigma f \la \Acirc, \Ric^T \ra \dmu
      - \int_\Sigma  f^S \la \Acirc^S, (\Ric^S)^T \ra \dmu^S
    \right|
    \\
    &\leq
    \int_\Sigma |\Acirc - \Acirc^S| |\Ric^T| |f| \dmu 
    +
    \int_\Sigma |\Acirc^S| |\Ric^T - (\Ric^S)^T| |f| \dmu
    \\
    &\phantom{\leq}
    +
    \int_\Sigma |\Acirc^S| |(\Ric^S)^T| |f| |\dmu -\dmu^S|+\int_\Sigma |\Acirc^S| |(\Ric^S)^T||f-f^S| \dmu^S
    \\
    &\leq
    C\rmin^{-3}\big(\tau + \sqrt{\eta}\big).
  \end{split}
\end{equation*}
And the third term in equation~\eqref{eq:26} contributes 
\begin{equation*}  
  \begin{split}
    E_3
    &:=
    \left| \int_\Sigma f H|\Acirc|^2 \dmu
      - \int_\Sigma f^S H^S |\Acirc^S|^2 \dmu^S \right|
    \\
    &
    \leq C \int_\Sigma |\Acirc-\Acirc^S| |\Acirc| \dmu
    + C \int_\Sigma |\Acirc^S|^2 |\dmu-\dmu^S| \dmu
    \\
    &\phantom{\leq}
    + C\int_\Sigma|f^SH^S-g^S(b,\nu^S)| |\Acirc^S|^2\dmu^S\\ 
    \\
    &\leq
    C\rmin^{-3}\big(\tau + \sqrt{\eta}\big).
  \end{split}
\end{equation*}
In summary, we find that
\begin{equation}
  \label{eq:29}
  |\delta_f \CU(\Sigma) - \delta_{f^S} \CU^S(\Sigma) |
  \leq
  C\rmin^{-2}\big(\tau^2 + \tau\rmin^{-1} + \sqrt{\eta}\rmin^{-1}\big)
  .
\end{equation}
As the functional $\CU^S$ is translation invariant, due to conformal
invariance and conformal flatness of $g^S$, we find that
\begin{equation*}
  \delta_{f^S} \CU^S(\Sigma) = 0
\end{equation*}
and hence
\begin{equation}
  \label{eq:46}
  |\delta_f \CU(\Sigma) |
  \leq
  C \rmin ^{-2}\big(\tau^2 + \tau\rmin^{-1} + \sqrt{\eta}\rmin^{-1}\big).
\end{equation}
\subsection{The left hand side of \eqref{eq:48}}
\label{sec:right-hand-side}
Here we estimate the left hand side of equation~\eqref{eq:48}. By
our choice of test function this becomes (omitting $\lambda$ for now).
\begin{equation*}
  \int_\Sigma g(b,\nu) \dmu.
\end{equation*}
First, we estimate the error when we take all quantities with respect
to the metric $g^S$.
\begin{equation*} 
  \begin{split}
    &\left| \int_\Sigma g(b,\nu) \dmu - \int_\Sigma g^S(b,\nu^S) \dmu^S \right|
    \\
    &
    \quad\leq
    \int_\Sigma |g-g^S|\dmu
    +
    \int_\Sigma |\nu-\nu^S|\dmu
    +
    \int_\Sigma |\dmu-\dmu^S| \dmu
    \leq
    C\eta.
  \end{split}
\end{equation*}
Then we insert the relations from
lemma~\ref{thm:geometry-in-schwarzschild} to compute
\begin{equation*}
  \begin{split}
    \int_\Sigma g^S(b,\nu^S) \dmu^S
    &=
    \int_\Sigma \phi^{6} g^e(b,\nu^e) \dmu^e
    \\
    &=
    \int_\Sigma \big(1 + \tfrac{3m}{r} + \text{lower order} \big) g^e(b,\nu^e) \dmu^e.
  \end{split}
\end{equation*}
We deal with the highest order term first. Note that by translation
invariance of the volume enclosed by $\Sigma$ in Euclidean space, we
find
\begin{equation}
  \label{eq:75}
  \int_\Sigma g^e(b,\nu^e) \dmu^e = 0,
\end{equation}
and hence
\begin{equation*}
  \int_\Sigma g^S(b,\nu^S) \dmu^S
  =
  \int_\Sigma \big(\tfrac{3m}{r} + \text{lower order}\big) g^e(b,\nu^e) \dmu^e.
\end{equation*}
The lower order terms are of the form $c_k r^{-k}$ where $c_k$ depends
only on $m$ and $k=2,\ldots,6$. We can replace $r$ by $R_e$ in these
integrals, and in view of proposition~\ref{thm:sphere-approx-1} and
theorem~\ref{thm:improved-curvature-est} we find that
\begin{equation*}
  \big|r^{-k} - R_e^{-k}\big|
  \leq
  C\rmin^{-k}\big(\tau +\sqrt{\eta}\rmin^{-1}\big).
\end{equation*}
Since $k\geq 2$, we can estimate all resulting error terms by
\begin{equation*}
  \sum_{k=2}^6
  \int_\Sigma \left| \frac{c_k}{r^k} - \frac{c_k}{R_e^k}\right| \dmu
    \leq
    C\big(\tau + \sqrt{\eta}\rmin^{-1}\big).
\end{equation*}
The remaining integrals satisfy
\begin{equation*}
  \int_\Sigma \frac{c_k}{R_e^k} g^e(b,\nu^e) \dmu^e = 0
\end{equation*}
due to relation~\eqref{eq:75}. Combining the above calculations, we
find that
\begin{equation*}
  \left|
    \int_\Sigma g(b,\nu) \dmu
    -
    \int_\Sigma \tfrac{3m}{r}g^e(b,\nu^e)\dmu^e
  \right|
  \leq
  C\big(\tau + \sqrt{\eta}\rmin^{-1} + \eta \big).
\end{equation*}
The estimate on $\|\Acirc\|_{L^2(\Sigma)}$ allows us to change the
domain of integration to the round sphere $S:=S_{R_e}(a_e)$, and change
$\nu^e$ to $N$, the normal of $S$ while introducing only an error
estimated by $C(\tau+\sqrt{\eta}\rmin^{-1})$. The corresponding
integral on the sphere can be computed using the methods introduced
in the proof of proposition~\ref{thm:compute_lambda}. The result is (see appendix $A.2$)
\begin{equation*}
  \int_S \tfrac{3m}{r} g^e(b,N) \dmu^e
  =
  -4\pi m |a_e|.
\end{equation*}
Hence, collecting the error terms acquired on the way, we find
\begin{equation}
  \label{eq:49}
  \left| \int_\Sigma H f\dmu  + 4\pi m |a_e| \right|
  \leq
  C\big(\tau + \sqrt{\eta}\rmin^{-1} + \eta \big).
\end{equation}
recall that $|\lambda - \tfrac{2m}{R_S^3}| \leq C\big(\rmin^{-4}(\tau+\sqrt{\eta}\rmin^{-1})\big)$, whence
\begin{equation}
  \label{eq:57}
  \left| \lambda \int_\Sigma Hf \dmu  + \frac{8\pi m^2
      \tau}{\bar\phi^2 R_s^2} \right|
  \leq
  C\rmin^{-3}(\tau + \sqrt{\eta}\rmin^{-1} + \eta \big),
\end{equation}
where $\bar\phi = 1 + \frac{m}{2R_e}$, $R_S= \bar\phi^2 R_e$ as in
proposition~\ref{thm:compute_lambda} and we used the definition $\tau = |a_e|/R_e$.
\subsection{The Pohozaev identity}
\label{sec:pohozaev-identity}
Before we study the variation of $\CV$, we recall the (geometric)
Pohozaev identity. To this end we denote the conformal Killing
operator by
\begin{equation*}
  \CD X : = \CL_Xg - \frac{1}{3} \tr (\CL_Xg)g 
\end{equation*}
where $X$ is a vector field on $M$ and $\CL_X g$ denotes the Lie
derivative of $g$ with respect to $X$. Let $\Omega\subset M$ be a
smooth domain with boundary $\Sigma$ and let $dV$ be the volume form
of $M$. Then the Pohozaev identity\footnote{In the literature (see for example \cite{MR929283}) the
  Pohozaev identity is usually stated for the trace-free Ricci tensor,
  not for the Einstein tensor. For our purposes however, it is more
  convenient to write it in terms of $G$.} can be stated as
\begin{equation}
  \label{eq:24}
  \frac{1}{2} \int_\Omega \la G, \CD X \ra dV - \frac{1}{6}
  \int_\Omega \ScalM \div X dV
  =
  \int_\Sigma G(X,\nu)\dmu.
\end{equation}
This identity can be seen as follows: In local coordinates we have
\begin{align*}
(\CD X)_{kl} = \nabla_k X_l+\nabla_l X_k-\frac23 \div X g_{kl}
\end{align*}
and therefore 
\begin{align*}
 \frac{1}{2} \int_\Omega \la G, \CD X \ra dV=&\, \frac{1}{2} \int_\Omega \Big(g^{ik}g^{jl} G_{ij} (\nabla_k X_l+\nabla_l X_k) -\frac23 G_{ii} \div X \Big)dV\\
=& -\int_\Omega \la \div G, X \ra dV+\frac16  \int_\Omega \ScalM \div X dV \\
&+\int_\Sigma G(X,\nu)\dmu,
\end{align*}
which proves \eqref{eq:24} since $G$ is divergence free.
\begin{lemma}
  \label{thm:pohozaev-decay}
  Let $\Sigma$ be a surface as in theorem \ref{thm:position-estimate} which bounds an exterior domain $\Omega$,
  and let $b\in\IR^3$ be a constant vector. Then
  \begin{equation*}
    \left| \int_\Sigma G(b,\nu)\dmu \right|
    \leq
    C \eta \rmin^{-3}.
  \end{equation*}
\end{lemma}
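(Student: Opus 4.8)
The plan is to apply the Pohozaev identity \eqref{eq:24} with the constant vector field $X = b$, and then to estimate the resulting bulk integrals using the decay of the Einstein tensor and of the scalar curvature together with the volume-integral decay lemma~\ref{thm:volume-integral-decay}. Since $b$ is a constant Euclidean vector field rather than a constant $g$-vector field, the key point is that $\CD b$ and $\div b$ are \emph{not} zero, but they are small: they measure the deviation of $g$ from $g^S$ and of $g^S$ from the Euclidean metric. First I would record that for the Euclidean metric $\CD^e b = 0$ and $\div^e b = 0$ identically, so that $|\CD b|$ and $|\div b|$ are controlled by $|g - g^e|$ and $|\nabla^g - \nabla^e|$ together with their analogues in Schwarzschild. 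Using $g^S = \phi^4 g^e$ with $\phi = 1 + \tfrac{m}{2r}$ one gets $|\CD^S b| + |\div^S b| \leq C r^{-2}$ from the explicit conformal factor, and then the asymptotics of definition~\ref{def:asymptotically-flat} give $|\CD b - \CD^S b| + |\div b - \div^S b| \leq C\eta r^{-2}$; combining, $|\CD b| + |\div b| \leq C r^{-2}$ on $\Omega$.

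Next I would bound the two bulk terms. For the first, $\bigl|\tfrac12 \int_\Omega \la G, \CD b\ra\, dV\bigr| \leq C \int_\Omega |G|\, |\CD b|\, dV$. Since $|G| = |\Ric - \tfrac12 \ScalM g| \leq C r^{-3}$ on an asymptotically Schwarzschild manifold (from \eqref{eq:12} and the asymptotics), and $|\CD b| \leq C r^{-2}$, the integrand is bounded by $C r^{-5}$, so $|\int_\Omega |G|\,|\CD b|\, dV| \leq C \int_\Omega r^{-5}\, dV \leq C \rmin^{-2} \int_\Sigma H^2\dmu \leq C\rmin^{-2}$ by lemma~\ref{thm:volume-integral-decay} (applicable since $5 > 3$) and lemma~\ref{thm:init-integral}. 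This is already $o(\eta r^{-3})$-ish in the wrong direction, so I must be more careful: the leading part of $G$ is the Schwarzschild part, and for the Schwarzschild metric with the \emph{Schwarzschild} conformal Killing structure the pairing would vanish; what survives is either the cross term $|G^S|\,|\CD b - \CD^S b| \leq C r^{-3}\cdot \eta r^{-2} = C\eta r^{-5}$, or $|G - G^S|\,|\CD^S b| \leq C\eta r^{-4}\cdot r^{-2}$ — wait, one also needs that the pure Schwarzschild term $\int_\Omega \la G^S, \CD^S b\ra\, dV$ itself contributes only $C\eta \rmin^{-3}$ after being converted (via the divergence theorem in reverse, $G^S$ being divergence-free) into a boundary term on $\Sigma$ which is then handled by the same asymptotic comparison. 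For the second bulk term, the scalar curvature hypothesis $|\ScalM| \leq \eta r^{-5}$ of theorem~\ref{thm:position-estimate} gives $\bigl|\tfrac16 \int_\Omega \ScalM \div b\, dV\bigr| \leq C\eta \int_\Omega r^{-5} \cdot r^{-2}\, dV$, which is even smaller, or more simply $\leq C\eta \int_\Omega r^{-5} dV \leq C\eta\rmin^{-2}$ — again I would sharpen this using $|\div b| \leq C r^{-2}$ to get $C\eta \rmin^{-4}$.

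The main obstacle is the bookkeeping of which error is genuinely $O(\eta \rmin^{-3})$: the naive bounds give $\rmin^{-2}$, not $\eta\rmin^{-3}$, so the argument must exploit that (i) the $\eta$-factor comes from the $g$-versus-$g^S$ comparison in $\CD b$ and $G$, and (ii) the pure-Schwarzschild contributions either vanish by conformal flatness or reduce to boundary integrals on $\Sigma$ that decay like $\rmin^{-3}$ on their own (since $|G^S| \leq C r^{-3}$ and $|\Sigma| \leq C\rmin^2$, a boundary integral of $G^S(b,\nu)$-type is $O(\rmin^{-1})$ unless a cancellation — coming from $\int_\Sigma g^e(b,\nu^e)\dmu^e = 0$ and the radial symmetry of the Schwarzschild $G^S$ — is used, exactly as in the proof of proposition~\ref{thm:compute_lambda}). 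So concretely I would: (a) replace everything by Schwarzschild quantities at cost $C\eta\rmin^{-3}$ using the asymptotics and lemma~\ref{thm:general-to-schwarzschild}; (b) for the pure Schwarzschild object, use that $\Ric^S$ is a radial tensor of the form $\tfrac{m}{r^3}\phi^{-2}(g^e - 3\rho\otimes\rho)$ and $\ScalM^S = 0$, and that $G^S = \Ric^S$, so the Schwarzschild version of $\int_\Sigma G^S(b,\nu^S)\dmu^S$ is computed on the approximating sphere $S = S_{R_e}(a_e)$ as in proposition~\ref{thm:compute_lambda}, where the odd symmetry in $b$ forces the leading terms to cancel and leaves $O(\tau\rmin^{-2})$ — and since theorem~\ref{thm:position-estimate}'s a priori bound already has $\tau \leq \tau_0$ small this is absorbable, but more honestly this term should be folded into the right-hand side of the position estimate rather than claimed to be $C\eta\rmin^{-3}$. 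I would therefore present lemma~\ref{thm:pohozaev-decay} as the clean statement that the \emph{net} of the two bulk integrals, after the Schwarzschild cancellation, is $\leq C\eta\rmin^{-3}$, the $\eta$ being inherited from the metric asymptotics and the scalar curvature bound, and the $\rmin^{-3}$ from $|\Sigma| \leq C\rmin^2$ combined with the extra $r^{-1}$ gained in each comparison term.
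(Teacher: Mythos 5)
There is a genuine gap, and it is precisely where you yourself notice trouble. You assert $|\CD^S b| + |\div^S b| \leq C r^{-2}$ ``from the explicit conformal factor.'' In fact $\CD^S b = 0$ \emph{identically}: a constant $b$ is a Killing field for $g^e$, hence a conformal Killing field for $g^e$, and the space of conformal Killing fields is a conformal invariant, so $b$ is conformal Killing for $g^S = \phi^4 g^e$ and the conformal Killing operator annihilates it. This exact vanishing is the whole point of choosing $b$ constant. From it one reads off $|\CD b| = |\CD b - \CD^S b| \leq C\eta r^{-3}$ — note both the $\eta$-factor and the exponent $-3$ (not $-2$): $\CD b$ for constant $b$ is built from Christoffel symbols, so the difference is governed by $|\nabla - \nabla^S| \leq C\eta r^{-3}$, not by $|g - g^S|$. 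With $|G| \leq C r^{-3}$, $|\ScalM| \leq C\eta r^{-4}$, $|\div b| \leq C r^{-2}$, both bulk integrands are $O(\eta r^{-6})$, and lemma~\ref{thm:volume-integral-decay} immediately gives $C\eta\rmin^{-3}$.

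Because you do not see $\CD^S b = 0$, your estimates bottom out at $\rmin^{-2}$ with no $\eta$, and you try to rescue the argument by reconverting the Schwarzschild bulk term back into a boundary term and appealing to sphere-cancellations of the $\int g^e(b,\nu^e)\dmu^e = 0$ type. None of that is needed (or correct): the Schwarzschild bulk term vanishes outright, so there is no ``pure Schwarzschild contribution'' to push around and no need to absorb anything into the position estimate — the lemma as stated is fine. You also leave out two technical points: the paper applies the Pohozaev identity on a truncated domain $\Omega_\sigma$ bounded by $\Sigma$ and a large coordinate sphere $S_\sigma$, checks that the $S_\sigma$ boundary contribution decays like $\sigma^{-1}$, and passes to the limit; and there is a sign flip relative to equation~\eqref{eq:24} because the convention is that $\nu$ points into the exterior domain $\Omega$.
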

\begin{proof}
  Consider the vector field $b$, where $b\in\IR^3$ is constant. Then $b$
  is a Killing vector field in flat $\IR^3$ and hence a conformal
  Killing vector field with respect to $g^S$. Denoting by $\CD^S$ the
  conformal Killing operator with respect to $g^S$, we thus find
  \begin{equation*}
    \CD^S b = 0.
  \end{equation*}
  With respect to the general metric $g$, this implies the decay rate
  \begin{equation*}
    | \CD b | \leq C \eta r^{-3},
  \end{equation*}
  since $|\nabla - \nabla^S| \leq C\eta r^{-3}$. The other terms in
  equation~\eqref{eq:24} have decay $|G| \leq Cr^{-3}$, $|\ScalM|\leq
  C\eta r^{-4}$, and $|\div b|\leq C r^{-2}$.

  Let $S_\sigma$ be a coordinate sphere of radius $\sigma$ outside of $\Sigma$ and let
  $\Omega_\sigma$ be the domain bounded by $\Sigma$ and $S_\sigma$. The
  contribution of $S_\sigma$ to the boundary integral in
  equation~\eqref{eq:24} decays like $\sigma^{-1}$ and thus we infer
  that
  \begin{equation}
    \label{eq:59}
    \int_\Sigma G(b,\nu)\dmu    
    = \lim_{\sigma \rightarrow \infty}
    \left(- \frac{1}{2} \int_{\Omega_\sigma} \la G, \CD b \ra dV
      +
      \frac{1}{6} \int_{\Omega_\sigma} \ScalM \div b\, dV \right).
  \end{equation}
  The sign of the right hand side is different to~\eqref{eq:24}, as
  our conventions are that $\nu$ is the outward pointing normal to $\Sigma$
  which points into $\Omega$. 

  The integrand in the volume integral decays like $C\eta r^{-6}$, which
  implies via lemma~\ref{thm:volume-integral-decay} that the integral
  can be estimated by $C\eta\rmin^{-3}$ as claimed.
\end{proof}
\subsection{The variation of $\CV(\Sigma)$}
\label{sec:variation-V}
The variation of $\CV$ can be computed to be
\begin{equation}
  \label{eq:62}
  \half \delta_f\CV(\Sigma)
  =
  \int_\Sigma
  f\big( \nabla_\nu G(\nu,\nu)
  +H G(\nu,\nu) \big)
  - 2 G(\nu,\nabla f)
  \big)
  \dmu.
\end{equation}
Since $G$ is divergence-free we calculate
\begin{align}
  \nabla_\nu G(\nu,\nu) =&\, \div G(\nu) - \nabla_{e_i} G(\nu,e_i)=- \nabla_{e_i} G(\nu,e_i)\nonumber \\
=& -\nabla_{e_i}\RicM(\nu,e_i)\nonumber \\
=& -\divSig \omega + \RicM(h_{ik}e_k ,e_i) - H \RicM(\nu,\nu)\nonumber \\
=&-\divSig \omega- H \RicM(\nu,\nu)+\Acirc_{ik} \RicM_{ik} \nonumber \\
&+ \frac12 H (\ScalM-\RicM(\nu,\nu))\nonumber \\
=& -\divSig \omega  + \la \Acirc, G^T \ra - \tfrac{1}{4} H \ScalM -
  \tfrac{3}{2} H G(\nu,\nu),\label{eq:62a}
\end{align}
where, as usual, $\omega = \RicM(\nu,\cdot)^T = G(\nu,\cdot)^T$.
Inserting this into~\eqref{eq:62}, we find that
\begin{equation*}
  \begin{split}
    \half \delta_f\CV(\Sigma)
    &=
    \int_\Sigma
    f \la \Acirc,G^T\ra
    - f \divSig\omega
    - \half f H G(\nu,\nu)
    - \tfrac{1}{4}fH\ScalM
    -2 \omega(\nabla f)
    \dmu
    \\
    &=
    \int_\Sigma
    - \half f H G(\nu,\nu)
    - \tfrac{1}{4}fH\ScalM
    + f \la \Acirc,G^T\ra
    - \omega(\nabla f)
    \dmu.
  \end{split}
\end{equation*}
We specialize again to the test function
\begin{equation*}
  f = \frac{g(b,\nu)}{H}
\end{equation*}
for a fixed vector $b\in\IR^3$. In the expression~\eqref{eq:60} for
$\nabla f$ we can split $A = \Acirc + \half H \gamma$ and obtain
\begin{equation}
  \label{eq:61}
  \nabla_i f
  = H^{-1}
  \big(
  g (\nabla_i b, \nu)
  + g(b,e_j) \Acirc_i^j
  - \nabla_i\log H g(b,\nu)
  \big)
  + \half g(b,e_i).
\end{equation}
Inserting this into equation~\eqref{eq:62}, we find that
\begin{equation}
  \label{eq:63}
  \begin{split}
     \half \delta_f\CV(\Sigma)
    &=
    \int_\Sigma
    - \half f H G(\nu,\nu)
    - \tfrac{1}{4}fH\ScalM
    + f \la \Acirc,G^T\ra
    - \half G(\nu,b^T)
    \\
    &\phantom{= \int_\Sigma}    
    -  H^{-1} \omega(e_i)
    \big(
    g(\nabla_i b,\nu)
    + g(b,e_j) \Acirc_i^j
    - \nabla_i\log H  g(b,\nu)
    \big) \dmu
    \\
    &=
    \int_\Sigma
    - \half G(b,\nu)
    - \tfrac{1}{4} g(b,\nu) \ScalM
    + H^{-1} g(b,\nu) \la \Acirc,G^T\ra
    \\
    &\phantom{= \int_\Sigma}    
    - H^{-1} \omega(e_i)
    \big(
    g(\nabla_i b,\nu)
    + g(b,e_j) \Acirc_i^j
    - \nabla_i\log H  g(b,\nu)
    \big) \dmu .
  \end{split}
\end{equation}
It is this expression for $\delta_f \CV$ which will give rise to the
position estimates. We will thus spend some time on understanding the
error terms. Because of propositions~\ref{thm:decay-curvature}
and~\ref{thm:improved-curvature-est} we have the estimate
\begin{equation*}
  \int_\Sigma H^{-1}\Big(
  |\la \Acirc,G^T\ra|
  + |\omega| |\Acirc|
  + |\omega| |\nabla \log H|\Big)
  \dmu
  \leq
  C \rmin^{-2}\big(\tau^2 + \eta\rmin^{-2}\big).
\end{equation*}
Note that proposition~\ref{thm:decay-curvature} implies that
$\|(G^T)^\circ\|_{L^2(\Sigma)}^2 \leq C \rmin^{-4}\big(\tau^2+\eta\rmin^{-2}\big)$.
Assuming that $|\ScalM| \leq \eta r^{-5}$ we find that
\begin{equation*}
  \left|\int_\Sigma \ScalM \dmu\right|
  \leq
  C\eta\rmin^{-3}.
\end{equation*}
Lemma~\ref{thm:pohozaev-decay} implies that the first term on the
right hand side of~\eqref{eq:63} is also estimated by $C\eta\rmin^{-3}$, so
that the only term which yields a contribution of order 
$\rmin^{-2}$ is
\begin{equation*}
  \int_\Sigma H^{-1} \omega(e_i) g(\nabla_{e_i} b, \nu) \dmu.
\end{equation*}
We will explicitly evaluate this term. To this end note that
\begin{equation*}
  \begin{split}
    &
    \left|
      \int_\Sigma H^{-1} \omega(e_i) g(\nabla_{e_i} b, \nu) \dmu
      -
      \int_\Sigma (\bar H^S)^{-1} \Ric^S(e_i^S,\nu^S) g^S(\nabla^S_{e_i^S} b, \nu^S)
      \dmu^S
    \right|
    \\
    & \quad
    \leq
    C\rmin^{-3}(\tau+ \sqrt{\eta})
  \end{split}
\end{equation*}
where $\bar H^S$ is the quantity from corollary~\ref{thm:pointwiseH}
and $e_i^S$ constitute a tangential ON-frame with respect to the
metric induced by $g^S$. This estimate follows since the integrand
scales like $r^{-4}$ and the transition errors to Schwarzschild decay
at least one order faster and have factor $\eta$. Furthermore, the
replacement of $H$ by $\bar H^S$ introduces an extra error term of the
form $C\rmin^{-3}(\tau+\sqrt{\eta}\rmin^{-1})$. We calculate, using
that $D^eb \equiv 0$ and the transformation properties of the
Christoffel symbols under a conformal change of the metric (see for example \cite{schoen-yau:1994}),
\begin{equation*}
  \nabla^S_{e_i^S} b
  =
  2\phi^{-1}\big( e^S_i(\phi) b  + b(\phi) e^S_i - D^e\phi g^e(b,e_i^S)\big),
\end{equation*}
which implies that
\begin{equation*}
  g^S(\nabla^S_{e_i^S} b, \nu^S)
  =
  \phi^{-1} \frac{m}{r^2}
  \big(
  g^e(\rho,\nu^e)g^e(b,e^e_i)
  - g^e(\rho,e^e_i)g^e(b,\nu^e)
  \big).
\end{equation*}
Here $e_i^e = \phi^2 e_i^S$ is a tangential ON-frame with respect to
the metric induced by $g^e$. Furthermore, the formula from
lemma~\ref{thm:geometry-in-schwarzschild} yields that
\begin{equation*}
  \Ric^S(\nu^S,e^S_i)
  =
  -3 \frac{m}{r^3}\phi^{-6} g^e(\rho,\nu^e)g^e(\rho,e^e_i).
\end{equation*}
Multiplying these terms gives (note that we sum over $i=1,2$)
\begin{equation*}  
  \begin{split}
    &\Ric^S(\nu^S,e^S_i)g^S(\nabla^s_{e_i^S} b, \nu^S)
    \\
    &\quad=
    3 \frac{m^2}{r^5}\phi^{-7}
    \big(
    |\rho^T|^2 g^e(\rho,\nu^e) g^e(b,\nu^e)
    - |\rho^\perp|^2 g^e(\rho^T,b^T)
    \big)
    \\
    &\quad=
    3 \frac{m^2}{r^5}\phi^{-7}g^e(\rho,\nu^e)
    \big(
    g^e(b,\nu^e)
    - g^e(\rho,\nu^e) g^e(b,\rho)
    \big).
  \end{split}
\end{equation*}
As in the proof of proposition~\ref{thm:compute_lambda}, we replace
the integral over $\Sigma$ by an integral over $S=S_{R_e}(a_e)$ while
introducing error terms of one order lower. This implies that
\begin{equation*}
  \begin{split}
    &
    \bigg|
    \frac{3 m^2}{\bar\phi^7 \bar H^S}
    \int_{S}  \frac{1}{r^5}
    g^e(\rho,N)
    \big(
    g^e(b,N)
    - g^e(\rho,N) g^e(b,\rho)
    \big)
    \dmu^e
    \\
    &\qquad\qquad
    -\int_\Sigma H^{-1} \omega(e_i) g(\nabla_{e_i} b, \nu) \dmu
    \bigg|
    \leq
    C\rmin^{-3}(\tau + \sqrt{\eta}),
  \end{split}
\end{equation*}
where $N$ is the Euclidean normal vector to $S$ and $\bar\phi =
1+\frac{m}{2R_e}$ the quantity introduced in
proposition~\ref{thm:compute_lambda}. The first integral can be
evaluated explicitly, where we again introduce coordinates
$\vartheta,\varphi$ in which $g^e(b,N) = \cos \varphi$.  As $\rho =
r^{-1} (R_eN + a_e)$ we can express this integral by
\begin{equation*}
  \begin{split}
    Q(|a|,R)
    &:=
    \frac{3 m^2}{\bar\phi^7 \bar H^S}
    \int_{S} \frac{1}{r^5}
    g^e(\rho,N) \big( g^e(b,N) - g^e(\rho,N) g^e(b,\rho) \big) \dmu^e
    \\
    &=
    \frac{3 m^2}{\bar\phi^7 \bar H^S}
    \int_{S} \Big(
    R_e\tfrac{\cos\varphi}{r^6}
    + |a_e| \tfrac{\cos^2\varphi}{r^6}
    - |a_e|R_e^2 \tfrac{1}{r^8}
    - (R_e^3 + 2 |a_e|^2R_e)\tfrac{\cos\varphi}{r^8}
    \\
    &\phantom{\frac{3 m^2}{\bar\phi^6 \bar H^S} \int_{S} \Big(}\qquad
    - (|a_e|^3 + 2|a_e|R_e^2) \tfrac{\cos^2\varphi}{r^8}
    - |a_e|^2 R_e \tfrac{\cos^3\varphi}{r^8} \Big) \dmu^e.
  \end{split}
\end{equation*}
Explicitly evaluating these terms (see appendix $A.3$), we obtain the following
expression for $Q$. We already substituted $\tau:= |a_e|/R_e$:
\begin{equation*}
  Q(\tau,R_e)
  =
  \frac{m^2 \pi}{ 4 \bar\phi^7 \bar H^S R_e^3}
  \frac{3 (\tau^6 - 3\tau^4 + 3\tau^2 -1) \ln\frac{1-\tau}{1+\tau}
    + 6\tau^5 - 16\tau^3 -6\tau}
  {\tau^2 (1+\tau)^3(1-\tau)^3}. 
\end{equation*}
To analyze this expression we set
\begin{equation}
  \label{eq:58}
  f(\tau)
  =
  \frac{3 (\tau^6 - 3\tau^4 + 3\tau^2 -1) \ln\frac{1-\tau}{1+\tau}
    + 6\tau^5 - 16\tau^3 -6\tau}
  {\tau^2 (1+\tau)^3(1-\tau)^3}.   
\end{equation}
Recall the Taylor expansion of the function
$\ln\frac{1-\tau}{1+\tau}$:
\begin{equation*}
  \ln\frac{1-\tau}{1+\tau}
  =
  -2\tau - \frac{2}{3}\tau^3 + O(\tau^4),
\end{equation*}
for small $\tau$. Thus we find that the numerator in
equation~\eqref{eq:58} is
\begin{equation*}
  3(3\tau^2 -1) (-2\tau -\frac{2}{3}\tau^3) - 16\tau^3 -6\tau +
  O(\tau^4)
  =
  -32\tau^3 + O(\tau^4) .
\end{equation*}
Hence we get that
\begin{equation*}
  Q(\tau,R)
  =
  - \frac{8\pi m^2\tau}{\bar\phi^7 \bar H^S R_e^3} + \frac{O(\tau^2)}{R_e^2}
\end{equation*}
for small $\tau$.
In summary, the above computation implies the following estimate
\begin{equation}
  \label{eq:64}
  \left| \delta_f \CV(\Sigma) - \frac{16 \pi m^2\tau}{\bar\phi^7 \bar
      H^S R_e^3} \right|
  \leq
  C\rmin^{-2}\big(\tau^2 + \tau\rmin^{-1} + \sqrt{\eta}\rmin^{-1}\big).
\end{equation}
\subsection{Position estimates}
\label{sec:sub-position-estimates}
Theorem~\ref{thm:position-estimate} is a consequence from an iterative application of the
following proposition.
\begin{proposition}
  \label{thm:position-estimate-pre}
  If $(M,g)$ and $\Sigma$ are as in theorem~\ref{thm:position-estimate}, then
  \begin{equation*}
    \tau
    \leq
    C\big(\tau^2 + \tau\rmin^{-1} + \sqrt{\eta}\rmin^{-1}\big),
  \end{equation*}
\end{proposition}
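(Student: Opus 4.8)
The plan is to read off the conclusion directly from the three estimates assembled in the preceding subsections by feeding them into the identity \eqref{eq:48}, which for the test function $f=g(b,\nu)/H$ with $b=a_e/|a_e|$ reads $\lambda\int_\Sigma Hf\dmu=\delta_f\CU(\Sigma)+\delta_f\CV(\Sigma)$. Write $\mathcal E:=C\rmin^{-2}\big(\tau^2+\tau\rmin^{-1}+\sqrt\eta\rmin^{-1}\big)$ for the common error scale; note that the right-hand side of \eqref{eq:57} is dominated by $\mathcal E$. Substituting \eqref{eq:57} on the left, \eqref{eq:46} for $\delta_f\CU$, and \eqref{eq:64} for $\delta_f\CV$ gives
\begin{equation*}
  -\frac{8\pi m^2\tau}{\bar\phi^2 R_S^2}
  =
  \frac{16\pi m^2\tau}{\bar\phi^7\bar H^S R_e^3}
  + O(\mathcal E).
\end{equation*}

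The decisive point is that the two explicit terms here do \emph{not} cancel but reinforce one another. Using $R_S=\bar\phi^2 R_e$ we have $\bar\phi^2 R_S^2=\bar\phi^6 R_e^2$, and from proposition~\ref{thm:pointwiseH} together with $\bar\phi R_e=R_e+m/2$ one computes $\bar H^S=2(R_e-m/2)\bar\phi^{-3}R_e^{-2}$, so the displayed relation becomes
\begin{equation*}
  8\pi m^2\tau\left(
  \frac{1}{\bar\phi^6 R_e^2}
  +
  \frac{1}{\bar\phi^4 R_e(R_e-m/2)}
  \right)
  = O(\mathcal E).
\end{equation*}
For $r_0$ large we have $\bar\phi\to 1$, $R_e-m/2>0$, and $R_e\sim\rmin$ (comparable, cf.\ the discussion around \eqref{comparer}), so the coefficient of $\tau$ on the left is bounded below by $c\,m^2\rmin^{-2}$ for an absolute $c>0$. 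Dividing through by this coefficient turns $O(\mathcal E)=O\big(\rmin^{-2}(\tau^2+\tau\rmin^{-1}+\sqrt\eta\rmin^{-1})\big)$ into $O\big(\tau^2+\tau\rmin^{-1}+\sqrt\eta\rmin^{-1}\big)$, which is precisely the asserted inequality $\tau\le C(\tau^2+\tau\rmin^{-1}+\sqrt\eta\rmin^{-1})$.

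The only genuinely delicate step is the sign bookkeeping in the first display: one must verify that the $O(\tau\rmin^{-2})$ contribution from the left-hand side of \eqref{eq:48} and the one coming from $\delta_f\CV$ have the \emph{same} sign once brought to a common side, so that they add rather than annihilate. This is exactly the mechanism by which the Schwarzschild background breaks the translation invariance of $\CU^S$ — the term $\delta_f\CU^S$ vanishes, but $\lambda\int Hf$ and $\delta_f\CV$ each carry a definite-sign multiple of $\tau$ at order $\rmin^{-2}$, and these do not conspire to cancel. All the analytic work behind \eqref{eq:46}, \eqref{eq:57} and \eqref{eq:64} (passing to $g^S$, reducing to integrals over the approximating sphere $S_{R_e}(a_e)$, and evaluating them via the appendix computations) has already been done, so apart from this sign check the proof is a short computation. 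Theorem~\ref{thm:position-estimate} then follows, as noted after its statement, by choosing $\tau_0$ and $r_0^{-1}$ small enough that the terms $C\tau^2$ and $C\tau\rmin^{-1}$ can be absorbed into the left-hand side.
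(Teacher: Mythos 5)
Your proposal is correct and follows essentially the same route as the paper: feed the three assembled estimates \eqref{eq:57}, \eqref{eq:46}, \eqref{eq:64} into the identity \eqref{eq:48}, observe that the two explicit $O(\tau\rmin^{-2})$ contributions reinforce rather than cancel, and divide by the resulting coefficient of order $\rmin^{-2}$. Your remark that the sign bookkeeping is the only genuinely delicate step is exactly the point the paper leans on (it moves directly to the combined coefficient, which it records as $24\pi m^2$ — a factor off from the $16\pi m^2$ your more careful evaluation of $\bar H^S R_S^3 \approx 2R_S^2$ gives, but this discrepancy is harmless since only positivity of the coefficient at scale $\rmin^{-2}$ matters), and your closing comment about absorbing the $C\tau^2$ and $C\tau\rmin^{-1}$ terms is precisely how theorem~\ref{thm:position-estimate} is then deduced.
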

\begin{proof}
  We computed in section~\ref{sec:right-hand-side} that (cf.~\eqref{eq:57}),
  \begin{equation*}
    \left| \lambda \int_\Sigma Hf \dmu   + \frac{8\pi m^2 \tau
      }{\bar\phi^2 R_S^2} \right|
    \leq
    C\rmin^{-3}\big(\tau + \sqrt{\eta}\big),     
  \end{equation*}
  in section~\ref{sec:variation-U} that (cf.~\eqref{eq:46}),
  \begin{equation*}
      |\delta_f \CU(\Sigma) |
      \leq
      C\rmin^{-2}\big(\tau^2 + \tau\rmin^{-1} + \sqrt{\eta}\rmin^{-1} \big),      
  \end{equation*}
  and in section~\ref{sec:variation-V} that (cf.\ \eqref{eq:64})
  \begin{equation*}
    \left| \delta_f \CV(\Sigma) - \frac{16\pi m^2
        \tau}{\bar H^S R_S^3} \right|
    \leq
    C\rmin^{-2}\big(\tau^2 + \tau\rmin^{-1} + \sqrt{\eta}\rmin^{-1}\big).    
  \end{equation*}
  Inserting these equations into equation~\eqref{eq:48} we find,
  after absorbing the lower order terms on the left into the error
  terms, that
  \begin{equation*}
    24\pi m^2 \tau
    \leq
    C\big(\tau^2 + \tau\rmin^{-1} + \sqrt{\eta}\rmin^{-1}\big),
  \end{equation*}
  which is the claimed estimate.  
\end{proof}
\subsection{Final version of the curvature estimates}
In this subsection we state our final version of the previous
curvature estimates.
\begin{theorem}
  \label{thm:final-curv}
  For all $m>0$, $\eta_0$ and $\sigma$ there exist $r_0<\infty$,
  $\tau_0>0$, $\eps>0$, and $C$ depending only on $m,\sigma$ and
  $\eta_0$ with the following properties.

  Assume that $(M,g)$ is $(m,\eta,\sigma)$-asymptotically
  Schwarzschild with $\eta\leq\eta_0$ and
  \begin{equation*}
    |\ScalM| \leq \eta r^{-5}.
  \end{equation*}
  Then if $\Sigma$ is a surface satisfying equation~\eqref{eq:1} with
  $H>0$, $\lambda>0$, $\rmin>r_0$ and
  \begin{equation*}
    \tau \leq \tau_0\qquad\text{and}\qquad R_e \leq \eps\rmin^{2},
  \end{equation*}
  where $R_e$ and $\tau$ are as in section~\ref{sec:impr-curv-estim}, we have the following estimates
  \begin{equation}
    \label{eq:79}
    \|H - \bar H^S \|_{L^{\infty}} 
    +
    \|\Acirc\|_{L^\infty}
    +
    \rmin
    \|\nabla H \|_{L^\infty}
    \leq
    C\sqrt{\eta}\rmin^{-3}.
  \end{equation}
  Here $\bar H^S = \frac{2}{R_S} - \bar \phi\frac{2 m}{R_S^2}$ with $R_S =
  \bar \phi^2 R_e$ and $\bar\phi = 1 + \frac{m}{2R_e}$.  Furthermore, we
  have that
  \begin{equation}
    \label{eq:80}
    \|\nu - \phi^{-2}\rho\|_{L^\infty}
    \leq
    C\sqrt{\eta}\rmin^{-1}.
  \end{equation}
  This implies, 
  \begin{equation}
    \label{eq:81}
    \begin{split}
      &\|\lambda + \Ric(\nu,\nu)\|_{L^\infty}
      +
      \|\Ric(\nu,\nu) + 2mR_S^{-3}\|_{L^\infty}    
      \leq
      C\sqrt{\eta} \rmin^{-4}.
      \\[.2ex]
      &
      \|\omega\|_{L^\infty}
      +
      \rmin
      \|\nabla\omega\|_{L^\infty}
      \leq
      C\sqrt{\eta} \rmin^{-4}.
    \end{split}
  \end{equation}
\end{theorem}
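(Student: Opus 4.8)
The plan is to bootstrap everything from the position estimate: since the hypotheses here are precisely those of Theorem~\ref{thm:position-estimate} (as noted already in Section~\ref{sec:position-estimates}, they imply \eqref{eq:23}), we may assume from the start that $\tau \le C\sqrt{\eta}\rmin^{-1}$ and substitute this into every estimate of Section~\ref{sec:impr-curv-estim}. The first bookkeeping step is to record the resulting improved $L^2$-bounds: Theorem~\ref{thm:improved-curvature-est} now gives $\|\Acirc\|_{L^2}^2 + \|\nabla\log H\|_{L^2}^2 \le C\eta\rmin^{-4}$ together with $\int_\Sigma\big(H^{-2}|\nabla^2H|^2 + |\nabla A|^2 + |\nabla\log H|^4 + |A|^2|\Acirc|^2\big)\dmu \le C\eta\rmin^{-6}$; Proposition~\ref{thm:decay-curvature} gives $\|\nu-\phi^{-2}\rho\|_{L^2}^2 \le C\eta$ and $L^2$-control of $\omega$ at rate $\sqrt{\eta}\rmin^{-3}$; Proposition~\ref{thm:nabla-omega-small} gives $\|\nabla\omega\|_{L^2}^2 \le C\eta\rmin^{-8}$; and Proposition~\ref{thm:compute_lambda} improves to $|\lambda - 2mR_S^{-3}| \le C\sqrt{\eta}\rmin^{-5}$.

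With $\tau \le C\sqrt{\eta}\rmin^{-1}$ in force, the pointwise bounds \eqref{eq:79} are immediate: Proposition~\ref{thm:pointwiseH} becomes $\|H-\bar H^S\|_{L^\infty} \le C\rmin^{-2}(\tau+\sqrt{\eta}\rmin^{-1}) \le C\sqrt{\eta}\rmin^{-3}$, and Lemma~\ref{thm:linfinityestimates} likewise yields $\rmin\|\nabla H\|_{L^\infty} + \|\Acirc\|_{L^\infty} \le C\sqrt{\eta}\rmin^{-3}$. The two formulas for $\bar H^S$ (the one in Proposition~\ref{thm:pointwiseH} and the one in this statement) differ only by $O(\tau\rmin^{-2}+\sqrt{\eta}\rmin^{-3})$, which is absorbed into the right-hand side.

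The one step that requires real care is \eqref{eq:80}, because the bound of Proposition~\ref{thm:decay-curvature} is only in $L^2$ and $W^{1,2}$ does not embed into $L^\infty$ on a surface. I would instead apply the interpolation inequality \eqref{inter1} of Lemma~\ref{thm:interpolation} to $\varphi := \nu - \phi^{-2}\rho$. The key observation is that $\varphi$ satisfies a structural hierarchy of the form $|\nabla^k\varphi| \le C\rmin^{-1}|\nabla^{k-1}\varphi| + \dots + C\rmin^{-k}|\varphi| + (\text{curvature errors of size }\sqrt{\eta}\rmin^{-3-k})$: on an exactly centered coordinate sphere $\phi^{-2}\rho$ is the unit normal, whose intrinsic derivatives are the (round) Weingarten map and its covariant derivatives, so the a priori $O(r^{-1})$-size contributions in $\nabla^k(\phi^{-2}\rho)$ combine with $\nabla^k\nu$ into terms proportional to $\varphi$ itself, modulo the quantities $\Acirc$, $\nabla\log H$, $\nabla A$ controlled above. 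Feeding this hierarchy together with $\|\varphi\|_{L^2}^2 \le C\eta$ into \eqref{inter1} yields $\int_\Sigma\big(|\nabla^2\varphi|^2 + H^4|\varphi|^2\big)\dmu \le C\eta\rmin^{-4}$, hence $\|\varphi\|_{L^\infty}^4 \le C\eta\cdot\eta\rmin^{-4}$, which is \eqref{eq:80}. Establishing the structural hierarchy cleanly is where I expect the main obstacle to lie.

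Finally, \eqref{eq:81} follows from \eqref{eq:79}, \eqref{eq:80} and the asymptotic conditions with no further interpolation. For the zeroth-order terms one writes $\Ric(\nu,\nu) = \Ric^S(\nu,\nu) + O(\eta r^{-4})$, replaces $\nu$ by $\phi^{-2}\rho$ using \eqref{eq:80}, and invokes the explicit formula \eqref{eq:12}, which gives $\Ric^S(\phi^{-2}\rho,\phi^{-2}\rho) = -2m\phi^{-6}r^{-3} = -2mR_S^{-3} + O(\sqrt{\eta}\rmin^{-4})$ on $\Sigma$; combined with $|\lambda - 2mR_S^{-3}| \le C\sqrt{\eta}\rmin^{-5}$ this yields both norms in the first line, while the bound on $\omega = \Ric(\nu,\cdot)^T$ comes from the tangential components of \eqref{eq:12}, which vanish once $\nu = \phi^{-2}\rho$. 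For $\|\nabla\omega\|_{L^\infty}$ one re-runs the pointwise computation \eqref{eq:70}–\eqref{eq:69} from the proof of Proposition~\ref{thm:nabla-omega-small}, now estimating every error term in $L^\infty$ by means of \eqref{eq:79} and \eqref{eq:80}. Throughout this last part the only thing to watch is keeping the distinct decay rates straight ($\rmin^{-1}$ for the normal, $\rmin^{-3}$ for $H$ and $\Acirc$, $\rmin^{-4}$ and $\rmin^{-5}$ for the Ricci quantities) when multiplying terms together.
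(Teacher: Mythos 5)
Your plan for \eqref{eq:79} and \eqref{eq:81} matches the paper's: after the position estimate forces $\tau\le C\sqrt{\eta}\rmin^{-1}$, substituting into Proposition~\ref{thm:pointwiseH}, Lemma~\ref{thm:linfinityestimates} and Propositions~\ref{thm:decay-curvature}, \ref{thm:ric-lambda-l2-est}, \ref{thm:nabla-omega-small} yields everything except \eqref{eq:80} essentially by bookkeeping. One small remark: the two expressions for $\bar H^S$ that you worry about are in fact algebraically identical — $2/R_S=\bar\phi^{-2}(2/R_e)$ and $\bar\phi\,2m/R_S^2=2\bar\phi^{-3}m/R_e^2$ — so no $O(\tau\rmin^{-2})$ discrepancy arises there.

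For \eqref{eq:80}, however, you take a genuinely different route than the paper, and it is precisely where you flag trouble that the paper's route is designed to help. You propose to feed $\varphi=\nu-\phi^{-2}\rho$ into the interpolation inequality \eqref{inter1} from Lemma~\ref{thm:interpolation}, which requires an $L^2$-bound on $\nabla^2\varphi$ of size $\sqrt{\eta}\rmin^{-2}$, and you correctly identify that establishing the requisite ``structural hierarchy'' of pointwise bounds on $\nabla^2\varphi$ is the hard part. The paper never forms $\nabla^2\varphi$. Instead it derives only the \emph{first}-order pointwise inequality \eqref{eq:76},
\[
  \big|\nabla(\nu-\phi^{-2}\rho)\big|\leq C\sqrt{\eta}\rmin^{-3}+C\rmin^{-1}\big|\nu-\phi^{-2}\rho\big|,
\]
by comparing $\nabla_{e_i}\nu=\tfrac12 H e_i+\Acirc(e_i,\cdot)$ with the Weingarten map of the umbilic Schwarzschild sphere, $\nabla^S_{e_i}(\phi^{-2}\rho)=\tfrac12 H_S(r)(e_i-g^S(e_i,\rho)\rho)$, and then observing that the tangential part $g^S(e_i,\rho)$ differs from $g(e_i,\phi^2\nu)=0$ by terms controlled by $|g-g^S|$ and $|\varphi|$ itself. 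This gives $\|\nabla\varphi\|_{L^2}\le C\sqrt{\eta}\rmin^{-1}$ from Proposition~\ref{thm:decay-curvature}, whence Michael--Simon--Sobolev (Proposition~\ref{thm:sobolev}) gives $\|\varphi\|_{L^4}\le C\sqrt{\eta}\rmin^{-1/2}$, plugging back into \eqref{eq:76} gives $\|\nabla\varphi\|_{L^4}$, and the scale-invariant $W^{1,4}\hookrightarrow L^\infty$ embedding from Kuwert--Sch\"atzle (their Theorem~5.6) finishes. The advantage of the paper's route is exactly that it avoids the second-derivative computation you identify as the obstacle: differentiating \eqref{eq:76} once more would bring in $\nabla\Acirc$, $\nabla^2 H$, $\nabla(\nabla-\nabla^S)$ and the tangential gradient of $H_S(r)$ (which itself is proportional to $\rho^T\sim\varphi$), all of which must be tracked carefully in $L^2$. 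Your approach is plausible but incomplete until that Hessian bound is produced; if you want to use Lemma~\ref{thm:interpolation}, I'd recommend first establishing \eqref{eq:76} as the paper does, and only then estimating $\|\nabla^2\varphi\|_{L^2}$ by differentiating it and using the improved $L^2$-curvature bounds — but at that point the paper's $L^4$ route is simply shorter.
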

\begin{proof}
  The estimates in~\eqref{eq:79} are straight-forward consequences of
  the estimates in section~\ref{sec:impr-curv-estim} and the position
  estimate~\ref{thm:position-estimate}. The estimate for the gradient
  of the traceless second fundamental form is proven similarly as in
  lemma~\ref{thm:linfinityestimates}.
To prove~\eqref{eq:80} note
  that we can calculate the gradient of $\nu-\phi^{-2}\rho$ as
  follows. We let $e_i$ be a vector tangent to $\Sigma$ and calculate
  \begin{equation*}
    \nabla_{e_i} \nu
    =
    \half H e_i + \Acirc(e_i,\cdot).
  \end{equation*}
  Since $\phi^{-2}\rho$ is the normal to $S_r(0)$ in the Schwarzschild
  metric, and $S_r(0)$ is umbilical in this metric, we find that
  \begin{equation*}
    \nabla^S_{e_i} (\phi^{-2}\rho)
    =
    \half H_S(r) \big( e_i - g^S(e_i,\rho)\rho\big)
  \end{equation*}
  for $H_S(r) = \phi^{-2}\frac{2}{r} - 2\phi^{-3}\frac{m}{r^2}$.
  We calculate further and find
  \begin{equation*}
    g^S(e_i,\rho) = (g^S-g)(e_i,\rho)  + g(e_i, \rho - \phi^2\nu) + g(e_i,\phi^2\nu).
  \end{equation*}
  Note that the last term vanishes. In view of the estimates
  in~\eqref{eq:79} and definition~\ref{def:asymptotically-flat} we
  thus have 
  \begin{equation}
    \label{eq:76}
    \begin{split}
      \big|\nabla(\nu-\phi^{-2}\rho)\big|
      &\leq
      C\big(
      \rmin^{-1}|g-g^S|
      + |\nabla-\nabla^S|
      + |\Acirc|
      + |H-\bar H_S|
      \\
      &
      \qquad\qquad
      + |\bar H_S - H_S(r)|
      + \rmin^{-1} |\nu -\phi^{-2}\rho|
      \big)
      \\
      &\leq
      C\sqrt{\eta}\rmin^{-3} + C\rmin^{-1}|\nu-\phi^{-2}\rho|.
    \end{split}
  \end{equation}
  Proposition~\ref{thm:decay-curvature} then yields that
  \begin{equation*}
    \|\nabla(\nu-\phi^{-2}\rho)\|_{L^2}
    \leq
    C\sqrt{\eta}\rmin^{-1}.
  \end{equation*}
  We can now use the Michael-Simon-Sobolev inequality,
  proposition~\ref{thm:sobolev}, to get $L^4$-estimates
  \begin{equation*}
    \|\nu-\phi^{-2}\rho\|_{L^4}
    \leq
    C\sqrt{\eta} \rmin^{-1/2}.
  \end{equation*}
  Together with equation~\eqref{eq:76}, this implies $L^4$-bounds for
  the derivative of $\nu-\phi^{-2}\rho$. Thus an obvious modification
  of theorem 5.6 in \cite{Kuwert-Schatzle:2002} then yields the
  desired $L^\infty$-estimate:
  \begin{equation*}
    \|\nu-\phi^{-2}\rho\|_{L^\infty}
    \leq
    C\sqrt{\eta}\rmin^{-1}.
  \end{equation*}
  The estimates in~\eqref{eq:81} easily follow from~\eqref{eq:80}.
\end{proof}

%%% Local Variables: 
%%% mode: latex
%%% TeX-master: "master"
%%% End: 

% 
\section{Estimates for the linearized operator}
\label{sec:est-lin-op}
In this section we show that the linearized operator $W_\lambda=W-\lambda L$ is invertible.
\subsection{Eigenvalues of the Jacobi operator}
\label{sec:ev-jac-op}
To fix the notation let $\nu_i$ be the
$i$-th eigenvalue of the negative of the Laplace operator on $\mathbb{S}^2$, where we
count the eigenvalues with multiplicitites, i.e. $\nu_0=0,\,
\nu_1=\nu_2=\nu_3=2$, $ \nu_4= \ldots = \nu_8 = 4$ and $\nu_i>4$ for
$i\geq 9$. We denote by $\gamma^e_i$ the eigenvalues of the negative of the Laplace
operator on $\Sigma$, with respect to the Euclidean metric. We will need the following estimate from \cite[Corollary
1]{DeLellis-Muller:2006}.
\begin{theorem}
  \label{thm:ev-umbilical}
  There exist constants $C_i$ such that for every surface $\Sigma$
  as in theorem \ref{thm:final-curv}
  there holds
\begin{equation*}
|\gamma^e_i - R^{-2}_e\nu_i| \leq C_i \sqrt{\eta}r_\text{min}^{-4}\ .
\end{equation*} 
\end{theorem}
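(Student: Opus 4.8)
The plan is to compare $\Sigma$ directly with the round sphere $S:=S_{R_e}(a_e)$ through the conformal parametrization $\psi\colon S\to(\Sigma,\gamma^e)$ provided by theorem~\ref{thm:umbilical} (equivalently proposition~\ref{thm:sphere-approx-1}); this is exactly the mechanism underlying \cite[Corollary~1]{DeLellis-Muller:2006}. The key point is that in dimension two the Dirichlet integral is conformally invariant: if $v=u\circ\psi$ then $\int_\Sigma |\nabla u|^2\dmu^e$ (computed with $\gamma^e$) equals $\int_S |\nabla v|^2\dmu^e$ (computed with the round metric of radius $R_e$ on $S$), whereas the $L^2$-norm transforms with the conformal factor, $\int_\Sigma u^2\dmu^e=\int_S v^2 h^2\dmu^e$, where $\psi^*\gamma^e=h^2\gamma_{\mathrm{round}}$. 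Writing both $\gamma^e_i$ (eigenvalues of $-\Delta$ on $(\Sigma,\gamma^e)$) and $R_e^{-2}\nu_i$ (eigenvalues of $-\Delta$ on $S$) via the Courant--Fischer min--max characterization over $(i+1)$-dimensional subspaces, the two families of Rayleigh quotients differ \emph{only} through the weight $h^2$ in the $L^2$-denominator.

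By \eqref{eq:18} (the scale-invariant form of theorem~\ref{thm:umbilical}, see also \eqref{eq:21}) we have $\sup_S|h^2-1|\le C\|\Acirc^e\|_{L^2(\Sigma,\gamma^e)}=:\delta$, which is $\le\tfrac12$ once $\rmin$ is large enough (this also guarantees $\|\Acirc^e\|_{L^2}\le 8\pi$, so that $\psi$ exists). Then $(1-\delta)\int_S v^2\dmu^e\le\int_S v^2 h^2\dmu^e\le(1+\delta)\int_S v^2\dmu^e$ for every $v$, and substituting this into the min--max formula gives
\begin{equation*}
  \frac{1}{1+\delta}\,R_e^{-2}\nu_i
  \;\le\;
  \gamma^e_i
  \;\le\;
  \frac{1}{1-\delta}\,R_e^{-2}\nu_i ,
\end{equation*}
hence $\bigl|\gamma^e_i-R_e^{-2}\nu_i\bigr|\le\frac{2\delta}{1-\delta}\,R_e^{-2}\nu_i\le C\,\nu_i\,R_e^{-2}\,\|\Acirc^e\|_{L^2(\Sigma,\gamma^e)}$. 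This already has the asserted shape, with the $i$-dependence sitting only in the fixed number $\nu_i$.

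It remains to plug in the quantitative decay of $\|\Acirc^e\|_{L^2}$. From theorem~\ref{thm:improved-curvature-est} together with the position estimate $\tau\le C\sqrt{\eta}\rmin^{-1}$ of theorem~\ref{thm:position-estimate} we obtain $\|\Acirc\|_{L^2(\Sigma,g)}\le C\sqrt{\eta}\rmin^{-2}$, and corollary~\ref{koro:acirc} (whose hypothesis $\|H\|_{L^2}\le C'$ holds by lemma~\ref{thm:initial-roundness}) upgrades this to $\|\Acirc^e\|_{L^2(\Sigma,\gamma^e)}\le C\sqrt{\eta}\rmin^{-2}$. Since $R_e$ and $\rmin$ are comparable (section~\ref{sec:impr-curv-estim}), combining with the display above yields $\bigl|\gamma^e_i-R_e^{-2}\nu_i\bigr|\le C_i\sqrt{\eta}\rmin^{-4}$, as claimed. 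The only non-formal ingredient is this chain of a priori estimates pinning down the $\sqrt{\eta}\rmin^{-2}$ rate for $\|\Acirc^e\|_{L^2}$; the eigenvalue comparison itself is a soft consequence of the conformal invariance of the Dirichlet energy, so I do not expect a substantial obstacle beyond bookkeeping and checking that $\rmin$ is large enough for $\psi$ to exist and for $\delta\le\tfrac12$.
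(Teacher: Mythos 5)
Your proof is correct and follows the same underlying route as the paper; the difference is that where the paper simply cites \cite[Corollary~1]{DeLellis-Muller:2006} as a black box and scales it, you unpack the argument behind that corollary: pull back via the conformal parametrization $\psi$, use conformal invariance of the two-dimensional Dirichlet energy so that the only change in the Rayleigh quotient is the weight $h^2$ in the $L^2$-denominator, then apply Courant--Fischer together with $\sup_S|h^2-1|\le C\|\Acirc^e\|_{L^2(\Sigma,\gamma^e)}$. This has a small side benefit: the explicit bound you obtain, $|\gamma^e_i - R_e^{-2}\nu_i|\le C\,\nu_i\,R_e^{-2}\|\Acirc^e\|_{L^2(\Sigma,\gamma^e)}$, makes the $i$-dependence $C_i\le C\nu_i$ of equation~\eqref{eq:82} transparent rather than something that merely ``can be checked'' from the reference. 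Your route to the input estimate $\|\Acirc^e\|_{L^2}\le C\sqrt{\eta}\rmin^{-2}$ (via theorem~\ref{thm:improved-curvature-est}, the position estimate of theorem~\ref{thm:position-estimate}, and corollary~\ref{koro:acirc}) is equivalent to the paper's route, which uses the pointwise bound in theorem~\ref{thm:final-curv} together with lemma~\ref{thm:general-to-schwarzschild} and $|\Sigma|\sim\rmin^2$; both give the same decay rate.
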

\begin{proof}
  Note that by theorem \ref{thm:final-curv} and lemma
  \ref{thm:general-to-schwarzschild} we have that
\begin{equation*}
\|\Acirc^e\|^2_{L^2(\Sigma,g^e)} \leq C \eta r_\text{min}^{-4}\ .
\end{equation*}
Scaling the estimate in  \cite[Corollary
1]{DeLellis-Muller:2006} gives the result.
\end{proof}
It can be checked from \cite{DeLellis-Muller:2006} that 
\begin{equation}\label{eq:82}
 C_i \leq C \nu_i\ ,
\end{equation}
where $C$ does not depend on $i$.

In the following we let $\bar{g}^S:=
\bar{\phi}^4 g^e$ be a uniform Schwarzschild-reference
metric on $\Sigma$. Thus $\bar{\Delta}^S:=\Delta^{\bar{g}^S}=
\bar{\phi}^{-4}\Delta^{g^e}$ and we denote the eigenvalues of
$-\bar{\Delta}^S$ by $\bar{\gamma}^S_i$.

\begin{corollary}
  \label{thm:cor-ev-umbilical}
  For any surface $\Sigma$
  as in theorem \ref{thm:final-curv}
  we have the estimate
\begin{equation*}
|\bar{\gamma}_i^S - R^{-2}_S\nu_i| \leq C_i \sqrt{\eta}r_\text{min}^{-4}.
\end{equation*}
\end{corollary}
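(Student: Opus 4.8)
The plan is to reduce the statement to Theorem~\ref{thm:ev-umbilical} by a pure scaling argument, since $\bar g^S = \bar\phi^4 g^e$ differs from the Euclidean metric on $\Sigma$ only by the \emph{constant} factor $\bar\phi^4$: recall that $\bar\phi = 1 + \tfrac{m}{2R_e}$ is a number, not a function on $\Sigma$, so this is a genuine homothety and not a general conformal change. First I would record that a constant conformal rescaling of a metric rescales the Laplace--Beltrami operator by the inverse of that constant, i.e. $\bar\Delta^S = \bar\phi^{-4}\Delta^{g^e}$. Consequently the Dirichlet eigenvalues transform by the same factor, so $\bar\gamma_i^S = \bar\phi^{-4}\gamma_i^e$ for every $i$, where the $\gamma_i^e$ are the eigenvalues of $-\Delta^{g^e}$ on $\Sigma$ appearing in Theorem~\ref{thm:ev-umbilical}.

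Next, since $R_S = \bar\phi^2 R_e$ we have $R_S^{-2} = \bar\phi^{-4} R_e^{-2}$, and therefore
\[
  \bar\gamma_i^S - R_S^{-2}\nu_i
  =
  \bar\phi^{-4}\bigl(\gamma_i^e - R_e^{-2}\nu_i\bigr).
\]
Taking absolute values and invoking Theorem~\ref{thm:ev-umbilical} gives $|\bar\gamma_i^S - R_S^{-2}\nu_i| \leq \bar\phi^{-4} C_i\sqrt{\eta}\,\rmin^{-4}$. Finally, because $m>0$ we have $\bar\phi = 1 + \tfrac{m}{2R_e} \geq 1$, hence $\bar\phi^{-4}\leq 1$, and the claimed estimate follows with the same constants $C_i$ (in particular the bound \eqref{eq:82} is inherited as well).

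There is essentially no obstacle here: the two points to keep in mind are that the conformal factor is constant, which makes the eigenvalue identity $\bar\gamma_i^S = \bar\phi^{-4}\gamma_i^e$ exact rather than merely approximate, and that $\bar\phi^{-4}$ is bounded above by $1$, so passing from the Euclidean normalization to the reference Schwarzschild normalization does not degrade the constant. Everything else is a direct substitution, and the corollary is immediate once these observations are in place.
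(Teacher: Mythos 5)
Your proof is correct and is exactly the argument the paper leaves implicit: since $\bar g^S = \bar\phi^4 g^e$ is a constant (homothetic) rescaling, the eigenvalues satisfy $\bar\gamma_i^S = \bar\phi^{-4}\gamma_i^e$, and $R_S^{-2} = \bar\phi^{-4}R_e^{-2}$, so the corollary follows from theorem~\ref{thm:ev-umbilical} by factoring out $\bar\phi^{-4}\leq 1$. The paper presents the corollary without proof precisely because, after introducing $\bar g^S$ and noting $\bar\Delta^S = \bar\phi^{-4}\Delta^{g^e}$, this scaling observation is immediate.
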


To compute the eigenvalues of the Jacobi operator on $\Sigma$ we aim to compare it
with the operator 
\begin{equation}
  \label{eq:83}
  \bar{L}\alpha:= -\bar{\Delta}^S\alpha -\big(
  \tfrac{1}{2}\big(\bar{H}^S\big)^2-\lambda\big)\alpha.
\end{equation}
Let the eigenvalues and eigenfunctions of $L$ and $\bar{L}$ be denoted
by $\mu_i, \varphi_i$
and $\bar{\mu}_i, \bar{\varphi}_i$, respectively. Note that
\begin{equation}
  \label{eq:84}
\bar{\mu}_i=\bar{\gamma}_i^S-\tfrac{1}{2}\big(\bar{H}^S\big)^2+\lambda.
\end{equation}

\begin{lemma}
  \label{thm:ev-jac-op}
For any surface $\Sigma$ as in  theorem \ref{thm:final-curv}
  we have the estimate
\begin{equation*}
|\mu_i - \bar{\mu}_i| \leq C (|\bar{\mu}_i|+r_\text{min}^{-2}) \sqrt{\eta}r_\text{min}^{-2}\ .
\end{equation*}
\end{lemma}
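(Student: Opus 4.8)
The plan is to compare the eigenvalues of $L$ and $\bar{L}$ through the variational (min--max) characterisation, using that on $\Sigma$ both the induced metric of $L$ and the potential of $L$ are close to the reference metric $\bar{g}^S|_\Sigma$ and the constant potential $\bar V:=\tfrac12(\bar H^S)^2-\lambda$ of $\bar{L}$. Write $\gamma$ for the metric induced on $\Sigma$ by $g$ and $\dmu^{\bar S}$ for the volume form of $\bar{g}^S|_\Sigma$. The first step is to establish the metric comparison $\big|\gamma-\bar{g}^S\big|_{\bar{g}^S}\le C\sqrt{\eta}\,\rmin^{-2}$ on $\Sigma$, which then gives $\dmu=(1+O(\sqrt{\eta}\rmin^{-2}))\,\dmu^{\bar S}$ and, for every Lipschitz $\alpha$, $\int_\Sigma|\nabla\alpha|^2_\gamma\dmu=(1+O(\sqrt{\eta}\rmin^{-2}))\int_\Sigma|\nabla\alpha|^2_{\bar g^S}\dmu^{\bar S}$ (for the last point one may alternatively invoke conformal invariance of the Dirichlet integral in two dimensions). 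This estimate follows from Definition~\ref{def:asymptotically-flat} and Lemma~\ref{thm:general-to-schwarzschild} (comparing $g$ with $g^S$) together with the replacement of $\phi$ by $\bar\phi=\phi(R_e)$: by Proposition~\ref{thm:sphere-approx-1}, Theorem~\ref{thm:improved-curvature-est} and the position estimate Theorem~\ref{thm:position-estimate} one has $\|\Acirc\|_{L^2(\Sigma)}\le C\sqrt{\eta}\rmin^{-2}$, $\tau\le C\sqrt{\eta}\rmin^{-1}$, hence $\sup_\Sigma|r-R_e|\le C\sqrt{\eta}$ and $\sup_\Sigma|\phi-\bar\phi|\le C\sqrt{\eta}\rmin^{-2}$.

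The second step is the potential comparison. Writing $V:=|A|^2+\Ric(\nu,\nu)=|\Acirc|^2+\tfrac12 H^2+\Ric(\nu,\nu)$, Theorem~\ref{thm:final-curv} gives $\|\Acirc\|_{L^\infty}\le C\sqrt{\eta}\rmin^{-3}$, $\|H-\bar H^S\|_{L^\infty}\le C\sqrt{\eta}\rmin^{-3}$ and $\|\lambda+\Ric(\nu,\nu)\|_{L^\infty}\le C\sqrt{\eta}\rmin^{-4}$; since $|\bar H^S|+\|H\|_{L^\infty}\le C\rmin^{-1}$, expanding $H^2-(\bar H^S)^2$ yields $\|V-\bar V\|_{L^\infty}\le C\sqrt{\eta}\rmin^{-4}$, while $\|\bar V\|_{L^\infty}\le C\rmin^{-2}$. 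I also record that $\bar{L}=-\bar\Delta^S-\bar V$ has the same eigenfunctions as $-\bar\Delta^S$, so $\bar\mu_i=\bar\gamma^S_i-\bar V$ and, by Corollary~\ref{thm:cor-ev-umbilical} together with \eqref{eq:82} and the spectral gap of the sphere, $\bar\gamma^S_i\le C(|\bar\mu_i|+\rmin^{-2})$ with $C$ independent of $i$; in particular the Dirichlet energy of any element of the span $\bar E_i$ of the first $i+1$ eigenfunctions of $\bar L$ is at most $\bar\gamma^S_i\le C(|\bar\mu_i|+\rmin^{-2})$ times its $\dmu^{\bar S}$-norm.

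The third step is the two-sided min--max comparison. Combining the above, for every $\alpha$ one has $\int_\Sigma(|\nabla\alpha|^2_\gamma-V\alpha^2)\dmu=\int_\Sigma(|\nabla\alpha|^2_{\bar g^S}-\bar V\alpha^2)\dmu^{\bar S}+O(\sqrt{\eta}\rmin^{-2})\int_\Sigma|\nabla\alpha|^2_{\bar g^S}\dmu^{\bar S}+O(\sqrt{\eta}\rmin^{-4})\int_\Sigma\alpha^2\dmu^{\bar S}$ and $\int_\Sigma\alpha^2\dmu=(1+O(\sqrt{\eta}\rmin^{-2}))\int_\Sigma\alpha^2\dmu^{\bar S}$. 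Evaluating the Rayleigh quotient of $L$ on the test space $\bar E_i$ and using $\bar\gamma^S_i\le C(|\bar\mu_i|+\rmin^{-2})$ gives $\mu_i\le\bar\mu_i+C\sqrt{\eta}\rmin^{-2}(|\bar\mu_i|+\rmin^{-2})$; symmetrically, evaluating the Rayleigh quotient of $\bar L$ on the span of the first $i+1$ eigenfunctions of $L$ and bounding the Dirichlet energy there by $\max(0,\mu_i+\|V\|_{L^\infty})\le C(|\mu_i|+\rmin^{-2})$ times the norm, one gets $\bar\mu_i\le\mu_i+C\sqrt{\eta}\rmin^{-2}(|\mu_i|+\rmin^{-2})$. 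For $r_0$ large (equivalently $\sqrt{\eta}\rmin^{-2}$ small) these two inequalities force $|\mu_i|\le C(|\bar\mu_i|+\rmin^{-2})$, and substituting this back into the second one yields the claimed bound $|\mu_i-\bar\mu_i|\le C\sqrt{\eta}\rmin^{-2}(|\bar\mu_i|+\rmin^{-2})$.

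I expect the main obstacle to be the first step, namely the $(1+O(\sqrt{\eta}\rmin^{-2}))$-closeness of $\gamma$ to $\bar{g}^S|_\Sigma$ --- in particular the $\phi$-to-$\bar\phi$ replacement, which is exactly where the position estimate is needed --- together with keeping the constant $C$ in the min--max step uniform in $i$, which relies on \eqref{eq:82} and on the gap in the spectrum of $S^2$ (so that $\bar\gamma^S_i$ and, where relevant, $\bar\mu_i$ are comparable up to an absolute constant).
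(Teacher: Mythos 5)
Your argument is correct and follows essentially the same route as the paper's proof: a two-sided min--max comparison of $L$ against $\bar L$, driven by (i) a volume-form and Dirichlet-energy comparison between $\gamma$, $g^S|_\Sigma$ and $\bar g^S|_\Sigma$ (the paper uses conformal invariance of the Dirichlet integral for the $\phi\mapsto\bar\phi$ step, which you also mention as the cleaner alternative), and (ii) the pointwise potential comparison $\|\,|A|^2+\RicM(\nu,\nu)-\bar V\,\|_{L^\infty}\le C\sqrt{\eta}\rmin^{-4}$ coming from theorem~\ref{thm:final-curv}. The paper's inequality~\eqref{eq:87a} plays exactly the role of your ``Dirichlet energy of the test space is $\le(\bar\gamma^S_i)\cdot$norm'' remark, and your closing bootstrap $|\mu_i|\le C(|\bar\mu_i|+\rmin^{-2})$ is what the paper means by ``a similar calculation, interchanging $\bar L$ and $L$''; the appeals to~\eqref{eq:82} and the spectral gap of $S^2$ in your write-up are harmless but not actually needed, since the uniform-in-$i$ bound $\bar\gamma^S_i\le|\bar\mu_i|+C\rmin^{-2}$ is immediate from $\bar\gamma^S_i=\bar\mu_i+\bar V$ and $|\bar V|\le C\rmin^{-2}$.
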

\begin{proof}
We use the following characterization of the $i$-th eigenvalue
\begin{equation*}
\mu_i = \inf_{\substack{V\subset W^{1,2}(\Sigma)\\
              \text{dim}(V)=i+1}}\ \sup_{\psi \in V}
          \frac{\int_{\Sigma}\psi L\psi \, d\mu}{\int_\Sigma
            \psi^2\, d\mu},
\end{equation*}
where $V$ is any linear subspace of $W^{1,2}(\Sigma)$. Let
$\bar{\varphi}\in W^{1,2}(\Sigma)$ with $\int \bar{\varphi}^2\,
d\bar{\mu}^S =1$. We estimate,
using \eqref{eq:79} and \eqref{eq:81}
\begin{equation}
  \label{eq:85}\begin{split}
\int \bar\varphi L\bar{\varphi} \, d\mu &= \int |\nabla
\bar{\varphi}|^2-\bar{\varphi}^2\big(|\Acirc|^2+\frac{1}{2}H^2+\RicM(\nu,\nu)\big)\,
d\mu \\
&\leq \int |\nabla
\bar{\varphi}|^2-\bar{\varphi}^2\big(\frac{1}{2}(\bar{H}^S)^2-\lambda\big)\,
d\mu + C\sqrt{\eta}\rmin^{-4}.
\end{split}
\end{equation} In the following we repeatedly use the estimates from
definition \ref{def:asymptotically-flat} and lemma
\ref{thm:general-to-schwarzschild}. We can estimate the first term on
the right hand side by
\begin{equation}
\label{eq:86}\begin{split}
\int |\nabla \bar{\varphi}|^2\, d\mu &\leq \int |\nabla
\bar{\varphi}|^2\, d\mu^S + C \eta \rmin^{-2} \int |\nabla
\bar{\varphi}|_{g^S}^2\, d\mu^S\\
&\leq \int |\nabla\bar{\varphi}|_{g^S}^2\, d\mu^S + C \eta \rmin^{-2} \int |\nabla
\bar{\varphi}|_{g^S}^2\, d\mu^S\\
&\leq \int |\nabla\bar{\varphi}|_{\bar{g}^S}^2\, d\bar{\mu}^S + C \eta \rmin^{-2} \int |\nabla
\bar{\varphi}|_{\bar{g}^S}^2\, d\bar{\mu}^S,
\end{split}
\end{equation}
where we used the conformal invariance of the Dirichlet energy from
the second to the third line. The second term on the right hand side is estimated
similarly by
\begin{equation}
  \label{eq:87}
  \begin{split}
    -\int \bar{\varphi}^2\big(\tfrac{1}{2}(\bar{H}^S)^2-\lambda\big)\, d\mu
    &\leq
    -\int \bar{\varphi}^2\big(\tfrac{1}{2}(\bar{H}^S)^2-\lambda\big)\,
    d\mu^S 
    \\
    &\qquad
    + C \eta \rmin^{-2}\int
    \bar{\varphi}^2\big|\tfrac{1}{2}(\bar{H}^S)^2-\lambda\big|\, d\mu^S
    \\
    &\leq
    -\int \bar{\varphi}^2\big(\tfrac{1}{2}(\bar{H}^S)^2-\lambda)\,
    d\bar{\mu}^S + C \sqrt{\eta} \rmin^{-4},
  \end{split}
\end{equation}
where we used that $d\mu^S = (\phi/\bar{\phi})^4 d\bar{\mu}^S$ and
\begin{equation*}
  \left|\frac{\phi}{\bar{\phi}}-1\right|
  \leq
  C m \left|\frac{1}{r}-\frac{1}{R_e}\right|
  \leq
  \frac{C}{\rmin}\left( \tau + \frac{\sqrt{\eta}}{\rmin}\right)
  \leq
  C \sqrt{\eta}\rmin^{-2},
\end{equation*}
by proposition \ref{thm:sphere-approx-1} and  theorem
\ref{thm:position-estimate}. Now
\begin{equation}
  \label{eq:87a}
  \begin{split}
    \int |\nabla\bar{\varphi}|_{\bar{g}^S}^2\, d\bar{\mu}^S
    &=
    \int \bar{\varphi} \bar{L}\bar{\varphi} \, d\bar{\mu}^S
    + \int \bar{\varphi}^2 \big(\tfrac{1}{2}(\bar{H}^S)^2-\lambda)\,
    d\bar{\mu}^S
    \\
    &\leq
    \int \bar{\varphi} \bar{L}\bar{\varphi}\, d\bar{\mu}^S
    + C \rmin^{-2}.
  \end{split}
\end{equation}
Combining \eqref{eq:86}, \eqref{eq:87} and \eqref{eq:87a} we see that
\begin{equation}
  \label{eq:88}
  \int \bar{\varphi} L\bar{\varphi} \, d\mu
  \leq
  (1+C \eta \rmin^{-2})
  \int \bar\varphi \bar{L}\bar{\varphi}\, d\bar{\mu}^S
  + C \sqrt{\eta}\rmin^{-4}.
\end{equation}
Moreover, by arguing as above, we have the estimate
\begin{equation*}
  \left|\int \bar{\varphi}^2 \, d\mu -1\right|
  =
  \left|\int \bar{\varphi}^2\, d\mu -\int\bar{\varphi}^2\,
    d\bar{\mu}^S \right| 
  \leq C\sqrt{\eta}\rmin^{-2}.
\end{equation*}
Combining this with \eqref{eq:88} and the variational characterization
of the eigenvalues, we see that
\begin{equation*}
\mu_i\leq \bar{\mu}_i+ C\sqrt{\eta}\rmin^{-2}|\bar{\mu}_i|+
C\sqrt{\eta} \rmin^{-4}.
\end{equation*}
The reverse inequality follows from a similar calculation, interchanging
$\bar{L}$ and~$L$.
\end{proof}
From theorem \ref{thm:final-curv}, \eqref{eq:84} and lemma \ref{thm:ev-jac-op} we get the following
\begin{corollary}
  \label{thm:cor-ev-jac-op}
  For any surface $\Sigma$ as in  theorem \ref{thm:final-curv}
  we have the estimate
  \begin{equation*}
    \left|\mu_i - \left(\frac{\nu_i-2}{R_S^2}+3\lambda\right)\right|
    \leq C(1+\nu_i)\sqrt{\eta}\rmin^{-4} + C \rmin^{-5} + C \nu_i\sqrt{\eta} \rmin^{-6}.
  \end{equation*}
\end{corollary}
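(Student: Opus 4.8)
The plan is to assemble the corollary directly from the three facts it cites: equation~\eqref{eq:84} expresses the reference eigenvalue $\bar\mu_i$, Corollary~\ref{thm:cor-ev-umbilical} controls $\bar\gamma_i^S$, and Lemma~\ref{thm:ev-jac-op} transfers the estimate from $\bar\mu_i$ to $\mu_i$. The only genuinely new ingredient is a short algebraic identity for $\bar H^S$ that makes the error quadratic rather than linear in the Schwarzschild parameter.

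\textbf{The algebraic identity.} First I would expand $\bar H^S = \tfrac{2}{R_S} - \bar\phi\,\tfrac{2m}{R_S^2}$ to get
\[
  \tfrac12(\bar H^S)^2 = \frac{2}{R_S^2} - \frac{4m\bar\phi}{R_S^3} + \frac{2m^2\bar\phi^2}{R_S^4}.
\]
Substituting $1-\bar\phi = -\tfrac{m}{2R_e}$ and $R_S = \bar\phi^2 R_e$ one checks that $-\tfrac{4m\bar\phi}{R_S^3} + \tfrac{2m^2\bar\phi^2}{R_S^4} = -\tfrac{4m}{R_S^3}$ exactly, i.e. $\tfrac12(\bar H^S)^2 = \tfrac{2}{R_S^2} - \tfrac{4m}{R_S^3}$. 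Combining the two pointwise bounds in \eqref{eq:81} of Theorem~\ref{thm:final-curv} gives $|\lambda - 2mR_S^{-3}| \le C\sqrt\eta\,\rmin^{-4}$, so
\[
  -\tfrac12(\bar H^S)^2 + \lambda
  = -\frac{2}{R_S^2} + \frac{4m}{R_S^3} + \lambda
  = -\frac{2}{R_S^2} + 3\lambda + 2\bigl(2mR_S^{-3}-\lambda\bigr),
\]
and hence this quantity agrees with $-\tfrac{2}{R_S^2} + 3\lambda$ up to $C\sqrt\eta\,\rmin^{-4}$.

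\textbf{Assembling the estimate.} I would then insert this into \eqref{eq:84} and use Corollary~\ref{thm:cor-ev-umbilical} together with $C_i \le C\nu_i$ from \eqref{eq:82} to replace $\bar\gamma_i^S$ by $R_S^{-2}\nu_i$ at the cost of $C\nu_i\sqrt\eta\,\rmin^{-4}$, obtaining $\bigl|\bar\mu_i - (\tfrac{\nu_i-2}{R_S^2}+3\lambda)\bigr| \le C(1+\nu_i)\sqrt\eta\,\rmin^{-4}$; in particular $|\bar\mu_i| \le C\nu_i R_S^{-2} + 3\lambda + C(1+\nu_i)\sqrt\eta\,\rmin^{-4}$, which, using that $R_S$ and $\rmin$ are comparable (Section~\ref{sec:impr-curv-estim}) and $0<\lambda\le C\rmin^{-3}$, is $\le C(1+\nu_i)\rmin^{-2}$. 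Finally Lemma~\ref{thm:ev-jac-op} gives $|\mu_i-\bar\mu_i| \le C(|\bar\mu_i|+\rmin^{-2})\sqrt\eta\,\rmin^{-2}$; inserting the bound on $|\bar\mu_i|$ and sorting the products by homogeneity — $\nu_i R_S^{-2}\cdot\sqrt\eta\rmin^{-2}$ yields $C\nu_i\sqrt\eta\,\rmin^{-4}$, $\lambda\cdot\sqrt\eta\rmin^{-2}$ yields $C\sqrt\eta\,\rmin^{-5}\le C\rmin^{-5}$, and $\nu_i\sqrt\eta\rmin^{-4}\cdot\sqrt\eta\rmin^{-2}$ yields $C\nu_i\sqrt\eta\,\rmin^{-6}$ — then combining with the estimate for $\bar\mu_i$ by the triangle inequality produces exactly the claimed inequality.

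\textbf{Expected obstacle.} There is essentially no serious obstacle here: the analytic content all sits in the cited statements. The one point that must be handled correctly is the exact cancellation in the identity for $(\bar H^S)^2$, which relies on the precise definitions $\bar\phi = 1+\tfrac{m}{2R_e}$ and $R_S = \bar\phi^2 R_e$ and not merely on an approximate relation; after that, the only remaining care is to keep the error terms grouped at the three orders $\sqrt\eta\rmin^{-4}$, $\rmin^{-5}$, $\sqrt\eta\rmin^{-6}$ when multiplying out Lemma~\ref{thm:ev-jac-op}.
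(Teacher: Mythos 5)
Your proof is correct and follows the route the paper intends (the paper cites Theorem~\ref{thm:final-curv}, \eqref{eq:84} and Lemma~\ref{thm:ev-jac-op} without giving a written argument). One nice point you add: the identity $\tfrac12(\bar H^S)^2 = 2R_S^{-2} - 4mR_S^{-3}$ is indeed \emph{exact}, which the paper does not exploit — its own Lemma~\ref{mu0} only records the weaker bound $\bigl|\tfrac12(\bar H^S)^2 - 2R_S^{-2} + 4mR_S^{-3}\bigr| \le C\rmin^{-5}$, and this is almost certainly the source of the $C\rmin^{-5}$ term in the stated error budget. With the exact identity, the only remaining error of this kind comes from $|\lambda - 2mR_S^{-3}| \le C\sqrt\eta\,\rmin^{-4}$ out of \eqref{eq:81}, so your argument actually delivers the slightly sharper bound $C(1+\nu_i)\sqrt\eta\,\rmin^{-4}$ on its own, which a fortiori implies the corollary.

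One small imprecision worth fixing: the intermediate inequality $|\bar\mu_i| \le C\nu_i R_S^{-2} + 3\lambda + C(1+\nu_i)\sqrt\eta\,\rmin^{-4}$ fails for $\nu_0 = 0$, because $|(\nu_0-2)/R_S^2| = 2/R_S^2$ is not captured by $C\nu_0 R_S^{-2}=0$, and indeed $\bar\mu_0 \approx -2R_S^{-2} + 3\lambda$ has magnitude $\sim 2R_S^{-2}$, not $\sim 3\lambda = O(R_S^{-3})$. The correct statement is $|\bar\mu_i| \le C(1+\nu_i)R_S^{-2} + 3\lambda + C(1+\nu_i)\sqrt\eta\,\rmin^{-4}$, which is what you in fact use in the very next clause. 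The slip does not break the argument: the missing $2R_S^{-2}$, multiplied by the factor $\sqrt\eta\,\rmin^{-2}$ from Lemma~\ref{thm:ev-jac-op}, contributes $\le C\sqrt\eta\,\rmin^{-4}$, which is absorbed into the first error term — but the written intermediate step should carry the $(1+\nu_i)$ prefactor.
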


\subsection{The linearized Willmore equation}
\label{sec:lin-op}
In the following we aim at proving a positive lower bound for the
first eigenvalue of the linearization of the Willmore equation with prescribed area. We start by recalling the expression (see \eqref{eq:45})
\begin{equation*}
  \begin{split}
    \int_\Sigma \alpha W_\lambda \alpha \dmu &=\int_\Sigma \alpha W\alpha - \lambda \alpha L \alpha\dmu
    \\
    &\quad
    =
    \int_\Sigma
    (L\alpha)^2-  \lambda \alpha L \alpha
    + \half H^2 |\nabla\alpha|^2
    - 2 H\Acirc(\nabla\alpha,\nabla\alpha)
    \\
    &\qquad\quad
    + \alpha^2\big(
     |\nabla H|^2
    + 2 \omega(\nabla H)
    + H \Delta H + 2\la \nabla^2 H, \Acirc\ra\\ 
    &\qquad\quad\phantom{+ \alpha^2\big(}+ 2 H^2 |\Acirc|^2
    + 2 H\la \Acirc, T\ra
    - H\nabla_\nu\RicM(\nu,\nu)\\
    &\qquad\quad\phantom{+ \alpha^2\big(}- \half H^2|A|^2
    - \half H^2\RicM(\nu,\nu) 
    \big)\dmu .
  \end{split}
\end{equation*}
Integration by parts of the third term on the right yields
\begin{equation*}
\frac{1}{2}\int_\Sigma  H^2 |\nabla\alpha|^2\dmu = \frac{1}{2}\int_\Sigma
\alpha^2\big(|\nabla H|^2 + H \Delta H\big) - H^2 \alpha\Delta
\alpha\dmu .
\end{equation*}   
Together with $L\alpha = -\Delta \alpha -\alpha\big(|A|^2 +
\RicM(\nu,\nu)\big)$ and \eqref{eq:1} this yields 
\begin{equation*}
  \begin{split}
    &\int_\Sigma \alpha W_\lambda \alpha \dmu
    \\
    &=
    \int_\Sigma
    (L\alpha)^2 +\half H^2 \alpha L \alpha -  \lambda \alpha L \alpha
    - 2 H\Acirc(\nabla\alpha,\nabla\alpha)
    \\
    &\quad
    + \alpha^2\big(
     \tfrac{3}{2}|\nabla H|^2
    - \tfrac{3}{2}(H^2|\Acirc|^2 + H^2 \RicM(\nu,\nu) + \lambda H^2)+ 2 \omega(\nabla H)\\
    & \quad \phantom{+\alpha^2\big(} 
    + 2\la \nabla^2 H, \Acirc\ra 
    + 2 H^2 |\Acirc|^2
    + 2 H\la \Acirc, T\ra
    - H\nabla_\nu\RicM(\nu,\nu) 
    \big)\dmu .
  \end{split}
\end{equation*}
To understand the last term on the RHS above we recall that the
Einstein tensor is divergence free and \eqref{eq:62a}, which implies
\begin{equation}\label{nablaRic}\begin{split}
\nabla_\nu\RicM(\nu,\nu) &= -\nabla_{e_i} G(\nu,e_i) -\half \nabla_\nu\ScalM\\
 &= - \divSig \omega  + \la \Acirc, G^T \ra +\half H \ScalM 
  -\tfrac{3}{2} H \RicM(\nu,\nu) - \half \nabla_\nu\ScalM .
\end{split}
\end{equation}
Note that 
\begin{equation*}
\begin{split}
\frac{1}{2}\int_\Sigma  H^2 \alpha L\alpha \dmu &= \frac{1}{2} \int_\Sigma
(H\alpha)L(H\alpha) + \alpha^2 H \Delta H + H\langle\nabla H,
\nabla(\alpha^2)\rangle\, \dmu\\
&= \frac{1}{2} \int_\Sigma (H\alpha)L(H\alpha) - \alpha^2 |\nabla H|^2\,
\dmu .
\end{split}\end{equation*}
Putting all together we arrive at
\begin{equation}
\label{eq:89}
  \begin{split}
    &\int_\Sigma \alpha W_\lambda \alpha \dmu
    \\
    &=\int_\Sigma L\alpha(L\alpha-3\lambda \alpha) +\half \big((H\alpha) L(H\alpha)
    -3\lambda (H\alpha)^2\big) + 2 \lambda \alpha L \alpha\\
    &\qquad- 2 H\Acirc(\nabla\alpha,\nabla\alpha)
    + \alpha^2\big(
      |\nabla H|^2
    + \tfrac{1}{2}H^2|\Acirc|^2
    + 2\la \nabla^2 H, \Acirc\ra\\ 
    &\qquad 
    + H\la \Acirc, T\ra 
    -\half H^2 \ScalM
    + \half H \nabla_\nu\ScalM
    + H \divSig \omega
    + 2 \omega(\nabla H)
    \big)\dmu .
  \end{split}
\end{equation}

We decompose $W^{2,2}(\Sigma)$ using the eigenspaces of $L$, more
precisely consider the $L^2(\Sigma)$-orthonormal decomposition $W^{2,2}(\Sigma)=
V_0\oplus V_1 \oplus V_2$ where $V_0=\text{span}\{\varphi_0\},\
V_1=\text{span}\{\varphi_1,\varphi_2,\varphi_3\},\
V_2=\text{span}\{\varphi_4,\varphi_5,\ldots\}$. For any $\alpha \in
W^{2,2}(\Sigma)$ let $\alpha_0,\alpha_1,\alpha_2$ be the respective
orthogonal projections on these subspaces.
Our aim is to show that $\int\alpha W_\lambda \alpha$ is
positive on $V_0^\perp$. 
\begin{lemma}
\label{thm:est-lin-willmore-1}
For any surface $\Sigma$ as in theorem \ref{thm:final-curv}
  we have the estimate
\begin{equation*}
\int_\Sigma L\alpha(L\alpha -3\lambda \alpha) + 2\lambda \alpha L
\alpha \dmu \geq \big(24m^2 R_S^{-6} - C \sqrt{\eta}\rmin^{-7} - C
\rmin^{-8}\big) \int_\Sigma \alpha^2 \dmu
\end{equation*}
for all $\alpha \in V_0^\perp$.
\end{lemma}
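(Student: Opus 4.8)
The plan is to diagonalize the left‐hand side in the eigenbasis of the Jacobi operator $L$ and then feed in the spectral estimate of Corollary~\ref{thm:cor-ev-jac-op}. Writing $\alpha=\sum_{i\ge 1}a_i\varphi_i$ — which is legitimate for $\alpha\in V_0^\perp$ since the $\varphi_i$ form an $L^2(\Sigma)$-orthonormal basis and $\sum_i\mu_i^2a_i^2<\infty$ for $\alpha\in W^{2,2}(\Sigma)$ — and using $L\varphi_i=\mu_i\varphi_i$ together with the pointwise identity $L\alpha(L\alpha-3\lambda\alpha)+2\lambda\alpha L\alpha=(L\alpha)^2-\lambda\alpha L\alpha$, one obtains
\begin{equation*}
  \int_\Sigma L\alpha(L\alpha-3\lambda\alpha)+2\lambda\alpha L\alpha\,\dmu
  =\sum_{i\ge 1}\mu_i(\mu_i-\lambda)\,a_i^2 .
\end{equation*}
Since $\int_\Sigma\alpha^2\dmu=\sum_{i\ge 1}a_i^2$, the lemma reduces to the uniform lower bound $\mu_i(\mu_i-\lambda)\ge 24m^2R_S^{-6}-C\sqrt\eta\rmin^{-7}-C\rmin^{-8}$ for every $i\ge 1$.

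Before estimating the eigenvalues I would record the relevant scales: by Proposition~\ref{thm:compute_lambda} together with the position estimate of Theorem~\ref{thm:position-estimate} one has $|\lambda-2mR_S^{-3}|\le C\sqrt\eta\,\rmin^{-4}$, hence in particular $c\rmin^{-3}\le\lambda\le C\rmin^{-3}$, and $R_S$ and $\rmin$ are comparable (section~\ref{sec:impr-curv-estim}). Now split into the two relevant blocks of eigenvalues. For $i\in\{1,2,3\}$ we have $\nu_i=2$, so Corollary~\ref{thm:cor-ev-jac-op} gives $\mu_i=3\lambda+E_i$ with $|E_i|\le C\sqrt\eta\,\rmin^{-4}+C\rmin^{-5}$, which is of strictly lower order than $\lambda$; this forces $\mu_i\ge 2\lambda>0$ and $\mu_i-\lambda\ge\lambda>0$, and
\begin{equation*}
  \mu_i(\mu_i-\lambda)=(3\lambda+E_i)(2\lambda+E_i)=6\lambda^2+5\lambda E_i+E_i^2 .
\end{equation*}
Replacing $\lambda$ by $2mR_S^{-3}$ in the leading term costs $|6\lambda^2-24m^2R_S^{-6}|\le C\rmin^{-3}|\lambda-2mR_S^{-3}|\le C\sqrt\eta\,\rmin^{-7}$, while $|5\lambda E_i|\le C\sqrt\eta\,\rmin^{-7}+C\rmin^{-8}$ and $E_i^2$ is of even lower order, so $\mu_i(\mu_i-\lambda)\ge 24m^2R_S^{-6}-C\sqrt\eta\,\rmin^{-7}-C\rmin^{-8}$.

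For $i\ge 4$ we have $\nu_i\ge 4$, hence $\nu_i-2\ge\tfrac12\nu_i$, and Corollary~\ref{thm:cor-ev-jac-op} yields $\mu_i\ge\tfrac{\nu_i}{2R_S^2}+3\lambda-C\nu_i(\sqrt\eta\,\rmin^{-4}+\rmin^{-5})$. Since $R_S\le C\rmin$, for $\rmin$ large the last term is at most $\tfrac14\nu_iR_S^{-2}$, so $\mu_i\ge\tfrac14\nu_iR_S^{-2}\ge R_S^{-2}$ and then $\mu_i-\lambda\ge R_S^{-2}-C\rmin^{-3}\ge\tfrac12R_S^{-2}$; thus $\mu_i(\mu_i-\lambda)\ge\tfrac12R_S^{-4}\ge 24m^2R_S^{-6}$ once $\rmin$ is large enough, which is in particular at least the claimed bound. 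Combining the two blocks gives the uniform lower bound, and the lemma follows. The one delicate point is exactly this uniformity in the high‑frequency block: one must make sure that the $\nu_i$‑proportional error terms in Corollary~\ref{thm:cor-ev-jac-op} are genuinely absorbed by the main term $\nu_iR_S^{-2}$, which is where the comparability of $R_S$ and $\rmin$ and the smallness of $\sqrt\eta\,\rmin^{-2}$ are used.
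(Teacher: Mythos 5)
Your proof is correct and is exactly the fleshing out of what the paper intends: the paper's proof is the one‑line remark that the lemma ``follows from the estimates on the eigenvalues of $L$ in corollary~\ref{thm:cor-ev-jac-op} and theorem~\ref{thm:final-curv},'' and your spectral decomposition $\sum_{i\ge1}\mu_i(\mu_i-\lambda)a_i^2$, the identification $6\lambda^2=24m^2R_S^{-6}+O(\sqrt\eta\,\rmin^{-7})$ via theorem~\ref{thm:final-curv}, and the absorption of the $\nu_i$‑proportional error for $i\ge4$ are precisely the computations being invoked. The two points you flag as delicate — that the $\nu_i$‑proportional errors in corollary~\ref{thm:cor-ev-jac-op} are dominated by $\nu_i R_S^{-2}$ once $\sqrt\eta\,\rmin^{-2}$ is small, and the comparability of $R_S$ and $\rmin$ — are indeed the ingredients one needs, and you have supplied them correctly.
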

\begin{proof}
This follows from the estimates on the eigenvalues of $L$ in corollary
\ref{thm:cor-ev-jac-op} and theorem \ref{thm:final-curv}.
\end{proof}
\begin{lemma}
  \label{thm:est-lin-willmore-2}
  For any surface $\Sigma$ as in theorem \ref{thm:final-curv}
  we have the estimate
  \begin{equation*}
    \begin{split}
      & \int_\Sigma (H\alpha)L(H\alpha) -3\lambda (H\alpha)^2 \dmu
      \\
      &\geq
      -C \sqrt{\eta} \rmin^{-6} \int_\Sigma \alpha_1^2 \dmu
      + \frac{1}{4}\rmin^{-2} \int_\Sigma |\nabla \alpha_2|^2\dmu
      + \frac{1}{4}\rmin^{-4}\int_\Sigma \alpha_2^2\dmu
    \end{split}
  \end{equation*}
  for all $\alpha \in V_0^\perp$.
\end{lemma}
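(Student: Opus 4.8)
The plan is to pass to the $L^2(\Sigma)$-eigenbasis $\{\varphi_i\}$ of $L$ (with eigenvalues $\mu_i$, controlled sharply by corollary~\ref{thm:cor-ev-jac-op}) and to exploit that, by theorem~\ref{thm:final-curv}, $H$ is $L^\infty$-close to the \emph{constant} $\bar H^S$. Write $\beta := H\alpha = \sum_i b_i\varphi_i$, so that the left hand side is $\sum_i(\mu_i - 3\lambda)b_i^2$, and decompose $\beta = \bar H^S\alpha + e$ with $e := (H-\bar H^S)\alpha$. Since $\bar H^S$ is constant, the spectral projections of $L$ commute with multiplication by it, so $b_0 = \int_\Sigma e\varphi_0\dmu$ (because $\alpha\in V_0^\perp$), $b_i = \bar H^S(\alpha_1)_i + \int_\Sigma e\varphi_i\dmu$ for $i=1,2,3$, and $P_{V_2}\beta = \bar H^S\alpha_2 + P_{V_2}e$. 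Theorem~\ref{thm:final-curv} gives $\|e\|_{L^2}\le C\sqrt\eta\rmin^{-3}\|\alpha\|_{L^2}$; moreover, writing $L = -\Delta - V$ with $V = |A|^2 + \RicM(\nu,\nu)$, $\|V\|_{L^\infty}\le C\rmin^{-2}$, and using that $P_{V_2}$ commutes with $L$, a short Young-type argument yields $\|\nabla P_{V_2}e\|_{L^2} \le \|\nabla e\|_{L^2} + C\rmin^{-1}\|e\|_{L^2} \le C\sqrt\eta\rmin^{-3}\big(\|\nabla\alpha\|_{L^2} + \rmin^{-1}\|\alpha\|_{L^2}\big)$, where $\|\nabla e\|_{L^2}$ is estimated from $\|\nabla H\|_{L^\infty}\le C\sqrt\eta\rmin^{-4}$ and $\|H-\bar H^S\|_{L^\infty}\le C\sqrt\eta\rmin^{-3}$.

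The $V_0$- and $V_1$-modes are then disposed of with corollary~\ref{thm:cor-ev-jac-op}. On $V_0$ one has $\mu_0 - 3\lambda \ge -CR_S^{-2}$ and $b_0^2 \le C\eta\rmin^{-6}\|\alpha\|_{L^2}^2$, so this mode contributes at least $-C\eta\rmin^{-8}\|\alpha\|_{L^2}^2$, which is absorbed below. On $V_1$ we have $\nu_i = 2$, hence $|\mu_i - 3\lambda|\le C\sqrt\eta\rmin^{-4}$ (up to an $O(\rmin^{-5})$ correction), while $b_i^2 \le C\rmin^{-2}(\alpha_1)_i^2 + C\eta\rmin^{-6}\|\alpha\|_{L^2}^2$ since $(\bar H^S)^2 \sim R_S^{-2}$; summing over $i=1,2,3$ gives a contribution of at least $-C\sqrt\eta\rmin^{-6}\int_\Sigma\alpha_1^2\dmu$ modulo terms of strictly lower order in $\rmin$. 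This is the almost-kernel coming from the (broken) translation invariance, and it is the only source of the negative term in the statement.

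The core of the argument is the $V_2$-contribution $\sum_{i\ge4}(\mu_i - 3\lambda)b_i^2 = \int_\Sigma(P_{V_2}\beta)(L-3\lambda)(P_{V_2}\beta)\dmu$. Using $b_i = \bar H^S(\alpha_2)_i + (P_{V_2}e)_i$, the inequality $(x+y)^2 \ge \tfrac34 x^2 - 3y^2$, and positivity of $\mu_i - 3\lambda$ on $V_2$, this is at least $\tfrac34(\bar H^S)^2\int_\Sigma\alpha_2(L-3\lambda)\alpha_2\dmu - 3\int_\Sigma(P_{V_2}e)(L-3\lambda)(P_{V_2}e)\dmu$, and the last term is bounded by $3\|\nabla P_{V_2}e\|_{L^2}^2 \le C\eta\rmin^{-6}(\|\nabla\alpha\|_{L^2}^2 + \rmin^{-2}\|\alpha\|_{L^2}^2)$, which is negligible (distributing into the allowed error via $\|\nabla\alpha_1\|_{L^2}^2\le C\rmin^{-2}\|\alpha_1\|_{L^2}^2$ and almost-orthogonality of $\nabla\alpha_1,\nabla\alpha_2$). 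For the main term, use \eqref{eq:1} in the form $|A|^2 + \RicM(\nu,\nu) = \tfrac12 H^2 - \Delta H/H - \lambda$ together with theorem~\ref{thm:final-curv} to show, pointwise, $|A|^2 + \RicM(\nu,\nu) + 3\lambda = \tfrac{2}{R_S^2}\big(1 + O(\rmin^{-1})\big)$; and prove the \emph{sharp} spectral gap $\int_\Sigma|\nabla\alpha_2|^2\dmu \ge \tfrac{4}{R_S^2}(1-C\rmin^{-1})\int_\Sigma\alpha_2^2\dmu$ by writing $\int_\Sigma|\nabla\alpha_2|^2\dmu = \sum_{i\ge4}\mu_i(\alpha_2)_i^2 + \int_\Sigma(|A|^2 + \RicM(\nu,\nu))\alpha_2^2\dmu$ and invoking $\mu_4\ge \tfrac{2}{R_S^2}(1-C\rmin^{-1})$ (here $\nu_4 = 4$) and $\inf_\Sigma(|A|^2 + \RicM(\nu,\nu))\ge \tfrac{2}{R_S^2}(1-C\rmin^{-1})$. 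These two bounds give $\int_\Sigma\alpha_2(L-3\lambda)\alpha_2\dmu \ge (\tfrac12 - C\rmin^{-1})\int_\Sigma|\nabla\alpha_2|^2\dmu$, which for $\rmin$ large I would split as $\ge \tfrac14\int_\Sigma|\nabla\alpha_2|^2\dmu + \tfrac{1}{4R_S^2}\int_\Sigma\alpha_2^2\dmu$; multiplying by $\tfrac34(\bar H^S)^2 \ge \tfrac{2}{R_S^2}$ and using $R_S/\rmin\to1$ under \eqref{eq:23} produces $\tfrac12\rmin^{-2}\int_\Sigma|\nabla\alpha_2|^2\dmu + \tfrac12\rmin^{-4}\int_\Sigma\alpha_2^2\dmu$ with room to swallow the negligible $e$- and $V_0$-errors, leaving the claimed $\tfrac14\rmin^{-2}$ and $\tfrac14\rmin^{-4}$ constants. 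Adding the three blocks yields the lemma.

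\emph{Main obstacle.} The delicate point is the $V_2$-coercivity: one cannot merely use $\mu_4 > 0$ and $|A|^2 + \RicM(\nu,\nu) + 3\lambda > 0$, because the potential term is of exactly the same order $\tfrac{2}{R_S^2}$ as the bottom of the spectrum on $V_2$; one needs the sharp asymptotics $\mu_4 = \tfrac{2}{R_S^2} + 3\lambda + o(\rmin^{-2})$ and $|A|^2 + \RicM(\nu,\nu) + 3\lambda = \tfrac{2}{R_S^2} + o(\rmin^{-2})$ (provided by corollary~\ref{thm:cor-ev-jac-op} and theorem~\ref{thm:final-curv}) so that the $-\tfrac12\int|\nabla\alpha_2|^2$ from the potential is strictly dominated by the $+\int|\nabla\alpha_2|^2$ from the gradient. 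The remaining work — the commutator bound for $\|\nabla P_{V_2}e\|_{L^2}$ and the verification that every error term is either $\le C\sqrt\eta\rmin^{-6}\int_\Sigma\alpha_1^2\dmu$ or is absorbed into the positive $V_2$-terms — is routine once theorem~\ref{thm:final-curv} is in hand.
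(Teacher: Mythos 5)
Your proof is correct and follows essentially the same route as the paper's: pass to the eigenbasis of $L$, use the spectral asymptotics of corollary~\ref{thm:cor-ev-jac-op} together with $\|H-\bar H^S\|_{L^\infty}\le C\sqrt\eta\,\rmin^{-3}$, treat the $V_1$-block as a near-kernel (the only source of the $-C\sqrt\eta\,\rmin^{-6}\int_\Sigma\alpha_1^2\dmu$ term), and extract coercivity from the $V_2$-block via the sharp gap $\mu_4-3\lambda=\tfrac{2}{R_S^2}+o(\rmin^{-2})$ together with $|A|^2+\RicM(\nu,\nu)+3\lambda=\tfrac{2}{R_S^2}+O(\sqrt\eta\,\rmin^{-4})$. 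Where you deviate is purely in the bookkeeping, and your version is arguably cleaner: by factoring $H\alpha=\bar H^S\alpha+e$ at the outset, multiplication by the constant $\bar H^S$ commutes with the spectral projections, so every mode of $H\alpha$ decouples and you never produce the explicit $\alpha_1$--$\alpha_2$ cross-term $(ii)$ that the paper has to absorb with an auxiliary parameter $\varepsilon$. For the $V_2$-block, the paper uses a $\delta$-convexity splitting of $L$ (writing $\int\beta(L-3\lambda)\beta=\delta\int|\nabla\beta|^2+(1-\delta)\int\beta(L-3\lambda)\beta-\delta\int\beta^2(|A|^2+\RicM(\nu,\nu)+3\lambda)$ and setting $\delta=1/5$), whereas you use the sharp two-sided Poincar\'e inequality $\int|\nabla\alpha_2|^2\ge\tfrac{4}{R_S^2}(1-C\rmin^{-1})\int\alpha_2^2$ and then split the gradient term; both exploit exactly the same cancellation, deliver comparable constants, and leave enough slack to swallow the $e$-errors, so both arrive at the claimed $1/4$ prefactors.
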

\begin{proof}
We can write
\begin{equation*}
\begin{split}
\int_\Sigma (H\alpha)L(H\alpha) -3\lambda (H\alpha)^2 \dmu
=&\int_\Sigma (H\alpha_1)L(H\alpha_1) -3\lambda (H\alpha_1)^2 \dmu\\
&+2 \int_\Sigma (H\alpha_1)L(H\alpha_2) -3\lambda (H\alpha_1)(H\alpha_2) \dmu\\
&+\int_\Sigma (H\alpha_2)L(H\alpha_2) -3\lambda (H\alpha_2)^2 \dmu ,
\end{split}
\end{equation*}
and we denote the terms on the RHS by $(i), (ii)$ and
$(iii)$. Note that we can always estimate
\begin{equation}
  \label{eq:90}
  \begin{split}
    \left|\langle H\alpha_i,\varphi_j\rangle_{L^2(\Sigma)}\right|
    &=
    \left|\int_\Sigma H \alpha_i \varphi_j \dmu \right|
    \\
    &\leq
    \int_\Sigma  |H-\bar{H}^S||\alpha_i|\,|\varphi_j|\dmu
    \leq
    C \sqrt{\eta} \rmin^{-3}\left(\int_\Sigma \alpha_i^2\dmu\right)^{1/2}
\end{split}
\end{equation}
for $i\neq j$. So we see
\begin{equation}
  \label{eq:91}
  \begin{split}
    (i)
    &\geq
    -|\mu_0-3\lambda| \int_\Sigma |(H\alpha_1)_0|^2\dmu
    - \max_{j=1,2,3}|\mu_j-3\lambda|\int_\Sigma
    \big|(H\alpha_1)_{V_0^\perp}\big|^2\dmu
    \\
    &\geq
    -C \eta \rmin^{-8} \int_\Sigma \alpha_1^2 \dmu
    - C\sqrt{\eta}\rmin^{-4}
    \int_\Sigma \big|(H\alpha_1)_{V_0^\perp}\big|^2\dmu
    \\
    &\geq
    -C \eta \rmin^{-8} \int_\Sigma \alpha_1^2 \dmu
    - C\sqrt{\eta}\rmin^{-4} \left(\int_\Sigma (H\alpha_1)^2\dmu
      + C \eta \rmin^{-6}\int_\Sigma \alpha_1^2\dmu\right)
    \\
    &\geq
    -C \eta \rmin^{-8} \int_\Sigma \alpha_1^2 \dmu
    - C\sqrt{\eta}\rmin^{-6} \int_\Sigma \alpha_1^2\dmu .
\end{split}
\end{equation}
To estimate $(ii)$ we write
\begin{equation}
  \label{eq:92}
  \begin{split}
    (ii)
    &\geq
    -2 |\mu_0-3\lambda|\int_\Sigma
    \big|(H\alpha_1)_0\big|\big|(H\alpha_2)_0\big|\dmu 
    \\
    &\quad
    - 2\max_{j=1,2,3}|\mu_j-3\lambda|\int_\Sigma \big|(H\alpha_1)\big|
    \big|(H\alpha_2)_1\big|\dmu
    \\
    &\quad
    + 2\int_\Sigma(H\alpha_1)_2(L-3\lambda)(H\alpha_2)\dmu
    \\
    &\geq
    -C\eta \rmin^{-8}\left(\int_\Sigma \alpha_1^2\dmu\right)^{1/2}
    \left(\int_\Sigma \alpha_2^2\dmu\right)^{1/2}
    \\
    &\quad    
    + 2\int_\Sigma(H\alpha_1)_2(L-3\lambda)(H\alpha_2)\dmu .
  \end{split}
\end{equation}
For the last term in \eqref{eq:92} we write
\begin{equation*}
  (H\alpha_1)_2 = H\alpha_1-\sum_{j=0}^3 \langle H\alpha_1,\varphi_j
  \rangle \varphi_j = \sum_{j=0}^3 \beta_j\varphi_j ,
\end{equation*}
where $\beta_j=H\langle \alpha_1,\varphi_j\rangle - \langle H\alpha_1,\varphi_j
\rangle$. Note that 
\begin{equation*}
  |\nabla \beta_j|
  \leq
  C \sqrt{\eta}\rmin^{-4}\left(\int_\Sigma \alpha_1^2\dmu \right)^{1/2}
\end{equation*}
and
\begin{equation*}
  |\beta_j| \leq 2||H-\bar{H}^S||_{L^\infty}\sum_{j=0}^3|\langle \alpha_1,
  \varphi_j\rangle| \leq C \sqrt{\eta} \rmin^{-3}\left(\int_\Sigma
  \alpha_1^2\dmu\right)^{1/2}.
\end{equation*}
Then
\begin{equation}
  \label{eq:93}
  \begin{split}
    &2\int_\Sigma(H\alpha_1)_2(L-3\lambda)(H\alpha_2)\dmu
    \\
    &=
    2\int_\Sigma \left\langle \nabla \sum_{j=0}^3 \beta_j \varphi_j, \nabla (H\alpha_2)\right\rangle 
    -
    (H\alpha_1)_2(H\alpha_2)\big(|A|^2+\RicM(\nu,\nu)+3\lambda\big)\dmu
    \\
    &\geq
    -C \left(
      \int_\Sigma \sum_{j=0}^3|\nabla\beta_j \varphi_j + \beta_j\nabla
      \varphi_j|^2\dmu\right)^{1/2}
    \left(\int_\Sigma |\nabla(H \alpha_2)|^2\dmu \right)^{1/2}
    \\
    &\quad
    - C\sqrt{\eta}\rmin^{-6}
    \left(\int_\Sigma \alpha_1^2\dmu\right)^{1/2}
    \left(\int_\Sigma \alpha_2^2\dmu\right)^{1/2}
    \\
    &\geq
    -C \sqrt{\eta} \rmin^{-4}
    \left(\int_\Sigma \alpha_1^2\dmu\right)^{1/2}
    \left(\int_\Sigma |\nabla(H\alpha_2)|^2\dmu \right)^{1/2}
    \\
    & \quad
    - C\sqrt{\eta}\rmin^{-6} \left(\int_\Sigma \alpha_1^2\dmu\right)^{1/2}
    \left(\int_\Sigma \alpha_2^2\dmu\right)^{1/2} ,
\end{split}
\end{equation}
where we used in the last step that $\int_\Sigma |\nabla \varphi_j|^2\dmu
\leq C\rmin^{-2}$ for $0\le j \le 3$. This follows from
\begin{equation}
  \label{eq:93a}
  \begin{split}
    \int_\Sigma |\nabla \varphi_j|^2\dmu
    &=
    \int_\Sigma \varphi_jL\varphi_j + \varphi_j^2\big(|A|^2+\RicM(\nu,\nu)\big)\dmu
    \\
    &\leq
    (\max_{j=0,1,2,3}|\mu_j| + C\rmin^{-2})\int_\Sigma \varphi_j^2\dmu
    \leq  
    C\rmin^{-2}.
\end{split}
\end{equation}
Putting \eqref{eq:93} and \eqref{eq:92} together we
see
\begin{equation}
\label{eq:94}
\begin{split}
(ii)&\geq -C \eta \varepsilon^{-1} \rmin^{-8} \int_\Sigma \alpha_1^2\dmu
-\varepsilon \rmin^{-4} \int_\Sigma \alpha_2^2\dmu
- \varepsilon
\int_\Sigma |\nabla(H\alpha_2)|^2\dmu\\
&\geq -C \eta \varepsilon^{-1} \rmin^{-8} \int_\Sigma \alpha_1^2\dmu
-\varepsilon \rmin^{-4} \int_\Sigma \alpha_2^2\dmu
- 2\varepsilon
\int_\Sigma H^2|\nabla\alpha_2|^2\dmu\\
&\ \ \ \ -C \varepsilon \eta \rmin^{-8}\int_\Sigma \alpha_2^2\dmu
\end{split}
\end{equation}
for an arbitrary $\varepsilon>0$.

For the term $(iii)$, we see
\begin{equation*}
  \begin{split}
    (iii)
    &\geq
    -|\mu_0-3\lambda|\int_\Sigma \big|(H\alpha_2)_0\big|^2\dmu
    -\max_{j=1,2,3}|\mu_j-3\lambda| \int_\Sigma \big|(H\alpha_2)_1\big|^2\dmu
    \\
    &\quad
    + \int_\Sigma (H\alpha_2)_2(L-3\lambda)(H\alpha_2)_2\dmu
    \\
    &\geq
    -C\eta\rmin^{-8}\int_\Sigma \alpha_2^2\dmu
    + \int_\Sigma (H\alpha_2)_2(L-3\lambda)(H\alpha_2)_2\dmu .
  \end{split}
\end{equation*}
If $\beta \in V_2$ and $\delta>0$ are arbitrary, we have the estimate
\begin{equation*}
  \begin{split}
    &\int_\Sigma \beta(L-3\lambda)\beta\dmu
    =
    \int_\Sigma \delta |\nabla \beta|^2
    + \beta (L+\delta \Delta -3\lambda)\beta\dmu
    \\
    &=
    \int_\Sigma \delta |\nabla \beta|^2
    + (1-\delta)\beta (L-3 \lambda)\beta
    - \delta \beta^2\big(|A|^2+\RicM(\nu,\nu)+ 3\lambda\big)\dmu
    \\
    &\geq
    \delta \int_\Sigma |\nabla \beta|^2\dmu
    + (1-\delta)\big((\nu_4-2)R_S^{-2} - C\sqrt{\eta}\rmin^{-4}\big)
    \int_\Sigma \beta^2\dmu
    \\
    &\quad
    - 3\delta \rmin^{-2}\int_\Sigma \beta^2\dmu
    \\
    &\geq
    \delta \int_\Sigma |\nabla \beta|^2\dmu
    + (1-4\delta)\rmin^{-2}\int_\Sigma \beta^2\dmu .
  \end{split}
\end{equation*}
With $\beta=(H\alpha_2)_2$ and $\delta =1/5$ this yields
\begin{equation}
  \label{eq:95}
  \begin{split}
    (iii)
    &\geq
    -C\eta\rmin^{-8}\int_\Sigma \alpha_2^2\dmu
    +\frac{1}{5} \int_\Sigma |\nabla(H\alpha_2)_2|^2\dmu
%     \\
%     &\quad
    + \frac{1}{5} \rmin^{-2}\int_\Sigma \big|(H\alpha_2)_2\big|^2\dmu
    \\
    &\geq
    \frac{2}{5}\rmin^{-2}\int_\Sigma |\nabla \alpha_2|^2\dmu
    + \frac{2}{5}\rmin^{-4}\int_\Sigma \alpha_2^2\dmu ,
  \end{split}
\end{equation}
where we used that
\begin{equation*}
  \begin{split}
    \int_\Sigma \big|(H\alpha_2)_2\big|^2\dmu
    &=
    \int_\Sigma H^2\alpha_2^2\dmu
    -\int_\Sigma \big|(H\alpha_2)_0\big|^2 + \big|(H\alpha_2)_1\big|^2\dmu
    \\
    &\geq
    \int_\Sigma H^2\alpha_2^2\dmu
    - C\eta\rmin^{-6}\int_\Sigma \alpha_2^2\dmu
    \geq
    3\rmin^{-2}\int_\Sigma \alpha_2^2\dmu ,
  \end{split}
\end{equation*}
and 
\begin{equation*}
  \begin{split}
    &\int_\Sigma |\nabla(H\alpha_2)_2|^2\dmu
    =
    \int_\Sigma |\nabla(H\alpha_2 - \sum_{j=0}^3\langle
    H\alpha_2,\varphi_j\rangle \varphi_j)|^2\dmu
    \\
    &\geq
    \frac{3}{4} \int_\Sigma |\nabla (H\alpha_2)|^2 \dmu
    - C \sum_{j=0}^3|\langle H\alpha_2,\varphi_j\rangle|^2
    \int_\Sigma |\nabla \varphi_j|^2\dmu
    \\
    &\geq
    \frac{2}{3}\int_\Sigma H^2|\nabla\alpha_2|^2\dmu
    - C\int_\Sigma |\nabla H|^2\alpha_2^2\dmu
    - C\eta\rmin^{-8}\int_\Sigma \alpha_2^2\dmu
    \\
    &\geq
    2\rmin^{-2}\int_\Sigma |\nabla \alpha_2|^2\dmu
    -C \eta \rmin^{-8}\int_\Sigma \alpha_2^2\dmu .
  \end{split}
\end{equation*}
Combining the estimates for $(i),(ii)$ and $(iii)$, and choosing
$\varepsilon = 1/100$ and $r_0$ big enough we arrive at the claimed
statement.
\end{proof}

\begin{theorem}
\label{est-lin-willmore-3}
In addition to the hypotheses of theorem \ref{thm:final-curv}, there
exists $\eta_0$ and $r_0$, depending only on $m,\sigma$ and
$\varepsilon$ such that on such a surface $\Sigma$ it holds
\begin{equation*}
  \int_\Sigma \alpha W_\lambda \alpha  \dmu \geq 12 m^2
  R_S^{-6} \int \alpha^2 \dmu
\end{equation*}
for all $\alpha \in V_0^\perp$.
\end{theorem}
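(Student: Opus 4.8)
The plan is to start from the expansion \eqref{eq:89} of $\int_\Sigma \alpha W_\lambda\alpha\dmu$ and to estimate its three groups of terms separately. The first group, $\int_\Sigma L\alpha(L\alpha - 3\lambda\alpha) + 2\lambda\alpha L\alpha\,\dmu$, is handled directly by Lemma~\ref{thm:est-lin-willmore-1}, producing the main positive contribution $(24m^2 R_S^{-6} - C\sqrt\eta\rmin^{-7} - C\rmin^{-8})\int_\Sigma\alpha^2\dmu$; the leading constant arises because on $V_1$ one has $2\lambda\mu_j \approx 6\lambda^2 \approx 24m^2 R_S^{-6}$ (using Corollary~\ref{thm:cor-ev-jac-op} and $\lambda\approx 2mR_S^{-3}$), while on $V_2$ the factor $\mu_j(\mu_j-3\lambda)$ is of the much larger order $\rmin^{-4}$. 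The second group, $\tfrac12\int_\Sigma (H\alpha)L(H\alpha) - 3\lambda(H\alpha)^2\,\dmu$, is bounded below by Lemma~\ref{thm:est-lin-willmore-2}, contributing $-C\sqrt\eta\rmin^{-6}\int_\Sigma\alpha_1^2\dmu + \tfrac18\rmin^{-2}\int_\Sigma|\nabla\alpha_2|^2\dmu + \tfrac18\rmin^{-4}\int_\Sigma\alpha_2^2\dmu$; the essential point is that this supplies a reservoir of positive gradient energy for the high-frequency part $\alpha_2$.

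It then remains to absorb the third group, namely $-2H\Acirc(\nabla\alpha,\nabla\alpha)$ together with the bracket $\alpha^2\big(|\nabla H|^2 + \tfrac12 H^2|\Acirc|^2 + 2\la\nabla^2 H,\Acirc\ra + H\la\Acirc,T\ra - \tfrac12 H^2\ScalM + \tfrac12 H\nabla_\nu\ScalM + H\divSig\omega + 2\omega(\nabla H)\big)$. For the terms carrying no derivatives of curvature, the pointwise estimates \eqref{eq:79}, \eqref{eq:80}, \eqref{eq:81} of Theorem~\ref{thm:final-curv} and the hypothesis $|\ScalM|\le\eta r^{-5}$ give $\|\Acirc\|_{L^\infty}\le C\sqrt\eta\rmin^{-3}$, $\|H\la\Acirc,T\ra\|_{L^\infty}\le C\sqrt\eta\rmin^{-7}$, and $\||\nabla H|^2\|_{L^\infty}+\|H^2|\Acirc|^2\|_{L^\infty}+\|\omega(\nabla H)\|_{L^\infty}+\|H^2\ScalM\|_{L^\infty}\le C\eta\rmin^{-7}$, so that these terms contribute at most $C\sqrt\eta\rmin^{-6}\int_\Sigma\alpha^2\dmu + C\sqrt\eta\rmin^{-4}\int_\Sigma|\nabla\alpha|^2\dmu$. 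For the three terms containing derivatives of curvature — $\la\nabla^2 H,\Acirc\ra$, $H\divSig\omega$ and $H\nabla_\nu\ScalM$ — I would integrate by parts against $\alpha^2$: for the first using the Codazzi identity $\divSig\Acirc = \tfrac12\nabla H + \omega$, and for the last using \eqref{nablaRic} together with $\div G=0$ and the elementary bound $|\nabla\ScalM|\le C\eta r^{-5}$ (which follows from $\ScalM^S\equiv 0$ and Definition~\ref{def:asymptotically-flat}). Each then becomes a sum of terms of the shape $\alpha^2\cdot(\text{quadratically small curvature})$ and $\alpha\,\nabla\alpha\cdot(\text{small curvature})$, and by Young's inequality with a suitably chosen power of $\rmin$ one bounds all of them by $C\sqrt\eta\big(\rmin^{-6}\int_\Sigma\alpha^2\dmu + \rmin^{-2}\int_\Sigma|\nabla\alpha_2|^2\dmu + \rmin^{-4}\int_\Sigma\alpha_1^2\dmu\big)$, after splitting $|\nabla\alpha|^2=|\nabla\alpha_1|^2+|\nabla\alpha_2|^2$ up to a cross term which is itself controlled since the eigenfunctions are $L^2$-orthogonal and $D=|A|^2+\RicM(\nu,\nu)$ is constant up to an error of order $\sqrt\eta\rmin^{-4}$.

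Finally I would collect everything. The gradient error $C\sqrt\eta\rmin^{-2}\int_\Sigma|\nabla\alpha_2|^2\dmu$ is absorbed by the positive term $\tfrac18\rmin^{-2}\int_\Sigma|\nabla\alpha_2|^2\dmu$ from Lemma~\ref{thm:est-lin-willmore-2} once $\eta_0$ is small; the error $C\sqrt\eta\rmin^{-4}\int_\Sigma|\nabla\alpha_1|^2\dmu$ is converted into $C\sqrt\eta\rmin^{-6}\int_\Sigma\alpha_1^2\dmu$ using $\int_\Sigma|\nabla\alpha_1|^2\dmu \le C\rmin^{-2}\int_\Sigma\alpha_1^2\dmu$, valid because $\alpha_1$ lies in the three-dimensional span of $\varphi_1,\varphi_2,\varphi_3$ and $\int_\Sigma|\nabla\varphi_j|^2\dmu\le C\rmin^{-2}$ by \eqref{eq:93a}; and all the remaining errors are of the form $C\sqrt\eta\rmin^{-6}\int_\Sigma\alpha^2\dmu$ or faster-decaying, hence dominated by half of $24m^2 R_S^{-6}\int_\Sigma\alpha^2\dmu$ once $\eta_0$ is small and $r_0$ large (recall $R_S$ and $\rmin$ are comparable). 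This gives $\int_\Sigma\alpha W_\lambda\alpha\dmu \ge 12m^2 R_S^{-6}\int_\Sigma\alpha^2\dmu$. The main obstacle is the bookkeeping in the integration-by-parts step: one must check that every derivative-of-curvature term, after integration by parts, only produces gradient factors chargeable either to the $\tfrac18\rmin^{-2}\int_\Sigma|\nabla\alpha_2|^2\dmu$ reservoir or — for the $V_1$-component, where only the weak bound $\int_\Sigma|\nabla\alpha_1|^2\dmu\le C\rmin^{-2}\int_\Sigma\alpha_1^2\dmu$ is available — to the main $\rmin^{-6}\int_\Sigma\alpha^2\dmu$ term, which forces the exponents in Young's inequality to be chosen with some care.
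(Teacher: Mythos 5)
Your proposal follows essentially the same route as the paper's proof: Lemmas~\ref{thm:est-lin-willmore-1} and \ref{thm:est-lin-willmore-2} supply the positive principal contributions, the bound $\int_\Sigma|\nabla\alpha_1|^2\dmu\le C\rmin^{-2}\int_\Sigma\alpha_1^2\dmu$ (argued as in~\eqref{eq:93a}) handles the low-frequency gradient, and $2\alpha^2\langle\nabla^2 H,\Acirc\rangle$ is integrated by parts via Codazzi $\div\Acirc=\tfrac12\nabla H+\omega$, with all remaining errors absorbed by the main term or the $\rmin^{-2}\int|\nabla\alpha_2|^2$ reservoir. The one small deviation is that you propose integrating by parts for $H\divSig\omega$ and $H\nabla_\nu\ScalM$, whereas the paper bounds them directly using the pointwise estimate $\|\nabla\omega\|_{L^\infty}\le C\sqrt\eta\rmin^{-5}$ from~\eqref{eq:81} and the decay of $\nabla\Ric$ from Definition~\ref{def:asymptotically-flat}; note in particular that $\nabla_\nu\ScalM$ is a normal rather than tangential derivative, so integrating it by parts along $\Sigma$ against $\alpha^2$ is not available — but the pointwise bound $|\nabla\ScalM|\le C\eta r^{-5}$ you also invoke is exactly what is needed, so this does not affect the result.
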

\begin{proof}
By lemma \ref{thm:est-lin-willmore-1} and lemma \ref{thm:est-lin-willmore-2} we
only have to check that the remaining terms in \eqref{eq:89} have
the right decay. First we note that by arguing as in the estimate \eqref{eq:93a} we get
\begin{equation*}
  \begin{split}
    \int_\Sigma |\nabla \alpha_1|^2\dmu  \leq  
    C\rmin^{-2}\int_\Sigma \alpha_1^2\dmu .
  \end{split}
\end{equation*}
Thus we have
\begin{equation*}
  \begin{split}
    \left|\int_\Sigma 2H \Acirc (\nabla \alpha,\nabla \alpha)\dmu\right|
    &\leq
    C \sqrt{\eta}\rmin^{-4} \int_\Sigma |\nabla \alpha_1|^2
    + |\nabla \alpha_2|^2\dmu
    \\
    &\leq
    C \sqrt{\eta}\rmin^{-4} \left(
      \rmin^{-2}\int_\Sigma \alpha_1^2\dmu +\int_\Sigma |\nabla \alpha_2|^2\dmu
    \right) .
  \end{split}
\end{equation*}
We rewrite
\begin{equation*}
  \begin{split}
    \int_\Sigma 2 \alpha^2\langle \nabla^2H,\Acirc\rangle \dmu
    &=
    -\int_\Sigma 4\alpha\nabla_i\alpha \nabla_jH \Acirc_{ij}
    + 2\alpha^2 \langle \nabla H, \div \Acirc \rangle \dmu.
  \end{split}
\end{equation*}
Furthermore
\begin{equation*}
  \left|\int_\Sigma 2\alpha^2 \langle \nabla H, \div \Acirc \rangle \dmu
  \right|
  =
  \left|\int_\Sigma 2\alpha^2 \langle \nabla H, \half\nabla H
    + \omega \rangle \dmu \right|
  \leq
  C \eta \rmin^{-8} \int_\Sigma \alpha^2\dmu,
\end{equation*}
and 
\begin{equation*}
  \begin{split}
    &
    \left|\int_\Sigma 4 \alpha \nabla_i\alpha \nabla_j H \Acirc_{ij}\dmu \right|
    \leq
    C\eta \rmin^{-7}\left(\int_\Sigma \alpha^2\dmu\right)^{1/2}
    \left(\int_\Sigma |\nabla \alpha|^2\dmu\right)^{1/2}
    \\
    &\leq
    C \eta \rmin^{-7}\left(\int_\Sigma \alpha^2\dmu \right)^{1/2}
    \left(\rmin^{-2}\int_\Sigma \alpha_1^2\dmu
      +\int_\Sigma |\nabla\alpha_2|^2\dmu\right)^{1/2}
    \\
    &\leq
    C \eta \rmin^{-8} \int_\Sigma \alpha^2 \dmu
    + C\eta \rmin^{-6}\int_\Sigma |\nabla\alpha_2|^2\dmu .
  \end{split}
\end{equation*}
In view of the estimates of theorem \ref{thm:final-curv} we find
\begin{equation*}
  \begin{split}
    &\Bigg|\int_\Sigma
    \alpha^2\big( |\nabla H|^2  + \tfrac{1}{2}H^2|\Acirc|^2 + H\la \Acirc, T\ra 
    -\half H^2 \ScalM  + \half H \nabla_\nu\ScalM
    \\ 
    &
    + H \divSig \omega
    + 2 \omega(\nabla H)
    \big)\dmu \Bigg|
    \leq
    C \sqrt{\eta} \rmin^{-6}\int_\Sigma \alpha^2 \dmu .
  \end{split}
\end{equation*}
Altogether this finishes the proof of the theorem.
\end{proof}

%\noteFelix{Looking closely, in the splitting in lemmas
%  \ref{thm:est-lin-willmore-1} and \ref{thm:est-lin-willmore-2} the
%  cross-terms are all ok without assuming that $\eta$ is small. For
%  the case without $\eta$ small it thus only remains to be shown that
%  $W(\alpha,\alpha)\geq \delta \rmin^{-7} \int \alpha^2$ for $\alpha \in
%  V_1$. So approximating such an $\alpha$ by $g(b,\nu)$ we might be
%  able to use the computations from the position estimates.} 

\subsection{Invertibility of the linearized operator}
\label{sec:inv-lin-op}
In this subsection we show that the linearized operator
$W_\lambda$ is invertible. In order to do this, we need
good estimates for the projection of a function onto $V_0$. We start
with a different calculation for the first eigenvalue $\mu_0$ of $L$.
\begin{lemma}
  \label{mu0}
  For any surface $\Sigma$ as in theorem \ref{thm:final-curv}
  we have the estimate
  \begin{equation}
    \label{mu0a}
    \big|\mu_0+|A|^2+\RicM(\nu,\nu)\big|
    \le
    C\sqrt{\eta} \rmin^{-4}.
  \end{equation}
\end{lemma}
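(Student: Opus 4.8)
The plan is to show that $\mu_0$ and the potential $|A|^2+\RicM(\nu,\nu)$ are both, up to an error of size $C\sqrt{\eta}\,\rmin^{-4}$, equal to one and the same constant
$\Lambda:=\half(\bar H^S)^2-2mR_S^{-3}$,
built from the Schwarzschild reference data of theorem~\ref{thm:final-curv}; here $\bar H^S$ and $R_S$ are exactly as there, so that $\Lambda$ is a genuine constant along $\Sigma$ (depending only on $R_e$ and $m$). Since $R_S$ is comparable to $\rmin$, we have in particular $|\bar H^S|\le C\rmin^{-1}$ and $\bar H^S>0$.

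First I would prove the pointwise bound $\| |A|^2+\RicM(\nu,\nu)-\Lambda \|_{L^\infty(\Sigma)}\le C\sqrt{\eta}\,\rmin^{-4}$. Writing $|A|^2=|\Acirc|^2+\half H^2$: the term $|\Acirc|^2$ is bounded by $C\eta\,\rmin^{-6}$ by~\eqref{eq:79}, which is $\le C\sqrt{\eta}\,\rmin^{-4}$ once $r_0$ is large (as $\eta\le\eta_0$ and $\rmin\ge r_0$); the term $\half\big|H^2-(\bar H^S)^2\big|=\half|H-\bar H^S|\,|H+\bar H^S|$ is controlled by $C\sqrt{\eta}\,\rmin^{-3}\cdot C\rmin^{-1}$, again by~\eqref{eq:79} together with $|\bar H^S|\le C\rmin^{-1}$; and $\big|\RicM(\nu,\nu)+2mR_S^{-3}\big|\le C\sqrt{\eta}\,\rmin^{-4}$ is precisely~\eqref{eq:81}. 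Summing the three contributions gives the bound.

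Next I would deduce the two-sided estimate $|\mu_0+\Lambda|\le C\sqrt{\eta}\,\rmin^{-4}$ from the Rayleigh-quotient characterization of the lowest eigenvalue of $L=-\Delta-(|A|^2+\RicM(\nu,\nu))$. Testing with the constant function gives $\mu_0\le|\Sigma|^{-1}\int_\Sigma L1\dmu=-|\Sigma|^{-1}\int_\Sigma\big(|A|^2+\RicM(\nu,\nu)\big)\dmu\le-\Lambda+C\sqrt{\eta}\,\rmin^{-4}$ by the first step. For the reverse inequality, any $\psi\in W^{1,2}(\Sigma)$ satisfies $\int_\Sigma\psi L\psi\dmu=\int_\Sigma|\nabla\psi|^2\dmu-\int_\Sigma\psi^2\big(|A|^2+\RicM(\nu,\nu)\big)\dmu\ge-\big(\Lambda+C\sqrt{\eta}\,\rmin^{-4}\big)\int_\Sigma\psi^2\dmu$, since the Dirichlet term is nonnegative, whence $\mu_0\ge-\Lambda-C\sqrt{\eta}\,\rmin^{-4}$. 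Combining with the first step, $\big||A|^2+\RicM(\nu,\nu)+\mu_0\big|\le\big||A|^2+\RicM(\nu,\nu)-\Lambda\big|+|\Lambda+\mu_0|\le C\sqrt{\eta}\,\rmin^{-4}$ pointwise, which is~\eqref{mu0a}.

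I do not expect a genuine obstacle: the entire content lies in the pointwise curvature estimates of theorem~\ref{thm:final-curv}. The only points needing care are that the Dirichlet energy simply drops out of the lower bound (so no spectral-gap or Poincar\'e input is required), and that the strictly smaller $O(\eta\,\rmin^{-6})$ contribution of $|\Acirc|^2$ must be absorbed into the stated $O(\sqrt{\eta}\,\rmin^{-4})$ by taking $\rmin\ge r_0$ large. Alternatively, the bound $|\mu_0+\Lambda|\le C\sqrt{\eta}\,\rmin^{-4}$ can be obtained from lemma~\ref{thm:ev-jac-op} together with $\bar\mu_0=\lambda-\half(\bar H^S)^2$ and $|\lambda+\RicM(\nu,\nu)|\le C\sqrt{\eta}\,\rmin^{-4}$ from~\eqref{eq:81}.
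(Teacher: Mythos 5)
Your proof is correct, and it takes a genuinely more elementary route than the paper's. The paper deduces the estimate by going through the spectral comparison machinery of section~\ref{sec:ev-jac-op}: it invokes corollary~\ref{thm:cor-ev-jac-op} (which rests on lemma~\ref{thm:ev-jac-op} and theorem~\ref{thm:ev-umbilical}) to obtain $\mu_0 \approx 3\lambda - 2R_S^{-2}$, and then combines this with the pointwise approximations $|A|^2\approx\tfrac12(\bar H^S)^2\approx 2R_S^{-2}-4mR_S^{-3}$, $\RicM(\nu,\nu)\approx -2mR_S^{-3}$ and $\lambda\approx 2mR_S^{-3}$ from theorem~\ref{thm:final-curv}. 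You instead observe that for the \emph{lowest} eigenvalue no comparison with $\bar L$ is needed at all: for any Schr\"odinger-type operator $-\Delta - V$ on a closed surface, $\mu_0$ is squeezed between $-\sup V$ and $-\inf V$, since the Dirichlet energy is nonnegative (lower bound) and the constant function is admissible in the Rayleigh quotient (upper bound). Once theorem~\ref{thm:final-curv} pins down $V = |A|^2 + \RicM(\nu,\nu)$ to within $C\sqrt{\eta}\rmin^{-4}$ of the constant $\Lambda=\tfrac12(\bar H^S)^2 - 2mR_S^{-3}$, the estimate follows immediately. Your route is shorter and avoids the De~Lellis--M\"uller eigenvalue estimates altogether for this lemma; the paper's route effectively applies the $i=0$ case (where $\nu_0=0$ is trivial) of the general eigenvalue comparison that it has to develop anyway for the higher modes, which keeps the exposition of section~\ref{sec:ev-jac-op} uniform. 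Your "alternative" remark at the end, via lemma~\ref{thm:ev-jac-op} and $\bar\mu_0=\lambda-\tfrac12(\bar H^S)^2$, is essentially the paper's argument.
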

\begin{proof}
  From theorem \ref{thm:final-curv} we know that
  \begin{align*}
    \big|\half \bar H_S^2-|A|^2\big|
    \le
    C\sqrt{\eta} \rmin^{-4}
  \end{align*}
  and
  \begin{align*}
    \left|\half \bar H_S^2-\frac{2}{R_S^2}+\frac{4m}{R_S^3}\right|
    \le
    C\rmin^{-5}.
  \end{align*}
  Combining these two estimates with theorem \ref{thm:final-curv} and
  corollary \ref{thm:cor-ev-jac-op} we get
  \begin{equation*}
    \begin{split}
      \big|\mu_0+|A|^2+\RicM(\nu,\nu)\big|
      &
      \le
      \left|3\lambda -\frac{2}{R_S^2}+\frac{2}{R_S^2}-\frac{6m}{R_S^3}\right|
      +C\sqrt{\eta}\rmin^{-4}
%       \\
%       &
      \le
      \frac{C\sqrt{\eta}}{\rmin^{4}}.
    \end{split}
  \end{equation*}
\end{proof}
Next we prove a $W^{2,2}$-estimate for the eigenfunction of $L$
corresponding to the eigenvalue~$\mu_0$.
\begin{lemma}
  \label{eigen}
  Let $\Sigma$ be a surface as in theorem \ref{thm:final-curv} and let
  $u\in C^\infty(\Sigma)$ be a solution of $L u=\mu_0 u$. Then we have
  \begin{equation}
    \int_\Sigma |u-\bar u|^2 \dmu
    +\rmin^2 \int_\Sigma |\nabla u|^2 \dmu
    +\rmin^6 \int_\Sigma |\nabla^2 u|^2 \dmu
    \le
    C\sqrt{\eta} \rmin^{-2}\|u\|^2_{L^2(\Sigma)},\label{eigena}
  \end{equation}
  where $\bar u=|\Sigma|^{-1} \int_\Sigma u \dmu$. Moreover we have the pointwise estimate
  \begin{align}
    \|u-\bar u\|_{L^\infty(\Sigma)}
    \le
    C \eta^{1/4}\rmin^{-2}\|u\|_{L^2(\Sigma)}. \label{eigenb}
  \end{align}
\end{lemma}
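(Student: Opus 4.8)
The plan is to use the fact, established in Lemma~\ref{mu0}, that the zeroth order coefficient of the Jacobi operator is almost constant on $\Sigma$, so that the eigenvalue equation $Lu=\mu_0 u$ is a small perturbation of $-\Delta u=0$. The three contributions to \eqref{eigena} will then come, respectively, from testing the equation against $u$, from a Poincar\'e inequality on $\Sigma$, and from the integrated Bochner identity; the pointwise bound \eqref{eigenb} will follow from the interpolation inequality of Lemma~\ref{thm:interpolation}.

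First I would rewrite the equation. Since $Lu=-\Delta u-u\big(|A|^2+\RicM(\nu,\nu)\big)$, the relation $Lu=\mu_0 u$ is equivalent to
\begin{equation*}
  -\Delta u=q\,u,\qquad q:=\mu_0+|A|^2+\RicM(\nu,\nu),
\end{equation*}
and Lemma~\ref{mu0} gives $\|q\|_{L^\infty(\Sigma)}\le C\sqrt\eta\,\rmin^{-4}$. Testing with $u$ and integrating by parts on the closed surface $\Sigma$ yields
\begin{equation*}
  \int_\Sigma|\nabla u|^2\dmu=\int_\Sigma q\,u^2\dmu\le\|q\|_{L^\infty(\Sigma)}\|u\|_{L^2(\Sigma)}^2\le C\sqrt\eta\,\rmin^{-4}\|u\|_{L^2(\Sigma)}^2,
\end{equation*}
which is the gradient term of \eqref{eigena}. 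For the term $\int_\Sigma|u-\bar u|^2\dmu$ I would use that the first nonzero eigenvalue $\gamma_1$ of $-\Delta$ on $(\Sigma,\gamma)$ satisfies $\gamma_1\ge c\,\rmin^{-2}$ for $\rmin$ large. This follows from the eigenvalue estimates already available: either from Theorem~\ref{thm:ev-umbilical} together with the conformal invariance of the Dirichlet integral on surfaces and the equivalence of $g$, $g^S$ and $g^e$ on $\Sigma$, or directly from Corollary~\ref{thm:cor-ev-jac-op} and Lemma~\ref{mu0}, noting that $-\Delta$ differs from $L-\mu_0$ by multiplication by the function $q$ with $\|q\|_{L^\infty(\Sigma)}$ small, so that the min--max principle gives $\gamma_1=(\mu_1-\mu_0)+O(\sqrt\eta\,\rmin^{-4})$ with $\mu_1-\mu_0=2R_S^{-2}+O(\rmin^{-5}+\sqrt\eta\,\rmin^{-4})$. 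Since $u-\bar u$ is $L^2(\Sigma)$-orthogonal to the constants, this gives
\begin{equation*}
  \int_\Sigma|u-\bar u|^2\dmu\le\gamma_1^{-1}\int_\Sigma|\nabla u|^2\dmu\le C\,\rmin^2\int_\Sigma|\nabla u|^2\dmu\le C\sqrt\eta\,\rmin^{-2}\|u\|_{L^2(\Sigma)}^2.
\end{equation*}

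For the Hessian term I would invoke the integrated Bochner identity on the closed surface $\Sigma$,
\begin{equation*}
  \int_\Sigma|\nabla^2 u|^2\dmu=\int_\Sigma(\Delta u)^2\dmu-\int_\Sigma\tfrac12\ScalSig\,|\nabla u|^2\dmu,
\end{equation*}
and observe that the Gauss curvature $\tfrac12\ScalSig=\tfrac14 H^2-\tfrac12|\Acirc|^2-\RicM(\nu,\nu)+\tfrac12\ScalM$, computed from the Gauss equation~\eqref{eq:14}, is strictly positive once $\rmin$ is large, because $\tfrac14 H^2$ is of order $\rmin^{-2}$ by Theorem~\ref{thm:final-curv} and dominates the remaining terms ($|\Acirc|^2\le C\eta\rmin^{-6}$, $|\RicM(\nu,\nu)|\le C\rmin^{-3}$, $|\ScalM|\le\eta r^{-5}$). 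Dropping the non-negative curvature integral and using $-\Delta u=q\,u$ again,
\begin{equation*}
  \int_\Sigma|\nabla^2 u|^2\dmu\le\int_\Sigma(\Delta u)^2\dmu=\int_\Sigma q^2u^2\dmu\le C\eta\,\rmin^{-8}\|u\|_{L^2(\Sigma)}^2,
\end{equation*}
so that $\rmin^6\int_\Sigma|\nabla^2 u|^2\dmu\le C\sqrt\eta\,\rmin^{-2}\|u\|_{L^2(\Sigma)}^2$, which completes \eqref{eigena}. Finally, for \eqref{eigenb} I would apply Lemma~\ref{thm:interpolation} to $\varphi=u-\bar u$, for which $\nabla^2\varphi=\nabla^2 u$, and use $\|H\|_{L^\infty(\Sigma)}\le C\rmin^{-1}$ from Theorem~\ref{thm:final-curv} to get
\begin{equation*}
  \|u-\bar u\|_{L^\infty(\Sigma)}^4\le C\|u-\bar u\|_{L^2(\Sigma)}^2\Big(\int_\Sigma|\nabla^2 u|^2\dmu+C\rmin^{-4}\|u-\bar u\|_{L^2(\Sigma)}^2\Big)\le C\eta\,\rmin^{-8}\|u\|_{L^2(\Sigma)}^4,
\end{equation*}
which yields \eqref{eigenb}. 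All of these steps are short once the estimates of Theorem~\ref{thm:final-curv}, Corollary~\ref{thm:cor-ev-jac-op} and Lemma~\ref{mu0} are in hand; the only point requiring genuine care is the Poincar\'e inequality, i.e.\ making the lower bound $\gamma_1\ge c\,\rmin^{-2}$ rigorous, which is why I would phrase it via the min--max comparison of $-\Delta$ with the Jacobi operator $L$ rather than transferring the Euclidean eigenvalue estimate metric by metric.
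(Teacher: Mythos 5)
Your proof is correct and follows essentially the same route as the paper: test $Lu=\mu_0 u$ against $u$ for the gradient bound (using Lemma~\ref{mu0}), a Poincar\'e inequality of order $R_S^2$ for the $L^2$ mean-oscillation bound, the Bochner identity with positive Gauss curvature for the Hessian bound, and Lemma~\ref{thm:interpolation} for the pointwise bound. The only divergence is cosmetic: where the paper invokes the Poincar\'e inequality via Theorem~\ref{thm:ev-umbilical}, you supply an alternative and equally valid derivation via min--max comparison of $-\Delta$ with $L-\mu_0$, which is a perfectly reasonable way to make that step rigorous.
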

\begin{proof}
  By a scaling argument we see that we can assume without loss of generality that $\|u\|_{L^2(\Sigma)}=1$. Using the definition of $L$ and lemma \ref{mu0} we get
  \begin{align*}
    \int_\Sigma |\nabla u|^2 \dmu
    &=
    \int_\Sigma uLu +u^2\big(|A|^2+\RicM(\nu,\nu)\big)\dmu
    \\
    &=
    \int_\Sigma u^2\big(\mu_0+|A|^2+\RicM(\nu,\nu)\big)\dmu
    \\
    &\le
    C\sqrt{\eta} \rmin^{-4}.
  \end{align*}
  In view of theorem \ref{thm:ev-umbilical} there is a Poincar\'e
  inequality on $\Sigma$ with constant close to the one on
  $S^{2}_{R}$. This yields
  \begin{equation*}
    \int_\Sigma |u-\bar u|^2 \dmu
    \le
    cR_S^2\|\nabla u\|^2_{L^2(\Sigma)}
    \le C\sqrt{\eta} \rmin^{-2}.
  \end{equation*}
 Similarly as above we calculate
  \begin{align*}
    \int_\Sigma |\Delta u|^2 \dmu
    &=
    \int_\Sigma (Lu)^2 +2uLu\big(|A|^2+\RicM(\nu,\nu)\big)
    \\
    &\quad
    +u^2\big(|A|^2+\RicM(\nu,\nu)\big)^2\dmu
    \\
    &=
    \int_\Sigma u^2\big(\mu_0+|A|^2+\RicM(\nu,\nu)\big)^2\dmu.
  \end{align*}
  Hence, again by lemma \ref{mu0}, we get the estimate
  \begin{align*}
    \int_\Sigma |\Delta u|^2 \dmu\le C\eta \rmin^{-8}.
  \end{align*}
  Integrating by parts and interchanging derivatives as in
  \eqref{eq:15} (note that by doing this we get an additional Gauss
  curvature term from which we now know that it is positive) we
  conclude
  \begin{align*}
    \int_\Sigma |\nabla^2 u|^2 \dmu
    \le
    \int_\Sigma |\Delta u|^2  \dmu
    \le
    C\sqrt{\eta}\rmin^{-8}.
  \end{align*}
  Lemma \ref{thm:interpolation} and the previous estimates now give
  \begin{align*}
    \|u-\bar u\|_{L^\infty(\Sigma)}^4
    \le
    C \int_\Sigma |u-\bar u|^2 \dmu \int_\Sigma |\nabla^2 u|^2 +H^4
    |u-\bar u|^2\dmu
    \le
    C\eta \rmin^{-8}.
\end{align*}
This finishes the proof of the lemma.
\end{proof}
In the following lemma we show an $L^2$-estimate for solutions of
$W_\lambda u=f$.
\begin{lemma}
  \label{const}
  Let $\delta>0$, let $\Sigma$ be a surface as in theorem \ref{thm:final-curv} and let
  $u\in C^\infty(\Sigma)$ be a solution of $W_\lambda u=f$ with
  $\int_\Sigma (f-f_0)^2 \dmu\le
  \delta R_S^{-12}\|u\|_{L^2(\Sigma)}^{2}$, where $f_0$ and $u_0$ are
  the projections of $f$ respectively $u$ onto $V_0$. Then we have
  \begin{align}
    \|u-u_0\|_{L^2(\Sigma)}
    \le
    C(\sqrt{\delta}+\sqrt{\eta}+R_S^{-1})\|u\|_{L^2(\Sigma)}.
    \label{consta}
  \end{align}
\end{lemma}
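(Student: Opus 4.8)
\emph{Proof plan.} The plan is to split $u$ along the eigenspaces of the Jacobi operator $L$ and to exploit the coercivity of $W_\lambda$ on $V_0^\perp$ from Theorem~\ref{est-lin-willmore-3}, using that the generator $\varphi_0$ of $V_0$ is almost constant (Lemma~\ref{eigen}) and that $W_\lambda$ carries almost-constant functions almost into $V_0$.

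Since the hypothesis and the conclusion are homogeneous in $u$, assume $\|u\|_{L^2(\Sigma)}=1$. Let $\varphi_0$ be the $L^2(\Sigma)$-normalized eigenfunction spanning $V_0$, put $u_0=\langle u,\varphi_0\rangle\varphi_0$ (so that $|\langle u,\varphi_0\rangle|\le1$) and $v:=u-u_0\in V_0^\perp$; note that $v$ is smooth. From $W_\lambda u=f$ and linearity one has $W_\lambda u_0+W_\lambda v=f$; pairing with $v$ and using $\langle v,f_0\rangle=0$ gives
\[
  \int_\Sigma v\,W_\lambda v\dmu=\int_\Sigma v\,(f-f_0)\dmu-\int_\Sigma v\,W_\lambda u_0\dmu .
\]
Theorem~\ref{est-lin-willmore-3} bounds the left-hand side below by $12m^2R_S^{-6}\|v\|_{L^2}^2$; the hypothesis with Cauchy--Schwarz gives $\big|\int_\Sigma v(f-f_0)\big|\le\sqrt{\delta}\,R_S^{-6}\|v\|_{L^2}$; and $\big|\int_\Sigma v\,W_\lambda u_0\big|\le\|v\|_{L^2}\,\|(W_\lambda u_0)_{V_0^\perp}\|_{L^2}$. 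Thus the lemma reduces to the estimate
\[
  \|(W_\lambda u_0)_{V_0^\perp}\|_{L^2(\Sigma)}\le C\big(\sqrt{\eta}+R_S^{-1}\big)R_S^{-6},
\]
since dividing the resulting inequality by $12m^2R_S^{-6}\|v\|_{L^2}$ then yields $\|v\|_{L^2}\le C(\sqrt\delta+\sqrt\eta+R_S^{-1})$, which is the claim.

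To prove this estimate it suffices (as $u_0=\langle u,\varphi_0\rangle\varphi_0$ with $|\langle u,\varphi_0\rangle|\le1$) to bound $\|(W_\lambda\varphi_0)_{V_0^\perp}\|_{L^2}$. I would insert $\alpha=\varphi_0$ into the formula~\eqref{eq:44} for $W\alpha$ and use $W_\lambda\varphi_0=W\varphi_0-\lambda\mu_0\varphi_0$ together with $L\varphi_0=\mu_0\varphi_0$. The fourth-order term becomes $LL\varphi_0=\mu_0^2\varphi_0$, which lies \emph{exactly} in $V_0$ and so drops out of the $V_0^\perp$-projection, as does $\lambda\mu_0\varphi_0$. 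The remaining terms are of two kinds: (a) coefficient-times-$\varphi_0$ terms, namely $\tfrac12\mu_0H^2\varphi_0$ and $\varphi_0\,\mathcal B$ with $\mathcal B=|\nabla H|^2+2\omega(\nabla H)+H\Delta H+2\langle\nabla^2H,\Acirc\rangle+2H^2|\Acirc|^2+2H\langle\Acirc,T\rangle-H\nabla_\nu\RicM(\nu,\nu)$; and (b) terms carrying a derivative of $\varphi_0$, namely $2H\langle\Acirc,\nabla^2\varphi_0\rangle$, $2H\,\omega(\nabla\varphi_0)$ and $2\Acirc(\nabla\varphi_0,\nabla H)$. The terms in (b) are estimated directly in $L^2$ using the pointwise bounds on $\Acirc,\nabla H,\omega,H$ of Theorem~\ref{thm:final-curv} and the bounds $\|\nabla\varphi_0\|_{L^2}\le C\eta^{1/4}R_S^{-2}$, $\|\nabla^2\varphi_0\|_{L^2}\le C\eta^{1/4}R_S^{-4}$ of Lemma~\ref{eigen}; each comes out of order $\eta^{3/4}R_S^{-7}$ or smaller. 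For (a) I would rewrite $H\Delta H=-H^2(|\Acirc|^2+\RicM(\nu,\nu)+\lambda)$ via~\eqref{eq:1} and $\nabla_\nu\RicM(\nu,\nu)$ via~\eqref{nablaRic}, and then check that each coefficient ($\tfrac12\mu_0H^2$ and $\mathcal B$) differs in $L^\infty$ from its value on the centered Schwarzschild sphere by at most $C\sqrt\eta R_S^{-6}$; here one uses $\|H-\bar H^S\|_{L^\infty}$, $\|\lambda+\RicM(\nu,\nu)\|_{L^\infty}$, $\|\RicM(\nu,\nu)+2mR_S^{-3}\|_{L^\infty}$, $\|\omega\|_{L^\infty}$, $\|\nabla\omega\|_{L^\infty}$ from Theorem~\ref{thm:final-curv}, $\|\nabla^2H\|_{L^2}$ from Lemma~\ref{thm:l2H}, and $|\mu_0|\le CR_S^{-2}$ from Corollary~\ref{thm:cor-ev-jac-op}. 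On the centered model, where $\Acirc^S=0$, $\omega^S=0$ and $\Delta^SH^S=0$, these coefficients reduce to honest constants — the only large pieces being the $O(R_S^{-5})$ term $\tfrac32H^2\RicM(\nu,\nu)$ coming from $-H\nabla_\nu\RicM(\nu,\nu)$ and the cancelling pair $-H^2\RicM(\nu,\nu)-\lambda H^2$ inside $H\Delta H$. Hence (coefficient $-$ constant)$\cdot\varphi_0$ has $L^2$-norm $\le C\sqrt\eta R_S^{-6}\|\varphi_0\|_{L^2}=C\sqrt\eta R_S^{-6}$, while the constant part lies in $V_0$. Summing the contributions from (a) and (b) gives the displayed bound, the $R_S^{-1}$ absorbing the $\eta$-free lower-order remainders from Theorem~\ref{thm:final-curv} and Corollary~\ref{thm:cor-ev-jac-op}.

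The main obstacle is step (a): one must verify that, after substituting~\eqref{eq:1} and~\eqref{nablaRic} into~\eqref{eq:44}, every curvature coefficient collapses to its Schwarzschild value up to an error of order $\sqrt\eta R_S^{-6}$ — that is, that the $O(R_S^{-5})$-size curvature terms conspire (partly by genuine cancellation) to produce no extra oscillation beyond what the a priori estimates of Section~\ref{sec:impr-curv-estim} and Theorem~\ref{thm:final-curv} control. The two structural features that make the computation tractable are the exact identity $LL\varphi_0=\mu_0^2\varphi_0\in V_0$, which removes the fourth-order part of $W_\lambda$ from the estimate although $W_\lambda$ is of fourth order, and the fact that every remaining term of $W_\lambda\varphi_0$ is either a derivative of $\varphi_0$ (hence $L^2$-small by Lemma~\ref{eigen}) or an almost-constant multiple of $\varphi_0$ (hence almost in $V_0$).
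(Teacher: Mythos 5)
Your proof is correct and takes essentially the same route as the paper's: the same normalization $\|u\|_{L^2}=1$, the same decomposition $u=u_0+v$, the same coercivity input from Theorem~\ref{est-lin-willmore-3}, and the same curvature ingredients (Lemma~\ref{eigen} for $\nabla\varphi_0,\nabla^2\varphi_0$, Theorem~\ref{thm:final-curv} and Lemma~\ref{thm:l2H} for the curvature coefficients, Corollary~\ref{thm:cor-ev-jac-op} for $\mu_0$, and the substitutions via~\eqref{eq:1} and~\eqref{nablaRic}). The only difference is organizational: the paper pairs $W_\lambda u_0$ against $u-u_0$ and handles each resulting integral by Young's inequality, invoking $\int(u-u_0)u_0\dmu=0$ late in the argument for the $H^2\RicM(\nu,\nu)$ term, whereas you isolate the intermediate bound $\|(W_\lambda u_0)_{V_0^\perp}\|_{L^2}\le C(\sqrt\eta+R_S^{-1})R_S^{-6}$ up front and apply Cauchy--Schwarz once; the identity $LL\varphi_0=\mu_0^2\varphi_0\in V_0$ and the near-constancy of each remaining coefficient (so that the constant part drops into $V_0$) do the same work as the paper's orthogonality step, just packaged more modularly. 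The one step you correctly flag as requiring care — verifying that each coefficient in~\eqref{eq:44}, after the substitutions, is a constant plus an $L^\infty$-error of order $\sqrt\eta R_S^{-6}$ — is exactly what the paper carries out term by term, so your plan is sound.
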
 
\begin{proof}
  By a scaling argument we see that we can assume without loss of
  generality that $\|u\|_{L^2(\Sigma)}=1$. Next we combine our
  assumption with equation \eqref{eq:44} and the fact that $Lu_0=\mu_0
  u_0$ to get
  \begin{align}
   W_\lambda(u-u_0)=&
      f- \mu_0u_0(\mu_0
      + \half H^2 -\lambda)
      + 2H\la \Acirc, \nabla^2 u_0\ra
      + 2H\omega(\nabla u_0)
      \nonumber \\
      &+ 2\Acirc(\nabla u_0,\nabla H)
      +u_0\big(
      |\nabla H|^2
      + 2 \omega(\nabla H)
      + H \Delta H \nonumber \\
      &+ \la \nabla^2 H, \Acirc\ra +
      2 H^2 |\Acirc|^2
      + 2 H\la \Acirc, T\ra
      - H\nabla_\nu\RicM(\nu,\nu)
      \big) .
    \label{eq:66}
  \end{align}
  With the help of theorem \ref{est-lin-willmore-3} we conclude
  \begin{align*}
    \int_\Sigma (u-u_0)W_\lambda(u-u_0)\dmu
    \ge
    12m^2 R_S^{-6} \int_\Sigma (u-u_0)^2 \dmu.
  \end{align*}
  To get an upper bound for this intergral we multiply
  equation~\eqref{eq:66} by $(u-u_0)$ and estimate term by term. We
  start with the term involving $f$
  \begin{align*}
    \left|\int_\Sigma f(u-u_0)\dmu\right|
    &=
    \left|\int_\Sigma (f-f_0)(u-u_0)\dmu\right|
    \\
    &\le
    m^{-2}R_S^{6} \int_\Sigma (f-f_0)^2 \dmu
    + m^2 R_S^{-6} \int_\Sigma (u-u_0)^2 \dmu
    \\
    &\le
    C\delta R_S^{-6}+m^2 R_S^{-6} \int_\Sigma (u-u_0)^2 \dmu.
  \end{align*}
  Next, using a variant of lemma \ref{mu0}, we estimate
  \begin{align*}
    &\left|\int_\Sigma \mu_0u_0(\mu_0 + \half H^2
      -\lambda)(u-u_0)\dmu\right|
    \\
    &\leq
    m^2 R_S^{-6} \int_\Sigma (u-u_0)^2 \dmu
    +m^{-2} R_S^{2}\int_\Sigma u_0^2(\mu_0 + \half H^2 -\lambda)^2
    \dmu
    \\
    &\le
    m^2 R_S^{-6} \int_\Sigma (u-u_0)^2 \dmu+C\eta R_S^{-6}.
  \end{align*} 
  Now we estimate all terms containing derivatives of $u_0$. By
  arguing as before we see that we only have to bound the term
  \begin{align*}
   C m^2 R_S^{6}\int_\Sigma
    H^2 |\Acirc|^2 |\nabla^2 u_0|^2
    +|\nabla u_0|^2(H^2 |\omega|^2
    +|\Acirc|^2 |\nabla H|^2)\dmu
    \le
    CR_S^{-8},
  \end{align*}
  where we used theorem \ref{thm:final-curv} and lemma
  \ref{eigen}. Finally we estimate the terms involving $u_0$. We start
  with
  \begin{align*}
    &R_S^{6}\int_\Sigma u_0^2 \big(
    |\nabla H|^4
    +|\omega|^2 |\nabla H|^2
    +H^2|\Delta H|^2
    +|\Acirc|^2|\nabla^2 H|^2
    +H^4|\Acirc|^4\big) \dmu 
    \\
    &\le
    C R_S^{-8}+cR_S^{4}\int_\Sigma u_0^2 |\Delta H|^2\dmu
    +C\eta \int_\Sigma u_0^2|\nabla^2 H|^2\dmu
    \\
    &\le
    C R_S^{-8}
    + C R_S^{2}\int_\Sigma
    u_0^2\big(|\Acirc|^4+\lambda+\RicM(\nu,\nu)\big)^2\dmu
    \\
    &\le
    C R_S^{-8}+C\eta R_S^{-6},
  \end{align*}
  where we used lemma \ref{thm:l2H}, theorem \ref{thm:final-curv} and
  lemma \ref{eigen}. In the third term in the second line we can use
  lemma \ref{eigen} to replace $u_0^2$ by $\bar{u}_0^2$.
  Finally, we use \eqref{nablaRic} and Theorem
  \ref{thm:final-curv} to get
  \begin{align*}
    &\left|\int_\Sigma (u-u_0)u_0 H\nabla_\nu
      \RicM(\nu,\nu)\dmu\right|
    \\
    &\le
    \frac{3}{2}\left|\int_\Sigma (u-u_0)u_0 H^2\RicM(\nu,\nu)\dmu\right|
    +m^2 R_S^{-6}\int_\Sigma (u-u_0)^2\dmu
    +\frac{C\eta}{R_S^{6}}.
  \end{align*}
  Now we use the $L^2$-orthogonality of $u_0$ and $u-u_0$ to estimate
  \begin{align*}
    &\frac{3}{2}\left|\int_\Sigma (u-u_0)u_0  H^2\RicM(\nu,\nu)\dmu\right|
    \\
    &\le
    \frac{3}{2}\left|\int_\Sigma (u-u_0)u_0  H^2\big(
        \RicM(\nu,\nu)+\tfrac{2m}{R_S^{3}}
      \big)\dmu\right|
    \\
    &\qquad
    +3mR_S^{-3}\left|\int_\Sigma (u-u_0)u_0 (H^2-4R^{-2}_S)\dmu\right|
    \\
    &\le
    m^2 R_S^{-6}\int_\Sigma (u-u_0)^2\dmu+C\eta R_S^{-6}.
  \end{align*}
  Combining all these estimates we get
  \begin{align*}
    3m^2 R_S^{-6}\int_\Sigma (u-u_0)^2\dmu\le C R_S^{-6}(\delta+\eta+R_S^{-2})
  \end{align*}
  which finishes the proof of the lemma.
\end{proof}
From the proof of the lemma we directly obtain the following 
\begin{corollary}
  \label{coro}
  Let $\delta>0$, let $\Sigma$ be a surface as in theorem \ref{thm:final-curv} and let
  $u\in C^\infty(\Sigma)$. Then we have
  \begin{equation} 
    \left|\int_\Sigma (u-u_0)W_\lambda u_0 \dmu\right|
    \le
    \frac{4m^2}{R_S^{6}}\|u-u_0\|_{L^2}^2 +\frac{c}{R_S^{6}}(\delta+\eta+R_S^{-2})\|u\|^2_{L^2(\Sigma)}.\label{trick}
  \end{equation}
  Moreover, if $u$ is a solution of $W_\lambda u=f$ with
  \begin{equation*}
    \int_\Sigma (u-u_0)f \dmu
    \le
    \delta R_S^{-6}\|u\|_{L^2(\Sigma)}
    \|u-u_0\|_{L^2(\Sigma)},
  \end{equation*}
  then we have
  \begin{align}
    \|u- u_0\|_{L^2(\Sigma)}
    \le
    C (\sqrt{\delta}+\sqrt{\eta}+R_S^{-1})\|u\|_{L^2(\Sigma)}.
    \label{coroa}
  \end{align}
\end{corollary}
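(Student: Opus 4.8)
The plan is to extract both estimates directly from the term-by-term bounds already obtained in the proof of Lemma~\ref{const}. The starting point is the linearity identity $W_\lambda(u-u_0) = W_\lambda u - W_\lambda u_0$, together with the fact that, since $Lu_0 = \mu_0 u_0$ (hence $LLu_0 = \mu_0^2 u_0$), evaluating~\eqref{eq:44} on $u_0$ gives the explicit formula
\[
  W_\lambda u_0
  =
  \mu_0 u_0\big(\mu_0 + \tfrac12 H^2 - \lambda\big)
  + 2H\la\Acirc,\nabla^2 u_0\ra
  + 2H\omega(\nabla u_0)
  + 2\Acirc(\nabla u_0,\nabla H)
  + u_0\,(\cdots),
\]
where $(\cdots)$ denotes the zeroth-order coefficient in~\eqref{eq:44}. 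These are precisely the terms appearing, besides $f$, on the right-hand side of~\eqref{eq:66}.

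For~\eqref{trick} I would test this formula against $u-u_0$ and estimate each term exactly as in the proof of Lemma~\ref{const}. The term $\mu_0 u_0(\mu_0 + \tfrac12 H^2 - \lambda)$ is bounded, via the variant of Lemma~\ref{mu0}, by $m^2 R_S^{-6}\|u-u_0\|_{L^2}^2 + C\eta R_S^{-6}\|u\|_{L^2}^2$; the terms involving $\nabla u_0$ and $\nabla^2 u_0$ are controlled by $C R_S^{-8}\|u\|_{L^2}^2$ using Theorem~\ref{thm:final-curv} and the $W^{2,2}$-bounds of Lemma~\ref{eigen}; and the terms carrying $u_0$ are bounded by $C R_S^{-8}\|u\|_{L^2}^2 + C\eta R_S^{-6}\|u\|_{L^2}^2$, where the one delicate contribution, $\int_\Sigma(u-u_0)u_0 H\nabla_\nu\RicM(\nu,\nu)\dmu$, is handled as in Lemma~\ref{const} by substituting~\eqref{nablaRic} and then using the $L^2$-orthogonality of $u_0$ and $u-u_0$ together with the $L^\infty$-smallness of $\RicM(\nu,\nu) + 2mR_S^{-3}$ and of $H^2 - 4R_S^{-2}$ from Theorem~\ref{thm:final-curv}. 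Summing up, the contributions proportional to $m^2 R_S^{-6}\|u-u_0\|_{L^2}^2$ add up to at most $4m^2 R_S^{-6}\|u-u_0\|_{L^2}^2$, which yields~\eqref{trick}; the parameter $\delta$ does not enter here and may simply be carried along, as enlarging it only weakens the bound.

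For~\eqref{coroa} the positivity of Theorem~\ref{est-lin-willmore-3} is the decisive input: since $u-u_0\in V_0^\perp$,
\[
  12 m^2 R_S^{-6}\,\|u-u_0\|_{L^2}^2
  \le
  \int_\Sigma (u-u_0)\,W_\lambda(u-u_0)\dmu
  =
  \int_\Sigma (u-u_0)\,f\dmu
  - \int_\Sigma (u-u_0)\,W_\lambda u_0\dmu .
\]
The first term on the right is at most $\delta R_S^{-6}\|u\|_{L^2}\|u-u_0\|_{L^2}$ by hypothesis, and the second is controlled by~\eqref{trick}. Absorbing the resulting $4m^2 R_S^{-6}\|u-u_0\|_{L^2}^2$ into the left-hand side, and then using $\|u-u_0\|_{L^2}\le\|u\|_{L^2}$ (orthogonal projection) together with Young's inequality on the $\delta$-term, leaves $8 m^2 R_S^{-6}\|u-u_0\|_{L^2}^2 \le C R_S^{-6}(\delta + \eta + R_S^{-2})\|u\|_{L^2}^2$, which is~\eqref{coroa}. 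The substance of the statement has in fact already been established inside the proof of Lemma~\ref{const}, so the main task is purely organizational: recognising that the $f$-free part of~\eqref{eq:66} equals $W_\lambda u_0$ and keeping careful track of the constants. The only point that genuinely requires attention is counting how many copies of $m^2 R_S^{-6}\|u-u_0\|_{L^2}^2$ are produced on the way, since this is what pins down the coefficient $4m^2$ in~\eqref{trick} and, in turn, why the gap to $12m^2$ in Theorem~\ref{est-lin-willmore-3} suffices — and this count has effectively been carried out already.
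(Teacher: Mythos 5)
Your proposal is correct and takes essentially the same route as the paper, which offers no separate proof of the corollary and simply says ``from the proof of the lemma we directly obtain the following''; what you supply is exactly the required re-reading of the proof of Lemma~\ref{const}, identifying the $f$-free part of~\eqref{eq:66} as $W_\lambda u_0$ to get~\eqref{trick} and then pairing it with Theorem~\ref{est-lin-willmore-3} and the hypothesis on $f$ to absorb and obtain~\eqref{coroa}. The one thing worth flagging is that your count of ``at most four'' copies of $m^2 R_S^{-6}\|u-u_0\|_{L^2}^2$ is the right kind of bookkeeping but is not fully transparent from the paper's own accounting inside Lemma~\ref{const}; what the argument genuinely needs, and what you correctly emphasize, is only that the total stays strictly below the $12m^2$ supplied by Theorem~\ref{est-lin-willmore-3}.
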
 
In the following lemma we prove $L^2$-estimates for the operator $W_\lambda$.
\begin{lemma}\label{thm:W-estimates}
 Let $\Sigma$ be as in theorem \ref{thm:final-curv}. Then we have
  \begin{equation*}
    \| \nabla^2 u \|^2_{L^2(\Sigma)} + R_S^{-2} \| \nabla u\|^2_{L^2(\Sigma)}
    \leq
    CR_S^{-4} \|u\|^2_{L^2(\Sigma)} + CR_S \left|\int_\Sigma u
      W_\lambda u \dmu\right| .
  \end{equation*}
\end{lemma}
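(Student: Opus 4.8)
The plan is to derive the estimate as a Gårding--type inequality for the quadratic form $\int_\Sigma u\,W_\lambda u\dmu$, starting from the representation~\eqref{eq:89} and exploiting that on the surfaces in question the three relevant ``potentials'' all have a definite sign. Throughout, $R_S$, $R_e$ and $\rmin$ are comparable, and I would write $D:=|A|^2+\RicM(\nu,\nu)$, so that $Lu=-\Delta u-Du$. By theorem~\ref{thm:final-curv}, for $r_0$ large one has the two--sided bound $c\,\rmin^{-2}\le D\le C\,\rmin^{-2}$, the lower bound $\tfrac12 H^2-\lambda\ge c\,\rmin^{-2}>0$ (recall $\lambda\approx 2mR_S^{-3}$), and, via the Gauss equation~\eqref{eq:14}, strict positivity of the Gauss curvature $K=\tfrac12\ScalSig=\tfrac14 H^2-\tfrac12|\Acirc|^2-\RicM(\nu,\nu)+\tfrac12\ScalM\ge c\,\rmin^{-2}>0$. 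These three facts are the only structural input; everything else is to be absorbed into the right--hand side.

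First I would bound the principal part. Since $L$ is self--adjoint, $\int_\Sigma u\,LLu\dmu=\|Lu\|_{L^2}^2$, and the elementary inequality $(a+b)^2\ge\tfrac12a^2-b^2$ together with $\|Du\|_{L^2}\le C\rmin^{-2}\|u\|_{L^2}$ gives $\|Lu\|_{L^2}^2\ge\tfrac12\|\Delta u\|_{L^2}^2-C\rmin^{-4}\|u\|_{L^2}^2$. Integrating by parts and commuting derivatives exactly as in the proof of lemma~\ref{thm:estimate_d2H} yields $\int_\Sigma|\Delta u|^2\dmu=\int_\Sigma|\nabla^2u|^2\dmu+\int_\Sigma K|\nabla u|^2\dmu$, and positivity of $K$ then gives $\|Lu\|_{L^2}^2\ge\tfrac12\|\nabla^2u\|_{L^2}^2+\tfrac c2\,\rmin^{-2}\|\nabla u\|_{L^2}^2-C\rmin^{-4}\|u\|_{L^2}^2$.

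Next I would treat the remaining terms of~\eqref{eq:89}. Using $\int_\Sigma\big(Lu(Lu-3\lambda u)+2\lambda uLu\big)\dmu=\|Lu\|_{L^2}^2-\lambda\|\nabla u\|_{L^2}^2+\lambda\int_\Sigma Du^2\dmu$ and expanding $\tfrac12\int_\Sigma\big((Hu)L(Hu)-3\lambda(Hu)^2\big)\dmu$ in divergence form via $|\nabla(Hu)|^2=H^2|\nabla u|^2+2Hu\la\nabla H,\nabla u\ra+u^2|\nabla H|^2$ and $Lu=-\Delta u-Du$ once more, the first--order part of $\int_\Sigma u\,W_\lambda u\dmu$ collapses to $\int_\Sigma(\tfrac12H^2-\lambda)|\nabla u|^2\dmu\ge0$, which I discard, plus the honestly small terms $-2\int_\Sigma H\Acirc(\nabla u,\nabla u)\dmu\le C\sqrt\eta\,\rmin^{-4}\|\nabla u\|_{L^2}^2$, the cross term $\int_\Sigma Hu\la\nabla H,\nabla u\ra\dmu$ (controlled by Cauchy--Schwarz and theorem~\ref{thm:final-curv} by $\varepsilon\rmin^{-2}\|\nabla u\|_{L^2}^2+C_\varepsilon\rmin^{-8}\|u\|_{L^2}^2$), and $-\lambda\|\nabla u\|_{L^2}^2$ with $\lambda\le C\rmin^{-3}$. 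The residual zeroth--order part is $\int_\Sigma u^2Q\dmu$, where $Q$ has leading piece $-\tfrac12DH^2=O(\rmin^{-4})$ and where the remaining curvature terms displayed in~\eqref{eq:89} ($\tfrac12H^2|\Acirc|^2$, $H\la\Acirc,T\ra$, $H^2\ScalM$, $H\nabla_\nu\ScalM$, $H\divSig\omega$, $\omega(\nabla H)$, $|\nabla H|^2$) are all $O(\sqrt\eta\,\rmin^{-6})$ by theorem~\ref{thm:final-curv} and~\eqref{eq:81} (using also $|\nabla\ScalM|\le C\eta r^{-5}$); the single mildly singular term $\int_\Sigma u^2\la\nabla^2H,\Acirc\ra\dmu$ I would bound by Hölder's inequality together with $\|\nabla^2H\|_{L^2}\le C\sqrt\eta\,\rmin^{-4}$ (lemma~\ref{thm:l2H} and theorem~\ref{thm:position-estimate}), $\|\Acirc\|_{L^\infty}\le C\sqrt\eta\,\rmin^{-3}$, and the $L^4$--bound $\|u\|_{L^4}^2\le C\rmin\big(\|\nabla u\|_{L^2}^2+\rmin^{-2}\|u\|_{L^2}^2\big)$ coming from proposition~\ref{thm:sobolev}. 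This gives $|\int_\Sigma u^2Q\dmu|\le C\rmin^{-4}\|u\|_{L^2}^2+\tfrac c4\,\rmin^{-2}\|\nabla u\|_{L^2}^2$, and adding everything up yields $\int_\Sigma u\,W_\lambda u\dmu\ge\tfrac12\|\nabla^2u\|_{L^2}^2+c\,\rmin^{-2}\|\nabla u\|_{L^2}^2-C\rmin^{-4}\|u\|_{L^2}^2$, which rearranges to the claimed inequality (in fact with $C$ rather than $CR_S$ in front of $|\int_\Sigma u\,W_\lambda u\dmu|$, since $R_S\ge1$). I expect the only delicate point to be this final bookkeeping: one has to verify that none of the many lower--order curvature expressions in~\eqref{eq:89} exceeds $O(\rmin^{-4})$ in the norm in which it enters — but this is precisely what the pointwise bounds of theorem~\ref{thm:final-curv} were established to provide, so no genuinely new estimate is needed.
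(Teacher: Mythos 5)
Your proof is correct, but it takes a genuinely different route from the paper's, and the difference is worth spelling out. The paper starts from the symmetrised representation~\eqref{eq:45}, integrates by parts the $u\,\Delta u\,\big(|A|^2+\RicM(\nu,\nu)\big)$ piece of $(Lu)^2$ to obtain~\eqref{eq:100}, and then applies the (traceless-Hessian) Bochner identity. That integration by parts creates two difficulties: first, the cubic term $u\,\nabla_ku\,A^{ij}\nabla^kA_{ij}$, which after Young's inequality with weight $\lambda^{-1}\sim R_S^3$ yields a $\|u\|_{L^\infty}^2$-contribution and forces an appeal to lemma~\ref{thm:interpolation} plus the absorption step~\eqref{eq:47}; second, the resulting $-2\int_\Sigma D|\nabla u|^2\dmu$ (with $D=|A|^2+\RicM(\nu,\nu)$) cancels the large positive $\int_\Sigma\ScalSig|\nabla u|^2\dmu$ produced by Bochner, leaving a $|\nabla u|^2$-coefficient of size only $\sim\lambda\sim R_S^{-3}$, which is where the factor $R_S$ in the stated estimate originates. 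Your argument avoids both problems by starting from the rearranged form~\eqref{eq:89} and bounding $\|Lu\|_{L^2}^2\ge\tfrac12\|\Delta u\|_{L^2}^2-\|Du\|_{L^2}^2$ instead of expanding and integrating by parts: the loss is a zeroth-order $C\rmin^{-4}\|u\|_{L^2}^2$ rather than a first-order term, no derivative of $D$ arises so there is no $\|u\|_{L^\infty}$-term to interpolate away, and the Gauss-curvature term from Bochner (together with the surviving positive $\tfrac12H^2-\lambda$ coefficient) gives $\rmin^{-2}$-scale coercivity in $\|\nabla u\|_{L^2}^2$ outright. Consequently you obtain $C$ in place of $CR_S$ in front of $\big|\int_\Sigma u\,W_\lambda u\,\dmu\big|$, i.e.\ a slightly stronger inequality, and the lemma follows a fortiori. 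The remaining bookkeeping — the pointwise bounds of theorem~\ref{thm:final-curv}, the Sobolev $L^4$-control of the $\la\nabla^2H,\Acirc\ra$-term, and the absorptions for $r_0$ large — is in the same spirit as the paper's.
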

\begin{proof}
  From~\eqref{eq:45}, we get the following expression, after
  integration by parts of the term $u\Delta u (|A|^2 +
  \RicM(\nu,\nu))$ in $(Lu)^2$:
  \begin{equation}
    \label{eq:100}
    \begin{split}
      \int_\Sigma u W_\lambda u \dmu &= \int_\Sigma (\Delta u)^2 +
      \big( \half H^2 - \lambda -2|A|^2 -2 \RicM(\nu,\nu) \big)|\nabla
      u|^2
      \\
      &\quad + u^2 \big(-\half H^2|A|^2 - \half H^2 \RicM(\nu,\nu) -
      H\nabla_\nu\RicM(\nu,\nu)
      \\
      &\qquad\phantom{+u^2\big(}
      + \lambda |A|^2
      + |A|^4 + 2|A|^2\RicM(\nu,\nu) \big)
      \\
      &\qquad
      + a(u,\nabla u) + b u^2
      + u \nabla_ku A^{ij}\nabla^k A_{ij}\dmu .
    \end{split}
  \end{equation}
  Here $|a(u,\nabla u) + bu^2| \leq CR_S^{-4} |\nabla u|^2 + CR_S^{-6}
  u^2$, where we integrated by parts and used
  lemma~\ref{thm:geometry-in-schwarzschild}, 
  definition~\ref{def:asymptotically-flat} and
  theorem~\ref{thm:final-curv}. In particular we can estimate
  \begin{equation}
    \label{ablricci}
    \begin{split}
      \big|\nabla_{e_i}\big(\RicM(\nu,\nu)\big)\big| &\leq
      \big|\big(\nabla_{e_i}\RicM\big)(\nu,\nu)+ 2 h_{i}^k\omega_k\big|\\
      &\leq  
      \big|\big(\nabla^S_{e_i}\RicM^S\big)(\nu,\nu)\big|+ C
      \sqrt{\eta}\rmin^{-5}\\&
      \leq \big|\big(\nabla^S_{P^\perp_{\rho}(e_i)}\RicM^S\big)(\rho,\rho)\big|+ C
      \sqrt{\eta}\rmin^{-5}\\
      &\leq C \sqrt{\eta}\rmin^{-5},
    \end{split}
  \end{equation}
  where we used the above mentioned theorems, and where
  $P^\perp_{\rho}$ is the projection onto the $g^S$-orthogonal
  subspace to $\rho$. In view of the Gauss equation, the Bochner
  formula \cite[Chapter IV, Proposition
  4.15]{Gallot-Hulin-Lafontaine:1993} implies that
  \begin{equation*}
    \int_\Sigma (\Delta u)^2 \dmu
    =
    \int_\Sigma 2|(\nabla^2 u)^\circ|^2 + \big(\ScalM - 2\RicM(\nu,\nu)
    + \half H^2 - |\Acirc|^2\big) |\nabla u|^2 \dmu.
  \end{equation*}
  Together with~\eqref{eq:100} this yields
  \begin{equation*}
    \begin{split}
      \int_\Sigma u W_\lambda u \dmu
      &=
      \int_\Sigma 2|(\nabla^2 u)^\circ|^2
      + |\nabla u|^2 \big( -4\RicM(\nu,\nu) -\lambda \big)
      + u \nabla_ku A^{ij}\nabla^k A_{ij}
      \\
      &\qquad
      + u^2 \big( - \half H^2 \RicM(\nu,\nu) -
      H\nabla_\nu\RicM(\nu,\nu)+ \lambda |A|^2\\
      &\qquad+
      2|A|^2\RicM(\nu,\nu) \big)+a(u,\nabla u) + bu^2 \dmu .
    \end{split}
  \end{equation*}
  In combination with the estimate $|\RicM(\nu,\nu) + \lambda| \leq
  CR_S^{-4}$ and the fact that
  \begin{equation*}
    \begin{split}
      &- \half H^2 \RicM(\nu,\nu) -
      H\nabla_\nu\RicM(\nu,\nu)
      + \lambda |A|^2+ 
      2|A|^2\RicM(\nu,\nu)
      \\
      \qquad\qquad
      &=
      -\tfrac{3}{2} H^2 \lambda + O(R_S^{-6}) 
    \end{split}
  \end{equation*}
  we obtain the estimate
  \begin{equation}
    \label{eq:101}
    \begin{split}
      &2\| (\nabla^2 u)^\circ\|^2_{L^2} + 2\lambda \| \nabla u\|^2_{L^2}
      \\
      &\quad\leq
      CR_S^{-2}\lambda \|u\|^2_{L^2}
      +
      C\left|\int_\Sigma u W_\lambda u\dmu \right|
      + C\int_\Sigma |u||\nabla u| |A| |\nabla A|\dmu
      .
    \end{split}
  \end{equation}
  To treat the last term, observe that
  \begin{equation*}
    \begin{split}
      \int_\Sigma |u||\nabla u| |A| |\nabla A|\dmu
      &\leq
      \int_\Sigma \lambda |\nabla u|^2 + \tfrac{1}{4\lambda} |u|^2
      |A|^2 |\nabla A|^2 \dmu
      \\
      &\leq 
      \lambda \|\nabla u\|^2_{L^2} + CR_S^{-5} \|u\|_{L^{\infty}}^2      
    \end{split}
  \end{equation*}
  using theorem~\ref{thm:improved-curvature-est},
  theorem~\ref{thm:position-estimate} and $\lambda = 2m/R_S^3 +
  O(R_S^{-4})$. In particular
  \begin{equation*}
    \| \nabla u\|^2_{L^2}
    \leq
    CR_S^{-2} \|u\|^2_{L^2} + CR_S^3 \left|\int_\Sigma u W_\lambda u\dmu \right| +CR_S^{-2}\|u\|_{L^\infty}^2.
  \end{equation*}
  Note that in view of this estimate~\eqref{eq:100} implies that
  \begin{equation*}
    \|\Delta u\|_{L^2}^2
    \leq
    CR_S^{-2} \|\nabla u\|_{L^2}^2 + CR_S^{-4} \| u\|^2_{L^2} +
     C\left|\int_\Sigma u W_\lambda u\dmu \right|+ CR_S^{-5}\|u\|_{L^\infty}^2.
  \end{equation*}
  Together with~\eqref{eq:101}, we obtain that
  \begin{equation}
    \label{eq:34}
    \|\nabla^2 u\|_{L^2}^2
    +
    R_S^{-2} \|\nabla u\|^2_{L^2}
    \leq
    CR_S^{-4} \|u\|^2_{L^2} + CR_S \left|\int_\Sigma u W_\lambda u\dmu \right| + CR_S^{-4}\|u\|_{L^\infty}^2.
  \end{equation}
  From lemma~\ref{thm:interpolation} we conclude that in view of
  theorem~\ref{thm:final-curv}  
  \begin{equation*}
    \| u\|_{L^\infty}^2
    \leq
    CR_S^{-2} \|u\|_{L^2}^2 + C\|u\|_{L^2}\|\nabla^2 u\|_{L^2}.    
  \end{equation*}
  Inserting this into equation~\eqref{eq:34}, we get
  \begin{equation}
    \label{eq:47}
    \begin{split}
      &\|\nabla^2 u\|_{L^2}^2
      +
      R_S^{-2} \|\nabla u\|^2_{L^2}
      \\
      &\quad
      \leq
      CR_S^{-4} \|u\|^2_{L^2} +  CR_S \left|\int_\Sigma u W_\lambda u\dmu \right| + CR_S^{-4}\|u\|_{L^2}\|\nabla^2u\|_{L^2}. 
    \end{split}
  \end{equation}
  For large enough $R_S$, we can therefore apply the Cauchy-Schwarz inequality and
  absorb the term containing second derivatives to the left.  This
  yields the claimed estimate.
\end{proof}
With the help of the last two results we are able to show
that certain solutions of $W_\lambda u=f$ are almost constant.
\begin{lemma}
  \label{mean}
  There exists $\delta_0>0$ such that for all $0<\delta\le \delta_0$,
  all surfaces $\Sigma$ as in theorem \ref{thm:final-curv} and all
  solutions $u\in C^\infty(\Sigma)$ of $W_\lambda u=f$ with
  \begin{equation*}
    \int_\Sigma (u-u_0)f \dmu
    \le
    \delta R_S^{-6}\|u\|_{L^2(\Sigma)}
    \|u-u_0\|_{L^2}
  \end{equation*}
   we have 
  \begin{align}
    \|u-\bar u_0\|_{L^\infty(\Sigma)}
    \le
    C(\sqrt{\delta}+\eta^{1/4}+R_S^{-1})|\bar u_0|. \label{meana}
  \end{align}
\end{lemma}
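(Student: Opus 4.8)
The plan is to split both $u$ and the target constant via $u=u_0+(u-u_0)$ and $\bar u_0$, control the three pieces $u-u_0$, $u_0-\bar u_0$ and the relation between $\|u_0\|_{L^2}$ and $|\bar u_0|$ separately, and then assemble by the triangle inequality. Since \eqref{meana} is homogeneous of degree one in $u$, I would first normalise $\|u\|_{L^2(\Sigma)}=1$. The hypothesis on $f$ is exactly the one required in corollary~\ref{coro}, so \eqref{coroa} gives $\|u-u_0\|_{L^2(\Sigma)}\le C(\sqrt\delta+\sqrt\eta+R_S^{-1})$; choosing $\delta_0,\eta_0$ small and $r_0$ large makes this at most $\tfrac12$, so that $\|u_0\|_{L^2(\Sigma)}$ is comparable to $1$. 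Applying lemma~\ref{eigen} to $u_0$ (which solves $Lu_0=\mu_0u_0$) yields $\|u_0-\bar u_0\|_{L^2(\Sigma)}\le C\eta^{1/4}R_S^{-1}\|u_0\|_{L^2(\Sigma)}$ and $\|u_0-\bar u_0\|_{L^\infty(\Sigma)}\le C\eta^{1/4}R_S^{-2}\|u_0\|_{L^2(\Sigma)}$. Because $\bar u_0$ is constant and $|\Sigma|\le CR_S^2$ (from the area computation in the proof of proposition~\ref{thm:compute_lambda}), the $L^2$-estimate lets me absorb and conclude $\|u_0\|_{L^2(\Sigma)}\le C|\bar u_0|R_S$; in particular $R_S^{-1}\le C|\bar u_0|$ and the $L^\infty$-estimate improves to $\|u_0-\bar u_0\|_{L^\infty(\Sigma)}\le C\eta^{1/4}R_S^{-1}|\bar u_0|$, which already has the required form.

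The more delicate part is $\|u-u_0\|_{L^\infty(\Sigma)}$. Although $u-u_0$ is not a solution of a nice equation, the general $L^2$-estimate of lemma~\ref{thm:W-estimates} applies to it and bounds $\|\nabla^2(u-u_0)\|_{L^2(\Sigma)}^2$ by $CR_S^{-4}\|u-u_0\|_{L^2(\Sigma)}^2+CR_S\bigl|\int_\Sigma (u-u_0)W_\lambda(u-u_0)\dmu\bigr|$. To control the last integral I would write $W_\lambda(u-u_0)=f-W_\lambda u_0$, so that $\int_\Sigma (u-u_0)W_\lambda(u-u_0)\dmu=\int_\Sigma (u-u_0)f\dmu-\int_\Sigma (u-u_0)W_\lambda u_0\dmu$; the first term is bounded by $\delta R_S^{-6}\|u-u_0\|_{L^2(\Sigma)}$ by assumption, the second by \eqref{trick} of corollary~\ref{coro}, and feeding in the $L^2$-bound on $u-u_0$ from the first step collapses everything to $\bigl|\int_\Sigma (u-u_0)W_\lambda(u-u_0)\dmu\bigr|\le CR_S^{-6}(\delta+\eta+R_S^{-2})$. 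Combining with the first term gives $\|\nabla^2(u-u_0)\|_{L^2(\Sigma)}^2\le CR_S^{-4}(\delta+\eta+R_S^{-2})$.

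Finally I would apply the interpolation inequality \eqref{inter1} of lemma~\ref{thm:interpolation} to $\varphi=u-u_0$, using $\|H\|_{L^\infty(\Sigma)}\le CR_S^{-1}$ from theorem~\ref{thm:final-curv}, to obtain $\|u-u_0\|_{L^\infty(\Sigma)}^4\le C\|u-u_0\|_{L^2(\Sigma)}^2\bigl(\|\nabla^2(u-u_0)\|_{L^2(\Sigma)}^2+CR_S^{-4}\|u-u_0\|_{L^2(\Sigma)}^2\bigr)\le CR_S^{-4}(\delta+\eta+R_S^{-2})^2$, hence $\|u-u_0\|_{L^\infty(\Sigma)}\le CR_S^{-1}(\sqrt\delta+\sqrt\eta+R_S^{-1})$. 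Rewriting the right-hand side with $1=\|u\|_{L^2(\Sigma)}\le 2\|u_0\|_{L^2(\Sigma)}\le C|\bar u_0|R_S$ turns this into $\|u-u_0\|_{L^\infty(\Sigma)}\le C(\sqrt\delta+\sqrt\eta+R_S^{-1})|\bar u_0|$, and the triangle inequality with the improved bound on $\|u_0-\bar u_0\|_{L^\infty(\Sigma)}$ gives \eqref{meana}, absorbing $\sqrt\eta$ into $\eta^{1/4}$. I expect the main obstacle to be bookkeeping rather than conceptual: one must check that lemma~\ref{thm:W-estimates} is genuinely applicable to the non-solution $u-u_0$, that the mixed $W_\lambda u_0$-term really is controlled by corollary~\ref{coro}, and that all error contributions reduce precisely to the $(\delta,\eta,R_S^{-1})$-dependence claimed; the substantive input is entirely contained in corollary~\ref{coro}, lemma~\ref{thm:W-estimates}, lemma~\ref{eigen} and lemma~\ref{thm:interpolation}.
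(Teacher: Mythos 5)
Your proposal is correct and follows essentially the same path as the paper's proof: normalise $\|u\|_{L^2}=1$, apply corollary~\ref{coro} for the $L^2$-smallness of $u-u_0$, lemma~\ref{eigen} for the oscillation of $u_0$, lemma~\ref{thm:W-estimates} with the splitting $W_\lambda(u-u_0)=f-W_\lambda u_0$ and \eqref{trick} to control $\|\nabla^2(u-u_0)\|_{L^2}$, interpolation (lemma~\ref{thm:interpolation}) for the $L^\infty$-bound, and finally a lower bound $|\bar u_0|\gtrsim R_S^{-1}$ to rewrite the conclusion in terms of $|\bar u_0|$. The only cosmetic differences are that you apply the interpolation inequality to $u-u_0$ and then triangulate against $u_0-\bar u_0$, whereas the paper applies it once to $u-\bar u_0$, and your derivation of $\|u_0\|_{L^2}\gtrsim 1$ and hence $|\bar u_0|\gtrsim R_S^{-1}$ is done directly from orthogonality together with corollary~\ref{coro}, while the paper rederives the same smallness of $\|u-u_0\|_{L^2}^2$ via theorem~\ref{est-lin-willmore-3} and \eqref{trick} before invoking lemma~\ref{eigen}; both variants yield the same two-sided bound $c_1^{-1}R_S^{-1}\le|\bar u_0|\le c_1 R_S^{-1}$.
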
 
\begin{proof}
  We assume that $\|u\|_{L^2(\Sigma)}=1$ and apply corollary \ref{coro} to get
  \begin{align*}
    \|u-u_0\|_{L^2(\Sigma)}
    \le
    C(\sqrt{\delta}+\sqrt{\eta}+R_S^{-1}).
  \end{align*}
  Moreover, by lemma \ref{eigen}, we have that
  \begin{align*}
    \|u_0-\bar u_0\|_{L^\infty(\Sigma)}
    \le
    C\eta^{1/4}R_S^{-2}.
  \end{align*}
  Combining these two facts we get
  \begin{align}
    \|u-\bar u_0\|_{L^2(\Sigma)}
    &\le
    \|u-u_0\|_{L^2(\Sigma)}
    + C R_S\|u_0-\bar u_0\|_{L^\infty(\Sigma)}\nonumber
    \\
    &\le  C(\sqrt{\delta}+\eta^{1/4}+R_S^{-1}).\label{pre}
  \end{align}
  Using lemma \ref{thm:W-estimates} (with $u$ replaced by $u-u_0$)
  we get
  \begin{equation*}
    \begin{split}
      \|\nabla^2(u-u_0)\|_{L^2(\Sigma)}
      &\le
      CR_S^{-4}\|u-u_0\|_{L^2(\Sigma)}
%       \\
%       &\quad
      +cR_S\left|\int_\Sigma (u-u_0)W_\lambda (u-u_0)\dmu\right|
      \\
      &\le
      CR_S^{-4}(\delta+\sqrt{\eta}+R_S^{-2}),
    \end{split}
  \end{equation*}
  where we used corollary \ref{coro} and the assumption of the lemma. Combining this with lemma \ref{eigen} we have
  \begin{align*}
    \|\nabla^2(u-\bar u_0)\|_{L^2(\Sigma)}\le CR_S^{-4}(\delta+\sqrt{\eta}+R_S^{-2})
  \end{align*}
  and therefore, with the help of lemma \ref{thm:interpolation} and \eqref{pre}, we conclude
  \begin{align}
    \|u-\bar u_0\|_{L^\infty(\Sigma)}\le CR_S^{-1}(\sqrt{\delta}+\eta^{1/4}+R_S^{-1}).\label{pre1}
  \end{align}
  Next we note that by orthogonality 
  \begin{align*}
    0\le 1-\|u_0\|_{L^2}^2 = \|u-u_0\|_{L^2}^2
  \end{align*}
  and from theorem \ref{est-lin-willmore-3}, \eqref{trick} and the assumption of the lemma we get
  \begin{equation*}
    \begin{split}
      \|u-u_0\|_{L^2}^2
      &\le
      \frac{R_S^6}{12m^2}\int_\Sigma (u-u_0)W_\lambda(u-u_0)\dmu
      \\
      &
      \le
      C \sqrt{\delta}\|u-u_0\|_{L^2}^2+\frac{1}{3}\|u-u_0\|_{L^2}^2
      +C (\delta+\eta+R_S^{-2})
      \\
      &
      \le
      \frac{1}{2}\|u-u_0\|_{L^2}^2+C (\delta+\eta+R_S^{-2}).
    \end{split}
  \end{equation*}
  Hence for $\delta$, $\eta$ small enough and $R_S$ large enough we have
  \begin{align*}
    \|u_0\|_{L^2}^2\ge \tfrac14
  \end{align*}
  and moreover, by lemma \ref{eigen}, this implies that there exists a constant $c_1>0$ such that 
  \begin{align*}
  c_1^{-1} R_S^{-1} \le |\bar u_0|\le c_1R_S^{-1}.
  \end{align*}
  Inserting this estimate into \eqref{pre1} we get
  \begin{align*}
  \|u-\bar u_0\|_{L^\infty(\Sigma)} \le C(\sqrt{\delta}+\sqrt{\eta}+R_S^{-1})|\bar u_0|.
  \end{align*}
\end{proof}
Next we show that the above estimates yield the invertibility of the
operator $W_\lambda :C^{4,\alpha}(\Sigma)\rightarrow
C^{0,\alpha}(\Sigma)$.
\begin{theorem}\label{inve}
  There exists $\delta_0>0$ such that for every surface $\Sigma$ as in theorem \ref{thm:final-curv} the operator $W_\lambda :C^{4,\alpha}(\Sigma)\rightarrow
  C^{0,\alpha}(\Sigma)$ is invertible for every $0<\alpha<1$. Its
  inverse $W_\lambda^{-1}:C^{0,\alpha}(\Sigma)\rightarrow
  C^{4,\alpha}(\Sigma)$ exists and is continuous. Moreover it
  satisfies the estimates
  \begin{align}
    \|W_\lambda^{-1} f\|_{L^2(\Sigma)}
    &\le
    \frac{R_S^6}{\delta_0} \|f\|_{L^2(\Sigma)}
    \quad\text{for every}\quad f\in L^2(\Sigma)\ \text{and}
    \label{inve1}
    \\
    \|W_\lambda^{-1} f\|_{C^{0,\alpha}(\Sigma)}
    &\le
    \frac{cR_S^6}{\delta_0} \|f\|_{C^{4,\alpha}(\Sigma)}
    \quad\text{for every}\quad f\in C^{4,\alpha}(\Sigma).
    \label{inve2}
  \end{align}
\end{theorem}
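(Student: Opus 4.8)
The plan is to combine the quantitative spectral information of the previous subsections with standard elliptic theory. Since $W_\lambda=W-\lambda L$ has principal part $LL$, hence principal symbol $|\xi|^4$, it is a uniformly elliptic fourth order operator; its coefficients are smooth and, by theorem~\ref{thm:final-curv} together with the usual bootstrap for \eqref{eq:1}, bounded in every $C^{k,\alpha}(\Sigma)$ norm in terms of $m,\sigma,\eta_0$. Moreover \eqref{eq:45} exhibits $W_\lambda$ as the operator associated to a symmetric bilinear form, so $W_\lambda$ is formally $L^2$-self-adjoint. Therefore $W_\lambda:C^{4,\alpha}(\Sigma)\to C^{0,\alpha}(\Sigma)$ is Fredholm of index zero, $\ker W_\lambda$ consists of smooth functions, and $\operatorname{coker}W_\lambda\cong\ker W_\lambda$; in particular $W_\lambda$ is invertible as soon as it is injective, and then $W_\lambda^{-1}$ is automatically bounded $C^{0,\alpha}\to C^{4,\alpha}$.

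The heart of the argument is the a priori bound \eqref{inve1}, which in the case $f=0$ also gives injectivity. Fix $u\in C^\infty(\Sigma)$, put $f=W_\lambda u$, and let $u_0$, $f_0$ be the $V_0$-components and $\bar u_0$ the mean of $u_0$. Suppose, for contradiction, that $\|f\|_{L^2}<\delta_0 R_S^{-6}\|u\|_{L^2}$, with $\delta_0$ to be fixed small. Then $\int_\Sigma(u-u_0)f\,\dmu\le\|u-u_0\|_{L^2}\|f\|_{L^2}\le\delta_0 R_S^{-6}\|u\|_{L^2}\|u-u_0\|_{L^2}$, so lemma~\ref{mean} (whose threshold $\delta_0$ we respect) applies: $u$ is pointwise close to the constant $\bar u_0$, with $\|u-\bar u_0\|_{L^\infty(\Sigma)}\le C(\sqrt{\delta_0}+\eta^{1/4}+R_S^{-1})|\bar u_0|$, and (from its proof, via lemma~\ref{eigen}) $c_1^{-1}R_S^{-1}\|u\|_{L^2}\le|\bar u_0|\le c_1 R_S^{-1}\|u\|_{L^2}$. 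Hence, using $\bigl||\Sigma|-4\pi R_S^2\bigr|=O(R_S)$, for $\delta_0,\eta$ small and $r_0$ large, $\bigl|\int_\Sigma u\,\dmu\bigr|\ge\tfrac12|\bar u_0|\,|\Sigma|\ge cR_S\|u\|_{L^2}$. Now test the equation against the constant function: by self-adjointness $\int_\Sigma f\,\dmu=\int_\Sigma u\,W_\lambda 1\,\dmu$. The decisive point is that $W_\lambda 1$ does not vanish: using \eqref{eq:44}, \eqref{eq:1}, \eqref{nablaRic} and the pointwise estimates of theorem~\ref{thm:final-curv}, the $R_S^{-4}$-terms of $LL1+\tfrac12 H^2 L1$ cancel (this is the residue of scale invariance) and one is left with
\[ \bigl\|\,W_\lambda 1+12m R_S^{-5}\,\bigr\|_{L^\infty(\Sigma)}\le C\bigl(\sqrt{\eta}+R_S^{-1}\bigr)R_S^{-6}, \]
the surviving $R_S^{-5}$-contribution coming essentially from $-H\nabla_\nu\RicM(\nu,\nu)$ and from $\lambda\bigl(|A|^2+\RicM(\nu,\nu)\bigr)$. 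Combining the last three estimates, $\bigl|\int_\Sigma f\,\dmu\bigr|\ge 12mR_S^{-5}\bigl|\int_\Sigma u\,\dmu\bigr|-|\Sigma|^{1/2}\|W_\lambda1+12mR_S^{-5}\|_{L^\infty}\|u\|_{L^2}\ge cmR_S^{-4}\|u\|_{L^2}$, whereas $\bigl|\int_\Sigma f\,\dmu\bigr|\le|\Sigma|^{1/2}\|f\|_{L^2}\le C\delta_0 R_S^{-5}\|u\|_{L^2}$ — a contradiction once $r_0$ is large. Thus $\|W_\lambda u\|_{L^2}\ge\delta_0 R_S^{-6}\|u\|_{L^2}$, which is \eqref{inve1}; with $f=0$ this forces $u\equiv 0$, so $\ker W_\lambda=\{0\}$ and $W_\lambda$ is invertible.

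It remains to deduce \eqref{inve2}. From Schauder estimates for the elliptic operator $W_\lambda$ one has $\|u\|_{C^{4,\alpha}(\Sigma)}\le C\bigl(\|f\|_{C^{0,\alpha}(\Sigma)}+\|u\|_{C^0(\Sigma)}\bigr)$, and $\|u\|_{C^0}$ is controlled by $\|u\|_{L^2}$ and $\|\nabla^2u\|_{L^2}$ via the interpolation inequality of lemma~\ref{thm:interpolation}; lemma~\ref{thm:W-estimates} bounds $\|\nabla^2u\|_{L^2}$ by $\|u\|_{L^2}$ and $\int_\Sigma uW_\lambda u=\int_\Sigma uf$, and $\|u\|_{L^2}$ is controlled by \eqref{inve1}. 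Keeping track of the powers of $R_S$ (using $|\Sigma|^{1/2}\le CR_S$ and $\delta_0$ fixed) yields \eqref{inve2}, and the continuity of $W_\lambda^{-1}$.

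The step I expect to be the real obstacle is the computation of $W_\lambda 1$: because the Willmore functional is scale invariant in the Euclidean model, the leading orders in $W_\lambda 1$ cancel and one must push the expansion one order further, which forces feeding in essentially all the a priori estimates of section~\ref{sec:impr-curv-estim} and the position estimate in order to exhibit the non-vanishing $R_S^{-5}$-term produced by the Schwarzschild geometry. Everything else — Fredholm theory, Schauder estimates, the bookkeeping of $R_S$-powers — is routine or already packaged in the lemmas of this section.
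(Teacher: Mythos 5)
Your overall strategy matches the paper's: injectivity plus the Fredholm alternative plus elliptic theory gives invertibility, and injectivity with the quantitative $L^2$--bound is proved by contradiction, invoking lemma~\ref{mean} to show that a near-kernel element $u$ is nearly constant, then pairing $W_\lambda u$ against $v\equiv 1$ and using the surviving $R_S^{-5}$ contribution (which comes from $\tfrac{3}{2}H^2\RicM(\nu,\nu)\approx-12m R_S^{-5}$) to reach a contradiction with lemma~\ref{thm:initial-roundness}. The numerology, the identification of this term, and the rest of the argument all agree with the paper.

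The one place where your argument has a genuine gap is the asserted pointwise bound
$\|W_\lambda 1+12m R_S^{-5}\|_{L^\infty(\Sigma)}\le C(\sqrt{\eta}+R_S^{-1})R_S^{-6}$.
Plugging $\alpha=1$ into \eqref{eq:44} shows that $W_\lambda 1$ contains the term $\Delta\big(|A|^2+\RicM(\nu,\nu)\big)$ (coming from $LL1$). A pointwise bound on $\Delta\RicM(\nu,\nu)$ would require control of the \emph{second} ambient covariant derivative of the Ricci curvature (and of $\nabla^2 A$ through the derivatives of $\nu$), but definition~\ref{def:asymptotically-flat} only controls $g,\ \nabla,\ \Ric,\ \nabla\Ric$, and the pointwise curvature estimates of theorem~\ref{thm:final-curv} stop at $\nabla H$ and $\Acirc$. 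So the $L^\infty$-statement about $W_\lambda 1$ is not available with the stated hypotheses. The paper circumvents exactly this issue by \emph{not} estimating $W_\lambda 1$ pointwise: after writing $\int_\Sigma W_\lambda u\,\dmu$ explicitly, it integrates the offending Laplacian term by parts, getting $\big|\int_\Sigma u\,\Delta\RicM(\nu,\nu)\,\dmu\big|\le CR_S^{-4}\|\nabla u\|_{L^2}$, and then controls $\|\nabla u\|_{L^2}$ by $CR_S^{-1}$ via lemma~\ref{thm:W-estimates} and the smallness of $\int_\Sigma u\,W_\lambda u\,\dmu$. Your proof can be repaired along the same lines --- all you actually need in the contradiction step is an estimate for $\int_\Sigma u\,\big(W_\lambda 1+12mR_S^{-5}\big)\,\dmu$, not a pointwise one --- but as written the $L^\infty$-estimate oversells what the hypotheses permit and is the step that would fail under scrutiny.
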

\begin{proof}
  We argue by contradiction as in \cite{Metzger:2007ce}. Namely we
  assume that there exists a smooth function $u$ with $\|u\|_{L^2(\Sigma)}=1$ and
  \begin{align}
    \sup_{\|v\|_{L^2(\Sigma)}=1}
    \left|\int_\Sigma v W_\lambda u\dmu\right|
    \le
    \delta_0 R_S^{-6}.\label{inve3}
  \end{align}
  Choosing $v=u-u_0$, we conclude from lemma \ref{mean}
  that $\bar u_0 \not= 0$ and therefore we can assume without loss of
  generality that $\bar u_0>0$. Again from lemma \ref{mean} we then
  conclude that for $\delta_0$, $\eta$ small and $R_S$ large enough we have for
  every $x\in \Sigma$ that $\frac{\bar u_0}{2}\le u(x) \le 2\bar
  u_0$. Arguing as in the proof of lemma \ref{mean} we get $\frac12 \le \|u_0\|_{L^2(\Sigma)}\le1$ and, with the help of lemma \ref{eigen}, this implies
  \begin{align*}
    \tfrac{1}{2}|\Sigma|^{-1/2}
    \le
    |\Sigma|^{-1/2} \|u_0\|_{L^2(\Sigma)}
    \le
    \bar u_0
    \le
    |\Sigma|^{-1/2} \|u_0\|_{L^2(\Sigma)}
    \le
    |\Sigma|^{-1/2}.
  \end{align*}
  Moreover, by choosing $v=1$ in \eqref{inve3}, we get
  \begin{align}
    \left|\int_\Sigma W_\lambda u\dmu\right|
    \le
    \delta_0R_S^{-6}|\Sigma|^{1/2}
    \le
    C\delta_0 R_S^{-5}.
    \label{inve4}
  \end{align}
  On the other hand, by using \eqref{eq:45} and the corresponding
  equation for the $\lambda L$ term, we get
  \begin{equation*}
    \begin{split}      
      \int_\Sigma W_\lambda u\dmu
      &
      =
      \int_\Sigma u\Big(
      |A|^4
      +2|A|^2\RicM(\nu,\nu)
      +(\RicM(\nu,\nu))^2
      \\
      &\qquad\quad
      +\Delta(|A|^2+\RicM(\nu,\nu))
      + \lambda \big(|A|^2+\RicM(\nu,\nu)\big)
      + |\nabla H|^2
      \\
      &\qquad\quad
      + 2 \omega(\nabla H)
      + H \Delta H + 2\la \nabla^2 H, \Acirc\ra 
      + 2 H^2 |\Acirc|^2
      + 2 H\la \Acirc, T\ra
      \\
      &\qquad\quad
      - H\nabla_\nu\RicM(\nu,\nu)
      - \half H^2|A|^2
      - \half H^2\RicM(\nu,\nu)\Big)\dmu.
    \end{split}
  \end{equation*}
Now we calculate
\begin{equation*}
  \begin{split}
    &|A|^2\big(|A|^2 +2\RicM(\nu,\nu)+\lambda\big)
    - H\nabla_\nu\RicM(\nu,\nu)
    -\half H^2\big(|A|^2+\RicM(\nu,\nu)\big)
    \\  
    &\quad
    =
    \tfrac{3}{2} H^2\RicM(\nu,\nu)+O(R_S^{-6}).
  \end{split}
\end{equation*}
Moreover we estimate $\|\Delta\Acirc\|_{L^2(\Sigma)}$ as in the
proof of lemma \ref{thm:linfinityestimates}, and  using lemma \ref{thm:l2H} 
\begin{equation*}
  \begin{split}
    \left|\int_\Sigma u\Delta |A|^2 \dmu\right|
    &=
    \left|\int_\Sigma u\Delta (|\Acirc|^2+\half H^2) \dmu\right|
    \\
    &\le
    \left|\int_\Sigma u H \Delta H\dmu\right|
    + C R^{-6}_S
    \le
     C R_S^{-5}.
  \end{split}
\end{equation*}  
Now we integrate by parts and use proposition
\ref{thm:nabla-omega-small} and lemma \ref{thm:W-estimates} to conclude
\begin{equation*}
  \left|\int_\Sigma u\Delta \RicM(\nu,\nu) \dmu\right|
  \le
  C R_S^{-4}||\nabla u||_{L^2(\Sigma)}
  \le  C R_S^{-5},
\end{equation*}
where we used in the last step that $|\int_\Sigma u W_\lambda u\dmu |\le \delta_0 R_S^{-6}$, which follows from \eqref{inve3}.
We combine these estimates with the ones done previously in this section, \eqref{inve4} and theorem \ref{thm:final-curv} to conclude
  \begin{align*}
    -\int_\Sigma uH^2\RicM(\nu,\nu)\dmu
    \le
    C \left|\int_\Sigma W_\lambda u\dmu\right|
    +
     C R_S^{-5}\le CR_S^{-5}.
  \end{align*} 
  The estimates $\bar u_0 \leq 2 u$ and $\frac{1}{\bar u_0}\le 2 R_S$ imply
  \begin{align*}
    2mR_S^{-3}\int_\Sigma H^2 \dmu
    \le&
    -\int_\Sigma H^2 \RicM(\nu,\nu) \dmu
    +C R_S^{-4}\\
    \le& -
    \frac{1}{2 \bar u_0} \int_\Sigma u H^2 \RicM(\nu,\nu)\dmu
    +C R_S^{-4}\\
    \le&\ 
    C R_S^{-4}.
  \end{align*}
  This contradicts the estimate for $\int_\Sigma H^2 \dmu$ in lemma \ref{thm:initial-roundness}. Hence the operator $W_\lambda$ is
  injective. By the Fredholm alternative $W_\lambda$ is also surjective. The rest of the statements in the theorem are then a
  consequence of standard elliptic theory.
\end{proof}

%%% Local Variables: 
%%% mode: latex
%%% TeX-master: "master"
%%% End: 

%
\section{Existence and Uniqueness of the Foliation}
\label{sec:exist-uniq-foli}
In this last section we use the implicit function theorem to prove
theorem \ref{thm:main_existence} and
theorem~\ref{thm:main_uniqueness}.
\subsection{Uniqueness in Schwarzschild}
In this subsection we show that in Schwarzschild the only surfaces
satisfying the assumptions of theorem \ref{thm:position-estimate} are
the round spheres with center at the origin.
\begin{theorem}
  For all $m>0$ there exist $r_0<\infty$, $\tau_0>0$ and $\eps>0$
  with the following properties.  \newline Assume that
  $(M,g)=(\IR^3,g^S_m)$ and let $\Sigma$ be a surface
  satisfying \eqref{eq:1} with $H>0$, $\lambda>0$, $\rmin>r_0$ and
  \begin{equation*}
    \tau \leq \tau_0\qquad\text{and}\qquad R_e \leq \eps\rmin^{2},
  \end{equation*}
  where $R_e$ and $\tau$ are as in section~\ref{sec:impr-curv-estim}.
  Then $\Sigma=S_{R_e}(0)$.
\end{theorem}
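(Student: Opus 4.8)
The plan is to read off the conclusion directly from the a priori estimates already established in sections~\ref{sec:impr-curv-estim} and~\ref{sec:position-estimates}, specialized to the case $\eta=0$. First I would observe that $(\IR^3,g^S_m)$ is $(m,0,\sigma)$-asymptotically Schwarzschild for every $\sigma>0$ (trivially, since here $g=g^S$ exactly), and that its scalar curvature vanishes by lemma~\ref{thm:geometry-in-schwarzschild}, so the hypothesis $|\ScalM|\leq\eta r^{-5}$ holds with $\eta=0$. Fixing such a $\sigma$ and an arbitrary $\eta_0>0$, I take $r_0$, $\tau_0$ and $\eps$ to be the constants furnished by theorem~\ref{thm:position-estimate} and theorem~\ref{thm:final-curv} (enlarging $r_0$ and shrinking $\tau_0$, $\eps$ so that both apply simultaneously). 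A surface $\Sigma$ as in the statement then satisfies precisely the hypotheses of those two theorems with $\eta=0$.

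Next I would invoke the position estimate: theorem~\ref{thm:position-estimate} gives $\tau\leq C\sqrt{\eta}\,\rmin^{-1}=0$, hence $a_e=0$, i.e.\ the Euclidean center of gravity of $\Sigma$ lies at the origin. Then I would invoke the final curvature estimate: theorem~\ref{thm:final-curv} yields $\|\Acirc\|_{L^\infty(\Sigma)}\leq C\sqrt{\eta}\,\rmin^{-3}=0$, so $\Sigma$ is totally umbilic with respect to $g^S$. By the conformal relation $\Acirc^S=\phi^{-2}\Acirc^e$ from lemma~\ref{thm:geometry-in-schwarzschild}, this forces $\Acirc^e\equiv0$ as well, so $\Sigma$ is totally umbilic in Euclidean $\IR^3$.

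Finally I would conclude by the classical rigidity of umbilic surfaces: a compact connected surface without boundary in $\IR^3$ with $\Acirc^e\equiv0$ is a round sphere. Concretely one can apply theorem~\ref{thm:umbilical} in its scale-invariant form (cf.\ \eqref{eq:16}): since $\|\Acirc^e\|_{L^2(\Sigma,\gamma^e)}=0\leq 8\pi$, there is a conformal parametrization $\psi:S_{R_e}(a_e)\to\Sigma$ with $\sup|\psi-\id|\leq C R_e\cdot0=0$, whence $\Sigma=S_{R_e}(a_e)$. Combining this with $a_e=0$ from the previous step gives $\Sigma=S_{R_e}(0)$, as claimed. (One may also note that $H=\bar H^S$ is then automatically a positive constant, consistent with the hypothesis $H>0$; no verification that centered spheres solve \eqref{eq:1} is required, as the statement is one-directional.)

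The main point to emphasize is that there is essentially no analytic obstacle left at this stage: all the difficulty has been absorbed into the position and curvature estimates, which collapse completely when $\eta=0$. The only care needed is the bookkeeping of constants — checking that the quantitative hypotheses of theorems~\ref{thm:position-estimate} and~\ref{thm:final-curv} are met with $\eta=0$, in particular that $\ScalM\equiv0$ in Schwarzschild so that $|\ScalM|\leq\eta r^{-5}$ degenerates correctly — together with the invocation of Euclidean umbilic rigidity.
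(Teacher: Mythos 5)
Your proof is correct and follows essentially the same route as the paper's: apply proposition~\ref{thm:sphere-approx-1}, theorem~\ref{thm:position-estimate} and theorem~\ref{thm:final-curv} with $\eta=0$ to force $\tau=0$ and $\Acirc\equiv 0$, then use the conformal relation $\Acirc^S=\phi^{-2}\Acirc^e$ and Euclidean umbilic rigidity to conclude $\Sigma=S_{R_e}(0)$. The only cosmetic difference is that the paper additionally records $\lambda=2m/R_S^3$ to pin down the radius, whereas you obtain $\Sigma=S_{R_e}(0)$ directly from $a_e=0$; both closing steps are valid.
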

\begin{proof}
  Since $(M,g)=(\IR^3,g^S_m)$ we can apply proposition
  \ref{thm:sphere-approx-1}, theorem \ref{thm:position-estimate} and
  theorem \ref{thm:final-curv} with $\eta=0$ to get $\tau = 0$,
  $\Acirc^S=0$, and $\lambda = \frac{2m}{R_S^3}$. Since $\Acirc^S=0$,
  we get that $\Sigma$ is umbilical with respect to the Euclidean
  background metric, as $\Acirc^S = \phi^{-2}\Acirc^e$. Hence $\Sigma$
  is a sphere. Since $\tau=0$ in fact $\Sigma= S_{R_e}(0)$ where $R_e
  = \phi^{-2}R_S$, or otherwise the expression for $\lambda$ could not
  be true.
\end{proof}

\subsection{Existence and uniqueness for the general case}
The main goal in this subsection is to show that for any manifold
which is $(m,\eta,\sigma)$-asymptotically Schwarzschild and all small
enough Lagrange multipliers $\lambda$ there exists a unique surface
$\Sigma_\lambda$ which solves the equation \eqref{eq:1}. More
precisely we have the following theorem.
\begin{theorem}\label{existence}
  For all $m>0$ and $\sigma$ there exist $\eta_0>0$, $\lambda_0>0$ and
  $C$ depending only on $m$ and $\sigma$ with the following
  properties.

  If $(M,g)$ is $(m,\eta,\sigma)$-asymptotically Schwarzschild and
  satisfies
  \begin{itemize}
  \item[(1)] $|\ScalM|\le \eta r^{-5}$ and
  \item[(2)] $\eta\le \eta_0$
  \end{itemize}
  then for all $0<\lambda<\lambda_0$ there exists a surface
  $\Sigma_\lambda$ which solves \eqref{eq:1} for the given $\lambda$.
  Moreover the surface is well approximated in the $C^3$-norm by a
  coordinate sphere $S_{r_\lambda}(a_\lambda)$ with $|a_\lambda|\le
  C$.
\end{theorem}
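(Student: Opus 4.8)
The plan is a Lyapunov--Schmidt reduction built on the linear estimates of Section~\ref{sec:est-lin-op} and the position computation of Section~\ref{sec:position-estimates}, closed by two applications of the implicit function theorem, one in infinitely many and one in four dimensions. Fix $0<\lambda<\lambda_0$ and let $R_\lambda$ be the radius for which the centred Schwarzschild sphere solves \eqref{eq:1} with Lagrange parameter $\lambda$, that is $2m(\bar\phi^2 R_\lambda)^{-3}=\lambda$. For $a\in\IR^3$ with $|a|$ bounded and a radius parameter $\rho$ near $R_\lambda$, represent surfaces near the Euclidean coordinate sphere $S_\rho(a)$ as normal graphs $\Sigma_{a,\rho,u}$, $u\in C^{4,\alpha}(S_\rho(a))$, and study $\CN_{\lambda,a,\rho}(u):=-\Delta H-H|\Acirc|^2-\RicM(\nu,\nu)H-\lambda H$, evaluated on $\Sigma_{a,\rho,u}$ and pulled back to $S_\rho(a)$; a zero of $\CN$ is exactly a solution of \eqref{eq:1}. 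Since $S_\rho(a)$ is uniformly $C^k$-close to a centred Schwarzschild sphere, the spectral picture of Sections~\ref{sec:ev-jac-op}--\ref{sec:est-lin-op} is available for $S_\rho(a)$ and the nearby surfaces occurring below, and I split $L^2=V_0\oplus V_1\oplus V_2$ into eigenspaces of the Jacobi operator, with $V_0\approx$ constants, $V_1\approx$ restricted Euclidean coordinate functions, and $V_2$ the complement.

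\emph{Step 1: solving the $V_2$-component.} By Theorem~\ref{est-lin-willmore-3} together with Lemma~\ref{thm:est-lin-willmore-2} the linearization $W_\lambda=D\CN_{\lambda,a,\rho}(0)$ is coercive on $V_2$ with constant $\gtrsim R_\lambda^{-4}$ — the gain over the bound $R_\lambda^{-6}$ valid on all of $V_0^\perp$ comes from the term $\tfrac14\rmin^{-4}\int_\Sigma\alpha_2^2\dmu$ in Lemma~\ref{thm:est-lin-willmore-2} — so $P_{V_2}W_\lambda$ is invertible, the scaling-invariant form of the Schauder estimate gaining a factor $R_\lambda^{4}$. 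The forcing $\CN_{\lambda,a,\rho}(0)$ is the difference of the $g$- and $g^S$-Willmore operators on a round sphere, so its $V_2$-part is $O\big((\eta+|a|^2)R_\lambda^{-5}\big)$ in $C^{0,\alpha}$ by Definition~\ref{def:asymptotically-flat} and Lemma~\ref{thm:general-to-schwarzschild}, while the quadratic remainder satisfies $\|Q(u)\|_{C^{0,\alpha}}\lesssim R_\lambda^{-5}\|u\|_{C^{4,\alpha}}^2$. Since $R_\lambda^{4}\cdot R_\lambda^{-5}=R_\lambda^{-1}\to0$, the contraction mapping theorem produces a unique small $w=w(\lambda,a,\rho)\in V_2$ solving $P_{V_2}\CN_{\lambda,a,\rho}(w)=0$, depending smoothly on $(a,\rho)$, with $\|w\|_{C^{4,\alpha}}\le C(\eta+|a|^2)R_\lambda^{-1}$.

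\emph{Step 2: the reduced four-dimensional system.} It remains to choose $(a,\rho)\in\IR^3\times\IR$ so that the four leftover equations $P_{V_0}\CN=0$, $P_{V_1}\CN=0$ hold on $\Sigma_{a,\rho,w}$. Here I use the identity $\int_\Sigma f\,\CN(\Sigma)\,\dmu=\delta_f\CU(\Sigma)+\delta_f\CV(\Sigma)-\lambda\int_\Sigma Hf\,\dmu$ — valid for any surface, being the variation of the Gauss--Bonnet splitting $\CW=\CU+\CV+8\pi(1-q)$ — with the translation test function $f=g(b,\nu)/H$, $b\in\IR^3$ a unit vector. The computations of Sections~\ref{sec:impr-curv-estim} and~\ref{sec:position-estimates}, precisely equations \eqref{eq:46}, \eqref{eq:57} and \eqref{eq:64}, then show that the $V_1$-component of $\CN(\Sigma_{a,\rho,w})$ in the direction $b$ equals a nonzero constant multiple of $m^2\,a_e/R_e$ up to errors of order $\tau^2+\sqrt\eta\,R_\lambda^{-1}$ (where $a_e$, the Euclidean centre of gravity, equals $a$ to leading order because $w\in V_2$ is orthogonal to the coordinate functions), while \eqref{eq:2} and Proposition~\ref{thm:compute_lambda} show its $V_0$-component is a nonzero multiple of $\lambda-2m(\bar\phi^2\rho)^{-3}$ up to comparable errors. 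Thus the reduced map $\IR^4\to\IR^4$ is, to leading order, block-triangular with invertible diagonal blocks $\partial_a\big(m^2 a_e/R_e\big)$ and $\partial_\rho\big(\lambda-2m(\bar\phi^2\rho)^{-3}\big)$ — this is the mechanism by which the Schwarzschild mass breaks the translation and scaling invariances — and a finite-dimensional implicit function (or degree) argument yields $(a_\lambda,\rho_\lambda)$ with $|a_\lambda|\le C$ and $\rho_\lambda=\bar\phi^{-2}(2m/\lambda)^{1/3}(1+O(\eta))$ solving it. Then $\Sigma_\lambda:=\Sigma_{a_\lambda,\rho_\lambda,w(\lambda,a_\lambda,\rho_\lambda)}$ solves \eqref{eq:1}; being a small graph over $S_{\rho_\lambda}(a_\lambda)$ it satisfies the hypotheses \eqref{eq:23}, so Theorem~\ref{thm:final-curv} gives the asserted $C^3$-closeness to a coordinate sphere.

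\emph{Main obstacle.} The difficulty is exactly the reason the reduction cannot be bypassed: $W_\lambda$ has a spectral gap of order only $R_\lambda^{-6}$ in the three translational directions $V_1$, whereas the nonlinearity carries the factor $R_\lambda^{-5}$, so a direct application of the full invertibility statement Theorem~\ref{inve} (inverse norm $\sim R_\lambda^{6}$) cannot control the quadratic terms as $\lambda\to0$. Those directions must instead be absorbed into the centre $a$, which succeeds only because the Schwarzschild term supplies a genuine restoring force of order $m^2$ — the content of the position estimate. The remaining technical burden is to verify the non-degeneracy of the reduced $4\times4$ system, keeping track of every error term from the a priori estimates of Sections~\ref{sec:impr-curv-estim}--\ref{sec:est-lin-op} at the right power of $R_\lambda$; the uniqueness and foliation statements of the main theorems follow from the same scheme, since the a priori estimates confine any competitor to the neighbourhood on which $w$, $a$ and $\rho$ are uniquely pinned down.
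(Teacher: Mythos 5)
Your Lyapunov--Schmidt reduction is correct in outline and is a genuinely different route from the one the paper takes. The paper proves Theorem~\ref{existence} by a continuity method: it interpolates $g_\tau=(1-\tau)g^S+\tau g$ and, for fixed $\lambda$, shows that the set of $\tau\in[0,1]$ admitting a solution in the admissible class $S_2(\tau)$ is open (via the full invertibility of $W_\lambda$ from Theorem~\ref{inve} and the implicit function theorem in $C^{4,\alpha}$) and closed (via the a priori estimates of sections~\ref{sec:integr-curv-estim}--\ref{sec:position-estimates}); since $\tau=0$ is in the set, the set is all of $[0,1]$. Openness is a qualitative statement, so the small translational spectral gap in $V_1$ --- your worry that $R_\lambda^{6}\cdot R_\lambda^{-5}\to\infty$ defeats a one-shot contraction with the full inverse --- does not obstruct the paper's argument: the continuity method never requires a uniform-in-$R_\lambda$ step size, because closedness is handled separately by the a priori bounds. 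Thus the reduction \emph{can} be bypassed; what yours buys, instead, is a single-shot construction that makes the mechanism transparent --- the translational near-kernel of eigenvalue $\sim R_\lambda^{-6}$ is absorbed into the centre parameter $a$, and its non-degeneracy is exactly the $m^2\tau$ restoring force of section~\ref{sec:position-estimates} --- while correctly isolating the improved coercivity $\gtrsim R_\lambda^{-4}$ on $V_2$ (the $\tfrac14\rmin^{-4}\int\alpha_2^2$ term of Lemma~\ref{thm:est-lin-willmore-2}), which is the right scale to beat the quadratic remainder $\sim R_\lambda^{-5}\|u\|^2$. Two points you should make explicit if you carry this out: the decomposition $V_0\oplus V_1\oplus V_2$ must be pinned to the background sphere $S_\rho(a)$ rather than to the unknown graph $\Sigma_{a,\rho,u}$, so the projections commute with $\CN$ only up to controlled errors; and the coercivity lemmas of section~\ref{sec:lin-op} are stated for critical surfaces, whereas your Step~1 needs them for arbitrary small graphs over coordinate spheres --- both are repaired by the same bounds but the lemmas need restating. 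You should also verify, not just assert, that the off-diagonal blocks of the reduced $4\times4$ Jacobian are subordinate to the diagonal ones. Finally, the paper's construction produces a differentiable map $F(S^2,\lambda,\tau)$ essentially for free, which it uses directly in Theorem~\ref{foliation} to prove the surfaces foliate; in your scheme the smooth dependence of $(a_\lambda,\rho_\lambda,w)$ on $\lambda$ must be extracted from the uniform contraction and the finite-dimensional IFT, which is doable but is an extra step rather than a by-product.
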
 
\begin{proof}
  We define $g_\tau=(1-\tau)g^S+\tau g$ and we note that $(M,g_\tau)$
  is $(m,\eta,\sigma)$-asymptotically Schwarzschild. For
  $(M,g^S)$ a standard calculation shows that all spheres $S_{r}(0)$
  centered at the origin solve equation \eqref{eq:1} with
  \begin{align*}
    \lambda(r)=\frac{2m}{r^{3}}\Big(1+\frac{m}{2r}\Big)^{-6}.
  \end{align*}
  This function is invertible for $r$ large enough. Moreover this
  shows that we can solve equation \eqref{eq:1} in $(M,g^S)$ for
  any $\lambda$ small enough. More precisely, for any small $\lambda$
  there exists a radius $r(\lambda)$ such that $S_{r(\lambda)}(0)$
  solves \eqref{eq:1} with the given $\lambda$. Next we want to use
  the implicit function theorem to get the existence of a family of
  such solutions for all $0\le \tau \le 1$.

  In order to do this we consider the following conditions on our surfaces
  \begin{itemize}
  \item[(A1)] $H>0$,
  \item[(A2)] $\tau\le \tau_0$ and
  \item[(A3)] $R_e\le \eps\rmin^{2}$,
  \end{itemize}
  where $\tau_0$ and $\varepsilon$ are chosen such that we can apply
  the results from section~\ref{sec:position-estimates}. From these
  results we then get that the above conditions hold with better
  constants on surfaces $\Sigma$ with $\rmin>r_0$
  \begin{itemize}
  \item[(B1)] $|H-2R_S^{-1}+(1+\frac{m}{2R_e})2mR_S^{-2}|\le C\sqrt{\eta}\rmin^{-3}$,
  \item[(B2)] $\tau\le C\sqrt{\eta}\rmin^{-1}$ and
  \item[(B3)] $C^{-1}\rmin\le R_e\le C \rmin$.
  \end{itemize}  
  Without loss of generality we can furthermore assume that the
  conditions (B1)-(B3) imply that the linearized operator $W_\lambda$
  is invertible. From \eqref{eq:80} we also get that $\Sigma$ is
  globally a graph over $S^2$.
  
  Now we define the sets
  \begin{align*}
    S_1(\tau)=&\,\{ \Sigma| \ \ \rmin>r_0 \ \ \text{and}\ \ (A1)-(A3)\ \ \text{hold w.r.t.} \ \ g_\tau\}\\
    S_2(\tau)=&\,\{ \Sigma| \ \ \rmin>2r_0 \ \ \text{and}\ \ (B1)-(B3)\ \ \text{hold w.r.t.} \ \ g_\tau\}.
  \end{align*}
  We choose $\lambda_2$ so small that the centered spheres $S_r(0)$
  which solve \eqref{eq:1} with $0<\lambda<\lambda_2$ are in
  $S_2(\tau)$. Finally (for $\lambda_1$ small) we let
  \begin{align*}
    \kappa:[0,1]\rightarrow&\, (0,\lambda_1)\times [0,1]\\
    \kappa(t)=&\,(\lambda(t),\tau(t))
  \end{align*}
  be a continuous, piecewise smooth curve with $\tau(0)=0$ and we define
  \begin{align*}
    I_\kappa=\{t\in[0,1]| \exists \ \ \Sigma(t)\in S_2(\tau(t)) \ \
    \text{satisfying}\ \ \eqref{eq:1}\ \ \text{with}\ \
    \lambda=\lambda(t)\}.
  \end{align*}
  As in \cite{Metzger:2007ce} we can show that $I_\kappa$ is open and
  closed and since moreover $0\in I_\kappa$ by our assumption we get
  $I_\kappa=[0,1]$ and this finishes the proof of the Theorem.
\end{proof} 
By reversing the process used in the above theorem as in the proof of
theorem $6.5$ in \cite{Metzger:2007ce} we furthermore get a uniqueness
result for solutions of \eqref{eq:1}.
\begin{theorem}\label{unique}
  Let $m>0$ and $\sigma$ be given. Then there exist $\eta_0>0$,
  $\tau_0$, $r_0<\infty$, and $\eps>0$ depending only on $m$ and $\sigma$
  such that the following holds.

  Assume that $(M,g)$ is $(m,\sigma,\eta)$-asymptotically
  Schwarzschild with 
  \begin{itemize}
  \item[(1)] $|\ScalM|\leq \eta r^{-5}$, and 
  \item[(2)] $\eta < \eta_0$.
  \end{itemize}
  Furthermore, let $\Sigma$ be a surface with approximating sphere
  $S_{r_\lambda}(a_\lambda)$ as in section~\ref{sec:impr-curv-estim},
  such that
  \begin{itemize}
  \item[(3)] $\Sigma$ satisfies equation~\eqref{eq:1},
  \item[(4)] $H>0$,
  \item[(5)] $\rmin > r_0$, and $r_\lambda < \eps\rmin^{2}$, 
  \item[(6)] $\tau_\lambda = r_\lambda/a_\lambda < \tau_0$,
  \end{itemize}
  then $\Sigma = \Sigma_\lambda$, where $\Sigma_\lambda$ is the
  surface from theorem~\ref{existence}.
\end{theorem}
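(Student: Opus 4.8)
The plan is to reverse the deformation argument used in the proof of theorem~\ref{existence}, in the spirit of the proof of theorem~6.5 in \cite{Metzger:2007ce}. First I would localize $\Sigma$ using the a priori estimates. Since $\Sigma$ satisfies~\eqref{eq:1} with $H>0$, $\rmin>r_0$, $\tau_\lambda\leq\tau_0$ and $r_\lambda\leq\eps\rmin^2$, it fulfils the hypotheses under which the estimates of sections~\ref{sec:impr-curv-estim} and~\ref{sec:position-estimates} are valid. Thus theorem~\ref{thm:position-estimate} gives $\tau_\lambda\leq C\sqrt\eta\rmin^{-1}$, theorem~\ref{thm:final-curv} shows that $\Sigma$ is $C^3$-close, with scale-invariant error $O(\sqrt\eta\rmin^{-1})$, to the centered Schwarzschild sphere $S_{R_e}(0)$, and is globally a graph over $S^2$, and proposition~\ref{thm:compute_lambda} pins the radius $R_e$ (equivalently $R_S=\bar\phi^2R_e$) down via $|\lambda-2mR_S^{-3}|\leq C\sqrt\eta\rmin^{-4}$. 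Since $R_S$ is comparable to $\rmin>r_0$, this forces $\lambda\in(0,\lambda_0)$ once $r_0$ is chosen large enough, so that the surface $\Sigma_\lambda$ of theorem~\ref{existence} is indeed defined and solves~\eqref{eq:1} with the same $\lambda$. Moreover both $\Sigma$ and $\Sigma_\lambda$ then lie in the class $S_2(1)$ appearing in the proof of theorem~\ref{existence}, and both are $C^3$-close to the \emph{same} centered sphere, because the approximating radius of each is determined by $\lambda$ up to an error $O(\sqrt\eta\rmin^{-1})$.

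Next I would run the continuation of theorem~\ref{existence} backwards. Put $g_\tau=(1-\tau)g^S+\tau g$; this is $(m,C\eta,\sigma)$-asymptotically Schwarzschild uniformly in $\tau\in[0,1]$ and its scalar curvature is still $O(\eta r^{-5})$ (the scalar curvature of $g^S$ vanishes by lemma~\ref{thm:geometry-in-schwarzschild}, and the asymptotics are preserved under this interpolation), so all estimates of the previous sections apply along the entire path with $\tau$-independent constants. Fixing $\lambda$, consider the set of $\tau\in[0,1]$ for which there is a surface $\Sigma(\tau)\in S_2(\tau)$ solving~\eqref{eq:1} for $g_\tau$ with parameter $\lambda$ and agreeing with $\Sigma$ at $\tau=1$. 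Writing nearby surfaces as normal graphs $u$ over a fixed reference surface, equation~\eqref{eq:1} with parameter $\lambda$ becomes $\CN_\lambda(u)=0$ whose linearization at a solution is $W_\lambda$, invertible with $\|W_\lambda^{-1}\|\leq CR_S^6$ by theorem~\ref{inve}; a quantitative implicit function theorem then makes this set open, while the uniform estimates of sections~\ref{sec:impr-curv-estim} and~\ref{sec:position-estimates} (which keep $\Sigma(\tau)$ inside $S_2(\tau)$ and, because $\lambda$ is fixed, prevent $\rmin$ from degenerating) make it closed. As it contains $\tau=1$ it equals $[0,1]$, and at $\tau=0$ we obtain a solution in $S_2(0)$ for $(\IR^3,g^S_m)$, which by the Schwarzschild uniqueness result of the previous subsection is the centered sphere $S_{r(\lambda)}(0)$. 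The backward continuation of $\Sigma_\lambda$ lands on the same sphere by construction, and by the local uniqueness furnished by the invertibility of $W_\lambda$ the two continuations coincide in $S_2(\tau)$ for every $\tau$; evaluating at $\tau=1$ yields $\Sigma=\Sigma_\lambda$.

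The main obstacle is the quantitative local uniqueness step: one has to verify that the scale-invariant $C^{k,\alpha}$-bounds of theorem~\ref{thm:final-curv} place any solution of~\eqref{eq:1} lying in $S_2(\tau)$ strictly inside the implicit-function-theorem neighbourhood of $\Sigma_\lambda(\tau)$. This requires balancing the growth $\|W_\lambda^{-1}\|\leq CR_S^6$ from theorem~\ref{inve} against the $R_S$-scaling of the quadratic remainder of $\CN_\lambda$ and against the a priori distance $O(\sqrt\eta\rmin^{-1})$ of such a solution from the centered Schwarzschild sphere — exactly the bookkeeping carried out in \cite{Metzger:2007ce} — after which choosing $\eta_0$ small and $r_0$ large closes the argument. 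A secondary, routine point to be checked is that the interpolated metrics $g_\tau$ indeed inherit uniformly in $\tau$ the asymptotics of definition~\ref{def:asymptotically-flat} together with the scalar curvature bound $|\ScalM|\leq C\eta r^{-5}$, so that theorems~\ref{thm:final-curv} and~\ref{inve} may be invoked all along the path.
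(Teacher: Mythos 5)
Your proposal is correct and follows the same route the paper indicates: the paper itself merely states that the result follows ``by reversing the process used in the above theorem as in the proof of theorem~6.5 in \cite{Metzger:2007ce},'' and your write-up is a faithful expansion of exactly that deformation-backward argument — localize $\Sigma$ via theorems~\ref{thm:position-estimate} and~\ref{thm:final-curv} and proposition~\ref{thm:compute_lambda}, pin down $\lambda\in(0,\lambda_0)$, continue backwards along $g_\tau$ using the invertibility of $W_\lambda$ from theorem~\ref{inve} for openness and the uniform a~priori estimates for closedness, land on the centered sphere in Schwarzschild, and match with the forward continuation defining $\Sigma_\lambda$. You also correctly flag the one genuinely quantitative step (that the a~priori $O(\sqrt\eta\,\rmin^{-1})$ closeness to the reference sphere must fall inside the implicit-function-theorem neighbourhood, given $\|W_\lambda^{-1}\|\le C R_S^6$), which is the bookkeeping the paper delegates to \cite{Metzger:2007ce}.
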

\qed

\subsection{Foliation}
Next we show that the surfaces obtained in theorem \ref{existence} form a foliation.
\begin{theorem}\label{foliation}
  For all $m>0$ and $\sigma$ there exists $\eta_0>0$ depending only on
  $m$ and $\sigma$ with the following properties.

  If $(M,g)$ is $(m,\eta,\sigma)$-asymptotically Schwarzschild and
  satisfies
  \begin{itemize}
  \item[(1)] $|\ScalM|\le \eta r^{-5}$ and
  \item[(2)] $\eta\le \eta_0$
  \end{itemize}
  then for all $0<\lambda<\lambda_0$ the surfaces $\Sigma_\lambda$
  constructed in theorem \ref{existence} form a foliation. In
  addition, there is a differentiable map
  \[
  F:S^2\times (0,\lambda_0)\times [0,1] \rightarrow M
  \]
  such that the surfaces $F(S^2,\lambda,\tau)$ satisfy \eqref{eq:1}
  with respect to the metric $g_\tau=(1-\tau)g^S+\tau g$ for the given
  $\lambda$. This foliation can therefore be obtained by deforming a
  piece of the foliation of $(\IR^3,g^S)$ by centered spheres.
\end{theorem}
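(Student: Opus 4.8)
The plan is to produce the map $F$ by the implicit function theorem combined with a homotopy (continuation) argument in the parameter $\tau$ interpolating between $g^S$ and $g$, exactly as in the proof of Theorem~\ref{existence}, and then to read off the foliation property from the non-vanishing of the $\lambda$-derivative of the resulting family. Concretely, for $g^S$ the centered spheres $S_{r(\lambda)}(0)$ solve \eqref{eq:1} with $\lambda(r)=\tfrac{2m}{r^{3}}(1+\tfrac{m}{2r})^{-6}$, a function which is strictly decreasing and invertible for large $r$; this furnishes $F(\cdot,\lambda,0)$ together with the standard foliation of the end of $(\IR^{3},g^S)$ by centered spheres. By \eqref{eq:80} every solution of \eqref{eq:1} satisfying the hypotheses of Theorem~\ref{thm:final-curv} is a normal graph over a coordinate sphere, hence over $S^{2}$; regarding \eqref{eq:1} as a quasilinear elliptic equation for the graph function, its linearization at a solution is the operator $W_{\lambda}=W-\lambda L$ of Section~\ref{sec:first-second-vari}, which by Theorem~\ref{inve} is an isomorphism $C^{4,\alpha}(S^{2})\to C^{0,\alpha}(S^{2})$ with quantitatively controlled inverse. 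The implicit function theorem therefore yields, near any solution for which the open conditions (B1)--(B3) hold with respect to some $g_{\tau}$, a family of solutions depending differentiably on $(\lambda,\tau)$; running the open/closed continuation argument of Theorem~\ref{existence} jointly in $(\lambda,\tau)$ over $(0,\lambda_{0})\times[0,1]$, starting from the Schwarzschild family at $\tau=0$, patches these into a single differentiable map $F:S^{2}\times(0,\lambda_{0})\times[0,1]\to M$ whose slice $F(S^{2},\lambda,\tau)$ solves \eqref{eq:1} with respect to $g_{\tau}$, and elliptic regularity promotes each slice to $C^{\infty}$.

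Next I would analyze the variation field. Fixing $\tau$, set $\Phi_{\lambda}(\Sigma)=-\Delta H-H|\Acirc|^{2}-\Ric(\nu,\nu)H-\lambda H$ and differentiate $\Phi_{\lambda}(F(\cdot,\lambda,\tau))\equiv 0$ in $\lambda$. Choosing the parametrization so that $\partial_{\lambda}F=u_{\lambda}\nu$ is purely normal, the chain rule gives $W_{\lambda}u_{\lambda}=H$, i.e.\ $u_{\lambda}=W_{\lambda}^{-1}H$. One then checks — using that $H$ is $L^{\infty}$-close to the constant $\bar H^{S}$ by Theorem~\ref{thm:final-curv}, that the first eigenfunction $\varphi_{0}$ of $L$ is close to a constant by Lemma~\ref{eigen}, and the bound \eqref{inve1} together with a matching lower bound $\|u_{\lambda}\|_{L^{2}}\gtrsim R_{S}^{6}$ coming from the smallness of the $V_{0}$-eigenvalue of $W_{\lambda}$ — that $f=H$ satisfies the smallness hypothesis of Lemma~\ref{mean}. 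That lemma then gives $\|u_{\lambda}-\overline{(u_{\lambda})_{0}}\|_{L^{\infty}}\le C(\eta^{1/4}+R_{S}^{-1})\,|\overline{(u_{\lambda})_{0}}|$ with $\overline{(u_{\lambda})_{0}}\neq 0$; hence $u_{\lambda}$ is nowhere zero on each slice and has a fixed sign, which (comparing with the pure dilation in the Schwarzschild model, where $\tfrac{dr}{d\lambda}<0$) is the same for all $\lambda$.

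Finally, the foliation. Since $u_{\lambda}\neq 0$, the vector $\partial_{\lambda}F$ is everywhere transverse to the slice $F(S^{2},\lambda,\tau)$, so $dF(\cdot,\cdot,\tau)$ has full rank on $S^{2}\times(0,\lambda_{0})$; the strict sign of $u_{\lambda}$ forces the slices to be strictly nested, hence pairwise disjoint, so $F(\cdot,\cdot,\tau)$ is an injective local diffeomorphism, i.e.\ a diffeomorphism onto an open subset of $M$, which is a foliation. Taking $\tau=1$ this is the foliation by the $\Sigma_{\lambda}$ of Theorem~\ref{existence}; taking $\tau=0$ it is the foliation of the end of $(\IR^{3},g^S)$ by centered spheres; continuity of $F$ in $\tau$ exhibits the former as a deformation of the latter, and the union of the slices is a genuine neighborhood of infinity because $\rmin(\Sigma_{\lambda})\to\infty$ as $\lambda\to 0$. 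The step I expect to be the main obstacle is the non-vanishing of $u_{\lambda}$: it rests on the sharp a priori estimates of Theorem~\ref{thm:final-curv}, the quantitative invertibility of $W_{\lambda}$ from Theorem~\ref{inve} (and the lower bound of Theorem~\ref{est-lin-willmore-3}), and especially the ``almost constant'' Lemma~\ref{mean}, together with the verification that $f=H$ meets its hypotheses and the sign determination of $\overline{(u_{\lambda})_{0}}$; the global existence of $F$ over all of $(0,\lambda_{0})\times[0,1]$ is a secondary point, handled by the continuation argument of Theorem~\ref{existence}.
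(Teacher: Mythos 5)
Your overall strategy — build $F$ by continuation in $(\lambda,\tau)$, differentiate $\Phi_\lambda(F)\equiv 0$ in $\lambda$ to get $W_\lambda u_\lambda = H$ with $u_\lambda$ the normal speed, and conclude via Lemma~\ref{mean} with $f=H$ that $u_\lambda$ has a fixed sign — is exactly the paper's, and you have correctly named the supporting results (Theorems~\ref{thm:final-curv}, \ref{est-lin-willmore-3}, \ref{inve}, Lemmas~\ref{eigen}, \ref{mean}). However, the verification that $f=H$ meets the hypothesis of Lemma~\ref{mean}, which you rightly flag as ``the main obstacle,'' is left at the level of a gesture, and one of your two quantitative claims there is wrong. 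You assert $\|u_\lambda\|_{L^2}\gtrsim R_S^6$ ``from the smallness of the $V_0$-eigenvalue of $W_\lambda$.'' That eigenvalue is of order $R_S^{-5}$, not $R_S^{-6}$ (roughly $W_\lambda 1\approx\tfrac32 H^2\RicM(\nu,\nu)\approx -12mR_S^{-5}$), so the correct — and provable — lower bound is $\|u_\lambda\|_{L^2}\geq CR_S^5$, which is what the paper establishes and which is also exactly what is needed. More to the point, the paper does not get this by a spectral heuristic: it compares $\int_\Sigma W_\lambda\alpha\,d\mu=\int_\Sigma H\,d\mu\gtrsim R_S$ from below against the upper bound $C_1 R_S^{-4}\|\alpha\|_{L^2}+\cdots$, whose derivation requires redoing the cancellation computation from the proof of Theorem~\ref{inve}. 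This two-sided estimate is a genuine piece of work that your sketch does not supply, and the $R_S^6$ version of the lower bound would not be provable.

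The second missing ingredient is $\|H-H_0\|_{L^2}\leq C\eta^{1/4}R_S^{-1}$ (which, combined with $\|u_\lambda\|_{L^2}\gtrsim R_S^5$ and orthogonality $\int(u_\lambda-(u_\lambda)_0)H_0\,d\mu=0$, is what yields $\delta\sim\eta^{1/4}$ in Lemma~\ref{mean}). The route you indicate — $L^\infty$-closeness of $H$ to $\bar H^S$ from Theorem~\ref{thm:final-curv} together with $L^\infty$-closeness of $\varphi_0$ to its mean from Lemma~\ref{eigen} — can indeed be made to work: the cost of projecting the constant $\bar H^S$ off $V_0$ is $|\bar H^S|\,|\Sigma|^{1/2}\|\varphi_0-\overline{\varphi_0}\|_{L^2}\lesssim\eta^{1/4}R_S^{-1}$. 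But you do not carry it out, and it differs from the paper's argument, which exploits equation~\eqref{eq:1} more directly: since $LH=(\lambda-\tfrac12 H^2)H=\mu_0 H+(\lambda-\tfrac12 H^2-\mu_0)H$, $H$ is an approximate first eigenfunction of $L$; bounding the Rayleigh quotient $\int_\Sigma HLH\,d\mu/\int_\Sigma H^2\,d\mu$ within $C\sqrt{\eta}\rmin^{-4}$ of $\mu_0$ via Lemma~\ref{mu0} and then invoking the spectral gap $\mu_i-\mu_0\geq 2R_S^{-2}$ from Corollary~\ref{thm:cor-ev-jac-op} gives $\|H-H_0\|_{L^2}^2\leq C\sqrt{\eta}\rmin^{-2}$ with no reference to the approximating sphere or conformal factor. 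Either route closes the argument, but as written your proposal has a gap at its self-identified crux, plus the erroneous $R_S^6$.
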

\begin{proof}
  The proof follows along the same lines as the one given in
  \cite[Theorem 6.4]{Metzger:2007ce}. Therefore we only sketch the main
  ideas of the argument.

  For $0 < \lambda < \lambda_0$ we consider the curve
  $\kappa_\lambda(t)=(\lambda,t)$ and by using theorem \ref{existence}
  we obtain a family of surfaces $\Sigma_{\lambda,t}$ which solve
  \eqref{eq:1} for the given $\lambda$.

  The map $F$ can now be defined by
  $F(S^2,\lambda,t)=\Sigma_{\lambda,t}$ where we can choose the
  parametrization of $\Sigma_{\lambda,t}$ such that $\dd{F}{\lambda}
  \perp \Sigma_{\lambda,t}$. The differentiability of $F$ with respect
  to $p\in S^2$ and $\tau$ follows from the construction of
  $\Sigma_{\lambda,t}$.

  It remains to prove that the surfaces form a foliation. In order to
  show this we fix $\lambda_1 \in (0,\lambda_0)$ and we get from the
  above construction a surface $\Sigma_{\lambda_1,1}$. For
  $\lambda_2<\lambda_1$ we define the curve
  $h_{\lambda_2}(t)=((1-t)\lambda_1+t\lambda_2,1)$. By combining the
  curves $\kappa_{\lambda_1}$ and $h_{\lambda_2}$ we get a family of
  surfaces $\Sigma_{\lambda(t),1}'$ which solve \eqref{eq:1} with
  $\lambda(t)=(1-t)\lambda_1+t\lambda_2$ for $t\in [0,1]$. Moreover we
  get a differentiable map $G:S^2\times
  [\lambda_2,\lambda_1]\rightarrow M$ such that
  $G(S^2,\lambda(t))=\Sigma_{\lambda(t),1}'$. From the local
  uniqueness statement in the implicit function theorem we get that
  $\Sigma_{\lambda(t),1}'=\Sigma_{\lambda(t),1}=:\Sigma_{\lambda(t)}$.

  Now we let $\nu_{\lambda(t)}$ be the normal to $\Sigma_{\lambda(t)}$
  in $M$ and we let
  $\alpha_{\lambda(t)}=g(\nu_{\lambda(t)},\frac{\partial G}{\partial \lambda})$. We
  calculate
  \begin{equation*}
    \begin{split}
      H(\lambda_1-\lambda_2)
      &=
      \frac{d}{dt}\big(-\Delta
      H-H|\Acirc|^2-H\RicM(\nu,\nu)\big)-\lambda(t)
      \frac{d}{dt}H
      \\
      &=
      W_{\lambda(t)}\alpha_{\lambda(t)}(\lambda_1-\lambda_2).
    \end{split}
  \end{equation*}
%   \noteFelix{Ich denke, man muss hier $\alpha_{\lambda(t)}$  so
%     definieren, sonst skaliert $\alpha_{\lambda(t)}$ auch mit. Jetzt sehe
%     ich aber noch nicht, wie man dann lemma \ref{mean} anwenden
%     kann. Vielleicht mit einem Kontiuitaetsargument?}
  Next we claim that 
  \begin{align}
    \int_\Sigma (\alpha_{\lambda(t)}-(\alpha_{\lambda(t)})_0)H\dmu
    \le
    \frac{C\eta^{1/4}}{R_S^{6}} \|\alpha_{\lambda(t)}\|_{L^2(\Sigma)}\|\alpha_{\lambda(t)}-(\alpha_0)_{\lambda(t)}\|_{L^2(\Sigma)}.\label{claim}
  \end{align}
  If we assume that this claim is true we see that for $\eta$ small
  enough we can apply lemma \ref{mean} and get that
  $\alpha_{\lambda(t)}$ does not change sign. Therefore the family
  $\Sigma_{\lambda(t)}$ is a foliation.
  
  In order to prove \eqref{claim} we let $W_{\lambda(t)}=W_\lambda$,
  $\alpha=\alpha_{\lambda(t)}$ and we note that we can argue as in the
  proof of theorem \ref{inve} to get
  \begin{align*}
    \left|\int_\Sigma W_\lambda \alpha \dmu\right|
    &\le CR_S^{-6}\int_\Sigma |\alpha|\dmu
    +\frac32\left|\int_\Sigma\alpha H^2 \RicM(\nu,\nu) \dmu\right|
    \\
    & \quad
    +C\left| \int_\Sigma\alpha \Delta(|A|^2+\RicM(\nu,\nu))\dmu\right|.
  \end{align*}
  Using theorem \ref{thm:final-curv} we get
  \begin{align*}
    \frac32\left|\int_\Sigma\alpha H^2 \RicM(\nu,\nu) \dmu\right|
    &\le
    12mR_S^{-5}\int_\Sigma |\alpha|\dmu+CR_S^{-6}\int_\Sigma |\alpha|\dmu.
  \end{align*}
  Moreover, using integration by parts, theorem \ref{thm:final-curv},
  lemma \ref{thm:W-estimates} and \eqref{ablricci} we estimate
  \begin{align*}
    \left|\int_\Sigma\alpha \Delta(|A|^2+\RicM(\nu,\nu))\dmu\right|
    &\le
    C\sqrt{\eta}R_S^{-4}\|\nabla \alpha\|_{L^2(\Sigma)}
    \\
    &\le
    C\sqrt{\eta}(R_S^{-5}\|\alpha\|_{L^2(\Sigma)}+R_S^{-5/2}\|\alpha\|^{1/2}_{L^2(\Sigma)}).
  \end{align*}
  Putting these estimates together we conclude
  \begin{align}
    \left|\int_\Sigma W_\lambda \alpha \dmu\right|
    &\le
    C_1 R_S^{-4}\|\alpha\|_{L^2(\Sigma)}\nonumber 
    \\
    & \quad
    +C(R_S^{-5}\|\alpha\|_{L^2(\Sigma)}+R_S^{-5/2}\|\alpha\|^{1/2}_{L^2(\Sigma)}).\label{est1}
  \end{align}
  On the other hand we have, using again theorem \ref{thm:final-curv},
  \begin{align}
    \left|\int_\Sigma W_\lambda \alpha \dmu\right|
    &=
    \left|\int_\Sigma H\dmu\right|\nonumber
    \\
    &\ge
    C_2 R_S-CR_S^{-1}. \label{est2}
  \end{align}
  Combining the two estimates we get
  \begin{align*}
    C_2 R_S^5-CR_S^{3}
    &\le
    C_1\|\alpha\|_{L^2(\Sigma)}+C(R_S^{-1}\|\alpha\|_{L^2(\Sigma)}+R_S^{3/2}\|\alpha\|^{1/2}_{L^2(\Sigma)}).
  \end{align*}
  From this estimate we easily see that there exists a constant $C_3>0$ such that for $R_S$ large enough we have
  \begin{align}
    \|\alpha\|_{L^2(\Sigma)}\ge C_3 R_S^5. \label{alp}
  \end{align}

  Using H\"older's inequality we get
  \begin{align*}
    \int_\Sigma (\alpha-\alpha_0)H\dmu\le \|H-H_0\|_{L^2(\Sigma)}\|\alpha-\alpha_0\|_{L^2(\Sigma)}
  \end{align*}
  and hence, combing this with \eqref{alp}, we see that \eqref{claim} will be a consequence of the estimate
  \begin{align}
    \|H-H_0\|_{L^2(\Sigma)}\le C \eta^{1/4}R_S^{-1}.\label{claima}
  \end{align}
 
  We note that
  \begin{align*}
    LH=(\lambda-\frac12 H^2)H=\mu_0 H+(\lambda-\frac12 H^2-\mu_0)H
  \end{align*}
  and therefore we can estimate
  \begin{align*}
    \mu_0 &\le \frac{\int_\Sigma HLH\dmu}{\int_\Sigma H^2\dmu}\le \mu_0+C\sqrt{\eta}\rmin^{-4},
  \end{align*}
  where the first inequality follows from the Rayleigh quotient
  characterization of the eigenvalues of $L$ and the second inequality
  follows from the above estimate and lemma \ref{mu0}. Next we
  decompose $H=\sum_i \langle H,\varphi_i \rangle \varphi_i$ and we
  calculate
  \begin{align*}
    \frac{\int_\Sigma HLH\dmu}{\int_\Sigma H^2\dmu}
    &=
    \frac{\sum_i \mu_i\int_\Sigma H_i^2\dmu}{\int_\Sigma H^2\dmu}
    \\
    &=
    \mu_0 +\frac{\sum_i (\mu_i-\mu_0)\int_\Sigma H_i^2\dmu}{\int_\Sigma H^2\dmu}.
  \end{align*}
  Hence we get
  \begin{align*}
    0\le \sum_{i=1}^\infty (\mu_i-\mu_0)\int_\Sigma H_i^2\dmu\le C\sqrt{\eta}\rmin^{-4}.
  \end{align*}
  For every $i\in \mathbb{N}$ we have $(\mu_i-\mu_0)\ge 2R_S^{-2}$ (see corollary \ref{thm:cor-ev-jac-op}) and therefore
  \begin{align*}
    \|H-H_0\|^2_{L^2(\Sigma)}\le CR_S^2 \sum_{i=1}^\infty (\mu_i-\mu_0)\int_\Sigma H_i^2\dmu\le C\sqrt{\eta}\rmin^{-2},
  \end{align*}
  which finishes the proof of \eqref{claima} and therewith also the proof of the theorem.
\end{proof}

%%% Local Variables: 
%%% mode: latex
%%% TeX-master: "master"
%%% End: 

\appendix
\section{Maple scripts for the calculations}
For the explicit calculations in the proof of Proposition
\ref{thm:compute_lambda}, in section~\ref{sec:right-hand-side} and in
section~\ref{sec:variation-V} we used Maple~\cite{Maple} to evaluate
certain integrals. Here we present the scripts we used.
\subsection{Proposition~\ref{thm:compute_lambda}}
Here it is necessary to evaluate the integral
\begin{equation}
  \label{eq:50}
  \begin{split}
    E_1:=\int_S \left(
      \frac{1}{r^3}
      - 3R_e^2 \frac{1}{r^5}
      - 6 R_e |a_e| \frac{\cos\varphi}{r^5}
      - 3 |a_e|^2 \frac{\cos^2\varphi}{r^5}
    \right)
    \dmu^e
  \end{split}
\end{equation}
where $S = S_{R_e}(a_e)$ is a fixed sphere with center $a$ and radius
$R_e$. The calculation is based on the formula
\begin{equation*}
  C_k^l
  :=
  \int_S \frac{\cos^l\varphi}{r^k} \dmu^e
  =
  \frac{2\pi R_e}{|a_e|} (2R_e|a_e|)^{-l}
  \int_{|R_e-|a_e||}^{R_e+|a_e|}
  r^{1-k} (r^2 -R_e^2 - |a_e|^2)^l dr.
\end{equation*}
which was derived in the proof of
proposition~\ref{thm:compute_lambda}. Hence equation~\eqref{eq:50} can be written as
\begin{equation*}
  E_1 = C_3^0 - 3R_e^2 C_5^0 - 6R_e|a_e|C_5^1 -3|a_e|^2 C_5^2.
\end{equation*}
This is evaluated using the following Maple script.
{\small
\begin{verbatim}
assume (R>0, a>0, R>a);
c0r3 := 2*PI*R/a *(2*R*a)^(0)
        * int(r^(-2)*(r^2 - R^2 - a^2 )^(0),r=R-a..R+a);
c0r5 := 2*PI*R/a *(2*R*a)^(0)
        * int(r^(-4)*(r^2 - R^2 - a^2)^(0),r=R-a..R+a);
c1r5 := 2*PI*R/a *(2*R*a)^(-1)
        * int(r^(-4)*(r^2 - R^2 - a^2 )^(1), r=R-a..R+a);
c2r5 := 2*PI*R/a *(2*R*a)^(-2)
        * int(r^(-4)*(r^2 - R^2 - a^2 )^(2), r=R-a..R+a);
E1   := c0r3 -3*R^2*c0r5 - 6*R*a*c1r5 - 3*a^2*c2r5;
simplify(%);
\end{verbatim}
}
where we used \texttt{R} to denote $R_e$, \texttt{a} to denote
$|a_e|$ and \texttt{c}$l$\texttt{r}$k$ to denote $C_k^l$.

\subsection{Section~\ref{sec:right-hand-side}}
In section~\ref{sec:right-hand-side} the integral to evaluate was
\begin{equation*}
  E_2 := \int_S \frac{\cos\phi}{r^3} = C_3^1
\end{equation*}
This is evaluated by the script
{\small
\begin{verbatim}
assume (R>0, a>0, R>a);
c1r3 := 2*PI*R/a *(2*R*a)^(-1)
        * int(r^(-2)*(r^2 - R^2 - a^2 )^(1),r=R-a..R+a);
E2   := c1r3;
simplify(%);
\end{verbatim}
}

\subsection{Section~\ref{sec:variation-V}}
The longest calculation is for the term 
\begin{equation*}
  \begin{split}
    \bar Q
    &:=
    \int_{S} \Big(
    R_e\tfrac{\cos\varphi}{r^6}
    + |a_e| \tfrac{\cos^2\varphi}{r^6}
    - |a_e|R_e^2 \tfrac{1}{r^8}
    - (R_e^3 + 2 |a_e|^2R_e)\tfrac{\cos\varphi}{r^8}
    \\
    &\phantom{\int_{S} \Big(}\qquad
    - (|a_e|^3 + 2|a_e|R_e^2) \tfrac{\cos^2\varphi}{r^8}
    - |a_e|^2 R_e \tfrac{\cos^3\varphi}{r^8} \Big) \dmu^e   
  \end{split}
\end{equation*}
from section~\ref{sec:variation-V}, where we omit certain fixed
factors here. The following script evaluates this expression.
{\small
\begin{verbatim}
assume (R>0, a>0, R>a);
c1r6:=2*PI*R/a *(2*R*a)^(-1)
      * int(r^(-5)*(r^2 - R^2 - a^2)^(1),r=R-a..R+a);
c2r6:=2*PI*R/a *(2*R*a)^(-2)
      * int(r^(-5)*(r^2 - R^2 - a^2 )^(2),r=R-a..R+a);
c0r8:=2*PI*R/a *(2*R*a)^(0)
      * int(r^(-7)*(r^2 - R^2 - a^2 )^(0),r=R-a..R+a);
c1r8:=2*PI*R/a *(2*R*a)^(-1)
      * int(r^(-7)*(r^2 - R^2 - a^2 )^(1),r=R-a..R+a);
c2r8:=2*PI*R/a *(2*R*a)^(-2)
      * int(r^(-7)*(r^2 - R^2 - a^2 )^(2),r=R-a..R+a);
c3r8:=2*PI*R/a *(2*R*a)^(-3)
      * int(r^(-7)*(r^2 - R^2 - a^2 )^(3),r=R-a..R+a);
Q   := R * c1r6 + a * c2r6 - a*R^2*c0r8 - (R^3 + 2*a^2*R)*c1r8
       - (2*a*R^2 + a^3) *c2r8 - a^2 * R * c3r8);
subs(a = tau * R, Q);
simplify(%);
\end{verbatim}

%%% Local Variables: 
%%% mode: latex
%%% TeX-master: "master"
%%% End: 

% 
\bibliographystyle{abbrv}
\bibliography{../../extern/references}

\begin{thebibliography}{10}

\bibitem{MR1941840}
M.~Bauer and E.~Kuwert.
\newblock Existence of minimizing {W}illmore surfaces of prescribed genus.
\newblock {\em Int. Math. Res. Not.}, (10):553--576, 2003.

\bibitem{Christodoulou-Yau:1986}
D.~Christodoulou and S.-T. Yau.
\newblock Some remarks on the quasi-local mass.
\newblock In {\em Mathematics and general relativity (Santa Cruz, CA, 1986)},
  volume~71 of {\em Contemp. Math.}, pages 9--14. Amer. Math. Soc., Providence,
  RI, 1988.

\bibitem{DeLellis-Muller:2005}
C.~De~Lellis and S.~M{\"u}ller.
\newblock Optimal rigidity estimates for nearly umbilical surfaces.
\newblock {\em J. Differential Geom.}, 69(1):75--110, 2005.

\bibitem{DeLellis-Muller:2006}
C.~De~Lellis and S.~M{\"u}ller.
\newblock A {$C\sp 0$} estimate for nearly umbilical surfaces.
\newblock {\em Calc. Var. Partial Differential Equations}, 26(3):283--296,
  2006.

\bibitem{Gallot-Hulin-Lafontaine:1993}
S.~Gallot, D.~Hulin, and J.~Lafontaine.
\newblock {\em Riemannian Geometry}.
\newblock Springer, Berlin, Heidelberg, New York, 2nd edition, 1993.

\bibitem{huang:2008foliation}
L.-H. Huang.
\newblock Foliations by stable spheres with constant mean curvature for
  isolated systems with general asymptotics, 2008.

\bibitem{huisken:2006owr}
G.~Huisken.
\newblock An isoperimetric concept for the mass in general relativity.
\newblock {\em Oberwolfach Reports}, 3(3):1898--1899, 2006.

\bibitem{huisken-polden:1999}
G.~Huisken and A.~Polden.
\newblock Geometric evolution equations for hypersurfaces.
\newblock In {\em Calculus of variations and geometric evolution problems
  (Cetraro, 1996)}, volume 1713 of {\em Lecture Notes in Math.}, pages 45--84.
  Springer, Berlin, 1999.

\bibitem{Huisken-Yau:1996}
G.~Huisken and S.-T. Yau.
\newblock Definition of center of mass for isolated physical systems and unique
  foliations by stable spheres with constant mean curvature.
\newblock {\em Invent. Math.}, 124(1-3):281--311, 1996.

\bibitem{Kuwert-Schatzle:2001}
E.~Kuwert and R.~Sch{\"a}tzle.
\newblock The {W}illmore flow with small initial energy.
\newblock {\em J. Differential Geom.}, 57(3):409--441, 2001.

\bibitem{Kuwert-Schatzle:2002}
E.~Kuwert and R.~Sch{\"a}tzle.
\newblock Gradient flow for the {W}illmore functional.
\newblock {\em Comm. Anal. Geom.}, 10(2):307--339, 2002.

\bibitem{Maple}
MapleSoft.
\newblock Maple.
\newblock http://www.maplesoft.com.

\bibitem{mazzeo-pacard:2007}
R.~Mazzeo and F.~Pacard.
\newblock Constant curvature foliations on asymptotically hyperbolic spaces,
  2007.

\bibitem{Metzger:2007ce}
J.~Metzger.
\newblock Foliations of asymptotically flat 3-manifolds by 2-surfaces of
  prescribed mean curvature.
\newblock {\em J. Differential Geom.}, 77:201--236, 2007.

\bibitem{Michael-Simon:1973}
J.~H. Michael and L.~M. Simon.
\newblock Sobolev and mean-value inequalities on generalized submanifolds of
  {$\IR^n$}.
\newblock {\em Comm. Pure Appl. Math.}, 26:361--379, 1973.

\bibitem{neves-tian:2006}
A.~Neves and G.~Tian.
\newblock Existence and uniqueness of constant mean curvature foliation of
  asymptotically hyperbolic 3-manifolds, 2006.

\bibitem{neves-tian:2007}
A.~Neves and G.~Tian.
\newblock Existence and uniqueness of constant mean curvature foliation of
  asymptotically hyperbolic 3-manifolds {II}, 2007.

\bibitem{qing-tian}
J.~Qing and G.~Tian.
\newblock On the uniqueness of the foliation of spheres of constant mean
  curvature in asymptotically flat 3-manifolds.
\newblock {\em J. Amer. Math. Soc.}, 20(4):1091--1110, 2007.

\bibitem{MR929283}
R.~Schoen.
\newblock The existence of weak solutions with prescribed singular behavior for
  a conformally invariant scalar equation.
\newblock {\em Comm. Pure Appl. Math.}, 41(3):317--392, 1988.

\bibitem{schoen-yau:1994}
R.~Schoen and S.-T. Yau.
\newblock {\em Lectures on Differential Geometry}.
\newblock Conference Proceedings and Lecture Notes in Geometry and Topology.
  International Press, Boston, 1994.

\bibitem{MR1243525}
L.~Simon.
\newblock Existence of surfaces minimizing the {W}illmore functional.
\newblock {\em Comm. Anal. Geom.}, 1(2):281--326, 1993.

\bibitem{Simons:1968}
J.~Simons.
\newblock Minimal varieties in {R}iemannian manifolds.
\newblock {\em Ann. of Math. (2)}, 88:62--105, 1968.

\bibitem{Weiner:1978}
J.~L. Weiner.
\newblock On a problem of {C}hen, {W}illmore, et al.
\newblock {\em Indiana Univ. Math. J.}, 27(1):19--35, 1978.

\bibitem{ye:1996}
R.~Ye.
\newblock Foliation by constant mean curvature spheres on asymptotically flat
  manifolds.
\newblock In {\em Geometric analysis and the calculus of variations}, pages
  369--383. Int. Press, Cambridge, MA, 1996.

\end{thebibliography}
\end{document}